\documentclass[12pt]{amsart}
\usepackage{amsmath}
\usepackage{amssymb}
\usepackage{amstext}
\usepackage{xcolor}
\usepackage{hyperref}
\usepackage{mathtools}
\usepackage{dsfont}

\mathtoolsset{showonlyrefs}

\usepackage{tikz}

\newtheorem{theorem}{Theorem}[section]

\newtheorem{lemma}[theorem]{Lemma}
\newtheorem{proposition}[theorem]{Proposition}
\newtheorem{definition}[theorem]{Definition}
\newtheorem{remark}[theorem]{Remark}

\DeclareMathOperator{\Ent}{Ent}
\DeclareMathOperator{\Id}{Id}
\DeclareMathOperator{\Hess}{Hess}

\DeclareMathOperator{\id}{id}

\DeclareMathOperator{\Image}{Im}
\DeclareMathOperator{\LSI}{LSI}

\newcommand{\R}{\mathbb{R}}
\newcommand{\T}{\mathbb{T}}

\setlength\parindent{0pt}
\setcounter{tocdepth}{1}

\begin{document}

 \title[Towards the quantitative hydrodynamic limit]{Toward a quantitative theory of the hydrodynamic limit}

\author{Deniz Dizdar}
\address{Universit\'e de Montr\'eal}
\email{deniz.dizdar@umontreal.ca}

\author{Georg Menz}
\address{University of California, Los Angeles}
\email{gmenz@math.ucla.edu}

\author{Felix Otto}
\address{Max Planck Institute for Mathematics in the Sciences, Leipzig, Germany}
\email{Felix.Otto@mis.mpg.de.}

\author{Tianqi Wu}
\address{University of California, Los Angeles}
\email{timwu@ucla.edu}

\date{\today}

 \maketitle

 \begin{abstract}
   This article provides non-trivial technical ingredients for the article \emph{The quantitative hydrodynamic limit of the Kawasaki dynamics} by the same authors. In that work a quantitative version of the hydrodynamic limit is deduced using a refinement of the two-scale approach. In this work, we deduce the strict convexity of the coarse-grained Hamiltonian, a uniform logarithmic Sobolev inequality for the canonical ensemble and the convergence of the coarse-grained Hamiltonian to the macroscopic free energy of the system. We deduce those results following an approach developed by Grunewald, Otto, Villani and Westdickenberg. Because in our setting the associated coarse-graining operator is non-local, the arguments are much more subtle and need additional ingredients like the Brascamp-Lieb inequality and a multivariate local central-limit theorem.
 \end{abstract}

\begin{footnotesize}
\noindent\emph{MSC:} Primary 60K35; secondary 60J25; 82B21 \newline
\emph{Keywords:} Logarithmic Sobolev inequality; hydrodynamic limit; Kawasaki dynamics; splines; equivalence of ensembles; coarse-graining; Brascamp-Lieb
\end{footnotesize}

\tableofcontents

\newpage

\section*{Notations and conventions}

In this article we use the following conventions and notation.

\begin{itemize}
\item We use the letter~$C$ to denote a universal generic constant~$0< C< \infty$ that is independent of the dimension~$N$ of the underlying lattice. The actual value of~$C$ may change from line to line and sometimes even within a line.
\item We denote with~$a \lesssim b$ that
  \begin{align*}
    a \leq C b.
  \end{align*}
\item We denote with~$a \cdot b$ and~$|a|$ the standard Euclidean inner product and norm of the vectors~$a,b$.
\item We denote with~$\mathcal{P}(X)$ the set of probability measures on a measurable space~$X$.
\item We denote with~$\mathcal{L}^L$ the Hausdorff measure of dimension~$L$. If there is no source of confusion we use $dx$ is used as a shorthand notation for the Hausdorff measure of appropriate dimension.
\item $\|\cdot \|$ denotes the norm of a linear operator or multi-linear form with respect to given norms on the vector spaces.
\item $|\cdot|_{H^{1}}$ denotes the homogeneous~$H^1$ norm.
\item $[M]:= \left\{ 1 , \ldots, M \right\}$.
 \end{itemize}

\section{Introduction and main results}

A fundamental observation in probability theory is that patterns emerge out of randomness on large scales. For example, the law of large numbers states that for a sequence of i.i.d. random variables~$X_i$ the normalized sum converges to the mean i.e.~
\begin{align}\label{e_slln}
  \lim_{N \to \infty} \frac{1}{N} \sum_{i=1}^N X_i \overset{\mbox{\tiny a.s.}}{=} \mathbb{E}[X_1].
\end{align}
We are interested in dynamic manifestations of this fundamental principle, particularly in the hydrodynamic limit of the Kawasaki dynamics. The Kawasaki dynamics is a natural spin-exchange dynamic on a one-dimensional lattice-system. The hydrodynamic limit for the Kawasaki dynamics was first deduced by~\cite{Fri89} and~\cite{GPV}. It states that, under a certain scaling limit, a microscopic stochastic evolution~$X(t)$ converges to a macroscopic deterministic evolution~$\zeta(t)$. The microscopic evolution~$X(t)$ is called Kawasaki dynamics. The evolution~$\zeta(t)$ is the solution of a nonlinear heat equation.\\

Our goal in this line of research is to make the qualitative theory of the hydrodynamic limit of the Kawasaki dynamics quantitative, i.e.~to derive explicit error bounds. In the companion article~\cite{DMOW18a} of this work we discuss the broader picture  of the hydrodynamic limit and deduce a quantitative version of the hydrodynamic limit. In this article, we provide some non-trivial ingredients for the argument in~\cite{DMOW18a}.\\

A first step toward a quantitative theory of the hydrodynamic limit was taken in~\cite{GORV}. There, a new method was introduced to deduce the hydrodynamic limit, called the two-scale approach. The two scale-approach could in principle yield quantitative error estimates for the hydrodynamic limit. However, only the main estimates were quantitative in~\cite{GORV} and the hydrodynamic limit was still deduced on a qualitative level. With some additional work one could make the result of~\cite{GORV} on the hydrodynamic limit quantitative. However, the estimates would not be optimal. The quantitative hydrodynamic limit was deduced in the companion article~\cite{DMOW18a}, with error rates that are better than the rates one would obtain from~\cite{GORV}.\\

Both articles~\cite{DMOW18a} and~\cite{GORV} use the two-scale approach. The main idea of the two-scale approach is to artificially introduce a mesoscopic scale between the microscopic and the macroscopic scale. The mesoscopic scale is defined via a coarse-graining operator~$P$ that projects the microscopic dynamic~$X(t)$ onto mesoscopic observables~$PX(t)$. Then, a natural dynamic~$Y(t)$ is defined on the mesoscopic scale. The hydrodynamic limit is deduced in two steps: In the first step one shows that~$X(t)$ converges to~$Y(t)$ and in the second step one shows that~$Y(t)$ converges to~$\zeta (t)$. The main difference between the approach of~\cite{GORV} and~\cite{DMOW18a} is the definition of the coarse-graining operator~$P$. In~\cite{GORV}, the coarse-graining operator~$P$ projects onto piece-wise constant functions, whereas in~\cite{DMOW18a} the operator~$P$ projects onto splines. Working with splines is more challenging than working with piece-wise constant functions. The reason is that under the projection blocks become dependent. However, the estimates also improve because splines are smoother than piece-wise constant functions.\\

In this article we provide some nontrivial ingredients that are used in~\cite{DMOW18a}. The main results of this article are:
  \begin{itemize}
    \item Strict convexity of the coarse-grained Hamiltonian (see Theorem~\ref{p_strict_convexity_coarse_grained_Hamiltonian}).
  \item Uniform logarithmic Sobolev inequality for the conditional Gibbs measure given a mesoscopic profile (see Theorem~\ref{p_LSI_conditional}).
\item Convergence of the gradient of the coarse-grained Hamiltonian to the gradient of the  macroscopic free energy (see Theorem~\ref{p_convergence_meso_to_macro_free_energy}).
  \end{itemize}

This article emerged from Deniz Dizdar's diploma thesis~\cite{Dizdar} and some of the contents overlap with~\cite{Dizdar}. Compared to~\cite{Dizdar}, the proof of Theorem~\ref{p_convergence_meso_to_macro_free_energy} is completely new.\\

The rest of the introduction is organized in the following way. In Section~\ref{s_motivating_result}, we explain the main result of~\cite{DMOW18a}, namely the quantitative hydrodynamic limit of the Kawasaki dynamics. There, we also introduce notation and the set-up for the rest of this article. In Section~\ref{s_main_results} we discuss the main results of this article.

\subsection{Motivating result: The quantitative hydrodynamic limit of the Kawasaki dynamics}\label{s_motivating_result}

In this section we outline the main result of~\cite{DMOW18a}. For the reader's convenience we skip some details.\medskip

Let us start with defining the Kawasaki dynamics on the microscopic lattice~$\left\{1, \ldots, N \right\}$. For this purpose, let us introduce the Hamiltonian~$H : \mathbb{R}^N \to \mathbb{R}$ of the system, which is given by
\begin{equation}
H_N(x) \,=\, \sum_{n=1}^N \psi(x_n). \label{hamiltonian}
\end{equation}
We assume that the function~$\psi: \mathbb{R} \to \mathbb{R}$ is of the form
\begin{equation}\label{e_structure_of_ss_potential}
 \psi(x_n)\,=\,  \frac{1}{2}\,x_n^2 + ax_n+ \delta\psi(a_n),
\end{equation}
where the function $\delta\psi$ is bounded in $C^2(\mathbb{R})$:
\begin{equation}\label{e_assumptions_on_ss_perturbation}
\|\delta\psi\|_{L^{\infty}(\mathbb{R})} < C \:\: \text{and} \:\:\|\frac{d^2}{dx^2}\,
\delta\psi\|_{L^{\infty}(\mathbb{R})} < C.
\end{equation}
We also define the associated log moment generating function
\begin{equation}\label{psi^*}
\psi^*(\sigma) = \ln \int_{\mathbb{R}} \exp \left( \sigma z - \psi (z) \right) dz.
\end{equation}
By basic properties of the function $\psi^*$ (cf. \eqref{e_d_mu_m} from below
), we choose $a$ such that: 
\begin{align}\label{e_ss_mean_0}
	\frac{\int_{\mathbb{R}}  z \exp (- \psi(z)) dz}{\int_{\mathbb{R}} \exp (- \psi(z)) dz } =0.\\
\end{align}
The function $\psi$ may be non-convex and it helps think about the function $\psi$ as a double well-potential (see Figure~\ref{f_double_well}). \\

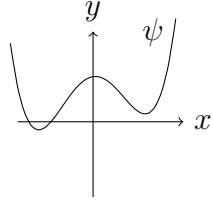
\begin{figure}[t]
  \centering
  \begin{tikzpicture}[scale=1]
      \draw[->] (-1,0) -- (1.2,0) node[right] {$x$};
      \draw[->] (0,-1) -- (0,1.2) node[above] {$y$};
      \draw[scale=0.5,domain=-2.2:2.2,smooth,variable=\x,black] plot
      ({\x},{0.3*(\x*\x-2)*(\x*\x-2)+0.15*\x});
      \draw (0.8,1.2) node {$\psi$};
    \end{tikzpicture}
  \caption{Double-well potential~$\psi$}\label{f_double_well}
\end{figure}

The Kawasaki dynamics~$X_t$ is given by the solution of the SDE
\begin{align}\label{e_Kawasaki_dynamics}
  dX_t = - A \nabla H (X_t) dt + \sqrt{2A} dB_t.
\end{align}
Here,~$B_t$ denotes a standard Brownian motion on $\mathbb{R}^N$, ~$A$ denotes the second order difference operator of the periodic rescaled lattice~$\left\{ \frac{1}{N}, \ldots, 1  \right\}$. More precisely, the operator~$A$ is given by the $N\times N-$matrix
\begin{equation}\label{e_def_A}
A_{i,j} := N^2(-\delta_{i,j-1} + 2\delta_{i,j} - \delta_{i,j+1}),
\end{equation}
where we use the convention that~$N+1=1$ and~$0=N$. It follows from the structure of the operator~$A$ that the Kawasaki dynamics~~\eqref{e_Kawasaki_dynamics} conserves the mean spin of the system. For the purpose of stating the results of the companion article \cite{DMOW18a}, we restrict the state space~$X_N = \mathbb{R}^N$ of the Kawasaki dynamics~$X_t$ to the hyperplane of mean $0$
\begin{equation}
\label{e_definition_of_X_N_M}
X_{N, 0} := \left\{  x \in \mathbb{R}^N, \ \frac{1}{N} \sum_{i=1}^N x_i =0   \right\}.
\end{equation}
\begin{remark} We use a different notation than in the companion article \cite{DMOW18a}, because we want to mainly work with the full state space $\mathbb{R}^N$ in this article. 
\end{remark}

The hydrodynamic limit of the Kawasaki dynamics~$X_t \in X_{N, 0}$ means that as~$N \to \infty$ the stochastic dynamics~$X_t$ on the periodic rescaled lattice~$\left\{\frac{1}{N}, \frac{2}{N}, \ldots, 1 \right\}$ converges to a deterministic dynamic~$\zeta (t) $ defined on the one-dimensional torus~$\mathbb{T}=[0,1)$. To do this, we embed the spaces $X_{N, 0} \subset X_N$ into the space~$L^{2}(\mathbb{T})$ by identifying the vector~$x \in X_N$ with its corresponding step function on the interval~$[0,1]$. More precisely, given~$x \in X_N$, we identify it with the step function
	\begin{equation} \label{def_step_functions}
	x(\theta) = x_j, \hspace{1cm} \theta \in \left[ \frac{j-1}{N}; \frac{j}{N} \right).
	\end{equation}
	Then the space $X_N$ (resp. $X_{N, 0}$) is identified with the space of piecewise constant functions on $\mathbb{T} = \R/\mathbb{Z}$ (resp. with zero mean), i.e.
	\begin{align}\label{e_embedding_X}
	X_{N} = &\left\{x : \mathbb{T} \rightarrow \R; \hspace{2mm} x \text{ is constant on } \left[\frac{j-1}{N}; \frac{j}{N} \right), \hspace{2mm} j = 1,..,N\right\}. \\
	X_{N, 0} = &\left\{x \in X_N: \quad \int_0^1 x(\theta) d\theta = 0 \right\}.
	\end{align}

It turns out the $L^2$ norm is not well-adapted to our problem, since it is too sensitive to local fluctuations. Instead, we work with the $H^{-1}$-norm which is defined in the following way: If $f : \mathbb{T} \rightarrow \R$ is a locally integrable function with zero mean, then

\begin{equation}
||f||_{H^{-1}}^2 := \int_{\T}{w(\theta)^2d\theta}, \hspace{5mm} w' = f, \hspace{3mm} \int_{\T}{w(\theta)d\theta} = 0.
\end{equation}\medskip

Now, let us us formulate the main result of~\cite{DMOW18a}.

\begin{theorem}[Quantitative hydrodynamic limit of the Kawasaki dynamics] \label{thm_lim_hydro}
We assume that the single-site potential $\psi$ satisfies~\eqref{e_structure_of_ss_potential} and~\eqref{e_assumptions_on_ss_perturbation}. Let~$\mu$ denote the Gibbs measure on~$X_N$ associated to the Hamiltonian~$H$. More precisely, the probability measure~$\mu$ is absolutely continuous wrt.~the~$N-1$-dimensional Hausdorff measure~$\mathcal{L}^{N-1}$ and the Radon-Nikodym derivative of~$\mu$ is given by
\begin{align}\label{e_def_Gibbs_measure}
  \frac{d\mu}{d \mathcal{L}^{N-1}}(x) =  \frac{1}{Z} \exp \left( - H(x) \right) \qquad  x \in X_N,
\end{align} 
where~$Z$ is a normalizing factor making~$\mu$ a probability measure.
We assume that the initial distribution~$\nu$ of the Kawasaki dynamics~$X(0)$ is absolutely continuous wrt.~the Gibbs measure $\mu$ and satisfies the estimate
\begin{equation}\label{e_closness_initial_data}
 \Ent(\nu | \mu) := \int  \ln \left( \frac{d \nu}{d\mu} \right) d \nu \leq C_{\Ent} N 
\end{equation}
for some constant $C_{\Ent} > 0$. \newline
Let $\zeta$ denote the unique weak solution (for a precise definition see~\cite{DMOW18a}) of the equation
\begin{equation} \label{hydro_equn}
\frac{\partial \zeta}{\partial t} = \frac{\partial^2}{\partial \theta^2}\varphi'(\zeta)
\end{equation}
with initial condition $\zeta(0,\cdot) = \zeta_0$ and $\varphi$ is the Legendre-Fenchel transform of the function $\psi^*$ (cf. \eqref{psi^*})
\begin{equation} \label{average_hydro_potential}
\varphi(m) = \underset{\sigma \in \R}{\sup} \hspace{1mm} \left\{ \sigma m - \psi^*(\sigma) \right\}.
\end{equation}
Then there is a constant~$0<C< \infty$ depending only on the constants appearing in~\eqref{e_assumptions_on_ss_perturbation} such that for any $T > 0$ 
\begin{align}
  \label{e_quantitative_estimate_1}
   \sup_{0 \leq t \leq T} & \mathbb{E} \left[  |X(t) - \zeta(t)|_{H^{-1}}^2 \right]    \\
& \lesssim  \   \mathbb{E} \left[ |x(0) - \zeta (0)|_{H^{-1}}^2 \right] + \frac{1}{N^{\frac{2}{3}}} \Big[   T +  C_{\Ent}+  |\zeta (0)|_{L^2}^2 +  1 \Big].
\end{align}
\end{theorem}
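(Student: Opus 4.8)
\emph{Proof idea (sketch).} Theorem~\ref{thm_lim_hydro} is the main result of the companion paper~\cite{DMOW18a}, so here we only indicate how the ingredients established in the present article enter; the scheme is the two-scale approach of~\cite{GORV}, run with the spline coarse-graining of~\cite{DMOW18a} in place of the piecewise-constant one. Fix a mesoscopic parameter $M$ with $1\ll M\ll N$, let $P=P_{N,M}\colon X_N\to Y_M$ be the (non-local) spline coarse-graining operator onto a space $Y_M$ of dimension $\sim M$, and set
\begin{align*}
  \bar H_M(y)\;=\;-\log\int_{\{Px=y\}}\exp\bigl(-H(x)\bigr)\,dx ,
\end{align*}
the coarse-grained Hamiltonian, with conditional Gibbs measures $\mu(\,\cdot\mid Px=y)$. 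Let $Y(t)$ be the mesoscopic dynamics, i.e. the gradient flow of $\bar H_M$ in the $\dot H^{-1}$-type metric induced by the discrete Laplacian on $M$ sites, started at $Y(0)=PX(0)$, and let $\zeta(t)$ solve~\eqref{hydro_equn}. By the triangle inequality in $H^{-1}$,
\begin{align*}
  \mathbb E\bigl[|X(t)-\zeta(t)|_{H^{-1}}^2\bigr]
  \;\lesssim\;\mathbb E\bigl[|X(t)-PX(t)|_{H^{-1}}^2\bigr]
  +\mathbb E\bigl[|PX(t)-Y(t)|_{H^{-1}}^2\bigr]
  +|Y(t)-\zeta(t)|_{H^{-1}}^2 ,
\end{align*}
and it suffices to bound the three terms separately; the initial error $\mathbb E[|x(0)-\zeta(0)|_{H^{-1}}^2]$ of~\eqref{e_quantitative_estimate_1} enters through the comparison of $PX(0)$ with $\zeta(0)$ in the last two.

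\emph{Microscopic scale.} Writing $\nu_t$ for the law of $X(t)$, the $H$-theorem for the Kawasaki dynamics~\eqref{e_Kawasaki_dynamics} gives $\Ent(\nu_t\mid\mu)\le\Ent(\nu\mid\mu)\le C_{\Ent}N$ for all $t$, together with a bound of the same order on the time-integrated Fisher information. Conditioning on the mesoscopic profile $Px$ and applying the uniform logarithmic Sobolev inequality for $\mu(\,\cdot\mid Px=y)$ (Theorem~\ref{p_LSI_conditional}), whose constant is of order one uniformly in $N$, $M$ and $y$, controls the fluctuations of $X(t)$ within the fibre $\{Px=\mathrm{const}\}$. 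Since $\Id-P$ annihilates the mesoscopic (low-frequency) part, the operator norm of $\Id-P$ from the Euclidean to the $H^{-1}$ metric is of order $(M\sqrt N)^{-1}$, and a standard change-of-measure argument using this LSI and the entropy budget yields $\mathbb E[|X(t)-PX(t)|_{H^{-1}}^2]\lesssim M^{-2}(1+C_{\Ent})$. This is exactly the step that forces the Brascamp--Lieb inequality and the local central limit theorem of this article, since these are what make Theorem~\ref{p_LSI_conditional} (and the strict convexity of $\bar H_M$) available in the genuinely multidimensional, non-product situation created by the spline projection.

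\emph{Mesoscopic to macroscopic scale.} The flow $Y(t)$ is a spatial discretization of~\eqref{hydro_equn} with $\nabla\bar H_M$ in place of $\varphi'$. Strict convexity of $\bar H_M$ (Theorem~\ref{p_strict_convexity_coarse_grained_Hamiltonian}), together with convexity of $\varphi$, provides the stability half of the argument (a Gronwall, respectively contraction, estimate for perturbations of the flow, valid uniformly in $M$), while the quantitative convergence of $\nabla\bar H_M$ to $\varphi'$ (Theorem~\ref{p_convergence_meso_to_macro_free_energy}) --- an equivalence-of-ensembles statement whose error is governed by the block size $N/M$ and is therefore of order $M/N$ --- together with the spline approximation error (of order $M^{-2}$) provides the consistency half; combining them gives $|Y(t)-\zeta(t)|_{H^{-1}}^2\lesssim(M^{-2}+M/N)(T+|\zeta(0)|_{L^2}^2+1)+|PX(0)-\zeta(0)|_{H^{-1}}^2$. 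For the remaining term $\mathbb E[|PX(t)-Y(t)|_{H^{-1}}^2]$ one differentiates in time, uses that $PX(t)$ obeys a mesoscopic equation with drift $PA\nabla H(X(t))$ which is to be matched against $\bar A\,\nabla\bar H_M(PX(t))$, and splits the discrepancy into (i) a commutator error between $P$ and the difference operator $A$ of~\eqref{e_def_A}, (ii) the defect $\mathbb E[\nabla H(X)\mid PX]-\nabla\bar H_M(PX)$ built into the definition of $\bar H_M$, (iii) the conditional fluctuation of $\nabla H(X)-\mathbb E[\nabla H(X)\mid PX]$, again controlled by Theorem~\ref{p_LSI_conditional}, and (iv) martingale terms; absorbing the leading negative term produced by the convexity of $\bar H_M$ and running Gronwall yields a bound of the same order, $(M^{-2}+M/N)(T+C_{\Ent}+1)+\mathbb E[|x(0)-\zeta(0)|_{H^{-1}}^2]$.

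\emph{Optimization and main obstacle.} Adding the three estimates gives a bound of the form $\mathbb E[|x(0)-\zeta(0)|_{H^{-1}}^2]+(M^{-2}+M/N)[\,T+C_{\Ent}+|\zeta(0)|_{L^2}^2+1\,]$, in which the first error decreases and the second (the equivalence-of-ensembles / non-commutation error) increases in $M$; balancing them forces $M\sim N^{1/3}$ and produces the rate $N^{-2/3}$ of~\eqref{e_quantitative_estimate_1}. The principal difficulty is the dynamical comparison $\mathbb E[|PX(t)-Y(t)|_{H^{-1}}^2]$: because $P$ is non-local, the blocks of the conditional ensemble are no longer independent, the commutator of $P$ with $A$ is not supported on single blocks, and $\nabla\bar H_M$ is not a function of block averages alone, so the commutator, equivalence-of-ensembles and conditional-fluctuation errors must all be controlled at the optimal order while the uniform convexity and logarithmic Sobolev constants are preserved --- which is precisely the role of the results proved in this article.
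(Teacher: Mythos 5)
This paper does not prove Theorem~\ref{thm_lim_hydro}. It is stated in Section~\ref{s_motivating_result} only as the \emph{motivating result} of the companion article~\cite{DMOW18a}; the present article confines itself to supplying three ingredients (Theorems~\ref{p_strict_convexity_coarse_grained_Hamiltonian}, \ref{p_LSI_conditional}, \ref{p_convergence_meso_to_macro_free_energy}) and explicitly refers the reader to~\cite{DMOW18a} for the proof. There is therefore no in-paper proof against which your argument can be checked, and any line-by-line verification of your sketch would have to be carried out against the companion article, not this one.

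That said, the high-level structure you describe is consistent with what the present paper signals about the two-scale approach: a non-local spline projection $P$, the induced coarse-grained Hamiltonian $\bar H$, a mesoscopic gradient flow $Y(t)$, and a three-term decomposition of the $H^{-1}$ error, with Theorem~\ref{p_LSI_conditional} handling the micro-to-meso fluctuations, Theorems~\ref{p_strict_convexity_coarse_grained_Hamiltonian} and \ref{p_convergence_meso_to_macro_free_energy} handling the meso-to-macro comparison, and the balance $M^{-2}\sim M/N$ yielding $M\sim N^{1/3}$ and the advertised rate $N^{-2/3}$. One point you should be more careful about: the per-term rates you assert (e.g.\ the $M^{-2}$ order of the spline approximation error, or the precise $H^{-1}$-versus-$L^2$ bookkeeping that converts the $L^2$ bound $(1/K+1/M)$ of Theorem~\ref{p_convergence_meso_to_macro_free_energy} into a squared $H^{-1}$ bound) are stated without derivation and cannot be verified here; getting them right is exactly the delicate part that the companion article must carry out. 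In short, your proposal is a reasonable signpost to~\cite{DMOW18a} but is not, and cannot be, a proof within the scope of the material presented in this paper.
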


\subsection{Main results of this article}\label{s_main_results}

The main results of this article are important ingredients for deducing the quantitative hydrodynamic limit via the two-scale approach (see the companion article~\cite{DMOW18a} and Theorem~\ref{thm_lim_hydro}). A central object in the two-scale approach is the coarse-graining operator~$P$. In~\cite{GORV}, the coarse-graining operator~$P$ projects the state~$x \in X_N$ onto piecewise constant functions on the interval~$[0,1]$. More relevant to this article is a more subtle coarse-graining operator~$P$ used in~\cite{DMOW18a}:~$P$ projects $x \in X_N$ onto a spline function on the torus~$\mathbb{T}$.

\begin{definition}[Definition of the coarse-graining operator~$P$]\label{d_coarse_graining_operator_P}
  For $M \in \mathbb{N}$, let $Y_M$ be the space of spline functions of degree~$L \in \mathbb{N}$  on the torus $\mathbb{T}$ corresponding to the mesh $\left\{\frac{m}{M}\right\}_{m=1,...,M}$. More precisely, if~$L \geq 1$
\begin{align*}
  Y_M := &\left\{y\in C^{L-1}(\mathbb{T})|\,\forall m\in [M]:    y|_{\left(\frac{m-1}{M},\frac{m}{M}\right)}\, \text{polynomial of degree $\leq$ L}\right.\},
\end{align*}
and if $L\,=\,0$:
\begin{equation*}
  Y_M := \left\{y \in L^2(\mathbb{T})|\,\forall m \in [M]: \text{y constant on}
  \,\left(\frac{m-1}{M},\frac{m}{M}\right)\right\}.
\end{equation*}
We endow $Y_M$ with the inner product inherited from $L^2(\mathbb{T})$. We define the coarse graining operator $P: L^2(\mathbb{T})\rightarrow Y_M\subset L^{2}(\mathbb{T})$ as the $L^2$-orthogonal projection onto $Y_M \subset L^2(\mathbb{T})$. 
\end{definition}

\begin{remark}
	Only splines of second order,~i.e.~$L=2$, are needed in the companion article~\cite{DMOW18a}. However, we state the main results of this article for splines of general order~$L \in \mathbb{N}$. From now on, the polynomial degree $L$ will be fixed for the entire article and we will not always mention the dependencies of constants on $L$ explicitly. By choosing~$L=0$ one recovers the coarse-graining operator~$P$ of~\cite{GORV}. Using~$L=2$ allows in better error estimate in the quantitative hydrodynamic limit compared to~\cite{DMOW18a}. However, the analysis becomes a lot more complex.
\end{remark}

From now on, we assume $N = KM$ for $K\in \mathbb{N}$, and $K$ is large enough so that $P$ is surjective (a consequence of Lemma \ref{p_invertible_PNPt} below). The coarse-graining operator~$P$ gives rise to another central object in the two-scale approach, the coarse-grained Hamiltonian~$\bar H$. It defines a natural energy on the mesoscopic scale~$Y_M$.

\begin{definition}[Coarse-grained Hamiltonian~$\bar H$.]
  The coarse-grained Hamiltonian~$\bar H: Y_M \to \mathbb{R}$ is given by 
  \begin{align}\label{e_def_coarse_grained_Hmailtonian}
   \bar H(y) := \bar H_{Y_M}(y) :=  - \frac{1}{N} \ln \int_{\left\{x \in X_N  :  Px =y \right\}} \exp \left( - H(x) \right) dx,
  \end{align}
where~$dx$ denotes the Hausdorff measure on the space~$\left\{x \in X_N  :  Px =y \right\}$.
\end{definition}

Now, let us turn to the main results of this article. The first main result is that the coarse-grained Hamiltonian is uniformly strictly convex for $K$ large enough.
\begin{theorem}[Strict convexity of~$\bar H$]\label{p_strict_convexity_coarse_grained_Hamiltonian}
There are constants~$0< \lambda, \Lambda, K^* < \infty$ such that for all~$K \geq K^*$,~$M$ and~$y \in \mathbb{R}^M$ it holds
\begin{align*}
2 \lambda \Id_{Y_M} \leq  \Hess_{Y_M} \bar H(y) \leq 2 \Lambda \Id_{Y_M}
\end{align*}
in the sense of quadratic forms.
\end{theorem}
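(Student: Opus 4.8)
The plan is to compute the Hessian of $\bar H$ explicitly and to recognize it as a Schur-complement-type object that can be controlled by the Brascamp–Lieb inequality and a compactness/perturbation argument in $K$. First I would fix the mesh parameter $M$ and the block size $K$ and write the fibre $\{x \in X_N : Px = y\}$ as an affine subspace $V_y = x_0(y) + \ker P$; the conditional measure on this fibre is a Gibbs measure with Hamiltonian $H(x) = \tfrac12|x|^2 + \langle a,x\rangle + \sum_n \delta\psi(x_n)$ restricted to $V_y$. Differentiating the definition \eqref{e_def_coarse_grained_Hmailtonian} twice in $y$ gives, by the standard computation for log-Laplace transforms along a linearly moving family of affine subspaces, a formula of the schematic form
\begin{align*}
  \Hess_{Y_M}\bar H(y) \;=\; \frac{1}{N}\Bigl(\text{mean of }\Hess H|_{V_y}\text{ along the fibre}\Bigr) \;-\; \frac{1}{N}\,\bigl(\text{covariance of the gradient flux under the conditional measure}\bigr),
\end{align*}
where both terms are expressed through the right-inverse of $P$ on $X_N$ and the orthogonal decomposition $X_N = \ker P \oplus (\ker P)^\perp$. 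Since $\delta\psi$ is bounded in $C^2$, we have $\tfrac12 - C \le \Hess H \le \tfrac12 + C$ as quadratic forms on all of $X_N$, so the first ("energy") term is comparable to $\tfrac1N (PP^t)^{-1}$ up to a bounded multiplicative error, while the second ("fluctuation") term is a manifestly nonnegative covariance.

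The upper bound $\Hess_{Y_M}\bar H(y) \le 2\Lambda\,\Id_{Y_M}$ is then the easy direction: drop the negative covariance term and bound the energy term using $\Hess H \le (\tfrac12+C)\Id$ together with the spectral bounds on $PP^t$ (equivalently on $PNP^t$ in the notation of Lemma~\ref{p_invertible_PNPt}), which are uniform in $M$ and $K$ because $P$ is an $L^2$-orthogonal projection onto a spline space with uniformly well-conditioned B-spline basis. The content is in the lower bound $\Hess_{Y_M}\bar H(y) \ge 2\lambda\,\Id_{Y_M}$: here I must show the fluctuation term cannot cancel too much of the energy term. This is exactly where the Brascamp–Lieb inequality enters — it bounds the covariance of any linear (or, after a standard argument, affine) functional under the conditional Gibbs measure by the covariance under the Gaussian with Hamiltonian $\tfrac12|x|^2$, i.e.\ under the identity covariance on the fibre $\ker P$. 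Plugging this in, the net Hessian is bounded below by $\tfrac1N$ times the Schur complement of the pure-Gaussian problem, namely $\tfrac1N\bigl[(PP^t)^{-1}\bigr]$ up to the bounded $\delta\psi$-distortion, which is positive definite with a bound uniform in $M$; the $\delta\psi$-distortion is harmless once $C$ is an absolute constant, but to get a clean uniform $\lambda$ I would route the estimate through the Gaussian reference computation and absorb the perturbation.

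The role of "$K$ large enough" and the reason a compactness argument is natural: for finite $K$ the constant $\lambda$ coming from the crude Brascamp–Lieb bound might be small, so I expect the cleanest route is a two-regime argument. For $K$ large one rescales the fibre: the conditional measure on $\{Px=y\}$, after the natural change of variables, looks like a product of $\asymp K$ nearly-i.i.d.\ one-site measures per block, and a local-CLT / equivalence-of-ensembles estimate (the multivariate local central limit theorem advertised in the abstract) shows the conditional covariance of the coarse-grained flux converges, as $K\to\infty$, to the covariance one gets from the macroscopic free energy $\varphi$, whose Hessian $\varphi''$ is bounded below by a positive constant because $\psi^*$ is smooth and strictly convex (it is a log-moment generating function of a measure with bounded perturbation). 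Thus for $K \ge K^*$ the Hessian is within $\lambda$ of a uniformly positive limit; combined with the (uniform in $K\ge K^*$) Brascamp–Lieb lower bound this gives the claim. The main obstacle is the non-locality of $P$: because $P$ projects onto splines rather than piecewise-constant functions, the fibre $\ker P$ does not split as a product over blocks, so neither the Brascamp–Lieb application nor the local-CLT step is a direct product computation — one has to exploit the finite overlap / banded structure of the B-spline basis (so that the coupling between blocks is of bounded range $L$) to reduce to an essentially block-diagonal situation with controlled off-diagonal corrections. Handling that banded structure quantitatively, uniformly in $M$, is where I expect most of the work to go.
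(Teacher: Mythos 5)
Your proposal identifies the right cast of characters (Brascamp--Lieb, exponential tilting, multivariate local CLT) but misses the key structural trick that makes the paper's proof work, and the specific way you propose to deploy Brascamp--Lieb has a gap.

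\medskip

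\textbf{The gap in the lower bound.} You propose to write $\Hess_{Y_M}\bar H$ as (conditional mean of $\Hess H$) minus (a conditional covariance), and to control the covariance directly by Brascamp--Lieb against the Gaussian reference. But Brascamp--Lieb bounds the covariance of a measure $e^{-H}\,dx$ by $(\Hess H)^{-1}$, which requires $\Hess H > 0$ on the whole fibre. Here $\psi = \tfrac12 z^2 + az + \delta\psi$ with $\delta\psi$ merely bounded in $C^2$, so $\psi$ is allowed to be a genuine double well and $\Hess H$ may be negative on parts of phase space. In that regime the covariance term can be \emph{larger} than the Gaussian covariance, and dropping it (for the upper bound) or bounding it by the Gaussian one (for the lower bound) is not justified. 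This is exactly why the paper does not rely on Brascamp--Lieb to produce convexity: in the non-convex case, the emergent convexity is an entropic effect that shows up only after averaging over many sites, which is what the local CLT quantifies.

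\medskip

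\textbf{The missing structural idea.} You correctly flag that non-locality of $P$ (the fibre $\ker P$ does not factor over blocks) is the obstruction, but your proposed fix --- pushing through the banded/overlapping B-spline structure with ``controlled off-diagonal corrections'' --- is not how the paper proceeds and would be considerably harder to make uniform in $M$. The paper instead enlarges the space: it introduces the discontinuous Galerkin space $Y_M^{DG}\supset Y_M$ and the projection $Q_M$ onto it. Because DG functions carry no continuity constraint across blocks, the fibre $\{Q_M x = y\}$ factors \emph{exactly} as a product over blocks, the coarse-grained Hamiltonian $\bar H_{Y_M^{DG}}$ decomposes as a sum of single-block $\psi_K$'s, and the whole problem reduces to a one-block multivariate local Cram\'er/CLT statement (Theorem~\ref{1block}). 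Uniform strict convexity on $Y_M^{DG}$ then transfers to $Y_M$ in one stroke via Lemma~\ref{log-concave}: since $Y_M\subset Y_M^{DG}$, the marginal $\bar H$ is obtained from $\bar H_{Y_M^{DG}}$ by integrating out $Y_M^{\perp}$, and marginals of uniformly log-concave measures are uniformly log-concave (this is the Brascamp--Lieb input in the paper --- as a marginalization lemma, not as a covariance bound). So Brascamp--Lieb appears, but after the convexity is already established on the larger space, not to produce it. You should reorganize: establish one-block strict convexity by tilting plus local CLT (your large-$K$ regime is the only regime --- the theorem only asserts $K\ge K^*$), factor over blocks on $Y_M^{DG}$, and then invoke marginal log-concavity to descend to $Y_M$.
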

We state the proof of Theorem~\ref{p_strict_convexity_coarse_grained_Hamiltonian} in Section~\ref{s_convexification}. One should compare Theorem~\ref{p_strict_convexity_coarse_grained_Hamiltonian} to the similar statement of Lemma 29 in~\cite{GORV}. The situation here is more subtle. The reason is that in~\cite{GORV}, the mesoscopic observables are also piecewise constant functions and therefore local functions. In our setting the mesoscopic observables are given by continuous splines which are non-local functions. This introduces additional interactions between blocks. We work around this obstacle by first deducing the strict convexity for mesoscopic observables that are piecewise polynomials of degree~$L$. In the jargon of numerical analysis those functions are referred to as discontinuous Galerkin functions. From this we deduce the strict convexity of the coarse-grained Hamiltonian~$\bar H$ by using the fact that marginals of log-concave measures are again log-concave, which is a well-known consequence of the Brascamp-Lieb inequality.\medskip

Let us turn to the second main result, namely the uniform LSI for the conditional measure~$\mu(dx |P x=y)$. The LSI was first described by Gross~\cite{Gro75}. 
It yields strong concentration properties and also gives exponential convergence to equilibrium of the associated diffusion process. For a good introduction to the LSI we refer the reader to the books~\cite{R,BGL14} and the overview article~\cite{L}.\medskip

In the article\cite{DMOW18a}, the uniform LSI is used to establish the convergence of the microscopic Kawasaki dynamic to the mesoscopic dynamics. More precisely, a consequence of the LSI is that the Kawasaki dynamics equilibrates very fast on small spatial scales. This observation yields that~$P\nabla H (X(t))$ is close to~$P \mathbb{E} \left[ \nabla   H(X(t))| P X(t) =Y(t)\right] $, which itself is close to~$\nabla \bar H(Y(t))$.\\

Now, let us define the conditional measure~$\mu(dx |P x=y)$ which describes the fluctuations of the Gibbs measure~$\mu$ (cf.~\eqref{e_def_Gibbs_measure}) around a mesoscopic profile~$y \in Y_M$.
 
\begin{definition}[Disintegration of the canonical ensemble~$\mu$]\label{d_disintegration_canonical_ensemble}
  The coarse-graining operator~$P: L^2(\mathbb{T})\to Y_M$ introduces a decomposition of the canonical ensemble~$\mu$ into conditional measure~$\mu(dx| P x =y)$ and marginal measures~$\bar \mu(dy)$. More precisely the measures $\mu(dx| P x =y)$,~$y \in Y_M$, and~$\bar \mu(dy)$ are defined by the relation
  \begin{align*}
    \int f (x) \mu (dx) = \int \int f(x) \mu(dx| P x =y) \bar \mu(dy) 
  \end{align*}
for any test function~$f:X_N \to \mathbb{R}$. The conditional measure~$\mu(dx| P x =y)$ is a probability measure on the space
\begin{align*}
 P^{-1} \left\{y  \right\} = \left\{ x \in X_N\ | \ P x =y\right\} \subset X_N
\end{align*}
that is absolutely continuous wrt.~the Hausdorff measure~$dx$. Its Radon-Nikodym derivative is given by
\begin{align}
  \label{e_d_conditional_can_ensemble}
  \frac{d \mu}{dx} (x) = \frac{1}{Z} \ \mathds{1}_{\left\{ P x =y \right\}}(x) \ \exp \left(  - H(x)  \right).
\end{align}
For convenience, we also may write~$\mu(dx|y)$ instead of~$\mu(dx|P x =y)$.

The marginal measure~$\bar \mu$ is a probability measure on the space $Y_M$ that is absolutely continuous wrt.~the Hausdorff measure~$dy$. Its Radon-Nikodym derivative is given by
\begin{align}
  \label{e_d_marginal}
  \frac{d \bar \mu}{d y} (y) = \frac{1}{Z}  \exp\left( - N \bar H(y)  \right),
\end{align}
where~$\bar H$ is the coarse-grained Hamiltonian given by~\eqref{e_def_coarse_grained_Hmailtonian}.
\end{definition}

Now, let us formulate the second main result of this article.

\begin{theorem}[Uniform LSI for the conditional measure~$\mu(dx |P x  =y)$]\label{p_LSI_conditional}
 The conditional measure~$\mu(dx |P x =y)$ given by~\eqref{e_def_Gibbs_measure} satisfies a LSI with constant~$\varrho>0$ uniform in the system size~$N$ and the mesoscopic profile~$y$. More precisely, if~$f: Y_M \to \mathbb{R}$ is a nonnegative test function that satisfies~$\int f (x) \mu (dx | P x =y)=1$ then
 \begin{align}
\label{e_d_LSI}   \mbox{} \\
 \Ent \left( f \mu( dx | P x =y) | \mu( dx | P x =y) \right) \leq \frac{1}{\varrho} \int \frac{|\nabla_{||} f(x)|^2}{f(x)} \mu( dx | P x =y),
 \end{align}
where~$|\nabla_{||}f|$ is the norm of the gradient on~$\ker P$ wrt.~the standard Euclidean structure.
\end{theorem}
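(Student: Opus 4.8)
The plan is to reduce the LSI for $\mu(dx|Px=y)$ on the affine subspace $P^{-1}\{y\}$ to a perturbation argument built on the Bakry–Émery criterion, exploiting the structure $\psi(x_n)=\tfrac12 x_n^2+ax_n+\delta\psi(x_n)$ with $\delta\psi$ bounded in $C^2$. First I would observe that on the subspace $\ker P$ (to which $P^{-1}\{y\}$ is a translate), the measure $\mu(dx|Px=y)$ has density proportional to $\exp(-H(x))=\exp(-\sum_n\psi(x_n))$ restricted to $P^{-1}\{y\}$. Writing $H(x)=\tfrac12|x|^2+a\sum_n x_n+\sum_n\delta\psi(x_n)$ and noting that on the affine space $P^{-1}\{y\}$ the linear term $a\sum_n x_n$ is essentially constant (since $\sum_n x_n$ is determined by $Px=y$ — it is $N\int y\,d\theta$ up to the embedding), the Hamiltonian restricted to $P^{-1}\{y\}$ is of the form $\tfrac12|x|^2+(\text{bounded }C^2\text{ perturbation})+\text{const}$. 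Hence $\Hess_{\ker P}H = \Id_{\ker P}+\Hess_{\ker P}(\sum_n\delta\psi(x_n))$, and since $\|\tfrac{d^2}{dx^2}\delta\psi\|_{L^\infty}<C$, the perturbation's Hessian is bounded in operator norm by $C$ on $\mathbb{R}^N$ and a fortiori on $\ker P$. This gives $\Hess_{\ker P}H\geq (1-C)\Id$ — which is not yet positive, so Bakry–Émery does not apply directly.

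The key step is therefore to first prove the LSI for the \emph{Gaussian-like} reference measure and then add the bounded perturbation via the Holley–Stroock perturbation principle. Concretely, let $\mu_0(dx|Px=y)$ denote the measure on $P^{-1}\{y\}$ with density proportional to $\exp(-\tfrac12|x|^2)$, i.e.\ the standard Gaussian conditioned on $Px=y$; this is itself a (non-degenerate) Gaussian on the affine subspace $P^{-1}\{y\}$ with covariance the identity on $\ker P$, so by Bakry–Émery (or directly, since Gaussians satisfy LSI with constant equal to the spectral gap of the covariance) it satisfies LSI on $\ker P$ with a constant $\varrho_0\geq 1$ that is uniform in $N$, $M$, and $y$ — crucially, the conditioning only changes the base point of the affine space, not the quadratic form. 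Then $\mu(dx|Px=y)$ has density with respect to $\mu_0(dx|Px=y)$ proportional to $\exp(-\sum_n\delta\psi(x_n))$, whose oscillation is bounded: $\sup_{P^{-1}\{y\}}\sum_n\delta\psi(x_n)-\inf_{P^{-1}\{y\}}\sum_n\delta\psi(x_n)\leq 2N\|\delta\psi\|_{L^\infty}$. Unfortunately this bound grows like $N$, so a naive Holley–Stroock bound $\varrho\geq\varrho_0 e^{-\mathrm{osc}}$ degenerates — this is the main obstacle.

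To overcome the $N$-dependence I would instead tensorize before perturbing. The point is that $\exp(-\tfrac12|x|^2-\sum_n\delta\psi(x_n))$ on $\mathbb{R}^N$ is a \emph{product} measure $\bigotimes_n\nu(dx_n)$ with $\nu(dz)\propto e^{-\psi(z)}dz$, and each factor $\nu$ is a one-dimensional measure that is a bounded $C^2$-perturbation of a standard Gaussian, hence satisfies LSI with a constant $\varrho_1>0$ independent of $N$ (one-dimensional Holley–Stroock with fixed oscillation $2\|\delta\psi\|_{L^\infty}$). By the tensorization property of the LSI, the product measure on $\mathbb{R}^N$ satisfies LSI with the same constant $\varrho_1$, uniformly in $N$. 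The remaining step is to pass from the full measure to the conditional measure $\mu(dx|Px=y)$: since $\mu(dx|Px=y)$ is the conditioning of a measure satisfying LSI with constant $\varrho_1$ onto the affine subspace $\{Px=y\}$, and conditioning (restriction to a subspace, i.e.\ disintegration) preserves the LSI with at least the same constant — this is the standard fact that if $\mu$ satisfies LSI$(\varrho)$ then so does almost every conditional measure $\mu(\cdot|Px=y)$ with the same $\varrho$, applied to the gradient $\nabla_{||}$ along $\ker P$ — we conclude that $\mu(dx|Px=y)$ satisfies the desired LSI with constant $\varrho:=\varrho_1>0$, uniform in $N$ and $y$. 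I expect the only genuinely delicate point to be the last reduction: one must check that the relevant notion of gradient for the conditional LSI is exactly $\nabla_{||}$, the Euclidean gradient on $\ker P$, which is precisely the geometry built into the statement, so the abstract disintegration-of-LSI lemma applies verbatim.
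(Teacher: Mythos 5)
Your proposal has a fatal gap at the last step. You assert, as ``the standard fact,'' that if a product measure on $\mathbb{R}^N$ satisfies $\operatorname{LSI}(\varrho_1)$ then each conditional measure $\mu(\cdot\,|\,Px=y)$, with gradient restricted to $\ker P$, also satisfies $\operatorname{LSI}(\varrho_1)$. This is not a theorem; it is in fact false in general, and proving it for the specific measures at hand is precisely the entire content of Theorem~\ref{p_LSI_conditional} and of the preceding literature on uniform LSI for canonical ensembles (Lu--Yau, Landim--Panizo--Yau, Chafa\"{\i}, Grunewald--Otto--Villani--Westdickenberg). The point is that conditioning destroys the product structure: on $\ker P$ the spins are no longer independent, and the restricted Dirichlet form $\int |\nabla_{||}f|^2/f\,d\mu(\cdot|y)$ is not comparable in any obvious way to the full Dirichlet form of the unconditioned product measure. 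If your claimed reduction were correct, the uniform LSI for the usual one-constraint canonical ensemble ($L=0$, conditioning on the mean spin) would be an immediate consequence of tensorization plus Holley--Stroock, whereas that result is a hard theorem requiring the local Cram\'er/local CLT machinery.

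Concretely: everything in your argument up through the step ``$\mu=\bigotimes_n\nu(dx_n)$ satisfies LSI uniformly in $N$'' is correct but irrelevant, because the conclusion you need does not follow from it. What is actually needed — and what the paper supplies — is the two-scale criterion (Lemma~\ref{OR}), whose hypotheses require (i) a uniform LSI for conditionals at a finer coarse-graining scale, (ii) a uniform LSI for the corresponding \emph{marginal} measure, and (iii) a coupling bound. Hypothesis (ii) is where the real work sits: the marginal Hamiltonian is (a rescaled version of) the coarse-grained Hamiltonian, and its uniform strict convexity is the substance of Theorem~\ref{p_strict_convexity_coarse_grained_Hamiltonian} / Theorem~\ref{p_strict_convexity_bar_H_DG}, proved via a multivariate local Cram\'er theorem. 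Your proposal never invokes this convexity and therefore cannot close. Moreover, because $P$ projects onto splines (non-local for $L\geq 1$), the paper must first route through the discontinuous Galerkin projection $Q_M$ (Theorem~\ref{LSIDG}, using Lemma~\ref{p_product_structure_conditional_Q} for the product factorization into blocks) and then apply the two-scale criterion a second time to pass from $Q_M$ to $P$, using $\ker Q_M\subset\ker P$; none of this appears in your sketch. The tensorization/Holley--Stroock/Bakry--\'Emery toolkit you list is indeed used by the paper, but only to verify the subsidiary hypotheses of the two-scale criterion, not as a substitute for it.
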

The statement of Theorem~\ref{p_LSI_conditional} should be compared to~\cite[Theorem 14]{GORV}, where a similar statement was deduced for the case~$L=0$. We proof Theorem~\ref{p_LSI_conditional} in Section~\ref{s_convexification} by extending the approach of~\cite{GORV}. The uniform LSI can not directly be deduced as in~\cite{GORV} from the two-scale criterion (see Lemma~\ref{OR} below). The reason is that the projection~$P$ onto the spline space~$Y_M$ is not local. We get around this obstacle by first deducing a uniform LSI for another conditional measure. More precisely, in Theorem~\ref{LSIDG} below we use the two-scale criterion to deduce the uniform LSI for the conditional measures~$\mu(dx|Q_Mx=y)$, where~$Q_M: L^2(\mathbb{T}) \to Y_M^{DG}$ denotes the orthogonal projection in~$L^2$ onto the subspace~$Y_M^{DG}$ of discontinuous Galerkin functions (see~\eqref{d_Y_DG} ff.~below). Then, we use two scale criterion, Theorem~\ref{LSIDG} and the strict convexity of the coarse-grained Hamiltonian~$\bar H$ to show that the conditional measures~$\mu(dx |P x =y)$ satisfy a uniform LSI.\\

The third and last main result of this article is the convergence of the coarse-grained Hamiltonian~$\bar H$ to the macroscopic free energy~$\mathcal{H}: L^2 (\mathbb{T}) \to \mathbb{R}$, which is defined by
\begin{align}
  \label{e_macroscopic_free_energy}
  \mathcal{H} (\zeta) = \int_{\mathbb{T}} \varphi (\zeta(\theta)) d \theta ,
\end{align}
where the function~$\varphi$ is given by~\eqref{average_hydro_potential}.

\begin{theorem}[Convergence of the gradient of the coarse-grained Hamiltonian to the gradient of the macroscopic free energy] \label{p_convergence_meso_to_macro_free_energy}
There is a constant~$K_0$ such that for all~$K \geq K_0$ and~for all~$\zeta \in L^2$

  \begin{align}
    \label{e_estimate_meso_to_macro_free_energy}
    \left| \nabla \bar H (P\zeta) -   \nabla \mathcal{H}(\zeta)  \right|_{L^2}  \lesssim   \left(\frac{1}{K} + \frac{1}{M}\right)(|\zeta|_{L^2}+|\zeta|_{H^1}) + \frac{1}{K} .
  \end{align}
\end{theorem}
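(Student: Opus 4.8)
\textbf{Proof proposal for Theorem~\ref{p_convergence_meso_to_macro_free_energy}.}

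The plan is to compute both gradients explicitly and compare them term by term. First, recall that $\bar H$ is a function on the finite-dimensional space $Y_M$, so $\nabla\bar H(Py)$ is naturally an element of $Y_M$; by the definition \eqref{e_def_coarse_grained_Hmailtonian} of $\bar H$ and a standard computation (differentiating under the integral, as in \cite{GORV}), one has the representation
\begin{align}
  \nabla\bar H(Px) = P\,\mathbb{E}_{\mu(\cdot|Px)}\!\left[\nabla H\right] = P\,\mathbb{E}_{\mu(\cdot|Px)}\!\left[\psi'(x_1),\dots,\psi'(x_N)\right],
\end{align}
where I identify the vector $(\psi'(x_n))_n$ with its associated step function in $X_N\subset L^2(\mathbb{T})$, and where $P$ on the right is the $L^2$-orthogonal projection onto $Y_M$. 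On the macroscopic side, differentiating \eqref{e_macroscopic_free_energy} gives $\nabla\mathcal H(\zeta) = \varphi'(\zeta(\cdot))$ as an element of $L^2(\mathbb{T})$. Since $\varphi$ is the Legendre transform of $\psi^*$, we have $\varphi'(m) = (\psi^*)^{-1}{}'(\dots)$; more usefully, $\varphi'(m) = \sigma(m)$ where $\sigma(m)$ is the dual variable characterized by $m = (\psi^*)'(\sigma(m))$, i.e. the value such that the tilted single-site measure $\propto \exp(\sigma(m)z - \psi(z))\,dz$ has mean $m$. So the comparison reduces to: the coarse-grained gradient is (a projection of) a conditional expectation of $\psi'$, and the macroscopic gradient is the pointwise "equivalence-of-ensembles" value $\sigma(\zeta)$.

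The key steps, in order, would be: (1) Insert an intermediate object $\sigma(P\zeta)(\theta) := \varphi'((P\zeta)(\theta))$ and split
\begin{align}
  \nabla\bar H(P\zeta) - \nabla\mathcal H(\zeta) = \big(\nabla\bar H(P\zeta) - P\,\sigma(P\zeta)\big) + \big(P\,\sigma(P\zeta) - P\,\sigma(\zeta)\big) + \big(P\,\sigma(\zeta) - \sigma(\zeta)\big) + \big(P\,\sigma(\zeta) - \sigma(\zeta)\big)
\end{align}
(taking care to actually use $P\sigma(\zeta)-\sigma(\zeta)$ only once) — more cleanly, write it as three terms $T_1 = \nabla\bar H(P\zeta) - P\sigma(P\zeta)$, $T_2 = P(\sigma(P\zeta) - \sigma(\zeta))$, $T_3 = (P - \Id)\sigma(\zeta)$. (2) Bound $T_3$ by spline approximation theory: $P$ is the $L^2$-projection onto splines of degree $L$ on a mesh of width $1/M$, so $|(P-\Id)g|_{L^2}\lesssim \frac1M |g|_{H^1}$; applied to $g = \sigma(\zeta) = \varphi'(\zeta)$, and using that $\varphi'$ is Lipschitz (which follows from the uniform bounds on $\delta\psi$: $\psi^*$ is uniformly strictly convex with bounded second derivative, hence $\varphi=(\psi^*)^*$ has $\varphi''$ bounded above and below away from zero), we get $|T_3|\lesssim \frac1M|\varphi'(\zeta)|_{H^1}\lesssim \frac1M|\zeta|_{H^1}$ by the chain rule. (3) Bound $T_2$ using that $P$ is a contraction in $L^2$ and $\sigma=\varphi'$ is Lipschitz: $|T_2|\lesssim |\sigma(P\zeta)-\sigma(\zeta)|_{L^2}\lesssim |(P-\Id)\zeta|_{L^2}$; now $\zeta$ need not be $H^1$, so one only gets $|T_2|\lesssim |(P-\Id)\zeta|_{L^2}$, and a crude bound gives the $\frac1K$-free part — actually here one must be more careful: on a single mesh cell $\zeta$ is not smooth, so $|(P-\Id)\zeta|_{L^2}$ is only $O(|\zeta|_{L^2})$ in general, which is too weak. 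The fix is to not go through $\sigma(P\zeta)$ but rather to exploit that on the microscopic side we are conditioning on $P\zeta$, i.e. on a $K$-cell average (in the spline sense), so fluctuations of $\zeta$ within a mesoscopic block of size $1/M$ that are invisible to $P$ also do not affect the leading term — this is precisely where the $\frac1K$ term in the estimate, and the restriction $K\ge K_0$, enter. (4) The main term is $T_1$: this is the genuine equivalence-of-ensembles / local-CLT estimate. We must show that the $L^2$-projection of the conditional mean $\mathbb{E}_{\mu(\cdot|P\zeta)}[\psi'(x_n)]$ is, block by block, close to the "microcanonical-to-canonical" value, i.e. close to $\sigma(\bar\zeta_m)$ where $\bar\zeta_m$ is the relevant local average. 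Conditioning on $Px=y$ with $P$ the spline projection constrains $K=N/M$ spins by $L+1$ (or so) linear constraints per block with overlap between neighboring blocks; the conditional measure is therefore a high-dimensional log-concave measure on an affine subspace, and one needs a quantitative equivalence of ensembles: $\big|\mathbb{E}_{\mu(\cdot|Px)}[\psi'(x_n)] - \sigma(\text{appropriate local mean})\big| \lesssim \frac1K$. This is where the Brascamp–Lieb inequality (to control covariances/Gaussian-type corrections uniformly) and the multivariate local central limit theorem (to get the $\frac1K$ rate rather than just $o(1)$) are used; I would cite or adapt the corresponding quantitative equivalence-of-ensembles statement proved earlier for the $Q_M$/discontinuous-Galerkin setting and then transfer it to $P$ using that $Y_M\subset Y_M^{DG}$ and the strict convexity of $\bar H$ from Theorem~\ref{p_strict_convexity_coarse_grained_Hamiltonian}.

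The hard part, clearly, is step (4): establishing the $O(1/K)$ quantitative equivalence of ensembles for the non-local spline conditioning. The non-locality means the conditional measure $\mu(dx|Px=y)$ does not factor over blocks, so one cannot simply invoke the one-block local CLT; one must either work on the discontinuous-Galerkin relaxation $Q_M$ (where blocks do decouple up to the constraints and a clean multivariate local CLT applies) and then pay a controlled price to pass from $Q_M$ to $P$, or handle the coupled chain of blocks directly via a transfer-operator / perturbative argument. Controlling the $\frac1K$ error uniformly in $M$ and in the profile $y$ — including in regions where $\psi$ is in its non-convex regime, where the relevant tilted measure is still log-concave but the constant degrades — is the technical crux, and this is exactly where the Brascamp–Lieb bound buys the needed uniformity. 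Steps (2) and (3), the spline approximation estimates, are routine and account for the $\frac1M(|\zeta|_{L^2}+|\zeta|_{H^1})$ contribution, while the stray $+\frac1K$ on the right-hand side is the residual equivalence-of-ensembles error from step (4) that does not come multiplied by a norm of $\zeta$ (it reflects the difference between the conditioned mean and the exact dual variable even for bounded profiles).
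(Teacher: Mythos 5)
Your overall plan — replace $\nabla\bar H(P\zeta)$ by a local-equilibrium value via equivalence of ensembles, replace $\nabla\mathcal H(\zeta)=\varphi'(\zeta)$ by its spline approximant, and chain the two together — is in the right spirit, and you correctly locate the two sources of error ($1/K$ from the local CLT, $1/M$ from spline approximation). But your specific three-term decomposition has a genuine gap at $T_2=P\bigl(\varphi'(P\zeta)-\varphi'(\zeta)\bigr)$, which you yourself flag and then do not resolve. Since $\varphi'$ is only Lipschitz, the best you can extract is $|T_2|\lesssim|(\Id-P)\zeta|_{L^2}$, and for a general $\zeta\in L^2$ this does not go to zero with $M$: the within-cell fluctuation of $\zeta$ that $P$ removes is $O(|\zeta|_{L^2})$, not $O(|\zeta|_{H^1}/M)$. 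Your proposed ``fix'' (the conditioning somehow washes out the sub-cell fluctuation) is a hope, not an argument, and it cannot work as a pure statement about $P\zeta$ versus $\zeta$: the target object $\varphi'(\zeta)$ genuinely sees those fluctuations.

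The idea you are missing is that one should not pass through the nonlinear composite $\varphi'(P\zeta)$ at all, but rather work entirely at the level of Legendre-dual variables and exploit a Galerkin-type orthogonality. Concretely, the paper introduces the mesoscopic free energy $\mathcal H_{Y_M}(y)=\sup_{\hat y\in Y_M}\bigl(\langle y,\hat y\rangle_{L^2}-\varphi_N^*(\hat y)\bigr)$ and its gradient $\hat y=\nabla\mathcal H_{Y_M}(P\zeta)$, characterized by $P\,\nabla\varphi^*_N(\hat y)=P\zeta$, and compares it to $\hat x=\nabla\mathcal H(\zeta)=\varphi'(\zeta)$, characterized by $\nabla\varphi^*(\hat x)=\zeta$. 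Testing the difference of these Euler--Lagrange equations against any $\eta\in Y_M$ gives $\langle\nabla\varphi^*(\hat y)-\nabla\varphi^*(\hat x),\eta\rangle_{L^2}=0$ (up to the harmless $\varphi_N^*$ vs.\ $\varphi^*$ replacement, which costs $O(1/K)$). Choosing $\eta=\hat y-P\hat x$ and using the uniform strict convexity of $\varphi^*$ (Lemma~\ref{p_convexity_bar_H_Z}) then bounds $|\hat y-\hat x|_{L^2}$ by $|(\Id-P)\hat x|_{L^2}=|(\Id-P)\varphi'(\zeta)|_{L^2}\lesssim\frac1M|\varphi'(\zeta)|_{H^1}\lesssim\frac1M|\zeta|_{H^1}$. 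The point is that only the \emph{dual} variable $\varphi'(\zeta)$, which does live in $H^1$, needs to be well approximated by splines — not $\zeta$ itself. This is exactly what kills your $T_2$. In addition, the paper's comparison of $\nabla\bar H$ to $\nabla\mathcal H_{Y_M}$ does not go through the conditional measure directly: it passes via $\bar H_{Y_M^{DG}}$ and $\mathcal H_{Y_M^{DG}}$ on the discontinuous-Galerkin space, uses the one-block local CLT there (Lemma~\ref{e_convergence_cgH_Q_to_mfe_DIZDAR}), and transfers to the spline space by a Laplace-method concentration argument on the $LM$-dimensional integral over $Y_M^\perp$ (Lemma~\ref{p_error_laplace_method}) together with the comparison of the two constrained minimizers $\bar z^*$ and $z^*$ (Lemma~\ref{p_relation_meso_fe_Y_and_Z}), rather than by a direct estimate on the non-local conditional expectation $\mathbb{E}_{\mu(\cdot|Px)}[\psi']$ as you propose in step (4).
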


\begin{remark}
 It follows from the definition the gradient that~$\nabla \mathcal{H} (\zeta) = \varphi' \circ \zeta$ for any~$\zeta \in L^{2}(\mathbb{T})$.

\end{remark}
We give the proof of Theorem~\ref{p_convergence_meso_to_macro_free_energy} in Section~\ref{s_convergence_free_energies}.

\section{Proof of Theorem~\ref{p_strict_convexity_coarse_grained_Hamiltonian}}\label{s_convexification}

This section is devoted to the proof of Theorem~\ref{p_strict_convexity_coarse_grained_Hamiltonian}. Since the spline functions in $Y_M$ are non-local due to continuity requirements (for $L \geq 1$), we will instead work with an intermediate space $Y_M^{DG} \supset Y_M$ of discontinuous Galerkin functions, prove the analogue of Theorem~\ref{p_strict_convexity_coarse_grained_Hamiltonian} for $Y_M^{DG}$ (see Theorem ~\ref{p_strict_convexity_bar_H_DG} below), and then transfer the result to $Y_M$. 

\begin{definition} \label{d_Y_pol_and_H}
The space
of discontinuous Galerkin functions of degree~$L \in \mathbb{N}$ is defined as
\begin{align} \label{d_Y_DG}
Y^{DG}_{M} &:= \left\{y\in L^2(\mathbb{T})|\,\forall m \in [M]:  y|_{\left(\frac{m-1}{M},\frac{m}{M}\right)}
                         \,\text{polynomial of degree $\leq$ L} \right\}.
\end{align}
We endow $Y_M^{DG}$ with the inner product inherited from $L^2(\mathbb{T})$. 
\end{definition}

\begin{definition}\label{d_H_DG}
  We denote with $Q_M: L^2(\mathbb{T}) \rightarrow Y_M^{DG}$
the orthogonal projection onto $Y_M^{DG}$ in $L^2(\mathbb{T})$. The coarse-grained Hamiltonian $\bar H_{Y_M^{DG}}: Y_M^{DG} \to \mathbb{R}$ associated to~$Q_M$ is given by
  \begin{align}\label{e_def_coarse_grained_Hamiltonian_DG}
   \bar H_{Y_M^{DG}} (y) := - \frac{1}{N} \ln \int_{\left\{x \in \mathbb{R}^N  :  Q_M x =y \right\}} \exp \left( - H_N(x) \right) dx.
  \end{align}
\end{definition}
For later use, we also define two notions of adjoints of $Q_M$: 
\begin{definition}[Two notions of adjoints to the coarse-graining operator $Q_M$]\label{d_adjoint_NQMt} Restrict the coarse-graining operator $Q_M$ to $Q_M: X_N \rightarrow Y_M^{DG}$. First, define map~$Q_M^{t}: Y_M^{DG}  \to X_N$ as the adjoint to $Q_M$, when we endow $X_N = \mathbb{R}^N$ with the Euclidean inner product. It follows that the map $NQ_M^t:Y_M^{DG}  \to X_N$ is the adjoint to $Q_M$, when we endow $X_N$ with the inner product inherited from $L^2(0, 1)$. From this it also follows the map $NQ_M^t$ is the $L^2$-orthogonal projection of $Y_M^{DG}$ onto $X_N$, explicitly given by
	\begin{align}\label{e_adjoint_NQ_M^t_explicit}
	\left( NQ_M^t y \right)_i = N \int_{\frac{i-1}{N}}^{\frac{i}{N}} y(\theta) d \theta \quad \mbox{for} \quad i \in \{1, 2, \cdots, N\}.
	\end{align} 
\end{definition}

\begin{remark}
  Let us emphasize that the index~$M$~in $Y_M^{DG}$ and~$Q_M$ is used to denote the relevant number of intervales~$\left[ \frac{i-1}{M}, \frac{i}{M}  \right]$ that are used to decompose~$[0,1]$.
\end{remark}

\begin{theorem}[Strict convexity of~$\bar H_{Y_M^{DG}}$]  \label{p_strict_convexity_bar_H_DG}
There are constants~$0< \lambda, \Lambda, K^* < \infty$ such that for all~$K \geq K^*$,~$M$ and all~$y \in Y_M^{DG}$ it holds
\begin{align*}
2 \lambda \Id_{Y_M^{DG}} \leq  \Hess_{Y_M^{DG}} \bar H_{Y_M^{DG}}(y) \leq 2 \Lambda \Id_{Y_M^{DG}}
\end{align*}
in the sense of quadratic forms.
\end{theorem}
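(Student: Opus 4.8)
The plan is to exploit the product structure hidden in \eqref{e_def_coarse_grained_Hamiltonian_DG}. Since $Y_M^{DG}$ carries no continuity constraints, it decomposes $L^2$-orthogonally as $Y_M^{DG}=\bigoplus_{m=1}^M V_m$ with $V_m$ the polynomials of degree $\le L$ on the block $[\tfrac{m-1}{M},\tfrac{m}{M})$; the projection $Q_M$, the affine constraint $\{Q_Mx=y\}$ and the single-site Hamiltonian $H_N(x)=\sum_n\psi(x_n)$ all split block by block, each block carrying exactly $K=N/M$ lattice sites. Hence the integral in \eqref{e_def_coarse_grained_Hamiltonian_DG} factorizes and
\[
 \bar H_{Y_M^{DG}}(y)=\frac1M\sum_{m=1}^{M}\bar h_K(\eta_m),
\]
where $\eta_m\in\mathbb{R}^{L+1}$ is the coordinate vector of $y|_{V_m}$ after rescaling the block to unit length and $\bar h_K$ depends only on $K$ (and $L$). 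Keeping track of the $\tfrac1M$-normalization of the $L^2$-inner product on $Y_M^{DG}$, one sees that $\Hess\bar H_{Y_M^{DG}}$ is block diagonal and that the Rayleigh quotients of $\Hess_{Y_M^{DG}}\bar H_{Y_M^{DG}}$ and of $\Hess\bar h_K$ have the same range, while the comparison between the Euclidean structure on the $K$ block coordinates and the $L^2$-structure is controlled, uniformly for $K\ge K^\ast$, by Lemma~\ref{p_invertible_PNPt}. Thus it suffices to prove $2\lambda\,\Id\le\Hess\bar h_K(\eta)\le2\Lambda\,\Id$ uniformly in $\eta\in\mathbb{R}^{L+1}$ and in $K\ge K^\ast$, i.e.\ to treat a single block.

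On a single block the computation of $\bar h_K$ is an equivalence-of-ensembles problem. The $L+1$ linear functionals cut out by the block constraint are generalized moments $\ell_k(x)=\langle x,p_k\rangle_{L^2}$ (weighted averages of the $K$ block coordinates, $p_0,\dots,p_L$ the orthonormal polynomials on $[0,1]$) of an i.i.d.\ sample $x_1,\dots,x_K$ with single-site law $\propto e^{-\psi}$, and the fiber integral in \eqref{e_def_coarse_grained_Hamiltonian_DG} equals, up to an explicit Jacobian and the factor $Z_1^{K}$ with $Z_1=\int e^{-\psi}$, the density at $\eta$ of the push-forward of the product measure under $x\mapsto(\ell_0(x),\dots,\ell_L(x))$. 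Introducing Lagrange multipliers $\sigma\in\mathbb{R}^{L+1}$ dual to the moments, one passes to the grand-canonical tilted product, whose per-site log-partition function is the Riemann sum $\Lambda_K(\sigma)$ of $\Lambda(\sigma)=\int_0^1\psi^*(\langle\sigma,\mathbf p(\theta)\rangle)\,d\theta$. The Cram\'er representation then gives
\[
 \bar h_K(\eta)=\Lambda_K^{*}(\eta)+\frac1K\,R_K(\eta),
\]
where $\Lambda_K^{*}$ is the Legendre transform of $\Lambda_K$ and $R_K$ is the logarithm of the ratio between the true density of $(\ell_0,\dots,\ell_L)$ and its Gaussian surrogate from the local central limit theorem.

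Two inputs then finish the proof. First, $\Lambda_K^{*}$ is uniformly strictly convex: since $\psi(z)=\tfrac12z^2+az+\delta\psi(z)$ with $\delta\psi$ bounded in $C^2$, the dominant quadratic term makes every tilted single-site measure $\propto e^{\sigma z-\psi(z)}$ have Gaussian-type tails with variance comparable to $1$ uniformly in $\sigma$; hence $(\psi^*)''$ is bounded above and below by positive constants, so $\Hess\Lambda_K(\sigma)=\int_0^1(\psi^*)''(\langle\sigma,\mathbf p(\theta)\rangle)\,\mathbf p(\theta)\mathbf p(\theta)^{t}\,d\theta+O(1/K)$ lies between two positive multiples of $\Id$ uniformly in $\sigma$ and $K\ge K^\ast$, and therefore so does $\Hess\Lambda_K^{*}$ uniformly in $\eta$. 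Second, $\|\Hess R_K(\eta)\|\lesssim1$ uniformly in $\eta$: this is a quantitative multivariate local central limit theorem for the generalized moments of the tilted i.i.d.\ sample, with error bounds uniform in the tilt and with control of the first two $\eta$-derivatives, supplied by an Edgeworth-type expansion together with standard characteristic-function estimates (the lattice/non-lattice dichotomy does not arise, $e^{-\psi}\,dz$ being absolutely continuous). Combining, $\Hess\bar h_K(\eta)=\Hess\Lambda_K^{*}(\eta)+O(1/K)$ lies in $[2\lambda,2\Lambda]\,\Id$ once $K\ge K^\ast$, and the block decomposition yields the theorem.

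I expect the estimate on $\Hess R_K$ to be the main obstacle. Because $\psi$ may be non-convex, neither $H_N$, nor the conditional measures on the fibers $\{Q_Mx=y\}$, nor the tilted measures are log-concave, so — unlike in a convex setting — the lower bound on the Hessian cannot be read off from Brascamp--Lieb or Pr\'ekopa; the convexification of $\bar H_{Y_M^{DG}}$ is a genuine large-$K$ effect. Moreover it is delicate: for a product measure the two contributions $\mathbb{E}[\Hess H_N]$ and $\cov(\nabla H_N)$ in the conditioning formula for $\Hess(N\bar H_{Y_M^{DG}})$ cancel site by site, so the whole Hessian comes from the canonical-versus-grand-canonical correction, which is of relative size $1/K$ and lives precisely in the directions amplified by the fiber lift. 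Hence a soft equivalence of ensembles (e.g.\ in relative entropy) does not suffice, and one needs local-CLT-level information, uniform in $\eta$ and $K$. By contrast the upper bound $\Hess\bar H_{Y_M^{DG}}\le2\Lambda\,\Id$ is soft: from $\Hess H_N\le(1+\|\tfrac{d^2}{dx^2}\delta\psi\|_{L^\infty})\,\Id$ and the conditioning formula $\Hess(N\bar H_{Y_M^{DG}})=S^{t}\big(\mathbb{E}_{\mu_y}[\Hess H_N]-\cov_{\mu_y}(\nabla H_N)\big)S\le S^{t}\,\mathbb{E}_{\mu_y}[\Hess H_N]\,S$, with $S$ an orthogonal right-inverse of $Q_M$, together with Lemma~\ref{p_invertible_PNPt}; and the block decomposition is purely structural.
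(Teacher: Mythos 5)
Your proposal follows essentially the same route as the paper: the block decomposition \eqref{e_decomposition}--\eqref{e_bar_H_pol_noninteracting} reducing to a single block of size $K$ (the paper's Theorem~\ref{1block}); exponential tilting with a dual $(L+1)$-vector (the paper's $\hat\beta$, your $\sigma$) giving the grand-canonical per-site log-partition $\bar\psi^*_K$ (your $\Lambda_K$); Legendre duality; the Cram\'er representation of Lemma~\ref{cramer}; and a $C^2$-uniform multivariate local CLT to show the remainder is $O(1/K)$ in $C^2$, which is exactly Proposition~\ref{estimates on g}. One genuine small divergence that you might keep: your soft derivation of the upper bound from $N\Hess\bar H_{Y_M^{DG}}=S^{t}\big(\mathbb{E}_{\mu_y}[\Hess H_N]-\cov_{\mu_y}(\nabla H_N)\big)S$, $\Hess H_N\le(1+\|\delta\psi''\|_{L^\infty})\Id$, and the near-isometry $Q_MNQ_M^t\approx\Id$ of Lemma~\ref{PNP} is shorter than the paper's, which extracts both bounds from the Cram\'er formula; the lower bound, as you correctly observe, is the genuinely hard direction and cannot be obtained this way because $\psi$ is not convex.

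The one real gap is the assertion that the $C^2$-uniform multivariate local CLT is "supplied by \dots standard characteristic-function estimates." The characteristic function of $J^{1/2}(Q_1x-\beta)$ under $\nu_{J,\beta}$ is the product $\prod_{j=1}^J h\big(m_{j,\beta},J^{-1/2}\,\xi\cdot\gamma^j\big)$ (Lemma~\ref{gexplicit}), and for a direction $\xi\in S^L$ chosen so that $\xi\cdot\gamma^j$ is small for most $j$ this product would a priori fail to decay, and the inner (Gaussian) regime would be degenerate along that direction — so neither the Fourier inversion nor the $\beta$-differentiations would close, and certainly not uniformly in $\beta$. What rules this out is the geometric input of Lemma~\ref{gamma}: since $\xi\cdot\gamma(t)=\sum_l \xi_l f_l(t)$ is a nonzero polynomial of degree $\le L$ for $\xi\in S^L$, it has at most $L$ zeros, and a pigeonhole over $L+2$ disjoint subintervals of $[0,1]$ gives, uniformly in $\xi\in S^L$, at least $J/(L+2)$ indices $j$ with $|\xi\cdot\gamma^j|\ge c_\gamma$. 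This uniform non-degeneracy — which the paper explicitly flags as the new ingredient relative to the $L=0$ case — is what makes both the outer decay \eqref{outerint} and the inner Gaussian domination \eqref{innerintup}--\eqref{innerintlow} work, and it is not "standard." Your remark that the lattice/non-lattice dichotomy is absent is correct but beside the point: the obstruction in the multivariate case is anisotropy in $\xi$, not arithmetic structure of the spin law.
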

We give the proof of Theorem~\ref{p_strict_convexity_bar_H_DG} in Section~\ref{s_strict_convexity_bar_H_DG}. The argument is quite technical, making up the bulk of the proof of Theorem~\ref{p_strict_convexity_coarse_grained_Hamiltonian}. We follow a similar approach as in~\cite{GORV}: a local Cram\'er theorem and a local central limit theorem (CLT). The main difference here is that the local Cram\'er theorem has to be extended to canonical ensembles with multiple constraints, which means that we will have to use a multivariate CLT instead of a univariate CLT.\\


To finish the proof of Theorem~\ref{p_strict_convexity_coarse_grained_Hamiltonian}, we need to transfer the strict convexity from~$\bar H_{Y_M^{DG}}$ to ~$\bar H$. Since $\bar H_{Y_M^{DG}}$ and $\bar H$ are coarse-grained versions of $H$ on the levels of $Y_M^{DG}$ and $Y_M$, respectively, and $Y_M \subset Y_M^{DG}$, $\bar H$ is itself a coarse-grained version of $\bar H_{Y_M^{DG}}$, i.e.
  \begin{align} 
  N \bar H (y) & = - \ln \int_{z \in Y_M^\perp}\exp(- N \bar H_{Y_M^{DG}} (y + z)) \mathcal{L}^{LM} (dz)  N^{\frac{LM}{2}}. \label{e_bar_h_Y_in_terms_of_bar_h_Y^DG}
  \end{align}
where $Y_M^\perp:= \{y\in Y_M^{DG}: P y = 0\}$ and $\dim Y_M^\perp = LM$. (The factor $N^{\frac{LM}{2}}$ is due to the difference between the standard Euclidean structure on $X_N = \mathbb{R}^N$ and the $L^2$ structure on $X_N \subset L^2(\mathbb{T})$.)

\begin{lemma} \label{log-concave}
Let $W\oplus Z$ be an orthogonal decomposition of a finite dimensional Euclidean space. Suppose $F: W\oplus Z \rightarrow \mathbb{R}$ is a $C^2$ function such that $\int_{W\oplus Z} \exp(-F) < \infty$. Let $\bar{F}(z) := - \ln \int_{W} \exp(-F(w,z)) \,dw$. Let~$c \geq 0$, then it holds that
\begin{align} \label{e_log-concave_inherited}
    \operatorname{Hess}_{W\oplus Z} F > c  \operatorname{id}_{W\oplus Z} \ \Rightarrow \ \operatorname{Hess}_Z \bar{F} > c  \operatorname{id}_Z. \\
   \label{e_log-concave_inherited_2}
    \operatorname{Hess}_{W\oplus Z} F < c  \operatorname{id}_{W\oplus Z} \ \Rightarrow \ \operatorname{Hess}_Z \bar{F} < c  \operatorname{id}_Z.
\end{align}
\end{lemma}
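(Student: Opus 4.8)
The plan is to prove the two implications by reducing the Hessian computation of $\bar F$ to known facts about log-concave (or, more generally, strictly convex) densities, specifically to the Brascamp--Lieb inequality and its companion lower bound. First I would write the standard formula for the Hessian of $\bar F$. Fixing $z$, let $\mu_z(dw)$ be the probability measure on $W$ with density proportional to $\exp(-F(w,z))$. Differentiating $\bar F(z) = -\ln \int_W \exp(-F(w,z))\,dw$ once gives $\nabla_Z \bar F(z) = \mathbb{E}_{\mu_z}[\nabla_Z F(\cdot,z)]$, and differentiating again yields
\begin{align*}
 \Hess_Z \bar F(z) = \mathbb{E}_{\mu_z}\!\left[\Hess_Z F\right] - \cov_{\mu_z}\!\left(\nabla_Z F\right),
\end{align*}
where $\cov_{\mu_z}(\nabla_Z F)$ is the covariance matrix of the $Z$-gradient of $F$ under $\mu_z$ (this is the standard "annealed minus fluctuation" identity; one derives it by the quotient rule, differentiating $\int \exp(-F)$ under the integral sign, which is justified by the integrability hypothesis together with the $C^2$ regularity). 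So everything comes down to bounding the covariance term from above and below in terms of $\Hess F$.

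Next I would invoke the Brascamp--Lieb inequality to handle \eqref{e_log-concave_inherited}. Assume $\Hess_{W\oplus Z} F > c\,\Id$. Write the block decomposition of $\Hess F$ with respect to $W\oplus Z$ as $\begin{pmatrix} A & B \\ B^t & D\end{pmatrix}$ with $A = \Hess_W F$, $D = \Hess_Z F$, $B = \partial_W\partial_Z F$. For a fixed direction $e\in Z$, the Brascamp--Lieb inequality applied to the function $g(w) := (\nabla_Z F(w,z))\cdot e$ gives
\begin{align*}
 \var_{\mu_z}(g) \le \mathbb{E}_{\mu_z}\!\left[\nabla_W g \cdot (\Hess_W F)^{-1} \nabla_W g\right] = \mathbb{E}_{\mu_z}\!\left[ (B e)^t A^{-1} (B e)\right].
\end{align*}
Therefore $e^t \Hess_Z \bar F(z)\, e \ge \mathbb{E}_{\mu_z}[\,e^t(D - B^t A^{-1} B)e\,]$, and $D - B^tA^{-1}B$ is the Schur complement of $A$ in $\Hess F$. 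Since $\Hess F > c\,\Id$ as a quadratic form on $W\oplus Z$, its Schur complement satisfies $D - B^tA^{-1}B > c\,\Id_Z$ (this is the elementary fact that the Schur complement of a symmetric matrix bounded below by $c\,\Id$ is itself bounded below by $c\,\Id$; it follows by minimizing the quadratic form over the $W$-block). Taking expectations preserves the bound, giving $\Hess_Z \bar F > c\,\Id_Z$.

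For \eqref{e_log-concave_inherited_2} I would use the complementary lower bound on the variance — the "reverse Brascamp--Lieb" / Cramér--Rao type inequality: if $\Hess_W F \le c_0\,\Id$ then $\var_{\mu_z}(g) \ge \mathbb{E}_{\mu_z}[\nabla_W g]^t (\mathbb{E}_{\mu_z}[\Hess_W F])^{-1}\mathbb{E}_{\mu_z}[\nabla_W g] \ge c_0^{-1}|\mathbb{E}_{\mu_z}[\nabla_W g]|^2$, or more precisely the Hessian-weighted version, combined with an integration-by-parts identity $\mathbb{E}_{\mu_z}[\nabla_W g] = \mathbb{E}_{\mu_z}[(\Hess_W F)\, w\text{-part}\ldots]$. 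Actually the cleanest route here: the covariance term is always $\ge 0$, so already $\Hess_Z \bar F \le \mathbb{E}_{\mu_z}[\Hess_Z F] < c\,\Id_Z$ follows immediately from $\Hess_Z F < c\,\Id_Z$ (which is implied by $\Hess_{W\oplus Z}F < c\,\Id$, restricting the quadratic form to the $Z$-subspace). So \eqref{e_log-concave_inherited_2} needs nothing beyond dropping the (nonnegative) covariance term. The main obstacle is therefore concentrated entirely in \eqref{e_log-concave_inherited}: making the Brascamp--Lieb step and the Schur-complement bound rigorous, and in particular justifying differentiation under the integral sign and the application of Brascamp--Lieb (which in its classical form needs $F$ strictly convex in $w$ — guaranteed here since $\Hess_W F \ge A > c\,\Id \ge 0$, and the tail integrability needed is supplied by the hypothesis $\int \exp(-F) < \infty$ together with convexity). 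I would remark that the strict inequalities are used only qualitatively; if one wants sharp constants, passing to $c$ in the closure of the positive cone is a routine limiting argument. Since the paper only needs this for $F = N\bar H_{Y_M^{DG}}$ precomposed with the affine identification, where $\Hess F$ is already pinched between $2\lambda N\,\Id$ and $2\Lambda N\,\Id$ by Theorem~\ref{p_strict_convexity_bar_H_DG}, the hypotheses are met verbatim.
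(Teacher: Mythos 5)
Your argument is correct and follows the same route the paper takes: the paper simply cites Brascamp--Lieb's observation that \eqref{e_log-concave_inherited} follows from the Brascamp--Lieb inequality (with \eqref{e_log-concave_inherited_2} by direct computation), and your derivation via the covariance identity $\Hess_Z\bar F=\mathbb{E}_{\mu_z}[\Hess_Z F]-\cov_{\mu_z}(\nabla_Z F)$, Brascamp--Lieb applied to $g=(\nabla_Z F)\cdot e$, and the Schur-complement bound is exactly that argument spelled out. For \eqref{e_log-concave_inherited_2} the detour through a Cram\'er--Rao lower bound is unnecessary; as you correctly note, dropping the nonnegative covariance term is all that is needed.
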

In~\cite{BL}, it was shown in a very neat way that statement \eqref{e_log-concave_inherited} in Lemma~\ref{log-concave} is simple consequence of the well-known Brascamp-Lieb inequality 
). Statement \eqref{e_log-concave_inherited_2} follows from a straightforward computation. 

\begin{proof}[Proof of Theorem~\ref{p_strict_convexity_coarse_grained_Hamiltonian}]
  We apply Lemma~\ref{log-concave} with $Z=Y_{M}$, $W=Y^{\perp}_{M}$, and $F= N\bar{H}_{Y_M^{DG}}$. The hypotheses of~\eqref{e_log-concave_inherited} and \eqref{e_log-concave_inherited_2} are satisfied due to Theorem~\ref{p_strict_convexity_bar_H_DG}. This yields the statement of Theorem~\ref{p_strict_convexity_coarse_grained_Hamiltonian}. \end{proof}

\subsection{Proof of Theorem~\ref{p_strict_convexity_bar_H_DG}: Strict convexity of~$\bar H_{Y_M^{DG}}$}\label{s_strict_convexity_bar_H_DG} Our first step towards proving Theorem~\ref{p_strict_convexity_bar_H_DG} is reducing it to the case $M = 1$. Since the blocks are independent for functions in $Y_M^{DG}$, the space $Y_M^{DG}$ factors as an orthogonal direct sum
\begin{align}
	Y_{M}^{DG} \,=\,\bigoplus_{m=1}^M\, Y_1^{DG}\label{e_decomposition}
\end{align}
via the map
\begin{equation}
	\alpha \mapsto \oplus_{m=1}^M \alpha^{(m)} \label{e_identification}
\end{equation}
where the functions $\alpha^{(m)} \in Y_1^{DG}, m = 1, \cdots, M,$ are obtained by restricting the function $\alpha$ to subintervals. More precisely, for $m = 1, \cdots, M$, we define
\begin{equation}\label{e_restriction_to_subinterval}
	\alpha^{(m)}(\theta) := \alpha\left(\frac{m-1}{M} + \frac{\theta}{M}\right) \mbox{ for } \theta \in [0, 1].
\end{equation}
With this decomposition, the $L^2$ inner product on $Y_M^{DG}$ is given by
\begin{align}
	\langle \alpha,\beta\rangle_{Y_{M}^{DG}} 
	\,=\, \frac{1}{M}\,\sum_{m=1}^M \langle \alpha^{(m)}, \beta^{(m)} \rangle_{Y_1^{DG}}. \label{e_innerproduct} 
\end{align}
Given $x \in \mathbb{R}^N$, for $1\leq m \leq M$, denote $x^{(m)} := (x_{(m-1)K+1}\,,\,...\,,\,x_{mK}) \in \mathbb{R}^K$. Then it follows that in the sense of \eqref{e_identification}
\begin{align}
	Q_{M}x &= \oplus_{m=1}^M  Q_{1} x^{(m)}. \label{P3}
\end{align}
Equation \eqref{P3} implies that any fiber of the projection $Q_M$
\begin{align}\label{e_d_R_M_alpha}
	\mathbb{R}^N_{M,\alpha} := \{x\in \mathbb{R}^N|\,Q_{M}x =\alpha\}  
\end{align}
factors as an orthogonal direct sum of fibers of the projection $Q_1$
\begin{equation}
	\mathbb{R}^N_{M,\alpha}\,=\,\bigoplus_{m=1}^M\, \mathbb{R}^K_{1,\alpha^{(m)}}. \label{fibreprod}
\end{equation}
Consequently, the Hausdorff measure on $\mathbb{R}^N_{M,\alpha}$ is a product measure:
\begin{equation}
	\mathcal{H} (dx)\,=\, \bigotimes_{m=1}^M \,\mathcal{H} (dx^{(m)}). \label{Haussdorffac}
\end{equation}
Since the Hamiltonian $H_N$ of $\mu$ defined in \eqref{hamiltonian}
is also uncoupled, the observations above imply that the coarse-grained Hamiltonian $\bar H_{Y_M^{DG}}$ decomposes in the following way:

\begin{lemma}\label{p_decomp_bar_H_DG}
	For~$\beta \in Y_1^{DG}$ we define
	\begin{equation}
		\psi_K(\beta) = -\frac{1}{K} \,\ln\,\int_{\mathbb{R}^K_{1,\beta}} \exp (-H_K(x))\, dx.\label{psiJ}
	\end{equation}
	Then it holds that for~$\alpha \in Y_M^{DG}$ 
	\begin{align}\label{e_bar_H_pol_noninteracting}
		\bar H_{Y_M^{DG}} (\alpha) = \frac{1}{M} \sum_{m=1}^M \psi_K(\alpha^{(m)}).
	\end{align}
\end{lemma}
In light of ~\eqref{e_bar_H_pol_noninteracting}, proving Theorem~\ref{p_strict_convexity_bar_H_DG} reduces to proving the following:

\begin{theorem}\label{1block}
	There are constants~$0< \lambda, \Lambda, J^* < \infty$ such that for all~$J \geq J^*$, and all~$\beta \in Y_1^{DG}$ it holds
	\begin{align}\label{e_strict_convexity_psi_J}
		2 \lambda  \leq \Hess_{Y_{1}^{DG}}\psi_J(\beta) \leq 2 \Lambda ,
	\end{align}
	in the sense of quadratic forms.
\end{theorem}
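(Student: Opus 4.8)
The plan is to prove Theorem~\ref{1block} by adapting the local Cram\'er / local central limit theorem strategy of~\cite{GORV} to the $L+1$ simultaneous linear constraints encoded by $Q_1$. Throughout I fix an $L^2$-orthonormal basis $e_0,\dots,e_L$ of $Y_1^{DG}$, write $\bar y_n := J\int_{(n-1)/J}^{n/J} y(\theta)\,d\theta$ for the block averages of $y\in L^2(\mathbb T)$, and use the coordinates $\beta\mapsto(\langle\beta,e_j\rangle_{L^2})_{j=0}^{L}$ on $Y_1^{DG}$, in which the coordinate Hessian matrix of a function faithfully represents the quadratic form $\Hess_{Y_1^{DG}}$. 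For $J$ large the map $Q_1:\mathbb R^J\to Y_1^{DG}$ is onto (part of what $J^*$ buys us; cf.\ Lemma~\ref{p_invertible_PNPt}), so every fibre $\{Q_1 x=\beta\}$ is nonempty.

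The first step is a representation of $\psi_J$ by Legendre duality. For $\sigma\in Y_1^{DG}$ set $\mu_{1,s}(dz)\propto e^{sz-\psi(z)}\,dz$ and let $p_{J,\sigma}$ be the Lebesgue density on $Y_1^{DG}$ of the push-forward under $Q_1$ of the tilted, inhomogeneous product measure $\bigotimes_{n=1}^{J}\mu_{1,\bar\sigma_n}$. Since $H_J$ is a sum of single-site terms and $\sum_n\bar\sigma_n x_n=J\langle\sigma,Q_1 x\rangle_{L^2}$ is constant on each fibre $\{Q_1 x=\beta\}$, inserting $e^{\sum_n\bar\sigma_n x_n-\sum_n\bar\sigma_n x_n}$ into $e^{-J\psi_J(\beta)}=\int_{\{Q_1 x=\beta\}}e^{-H_J}\,dx$ and applying the coarea formula for the linear surjection $Q_1$ gives, for every $\sigma$, the identity $\psi_J(\beta)=\langle\sigma,\beta\rangle-\Lambda_J(\sigma)-\tfrac1J\ln p_{J,\sigma}(\beta)+\mathrm{const}$, with $\Lambda_J(\sigma):=\tfrac1J\sum_n\psi^*(\bar\sigma_n)$. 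Optimizing over $\sigma$ --- equivalently, choosing $\sigma(\beta):=(\nabla\Lambda_J)^{-1}(\beta)$, the unique tilt making the $Q_1$-push-forward of $\bigotimes_n\mu_{1,\bar\sigma_n}$ have mean exactly $\beta$ --- yields
\begin{equation*}
\psi_J(\beta)=\Lambda_J^*(\beta)-\tfrac1J\ln p_{J,\sigma(\beta)}(\beta)+\mathrm{const},
\end{equation*}
where $\Lambda_J^*$ is the Legendre transform of $\Lambda_J$ on $Y_1^{DG}$. A direct computation gives $\nabla\Lambda_J(\sigma)_j=\tfrac1J\sum_n(\psi^*)'(\bar\sigma_n)\bar e_{j,n}$ and $\Hess_\sigma\Lambda_J(\sigma)=\hat C_J(\sigma)$, where $\hat C_J(\sigma)_{ij}:=\tfrac1J\sum_n\bar e_{i,n}\bar e_{j,n}(\psi^*)''(\bar\sigma_n)$ is $J$ times the covariance matrix of $Q_1 x$ under the tilt; hence $\Hess_\beta\Lambda_J^*(\beta)=\hat C_J(\sigma(\beta))^{-1}$.

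The second step is the uniform ellipticity of $\hat C_J$. Since $\|\delta\psi\|_{L^\infty}<\infty$, each $\mu_{1,s}$ has a density with respect to the unit Gaussian $N(s-a,1)$ that lies in $[e^{-2\|\delta\psi\|_{L^\infty}},e^{2\|\delta\psi\|_{L^\infty}}]$, so the variational formula $\var_\mu(z)=\min_c\mathbb E_\mu[(z-c)^2]$ gives $e^{-2\|\delta\psi\|_{L^\infty}}\le(\psi^*)''(s)=\var_{\mu_{1,s}}(z)\le e^{2\|\delta\psi\|_{L^\infty}}$ for all $s$, and likewise the higher derivatives of $\psi^*$ (the cumulants of $\mu_{1,s}$) are bounded uniformly in $s$. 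Since $e_0,\dots,e_L$ are fixed polynomials of degree $\le L$, the Gram matrix $(\tfrac1J\sum_n\bar e_{i,n}\bar e_{j,n})_{ij}$ converges to the identity as $J\to\infty$ (the non-degeneracy quantified by Lemma~\ref{p_invertible_PNPt}). Combining, for $J$ large all eigenvalues of $\hat C_J(\sigma)$ lie in a fixed interval $[\lambda_0,\Lambda_0]\subset(0,\infty)$, \emph{uniformly in $\sigma\in Y_1^{DG}$}, so $\Hess_\beta\Lambda_J^*(\beta)$ has eigenvalues in $[\Lambda_0^{-1},\lambda_0^{-1}]$ uniformly in $\beta$; moreover the uniform lower bound makes $\nabla\Lambda_J$ a global diffeomorphism, so $\sigma(\cdot)$ is smooth with uniformly bounded derivatives and $\beta\mapsto\hat C_J(\sigma(\beta))$ is bounded in $C^2$ uniformly in $J$, with determinant bounded below.

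The third and decisive step is to control the correction $\tfrac1J\Hess_\beta\ln p_{J,\sigma(\beta)}(\beta)$. Writing $p_{J,\sigma}(\beta)=(2\pi)^{-(L+1)/2}J^{(L+1)/2}(\det\hat C_J(\sigma))^{-1/2}(1+\rho_J(\sigma))$, one invokes a multivariate local central limit theorem with Edgeworth-type error control to get $\|\rho_J(\sigma(\cdot))\|_{C^2}\lesssim J^{-1/2}$; with the $C^2$ bound on $\hat C_J(\sigma(\cdot))$ from the second step this yields $\|\Hess_\beta\ln p_{J,\sigma(\beta)}(\beta)\|\lesssim 1$ uniformly in $\beta$, hence
\begin{equation*}
\Hess_{Y_1^{DG}}\psi_J(\beta)=\hat C_J(\sigma(\beta))^{-1}+O(J^{-1})
\end{equation*}
uniformly in $\beta\in Y_1^{DG}$, and \eqref{e_strict_convexity_psi_J} follows for $J\ge J^*$ large enough with, say, $2\lambda=\tfrac12\Lambda_0^{-1}$ and $2\Lambda=\tfrac32\lambda_0^{-1}$. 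The main obstacle is exactly this step: proving the multivariate local CLT for the $(L+1)$-dimensional vector $Q_1 x$ with error bounds on $\rho_J$ \emph{and on its first two $\beta$-derivatives} that are uniform both in the system size $J$ and in the constraint $\beta$ (equivalently, in the tilt $\sigma$, which ranges over all of $Y_1^{DG}$). As in the classical local CLT this requires quantitative control of the characteristic function of $Q_1 x$ near the origin (a Taylor/Edgeworth expansion, using the uniformly bounded cumulants of $\psi^*$ from the second step) and, crucially, away from it (a uniform estimate $|\widehat{\mu_{1,s}}(\xi)|\le 1-c$ for $|\xi|$ bounded away from $0$, which holds because each $\mu_{1,s}$ is an absolutely continuous bounded perturbation of a unit Gaussian); it is precisely here that passing from one constraint ($L=0$, as in~\cite{GORV}) to $L+1$ constraints forces a genuinely multivariate argument.
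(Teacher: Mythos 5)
Your proposal is correct and follows essentially the same route as the paper: Cram\'er's exponential tilting to construct a modified grand canonical ensemble, Legendre duality to express $\psi_J$ as $\bar\psi_J^*$-transform (your $\Lambda_J^*$ is the paper's $\bar\psi_J$) plus a $\tfrac1J\ln p_{J,\sigma}(\beta)$ correction, and a multivariate local CLT with $C^2$ control of the density at zero to show that this correction contributes only $O(1/J)$ to the Hessian. The one place where you gesture at the difficulty without spelling out the resolution is the away-from-origin estimate in the multivariate CLT: the bound $|\widehat{\mu_{1,s}}|\le 1-c$ at a fixed frequency is one-dimensional and does not by itself make the product $\prod_j h(m_{j,\beta},J^{-1/2}\xi\cdot\gamma^j)$ small, since $\xi\cdot\gamma^j$ can vanish for many $j$; the paper handles this with the geometric Lemma~\ref{gamma}, using that $\xi\cdot\gamma(\cdot)$ is a nonzero polynomial of degree $\le L$ (by independence of the $f_l$) and so is bounded away from zero on at least one of $L+2$ fixed subintervals, which supplies order-$J$ many factors with uniform decay.
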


\begin{proof}[Proof of Theorem~\ref{p_strict_convexity_bar_H_DG}] By \eqref{e_decomposition} and \eqref{e_innerproduct},
	\begin{align}
		\Hess_{Y_{M}^{DG}} \bar H_{Y_{M}^{DG}} (y) = M \Hess_{\oplus_{k=1}^M Y_1^{DG}} \bar H_{Y_{M}^{DG}} (y)
	\end{align}
	for all $y \in Y_{M}^{DG}$. Hence, a combination of~\eqref{e_bar_H_pol_noninteracting} and~\eqref{e_strict_convexity_psi_J} (with $J = K$) yields for large enough~$K$ the desired estimate 
	\begin{align}
		2 \lambda \id_{Y_{M}^{DG}} \leq \Hess_{Y_{M}^{DG}} \bar H_{Y_M^{DG}}(y) \leq 2 \Lambda  \id_{Y_{M}^{DG}}.
	\end{align}
\end{proof}

The rest of this section is devoted to the proof of Theorem~\ref{1block}. We prove uniform strict convexity of $\psi_J$ for large $J$ by showing that $\psi_J$ converges as $J\rightarrow \infty$ in the uniform $C^2$-topology to a uniformly strictly convex function. Namely,
this will be the Legendre-Fenchel transform of the function which, to each $\beta \in Y_1^{DG}$, associates the specific
free energy of a \emph{modified} grand canonical ensemble which makes the conditioning $Q_{1}x=\beta$
a \emph{typical} event. \medskip


Before we enter into the details, we give a sketch of the argument, which closely follows the argument in \cite{GORV}.  
Using Cram\'er's trick of exponential shift of measure,
we construct for each $\beta$ a product measure $\nu_{J,\beta}$ on $\mathbb{R}^J$ such that
\begin{itemize}
\item the law of each spin is an
``exponential shift"´ of the single-site measure (a perturbed standard Gaussian),
\item the expectation of $Q_{1}x$ under $\nu_{J,\beta}$ is equal to $\beta$, i.e.
the conditioning $Q_{1}x=\beta$ is a ``typical"´ event.
\end{itemize}
We refer to the product measure $\nu_{J,\beta}$ as the modified grand canonical ensemble for $\beta$.
The required shifts of spins can be parameterized by a variable $\hat{\beta}$ that is dual to $\beta$. \medskip

Because the single-site potential $\psi$ is quadratic plus a perturbation
that is bounded in $C^2(\mathbb{R})$, it follows that the specific free energy $\bar{\psi}^*_J(\hat{\beta})$ of $\nu_{J,\beta}$ is convex in $\hat{\beta}$ for large $J$ and its Hessian is uniformly bounded from above and from
below. Consequently, the Legendre-Fenchel transform $\bar{\psi}_J(\beta)$ of $\bar{\psi}^*_J(\hat{\beta})$ is
uniformly strictly convex. Moreover, the difference $\bar{\psi}_J(\beta) - \psi_J(\beta)$ can be interpreted as the
difference between the specific free energies of $\nu_{J,\beta}$ and its restriction to the hyperplane determined by
$Q_{1}x=\beta$ (which is the ``typical event''). Hence, we expect that this difference goes to zero as $J$ grows large. \medskip

To verify that this difference indeed converges to zero in the uniform $C^2$-topology, we first relate it through a Cram\'er-type
representation formula with the evaluation at $0\in \mathbb{R}^{L+1}$ of the density of the random variable
$J^{\frac{1}{2}}\,(Q_{1}x - \beta)$, given that $x$ is distributed according to $\nu_{J,\beta}$. This is done
in Lemma~\ref{cramer}. We then establish a kind of uniform $C^2$ local
central limit theorem assuring that this evaluated density is bounded from above and
from below uniformly in $\beta$ and that moreover, it is bounded in $C^2$ as a function of $\beta$. These estimates are stated in Proposition~\ref{estimates on g}
below and constitute the core of our proof. Then, combining the statements of Lemma~\ref{cramer} and Proposition~\ref{estimates on g} allow us to deduce Theorem~\ref{1block}. \\

Let us now take the first step in proving Theorem~\ref{1block}, namely the construction of the modified grand canonical ensembles~$\nu_{J,\beta}$. We begin by introducing a family of ``exponential shifts'' of the single-site measure. For each $m \in \mathbb{R}$,  let $\mu_m$ be the probability measure on~$\mathbb{R}$ given by the Lebesgue density
\begin{align}
  \label{e_d_mu_m}
  \frac{d \mu_m}{dz} =\exp(-\psi^*(\hat{z}_m) + \hat{z}_mz - \psi(z))
\end{align}
where $\psi^*$ is the log moment generating function for the single-site potential $\psi$ (cf. \eqref{psi^*}), and ~$\hat{z}_m$ is chosen so that $\mu_m$ has mean $m$, i.e.
\begin{align}
  \int_{\mathbb{R}} z  \mu_m(dz) = (\psi^* )'(\hat z_m) = m.
\end{align}
We will use the fact that the function $\psi^*$ is uniformly strictly convex and its 2nd and 3rd derivatives are uniformly bounded. 
\begin{lemma} \label{psi*}
There are $0<c<C<\infty$ such that it holds that:
\begin{align}
 0 < 4c < \inf_{m\in\mathbb{R}} \operatorname{Var}(\mu_m) \leq(\psi^* )''(\hat{z})
\leq \sup_{m\in\mathbb{R}} \operatorname{Var}(\mu_m) < \frac{C}{2}< \infty, \label{boundvar}\\
\left|(\psi^* )'''(\hat{z})\right| \leq \sup_{m\in\mathbb{R}} \left|\int (z-m)^3 \mu_m(dz) \right|<
 \frac{C}{2} < \infty. \label{boundthirdm}
\end{align}
where $\operatorname{Var}(\mu_m)$ denotes the variance of $\mu_m$.
\end{lemma}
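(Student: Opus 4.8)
**Plan for proving Lemma~\ref{psi*}.**

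The plan is to derive everything from the explicit form of the single-site potential $\psi(z) = \tfrac12 z^2 + az + \delta\psi(z)$ with $\delta\psi$ bounded in $C^2$, via a perturbative comparison of the tilted measure $\mu_m$ with a standard Gaussian. First I would record the standard identities from the theory of exponential families: writing $Z(\sigma) = \int_{\mathbb R}\exp(\sigma z - \psi(z))\,dz$ so that $\psi^*(\sigma) = \ln Z(\sigma)$, one has $(\psi^*)'(\sigma) = \mathbb E_{\mu_\sigma}[z]$, $(\psi^*)''(\sigma) = \operatorname{Var}(\mu_\sigma)$, and $(\psi^*)'''(\sigma) = \mathbb E_{\mu_\sigma}[(z-m)^3]$, where $\mu_\sigma$ denotes the measure with density proportional to $\exp(\sigma z - \psi(z))$ and $m = (\psi^*)'(\sigma)$. (Here I am using that $\hat z_m$ is by definition the $\sigma$ with $(\psi^*)'(\sigma) = m$, and that $\sigma\mapsto(\psi^*)'(\sigma)$ is a bijection of $\mathbb R$ once $(\psi^*)''>0$ is established.) This reduces all three claimed bounds to uniform-in-$\sigma$ bounds on the variance and third centered moment of the family $\mu_\sigma$.

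Next I would rewrite the density of $\mu_\sigma$ to expose the Gaussian. Completing the square, $\sigma z - \psi(z) = -\tfrac12(z - (\sigma - a))^2 + \tfrac12(\sigma-a)^2 - \delta\psi(z)$, so after the shift $z = (\sigma - a) + y$ the measure $\mu_\sigma$ becomes, in the $y$ variable, the probability measure with density proportional to $\exp(-\tfrac12 y^2 - \delta\psi((\sigma-a)+y))$. Since $\|\delta\psi\|_{L^\infty} \le C$, the Radon–Nikodym derivative of this measure with respect to the standard Gaussian $\gamma$ lies between $e^{-2C}\|\cdot\|$ and $e^{2C}$ after renormalization; more precisely the normalizing constant is in $[e^{-C}, e^C]$ times $\sqrt{2\pi}$, and the density ratio to $\gamma$ is in $[e^{-2C}, e^{2C}]$. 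From such a two-sided bound one gets immediately a lower and upper bound on $\operatorname{Var}(\mu_\sigma)$ that is uniform in $\sigma$: for the upper bound, $\operatorname{Var}(\mu_\sigma) = \inf_c \mathbb E_{\mu_\sigma}[(y-c)^2] \le e^{2C}\mathbb E_\gamma[y^2] = e^{2C}$; for the lower bound one can either use a reverse comparison or, cleanly, use the Brascamp–Lieb inequality in the form $\operatorname{Var}(\mu_\sigma) \ge \big(\mathbb E_{\mu_\sigma}[\psi''(z)]\big)^{-1}$ combined with $\psi'' = 1 + (\delta\psi)'' \le 1 + C$, giving $\operatorname{Var}(\mu_\sigma)\ge (1+C)^{-1}$. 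Either way, adjusting the constant $c$, one obtains $4c < \operatorname{Var}(\mu_\sigma) < C/2$ uniformly, which is \eqref{boundvar}. For the third moment \eqref{boundthirdm}, I would expand $\mathbb E_{\mu_\sigma}[(z-m)^3]$; after the shift the odd Gaussian moments vanish, so the third centered moment equals an integral against $(\text{density ratio} - 1)$, which is controlled in $L^\infty$ by $e^{2C}-1$ or so, times the (finite, universal) third moments of $\gamma$; hence $|(\psi^*)'''| < C/2$ uniformly after enlarging $C$.

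The main obstacle — though a mild one — is the uniformity in $\sigma$ (equivalently in $m$): the shift by $\sigma - a$ in the argument of $\delta\psi$ could a priori spoil estimates, but because only the values of $\delta\psi$ and $(\delta\psi)''$ enter and both are bounded in $L^\infty$ \emph{uniformly over all of $\mathbb R$} by \eqref{e_assumptions_on_ss_perturbation}, the comparison constants $e^{\pm 2C}$ and $(1+C)^{-1}$ do not depend on $\sigma$. A secondary bookkeeping point is that the lemma is stated with a single pair $0<c<C<\infty$ appearing in several inequalities at once, so at the end I would take $C$ to be the maximum of the finitely many constants produced above and $4c$ to be the minimum, which is harmless since $C$ is a generic constant depending only on the bounds in \eqref{e_assumptions_on_ss_perturbation}. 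I also need the strict convexity $(\psi^*)'' > 0$ globally to justify that $\hat z_m$ is well-defined for every $m\in\mathbb R$ and that $(\psi^*)'$ is a global diffeomorphism; this follows from the lower variance bound just established, so the argument is self-contained once the order of steps is as above.
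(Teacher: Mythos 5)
Your argument is substantially correct and reproduces, in all essentials, the standard proof (the paper itself omits the proof, citing Lemma~41 of~\cite{GORV}): complete the square in $\sigma z - \psi(z)$, absorb the $C^2$-bounded perturbation $\delta\psi$ into a two-sided density comparison with the standard Gaussian, and read off the exponential-family identities $(\psi^*)''(\sigma) = \operatorname{Var}(\mu_\sigma)$ and $(\psi^*)'''(\sigma) = \mathbb{E}_{\mu_\sigma}[(z-m)^3]$. The uniformity in $\sigma$ works exactly as you say, because the $L^\infty$ bounds on $\delta\psi$ and $\delta\psi''$ hold on all of $\mathbb{R}$.

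Two points of the write-up deserve correction, though neither is fatal. First, the lower bound $\operatorname{Var}(\mu_\sigma) \ge \big(\mathbb{E}_{\mu_\sigma}[\psi'']\big)^{-1}$ is \emph{not} a Brascamp--Lieb inequality: Brascamp--Lieb gives the \emph{upper} bound $\operatorname{Var}_\mu(z) \le \mathbb{E}_\mu[1/V'']$ and requires $V$ to be convex, which $\psi$ is explicitly allowed not to be (the paper displays a double well). What you actually want is the Cram\'er--Rao-type bound, obtained from $\operatorname{cov}_{\mu_\sigma}(z, V') = 1$ (integration by parts, with $V=\psi-\sigma\,\cdot$) together with Cauchy--Schwarz; this requires no convexity, only the quadratic decay of $V$ at infinity, so your conclusion $\operatorname{Var}(\mu_\sigma) \ge (1+C)^{-1}$ stands. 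Simpler still is the reverse density comparison $\operatorname{Var}(\mu_\sigma) \ge e^{-2C}\int (y-\bar y)^2\,d\gamma \ge e^{-2C}$ that you mention as an alternative, which is closer to what \cite{GORV} actually does. Second, for the third moment the remark that odd Gaussian moments vanish does not apply literally, since the centering is at $\bar y = \mathbb{E}_{\mu_\sigma}[y] \neq 0$, so $\int (y-\bar y)^3\,d\gamma \neq 0$ in general; instead estimate $|\mathbb{E}_{\mu_\sigma}[(y-\bar y)^3]| \le e^{2C}\,\mathbb{E}_\gamma[|y-\bar y|^3]$ and note that $|\bar y|$ is itself uniformly bounded by the same density comparison, which closes the argument. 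Your final bootstrap that the a priori (all-$\sigma$) bound $(\psi^*)''\ge 4c$ yields that $(\psi^*)'$ is a global diffeomorphism and hence that $\hat z_m$ is well defined is correct and is the right way to avoid circularity.
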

We omit the proof of this result. It is contained in Lemma 41 in \cite{GORV}. \medskip

To construct $\nu_{j, \beta}$, we need to find a suitable dual variable $\hat{\beta}\in Y_1^{DG}$ and shift the $J$ spins
exponentially according to the $J$-dimensional vector
$JQ_1^t \hat \beta$ (which is the $L^2$ orthogonal projection of $\hat \beta$ onto $\mathbb{R}^J$, cf. Definition \ref{d_adjoint_NQMt}). To facilitate calculations from now on, we give an explicit coordinate representation of the operators $Q_1, JQ_1^t$: 

\begin{definition}\label{Q1_basis} 
	Fix once for all an orthonormal basis $\{f_l\}_{l=0,1,\cdots,L}$ on $Y_1^{DG}$ (which are a family of orthogonal polynomials of degree $\leq L$), and use this to identify $Y_1^{DG}$ with $\mathbb{R}^{L+1}$ (the $L^2$ structure on the former is identified with the standard Euclidean structure on the latter). \medskip
	
	Under this identification $Y_1^{DG} = \mathbb{R}^{L+1}$ and the standard identification $X_J = \mathbb{R}^J$, the operator $Q_1: X_J \rightarrow Y_1^{DG}$ is represented as a $(L+1) \times J$ matrix $\Gamma$, and the operator $JQ_1^t: Y_1^{DG} \rightarrow X_J$ is represented by the matrix $J\Gamma^t$, where $\Gamma^t$ denotes the transpose of $\Gamma$. \medskip
	 
	For $0\leq l\leq L$, $1\leq j\leq J$, let $\bar f_l \in \mathbb{R}^J = X_J$ and $\gamma^j \in \mathbb{R}^{L+1} = Y_1^{DG}$ be the $l$-th column and $j$-th row of the matrix $J\Gamma^t$, respectively, then
	\begin{align}
	(Q_1 x)_l = \frac{1}{J} x \cdot \bar f_l, \quad (JQ_{1}^t\,y)_j =  y\cdot \gamma^j.
	\end{align}
	
\end{definition}

Let $\bar{\psi}^*_J: \mathbb{R}^{L+1} \rightarrow \mathbb{R}$ be the function
\begin{align}
\bar{\psi}^*_J(\hat{\beta})\,:= \frac{1}{J} \,\sum_{j=1}^{J} \psi^*\left((JQ_1^t \hat \beta)_j\right) = \frac{1}{J} \,\sum_{j=1}^{J} \psi^*\left(\hat \beta \cdot \gamma^j \right). \label{barpsi*_def}
\end{align}
which will be interpreted as the specific free energy of $\nu_{j, \beta}$ for the right choice of $\hat \beta$. After dealing with the approximation error, property
\eqref{boundvar} translates into convexity bounds for the function $\bar{\psi}^*_J(\hat{\beta})$ :
\begin{lemma} \label{barpsi*}
Let $c$ and $C$ be as in Lemma \ref{psi*}. Then there is $J_1 \in \mathbb{N}$ such that for all $J\geq J_1$ and
$\hat{\beta},\hat{\eta},\hat{\gamma} \in \mathbb{R}^{L+1}$:
\begin{align}
2c\,\,\leq\, \|\operatorname{Hess} \,\bar{\psi}^*_J(\hat{\beta})\| \,&\leq\,
C\,, \label{bdhesspsiJ*}\\
\|\,D^3 \bar{\psi}^*_J (\hat{\beta})\,\| \,&\leq\,C \label{bdthirdpsiJ*}.
\end{align}
\end{lemma}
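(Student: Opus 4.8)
The plan is to differentiate the explicit expression \eqref{barpsi*_def} and to reduce both estimates to the pointwise bounds on $(\psi^*)''$ and $(\psi^*)'''$ from Lemma~\ref{psi*}, together with one piece of linear algebra: the symmetric matrix $G_J:=\frac{1}{J}\sum_{j=1}^{J}\gamma^j(\gamma^j)^{t}\in\R^{(L+1)\times(L+1)}$ converges to $\Id_{Y_1^{DG}}$ as $J\to\infty$. Since the single-site potential is a quadratic plus a bounded perturbation, $\psi^*$ is smooth on all of $\R$, hence $\bar{\psi}^*_J\in C^\infty(\R^{L+1})$ and, for $\hat\beta,\xi\in\R^{L+1}$,
\[
\xi^{t}\,\Hess\bar{\psi}^*_J(\hat\beta)\,\xi=\frac{1}{J}\sum_{j=1}^{J}(\psi^*)''\!\big(\hat\beta\cdot\gamma^j\big)\,(\xi\cdot\gamma^j)^2,\qquad
D^3\bar{\psi}^*_J(\hat\beta)[\xi,\xi,\xi]=\frac{1}{J}\sum_{j=1}^{J}(\psi^*)'''\!\big(\hat\beta\cdot\gamma^j\big)\,(\xi\cdot\gamma^j)^3 .
\]
By \eqref{boundvar}, $4c\le(\psi^*)''\le C/2$ on $\R$, so the first identity yields the $\hat\beta$-independent sandwich $4c\,\xi^{t}G_J\xi\le\xi^{t}\Hess\bar{\psi}^*_J(\hat\beta)\,\xi\le\frac{C}{2}\,\xi^{t}G_J\xi$ for every $\hat\beta$.

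Next I would identify $G_J$ with the operator $Q_1\circ(JQ_1^{t})\colon Y_1^{DG}\to Y_1^{DG}$. By Definition~\ref{Q1_basis}, $\gamma^j$ is the $j$-th row of $J\Gamma^{t}$, so $\sum_j\gamma^j(\gamma^j)^{t}=(J\Gamma^{t})^{t}(J\Gamma^{t})=J^2\Gamma\Gamma^{t}$, and therefore $G_J=J\Gamma\Gamma^{t}$ is precisely the matrix of $Q_1\circ(JQ_1^{t})$. In the case $M=1$, Definition~\ref{d_adjoint_NQMt} says that $JQ_1^{t}$ is the $L^2$-orthogonal projection of $Y_1^{DG}$ onto $X_J\subset L^2(\T)$, namely $(JQ_1^{t}y)_j=J\int_{(j-1)/J}^{j/J}y(\theta)\,d\theta$; in particular $JQ_1^{t}$ and $Q_1$ are both $L^2$-contractions. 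For $y\in Y_1^{DG}$ I would write $Q_1(JQ_1^{t})y-y=Q_1\big(JQ_1^{t}y-y\big)$; since $JQ_1^{t}y-y$ has mean zero on each subinterval $I_j:=[(j-1)/J,j/J]$, the Poincar\'e--Wirtinger inequality on $I_j$ and summation give $\|JQ_1^{t}y-y\|_{L^2}\le\frac{1}{\pi J}\|y'\|_{L^2}\lesssim\frac{1}{J}\|y\|_{L^2}$, the last step by equivalence of norms on the fixed finite-dimensional space $Y_1^{DG}$ of polynomials of degree $\le L$ on $[0,1]$. Since $Q_1$ is an $L^2$-contraction this yields $\|G_J-\Id_{Y_1^{DG}}\|\lesssim 1/J$, so there is $J_1\in\mathbb{N}$ with $\frac12\Id\le G_J\le 2\,\Id$ for all $J\ge J_1$. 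Inserting this into the sandwich above gives $2c\,|\xi|^2\le\xi^{t}\Hess\bar{\psi}^*_J(\hat\beta)\,\xi\le C\,|\xi|^2$ for all $J\ge J_1$ and all $\hat\beta$, which is \eqref{bdhesspsiJ*} (after renaming $C$).

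For \eqref{bdthirdpsiJ*} I would use $|(\psi^*)'''|\le C/2$ from \eqref{boundthirdm} and the elementary bound $\sum_j|\xi\cdot\gamma^j|^3\le\big(\max_j|\xi\cdot\gamma^j|\big)\sum_j(\xi\cdot\gamma^j)^2$. Here $\sum_j(\xi\cdot\gamma^j)^2=|JQ_1^{t}\xi|_{\R^{J}}^2=J\,\|JQ_1^{t}\xi\|_{L^2}^2\le J\,|\xi|^2$ (using that the Euclidean and the $L^2$ norm on $X_J$ differ by the factor $J$ and that $JQ_1^{t}$ is an $L^2$-contraction), while $|\xi\cdot\gamma^j|=\big|J\int_{I_j}\xi(\theta)\,d\theta\big|\le\|\xi\|_{L^\infty[0,1]}\lesssim|\xi|$ again by equivalence of norms on $Y_1^{DG}$. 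Therefore $|D^3\bar{\psi}^*_J(\hat\beta)[\xi,\xi,\xi]|\le\frac{C}{2J}\cdot C\,J\,|\xi|^3\lesssim|\xi|^3$ uniformly in $\hat\beta$ and $J$, which is \eqref{bdthirdpsiJ*}.

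The only real subtlety is bookkeeping: $X_J$ carries both the Euclidean and the $L^2$ inner product (related by the factor $J$, which is the origin of the normalization $JQ_1^{t}$), and one must check that the convergence $G_J\to\Id_{Y_1^{DG}}$ and all the contraction estimates are taken with the correct normalization so that the constants $c,C,J_1$ end up genuinely independent of $J$. The analytic input — Poincar\'e--Wirtinger on an interval of length $1/J$ and equivalence of norms on the fixed finite-dimensional space of polynomials of degree $\le L$ — is elementary, and Lemma~\ref{psi*} supplies the rest.
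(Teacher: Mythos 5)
Your proof is correct and follows essentially the same route as the paper: differentiate \eqref{barpsi*_def}, sandwich the Hessian using \eqref{boundvar}, reduce to the fact that $Q_1JQ_1^t\approx\operatorname{id}_{Y_1^{DG}}$ (the paper's Lemma~\ref{PNP}), and control $D^3$ by factoring out $\max_j|\xi\cdot\gamma^j|\lesssim|\xi|$ as in \eqref{gammaj_approx}. The only difference is that you re-derive the approximate-identity property via Poincar\'e--Wirtinger plus norm equivalence, obtaining an $O(1/J)$ rate instead of the paper's $O(1/J^2)$ (which exploits the cancellation $\langle f_{l_1},f_{l_2}\rangle-\langle\bar f_{l_1},\bar f_{l_2}\rangle=\langle f_{l_1}-\bar f_{l_1},f_{l_2}-\bar f_{l_2}\rangle$); either rate suffices for this lemma.
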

We postpone the proof of this result to Section~\ref{proof1block}.\medskip

By a standard result from convex analysis, the bounds ~\eqref{bdhesspsiJ*} and ~\eqref{bdthirdpsiJ*} imply
that for $J\geq J_1$, the Legendre-Fenchel transform of $\bar{\psi}^*_J$,
\begin{equation}
\bar{\psi}_J(\beta) := \sup_{\hat{\beta}\in \mathbb{R}^{L+1}} \left(\langle \beta,\hat{\beta}\rangle
                       \,-\,\bar{\psi}^*_J(\hat{\beta}) \right) \label{d_bdhesspsiJ*}
\end{equation}
is uniformly strictly convex and its Hessian and 3rd derivative are uniformly bounded, and the unique maximizer $\hat{\beta}^{max}(\beta)$ of~\eqref{d_bdhesspsiJ*} satisfies 
\begin{equation}
\beta \,=\, \nabla \,\bar{\psi}^*_J(\hat{\beta}^{max}), \quad \hat \beta^{max} \,=\, \nabla \,\bar{\psi}_J(\beta).\label{duality}
\end{equation}
The vector~$\hat \beta^{max}$ serves to construct $\nu_{J, \beta}$. For $j=1,...,J$, set
\begin{align}
   \hat{m}_{j, \beta} &:= \ (JQ_1^t \hat \beta^{max} )_j = \hat \beta^{max} \cdot \gamma^j, \label{dualmeanj} \\
   m_{j,\beta} &:=  
   (\psi^* )'(\hat{m}_{j,\beta}). \label{meanj}
\end{align}
and define a product measure on $ \mathbb{R}^J$ (cf. \eqref{e_d_mu_m})
\begin{align}
\frac{d\nu_{J,\beta}}{dx}(x) \,:=\,\prod_{j=1}^{J} \frac{d\mu_{m_{j, \beta}}}{dx_j}(x_j). \label{mgrand}
\end{align}


It remains to check that the expected value of $Q_{1}x$ under $\nu_{J,\beta}$ is equal to $\beta$. For $l=0,...,L$:
\begin{align}
 \int & (Q_{1}x)_l \, d\nu_{J,\beta} =
 \int \frac{1}{J}x \cdot \bar f_l \, d\nu_{J,\beta} \overset{\eqref{mgrand}}{=} \frac{1}{J} \sum_{j=1}^{J} m_{j,\beta} (\bar f_l)_j \\
 &\overset{\eqref{meanj}}{=} \frac{1}{J} \sum_{j=1}^{J} (\psi^* )'(  \hat m_{j, \beta}  ) \,(\gamma^j)_l \overset{\eqref{barpsi*_def}, \eqref{dualmeanj}}{=} \frac{\partial}{\partial\hat{\beta}}_l \bar{\psi}^*_J(\hat{\beta}^{max}) \overset{\eqref{duality}}{=}  \beta_l. 
\end{align}

Thus, the construction of the modified grand canonical ensemble $\nu_{J, \beta}$ is complete.\medskip

For a given $\beta$, the specific free energy of the modified grand canonical ensemble $\nu_{J, \beta}$ is
just $\bar{\psi}^*_J (\hat{\beta}^{max})\,=\, \langle \beta, \hat{\beta}^{max}\rangle \,-\, \bar{\psi}_J(\beta)$.
On the other hand, the specific free energy of the canonical ensemble associated with the restriction of $\nu_{J,\beta}$
to the hyperplane $\{x\,|\,Q_{1}x\,=\,\beta\}$, where it is highly concentrated anyway for large $J$, is given by $\langle \beta, \hat{\beta}^{max}\rangle \,-\, \psi_J (\beta)$ (we leave the calculation to the reader as an exercise). Consequently, $\bar{\psi}_J (\beta) - \psi_J(\beta)$ measures the difference in free energies and hence
we expect it to converge to zero in some sense as $J\rightarrow \infty$.
As we indicated above, the proof that it converges strong enough for our purposes, i.e. in $C^2(\mathbb{R}^{L+1})$,
begins with a Cram\'er-type representation formula for the density in $0\in \mathbb{R}^{L+1}$ of
the distribution of $J^{\frac{1}{2}}\,(Q_{1}x - \beta)$ under $\nu_{J,\beta}$, which is a centered
$(L+1)$-dimensional vector of ``suitably weighted"´ sums of independent random variables. (Cf. equation (125) in \cite{GORV})
\begin{lemma} \label{cramer}
Denote by $g_{J,\beta}$ the law of the $\mathbb{R}^{L+1}$-valued random
variable $J^{\frac{1}{2}}\,(Q_{1}x - \beta)$ under $\nu_{J,\beta}$ and let
$\mathcal{J}Q \,:=\, (\det Q_{1}Q_{1}^t)^{\frac{1}{2}}$. The density of
$g_{J,\beta}$ at $0\in\mathbb{R}^{L+1}$ with respect to Lebesgue measure can be represented as follows:
\begin{equation}
\frac{dg_{J,\beta}}{d\mathcal{L}^{L+1}}(0) \,=\, (J^{\frac{L+1}{2}} \,\mathcal{J}Q)^{-1}\;
\exp\,[J\,(\bar{\psi}_J (\beta) - \psi_J(\beta))]. \label{cramerrep}
\end{equation}
\end{lemma}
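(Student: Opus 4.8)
The plan is to derive the representation formula \eqref{cramerrep} by writing the density of $g_{J,\beta}$ at $0$ explicitly as an integral over the fiber $\{x : Q_1 x = \beta\}$, and then recognizing the two normalizing constants that appear as exactly $\exp(J\psi_J(\beta))$ (from the canonical-ensemble partition function) and $\exp(J\bar\psi_J(\beta))$ (from the grand-canonical normalization $\bar\psi_J^*(\hat\beta^{max})$ together with the Legendre duality \eqref{duality}). Concretely, the first step is the disintegration: since $\nu_{J,\beta}$ is an absolutely continuous measure on $\mathbb{R}^J$ and $Q_1 : \mathbb{R}^J \to Y_1^{DG} = \mathbb{R}^{L+1}$ is a surjective linear map, the coarea/disintegration formula gives that the density of the pushforward of $\nu_{J,\beta}$ under $Q_1$ at a point $\alpha$ equals
\begin{equation*}
\frac{1}{\mathcal{J}Q}\int_{\{x : Q_1 x = \alpha\}} \frac{d\nu_{J,\beta}}{dx}(x)\, \mathcal{H}(dx),
\end{equation*}
where the Jacobian factor $\mathcal{J}Q = (\det Q_1 Q_1^t)^{1/2}$ accounts for the geometry of the fibers (this is the standard Jacobian for the linear surjection in the coarea formula, using the Euclidean structure on $\mathbb{R}^J$). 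Rescaling by $J^{1/2}$ to pass from the law of $Q_1 x - \beta$ to the law of $J^{1/2}(Q_1 x - \beta)$ contributes the factor $J^{-(L+1)/2}$, and evaluating at $0$ means evaluating the pushforward density at $\alpha = \beta$.

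The second step is to unfold $\frac{d\nu_{J,\beta}}{dx}$ using \eqref{e_d_mu_m}, \eqref{meanj} and \eqref{mgrand}: on the fiber $\{x : Q_1 x = \beta\}$ we have
\begin{equation*}
\frac{d\nu_{J,\beta}}{dx}(x) = \exp\!\Big(-\sum_{j=1}^J \psi^*(\hat m_{j,\beta}) + \sum_{j=1}^J \hat m_{j,\beta} x_j - \sum_{j=1}^J \psi(x_j)\Big).
\end{equation*}
The first sum is $J\bar\psi_J^*(\hat\beta^{max})$ by \eqref{barpsi*_def} and \eqref{dualmeanj}, hence constant on the fiber. For the middle sum, $\sum_j \hat m_{j,\beta} x_j = (JQ_1^t\hat\beta^{max})\cdot x = \hat\beta^{max}\cdot Q_1 x = \hat\beta^{max}\cdot\beta = \langle\beta,\hat\beta^{max}\rangle$, which is also constant on the fiber — this is the key point that makes the exponential shift ``typical''. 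The remaining sum $\sum_j \psi(x_j) = H_J(x)$ is exactly the Hamiltonian, so
\begin{equation*}
\int_{\{x:Q_1x=\beta\}}\frac{d\nu_{J,\beta}}{dx}(x)\,\mathcal{H}(dx) = \exp\!\big(-J\bar\psi_J^*(\hat\beta^{max}) + \langle\beta,\hat\beta^{max}\rangle\big)\int_{\mathbb{R}^J_{1,\beta}}\exp(-H_J(x))\,\mathcal{H}(dx).
\end{equation*}
By the definition \eqref{psiJ} the last integral is $\exp(-J\psi_J(\beta))$, and by Legendre duality $\langle\beta,\hat\beta^{max}\rangle - \bar\psi_J^*(\hat\beta^{max}) = \bar\psi_J(\beta)$ as recorded after \eqref{duality}. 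Combining everything, the fiber integral equals $\exp\big(J(\bar\psi_J(\beta) - \psi_J(\beta))\big)$, and multiplying by $(\mathcal{J}Q)^{-1}$ and the rescaling factor $J^{-(L+1)/2}$ gives \eqref{cramerrep}.

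I expect no deep obstacle here; the proof is essentially bookkeeping. The one place to be careful is the Jacobian factor and the precise normalization of Hausdorff measure on the fiber: one must check that the coarea formula applied to the linear map $Q_1$ (with $\mathbb{R}^J$ carrying the Euclidean structure, consistent with the definition \eqref{psiJ} of $\psi_J$ via the Hausdorff measure on $\mathbb{R}^J_{1,\beta}$) produces exactly $(\det Q_1 Q_1^t)^{1/2}$ and not its inverse or a power of $J$ hidden in the identification $Y_1^{DG}=\mathbb{R}^{L+1}$; since we fixed an \emph{orthonormal} basis $\{f_l\}$ in Definition~\ref{Q1_basis}, the $L^2$ and Euclidean structures on $Y_1^{DG}$ agree and no extra factor appears. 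A secondary point is tracking the $J^{1/2}$-rescaling: if $\alpha = Q_1 x$ has pushforward density $\rho(\alpha)$, then $J^{1/2}(Q_1 x - \beta)$ has density $J^{-(L+1)/2}\rho(\beta + J^{-1/2}\,\cdot\,)$, which at $0$ is $J^{-(L+1)/2}\rho(\beta)$ — so the only subtlety is confirming the exponent $(L+1)/2$ matches $\dim Y_1^{DG}$, which it does.
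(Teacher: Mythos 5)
Your approach matches the paper's: both use the co-area formula to express the density of the pushforward of $\nu_{J,\beta}$ under $Q_1$ as a Jacobian-normalized fiber integral, unfold the product density of $\nu_{J,\beta}$, and recognize the partition function $\exp(-J\psi_J(\beta))$ and the Legendre-dual normalization $\exp(-J\bar\psi_J^*(\hat\beta^{max}))$. The paper carries this out by integrating against a test function $\zeta$ and then approximating $\delta_0$, whereas you evaluate the pushforward density directly at $\alpha=\beta$; that is a valid shortcut since the density exists and is continuous.

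There is, however, a factor-of-$J$ slip in your middle-sum computation. You write $(JQ_1^t\hat\beta^{max})\cdot x = \hat\beta^{max}\cdot Q_1 x$, but since $Q_1^t$ is the Euclidean adjoint of $Q_1$ and the dot on the left is the Euclidean product on $\mathbb{R}^J$, the correct identity is $(JQ_1^t\hat\beta^{max})\cdot x = J\,(Q_1 x)\cdot\hat\beta^{max}$, giving $J\langle\beta,\hat\beta^{max}\rangle$ on the fiber, not $\langle\beta,\hat\beta^{max}\rangle$. (The paper's own chain of equalities displays exactly this factor: $\exp(-J\bar\psi_J^*(\hat\beta^{max}) + J\langle\widetilde\beta,\hat\beta^{max}\rangle - H_J(x))$.) With that factor restored, the Legendre identity $J(\langle\beta,\hat\beta^{max}\rangle - \bar\psi_J^*(\hat\beta^{max})) = J\bar\psi_J(\beta)$ is exactly what combines with $\exp(-J\psi_J(\beta))$ to produce $\exp(J(\bar\psi_J(\beta)-\psi_J(\beta)))$; without it, the step from your displayed fiber integral to that final expression does not follow. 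As written, the derivation contains an internal inconsistency that happens to disappear in the last line, so it should be corrected even though the stated conclusion is right.
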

We postpone the proof of Lemma~\ref{cramer} to Section~\ref{proof1block}. Formula ~\eqref{cramerrep} allows us to transfer the strict convexity of~$\bar{\psi}_J$ to the function~$\psi_J$, once we have the following estimates on the Jacobian~$(J^{\frac{L+1}{2}} \,\mathcal{J}Q)^{-1}$ (appearing on the right hand side of~\eqref{cramerrep}) and the density~$\frac{dg_{J,\beta}}{d\mathcal{L}^{L+1}}(0)$ (appearing on left hand side of~\eqref{cramerrep}).

\begin{lemma}\label{p_estimate_jacobien}
  There is a positive integer $J^* \in \mathbb{N}$ such that for $J\,\geq\,J^*$:
\begin{equation}
\frac{1}{C} \,\leq\, J^{\frac{L+1}{2}} \,\mathcal{J}Q \,\leq\, C. \label{Jacobian}
\end{equation}
\end{lemma}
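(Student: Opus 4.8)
The quantity $\mathcal{J}Q = (\det Q_1 Q_1^t)^{1/2}$ depends only on the operator $Q_1 : X_J = \mathbb{R}^J \to Y_1^{DG} = \mathbb{R}^{L+1}$ (with the standard Euclidean structures on both sides), and via Definition~\ref{Q1_basis} this operator is the matrix $\Gamma$ whose $l$-th row encodes the evaluation of the inner product of a spin vector with $\bar f_l$. The plan is to compute $\det(Q_1 Q_1^t)$ essentially exactly, showing that it scales like $J^{-(L+1)}$ with a prefactor that converges to a positive constant as $J \to \infty$, so that $J^{(L+1)/2}\mathcal{J}Q$ converges to that constant and in particular is pinched between $1/C$ and $C$ for $J$ large.

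**Main computation.** First I would make the entries of $\Gamma$ explicit. Recall $NQ_M^t$ (here $M=1$, $N=J$) is the $L^2$-orthogonal projection of $Y_1^{DG}$ onto $X_J$ given by averaging over dyadic sub-intervals, cf.~\eqref{e_adjoint_NQ_M^t_explicit}: $(J Q_1^t y)_j = J \int_{(j-1)/J}^{j/J} y(\theta)\, d\theta = y \cdot \gamma^j$. Hence if $y = \sum_l y_l f_l$ then $\gamma^j_l = J\int_{(j-1)/J}^{j/J} f_l(\theta)\,d\theta$. Consequently $\Gamma_{l j} = (J\Gamma^t)_{jl}/J = \int_{(j-1)/J}^{j/J} f_l(\theta)\, d\theta$, i.e.\ $\Gamma = \tfrac{1}{J}\,[J\Gamma^t]^t$ and
\begin{align}
(Q_1 Q_1^t)_{ll'} = \sum_{j=1}^J \Gamma_{lj}\Gamma_{l'j} = \sum_{j=1}^J \left(\int_{(j-1)/J}^{j/J} f_l\right)\left(\int_{(j-1)/J}^{j/J} f_{l'}\right).
\end{align}
Each term is $\left(\tfrac1J f_l(\theta_j) + O(J^{-2})\right)\left(\tfrac1J f_{l'}(\theta_j)+O(J^{-2})\right)$ for a sample point $\theta_j$ in the $j$-th interval, since $f_l, f_{l'}$ are fixed polynomials, hence smooth, with $J$-independent $C^1$ bounds. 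Summing, $(Q_1 Q_1^t)_{ll'} = \tfrac1J\left(\tfrac1J\sum_{j} f_l(\theta_j) f_{l'}(\theta_j)\right) + O(J^{-2})$, and the Riemann sum converges to $\int_0^1 f_l f_{l'}\,d\theta = \delta_{ll'}$ because the $f_l$ are $L^2$-orthonormal on $Y_1^{DG}$. Therefore $J\, Q_1 Q_1^t = \Id_{L+1} + O(J^{-1})$ in operator norm, so $\det(J\, Q_1 Q_1^t) \to 1$ and
\begin{align}
J^{\frac{L+1}{2}}\mathcal{J}Q = \left(\det(J\, Q_1 Q_1^t)\right)^{1/2} \longrightarrow 1 \quad \text{as } J\to\infty.
\end{align}
Pick $J^*$ so that $|\det(J\, Q_1 Q_1^t) - 1| \le \tfrac12$ for $J \ge J^*$; then $\tfrac1C \le \tfrac1{\sqrt 2} \le J^{(L+1)/2}\mathcal{J}Q \le \sqrt{3/2} \le C$, which is~\eqref{Jacobian}.

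**Expected obstacle.** None of this is deep; the only thing to be careful about is matching normalizations — the recurring factor $J$ (versus $N$) and the distinction between the standard Euclidean structure on $X_J$ and the $L^2$ structure inherited from $L^2(\mathbb{T})$, which is exactly what produces the $J^{(L+1)/2}$ weight in front of $\mathcal{J}Q$ in~\eqref{cramerrep} and here. So the main (modest) obstacle is bookkeeping: keeping the $L^2$-versus-Euclidean identifications from Definition~\ref{Q1_basis} straight, and controlling the Riemann-sum error uniformly, which is immediate since the basis polynomials $f_l$ are fixed once and for all (independent of $J$) and the degree $L$ is fixed for the whole article. Alternatively, one can avoid Riemann sums entirely: the Gram matrix $Q_1 Q_1^t$ is, up to the $1/J$ scaling, the Gram matrix of the piecewise-constant $L^2$-projections of $f_0,\dots,f_L$ onto $X_J$, and $\|P_{X_J} f_l - f_l\|_{L^2} \to 0$ for each fixed polynomial, which gives the same conclusion $J\,Q_1Q_1^t \to \Id$ by continuity of the determinant.
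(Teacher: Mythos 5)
Your proposal is correct and takes essentially the same approach as the paper: both reduce the claim to showing $Q_1 J Q_1^t = J\,Q_1 Q_1^t \to \operatorname{id}_{Y_1^{DG}}$ and then conclude via continuity of the determinant, using $J^{(L+1)/2}\,\mathcal{J}Q = \left(\det Q_1 J Q_1^t\right)^{1/2}$. The paper delegates the convergence to Lemma~\ref{PNP}, which exploits that $\bar f_l$ is the $L^2$-projection of $f_l$ onto $X_J$ to write $\langle f_{l_1},f_{l_2}\rangle_{L^2} - \langle \bar f_{l_1},\bar f_{l_2}\rangle_{L^2} = \langle f_{l_1}-\bar f_{l_1},\,f_{l_2}-\bar f_{l_2}\rangle_{L^2}$ and obtain an $O(J^{-2})$ rate, whereas your Riemann-sum bookkeeping gives $O(J^{-1})$ --- a weaker rate, but equally sufficient for this lemma (and your closing alternative via $\|P_{X_J}f_l - f_l\|_{L^2}\to 0$ is essentially the paper's argument).
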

We postpone the proof of Lemma~\ref{p_estimate_jacobien} to Section~\ref{proof1block}.
\begin{proposition}\label{estimates on g}
There exist a constant $C \,<\, \infty$ and a positive integer $J_2$ such that for all $J\geq J_2$ and all $\beta \in \mathbb{R}^{L+1}$:
\begin{align}
   \frac{1}{C} \,\leq\, \frac{dg_{J,\beta}}{d\mathcal{L}^{L+1}}(0) \,\leq\, C,& \label{boundg} \\
  \left\|\nabla\,\frac{dg_{J,\beta}}{d\mathcal{L}^{L+1}}(0)\right\| \,\leq\, C,& \label{boundnablag}\\
  \left\|\operatorname{Hess}\, \frac{dg_{J,\beta}}{d\mathcal{L}^{L+1}}(0)\right\| \,\leq\, C&. \label{boundhessg}
\end{align}
\end{proposition}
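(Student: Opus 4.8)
The plan is to view the density $\frac{dg_{J,\beta}}{d\mathcal{L}^{L+1}}(0)$ via Fourier inversion and to carry out a quantitative multivariate local central limit theorem, uniformly in $\beta$. Write $g_{J,\beta}$ as the law of $J^{1/2}(Q_1 x - \beta) = J^{-1/2}\sum_{j=1}^J (x_j - m_{j,\beta})\,\gamma^j$ under $\nu_{J,\beta}$, a centered sum of independent (non-identically distributed) $\mathbb{R}^{L+1}$-valued random variables, by \eqref{dualmeanj}--\eqref{mgrand} and Definition \ref{Q1_basis}. Its characteristic function is $\hat g_{J,\beta}(\xi) = \prod_{j=1}^J \phi_{j,\beta}\!\left(\tfrac{\xi\cdot\gamma^j}{\sqrt J}\right)$ where $\phi_{j,\beta}$ is the characteristic function of the centered shifted single-site measure $\mu_{m_{j,\beta}}$. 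By Fourier inversion,
\begin{equation}
\frac{dg_{J,\beta}}{d\mathcal{L}^{L+1}}(0) = \frac{1}{(2\pi)^{L+1}}\int_{\mathbb{R}^{L+1}} \hat g_{J,\beta}(\xi)\, d\xi,
\end{equation}
and differentiating under the integral sign (justified by the decay established below) gives $\nabla$ and $\operatorname{Hess}$ in $\beta$ by bringing down the $\beta$-derivatives of $\hat g_{J,\beta}$; note the $\beta$-dependence enters only through the means $m_{j,\beta}$, hence through $\hat\beta^{max}(\beta)$, whose first and second derivatives are controlled by the uniform convexity bounds on $\bar\psi^*_J$ from Lemma \ref{barpsi*} together with \eqref{duality}.

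First I would establish the key covariance lower bound: the covariance matrix of $J^{1/2}(Q_1x-\beta)$ under $\nu_{J,\beta}$ equals $\frac{1}{J}\sum_j \operatorname{Var}(\mu_{m_{j,\beta}})\,\gamma^j (\gamma^j)^t$, which by \eqref{boundvar} is pinched between $\tfrac{4c}{J}\sum_j \gamma^j(\gamma^j)^t$ and $\tfrac{C}{2J}\sum_j \gamma^j(\gamma^j)^t$; and $\frac1J \sum_j \gamma^j(\gamma^j)^t = J Q_1 Q_1^t$ in the coordinates of Definition \ref{Q1_basis}, which by Lemma \ref{p_estimate_jacobien} (and the Riemann-sum convergence of $Q_1$ to integration against the orthonormal polynomials $f_l$ on $[0,1]$) converges to a fixed positive-definite matrix with determinant bounded above and below. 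This yields a uniform-in-$\beta$, uniform-in-$J$ two-sided bound on the covariance, hence Gaussian-type decay $|\hat g_{J,\beta}(\xi)| \leq \exp(-c_1|\xi|^2)$ for $|\xi| \leq \epsilon \sqrt J$ after a second-order Taylor expansion of each $\log\phi_{j,\beta}$, with the cubic error controlled by \eqref{boundthirdm}. For $|\xi|$ in the intermediate regime $\epsilon\sqrt J \leq |\xi| \leq R$ one uses that $|\phi_{j,\beta}(t)| < 1$ for $t\neq 0$ with a uniform-in-$\beta$ modulus (the shifted measures $\mu_m$ being perturbed Gaussians, their characteristic functions are uniformly non-lattice), and for $|\xi| \geq R$ one uses that each $\phi_{j,\beta}$ already decays — e.g. split off one Gaussian factor from each block. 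Integrating these three regimes against the (polynomial in $\xi$, coming from differentiation) prefactors gives \eqref{boundg}, \eqref{boundnablag}, \eqref{boundhessg}: the lower bound in \eqref{boundg} comes from comparing the inversion integral against the Gaussian with the limiting covariance.

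The main obstacle, I expect, is the uniformity in $\beta$ of the decay estimate away from $\xi = 0$. The means $m_{j,\beta} = (\psi^*)'(\hat\beta^{max}\cdot\gamma^j)$ can be large and can vary wildly with $j$ as $|\beta|\to\infty$, so one must verify that the single-site characteristic functions $\phi_{j,\beta}$ do not degenerate (become close to lattice or slowly-decaying) uniformly over all admissible shifts. This requires a uniform bound of the form $|\phi_{j,\beta}(t)| \leq 1 - c_2\min(t^2,1)$ valid for all $m_{j,\beta}\in\mathbb{R}$, which follows from writing $\mu_m$ as the standard Gaussian tilted by the bounded perturbation $\delta\psi$ (so its density is a fixed Gaussian times a factor that is bounded above and below on any compact set and has bounded derivatives), making the non-lattice modulus of $\mu_m$ independent of $m$. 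The bookkeeping of how the $\beta$-derivatives of $\hat g_{J,\beta}$ grow — each derivative brings down a factor $\sum_j (\partial_\beta m_{j,\beta})(\gamma^j\cdot\xi)/\sqrt J$ type term, and $\partial_\beta m_{j,\beta}$ involves $(\psi^*)''$ and $D\hat\beta^{max}$, both bounded by Lemmas \ref{psi*} and \ref{barpsi*} — is the remaining routine-but-lengthy part, and is presumably where most of the deferred computation in Section \ref{proof1block} goes.
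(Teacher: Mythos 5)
Your overall framework matches the paper's: Fourier inversion expressing $\frac{dg_{J,\beta}}{d\mathcal{L}^{L+1}}(0)$ as the integral of the product $\prod_j h(m_{j,\beta},\,J^{-1/2}\xi\cdot\gamma^j)$ (this is Lemma~\ref{gexplicit}), splitting the integral into a small-$|\xi|$ region handled by Gaussian-type decay via the covariance lower bound and a large-$|\xi|$ region handled by decay of $h$, and differentiating in $\beta$ through $m_{j,\beta}$ with $\nabla m_{j,\beta}$, $\operatorname{Hess} m_{j,\beta}$ controlled by the convexity bounds on $\bar\psi^*_J$ (this is Lemma~\ref{p_bound_grad_and_hess_m}). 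You also correctly flag the need for a uniform-in-$m$ non-lattice/decay modulus of the tilted single-site measures, which is Lemma~\ref{properties of h}.

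However, you miss the one genuinely new ingredient for $L\geq 1$, which the paper explicitly calls out as new compared to the $L=0$ case: for a fixed $\xi$, the scalar arguments $\xi\cdot\gamma^j/\sqrt J$ passed to $h$ are \emph{not} uniformly comparable to $|\xi|/\sqrt J$ across $j$. Since $\gamma^j\approx\gamma(j/J)$ traces a curve in $\mathbb{R}^{L+1}$, $\xi\cdot\gamma^j$ can sit near zero on a whole range of $j$, and on that range the decay bound \eqref{decayh} gives no contraction at all. Your covariance lower bound, which amounts to $\frac1J\sum_j|\xi\cdot\gamma^j|^2\gtrsim|\xi|^2$, does not repair this: it is consistent with a vanishing fraction of the $j$'s carrying essentially all of that sum. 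What makes the outer integral vanish in the paper is Lemma~\ref{gamma}: because $\xi\cdot\gamma(t)$ is a nonzero polynomial of degree $\leq L$ in $t$ (by linear independence of the $f_l$), it has at most $L$ zeros, so by pigeonhole one of $L+2$ disjoint subintervals of $[0,1]$ avoids all zeros with a uniform margin on the unit sphere, giving a constant fraction of the $j$'s with $|\xi\cdot\gamma^j|\geq c_\gamma|\xi|$. Only then can the contraction $|h(\cdot,J^{-1/2}\xi\cdot\gamma^j)|<1$ be iterated over $\Omega(J)$ many factors. Without this geometric input the outer estimate simply fails. Relatedly, your regime split $\epsilon\sqrt J\leq|\xi|\leq R$ is vacuous for large $J$; that slip seems to come from implicitly thinking in the univariate ($L=0$) picture, where the argument of $h$ is the same scalar $|\xi|/\sqrt J$ for every $j$ and this spin-count issue does not arise.
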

This result was proven in~\cite{GORV} for the case $L=0$ (cf. equation (126) in ~\cite{GORV}). In the general case considered here,
establishing the estimates becomes somewhat more subtle. In particular, a geometric property due to
the independence of basis polynomials $f_l$ enters as a new ingredient. The proof also shares some similarities to the proof of the local Cram\'er theorem in~\cite{Menz11}.  As the proof as a whole becomes
quite long we postpone it to Section~\ref{s_p_local_cramer}.
We conclude this Section with a derivation of Theorem~\ref{1block} from these results.

\begin{proof}[Proof of Theorem~\ref{1block}]
Rewrite formula \eqref{cramerrep} as:
\begin{equation}
\bar{\psi}_J (\beta) - \psi_J(\beta) \,=\,\frac{1}{J}\, \left[\ln \,(J^{\frac{L+1}{2}} \,\mathcal{J}Q)\,+\,\ln\,
\frac{dg_{J,\beta}}{d\mathcal{L}^{L+1}}(0)\right]. \label{cramerrep2}
\end{equation}
For $J\,\geq\,\max\{J^*, J_2\}$, the estimates and \eqref{boundg} and~\eqref{Jacobian} thus yield
\begin{equation}
|\bar{\psi}_J (\beta) - \psi_J (\beta)| \,\leq\, \frac{\ln C+\ln C}{J}. \label{orderzero}
\end{equation}
For the gradient of the difference we find:
\begin{align}
\|(\nabla \bar{\psi}_J - \nabla \psi_J)(\beta)\|  &\stackrel{\eqref{cramerrep2}}{=} 
\frac{1}{J} \,\left(\,\frac{dg_{J,\beta}}{d\mathcal{L}^{L+1}}(0)\,\right)^{-1}\;
\left\|\nabla\frac{dg_{J,\beta}}{d\mathcal{L}^{L+1}}(0)\right\| \\
  & \stackrel{\eqref{boundg},\eqref{boundnablag}}{\leq}\, \frac{C^2}{J}. \label{e_local_cramer_convergence_gradient}
\end{align}
Finally,
\begin{align}
& \|(\operatorname{Hess} \,\bar{\psi}_J - \operatorname{Hess} \,\psi_J)(\beta)\| \\ 
& \qquad \stackrel{\eqref{cramerrep2} }{\leq}  \frac{1}{J} \,\left(\,\frac{dg_{J,\beta}}{d\mathcal{L}^{L+1}}(0)\,\right)^{-1}
 \,\left\|\operatorname{Hess} \,\frac{dg_{J,\beta}}{d\mathcal{L}^{L+1}}(0)\right\| \\
 & \qquad \qquad \qquad  + \frac{1}{J} \,\left(\,\frac{dg_{J,\beta}}{d\mathcal{L}^{L+1}}(0)\,\right)^{-2} \, \left\|\,\nabla\frac{dg_{J,\beta}}{d\mathcal{L}^{L+1}}(0)\,\otimes\,\nabla\frac{dg_{J,\beta}}{d\mathcal{L}^{L+1}}(0)\,\right\|\\
&\qquad \stackrel{\eqref{boundg},\eqref{boundnablag},\eqref{boundhessg}}{\leq}  \frac{C^2}{J} \,+\, \frac{C^4}{J}. \label{e_local_cramer_convergence_hessian}
\end{align}
This proves convergence of $\psi_J$ to $\bar{\psi}_J$ in $C^2(\mathbb{R}^{L+1})$, indeed with difference of order
$J^{-1}$ as $J\rightarrow\infty$. Since $\bar{\psi}_J$ is
uniformly strictly convex and its Hessian is uniformly bounded if $J\geq J_1$, this 
proves Theorem~\ref{1block}.
\end{proof}

\subsection{Proofs of the auxiliary results of Section~\ref{s_strict_convexity_bar_H_DG}.} \label{proof1block}

\begin{proof}[Proof of Lemma \ref{barpsi*}.] For $\hat{\eta}\in \mathbb{R}^{L+1}$ we have:
\begin{eqnarray*}
\langle \hat{\eta}, \operatorname{Hess} \,\bar{\psi}^*_J(\hat\beta) \,\hat{\eta}\rangle
&=& \frac{1}{J} \,\sum_{j=1}^J (\psi^* )''\left((JQ_1^t \hat \beta)_j\right) \,
\left[(JQ_1^t \hat \eta)_j\right]^2.
\end{eqnarray*}
Using \eqref{boundvar} we obtain:
\begin{equation*}
\frac{4c}{J}  \,\sum_{j=1}^J \left((JQ_1^t \hat \eta)_j\right)^2
\leq\,\langle \hat{\eta}, \operatorname{Hess} \,\bar{\psi}^*_J(\hat{\beta}) \,\hat{\eta}\rangle
\leq\, \frac{C}{2J}  \,\sum_{j=1}^J \left((JQ_1^t \hat \eta)_j\right)^2.
\end{equation*}
But the approximation of Lemma \ref{PNP} below implies:
\begin{align}\frac{1}{J} &\,\sum_{j=1}^J \left((JQ_1^t \hat \eta)_j\right)^2 = \frac{1}{J} \, |JQ_1^t \hat \eta|^2
= \langle JQ_1^t \hat \eta, JQ_1^t \hat \eta\rangle_{L^2} \\
&= \langle Q_1JQ_1^t \hat \eta,  \hat \eta\rangle_{L^2} 
\stackrel{\eqref{approxid2}}{=}  \left(1 + O\left(\frac{1}{J^2}\right)\right) |\hat \eta|^2.
\end{align}
This shows \eqref{bdhesspsiJ*} for large $J$. Concerning \eqref{bdthirdpsiJ*}, we find for $\hat{\eta}\in \mathbb{R}^{L+1}$:
\begin{align}|\,D^3 \bar{\psi}^*_J (\hat{\beta}) &(\hat{\eta},\hat{\eta},\hat{\eta})\,| = \left|\frac{1}{J} \,\sum_{j=1}^J (\psi^* )'''
\left((JQ_1^t \hat \beta)_j  \right) \:
\left[(JQ_1^t \hat \eta)_j\right]^3 \right|\\
&\leq \max_{j=1,...,J} \left|\hat \eta \cdot \gamma^j\right|\frac{1}{J} \sum_{j=1}^J \left|(\psi^* )'''
\left((JQ_1^t \hat \beta)_j \right) \right|\:
\left[(JQ_1^t \hat \eta)_j\right]^2 
\end{align}
We then appeal to \eqref{boundthirdm} and the uniform bound \eqref{gammaj_approx} from below and proceed just as above. 
\end{proof}

\begin{proof}[Proof of Lemma \ref{cramer}.]

So far it has always been understood that by $dx$, etc. we mean the Hausdorff measure of appropriate dimension. We will be a bit more careful during the next computation and write out the measures in detail where it seems helpful. Let $\zeta$ be a measurable test function defined on $\mathbb{R}^{L+1}$. The proof of identity
\eqref{cramerrep} essentially boils down to an application of the co-area Formula
for $Q_{1}$.
\begin{eqnarray*}
  \lefteqn{\int_{\mathbb{R}^{L+1}} \zeta(u) \,\frac{dg_{J,\beta}}{d\mathcal{L}^{L+1}}(u) \, \mathcal{L}^{L+1} (du) = \int_{\mathbb{R}^J} \zeta\left(J^{\frac{1}{2}}(Q_{1}x - \beta)\right)\, \frac{d\nu_{J,\beta}}{d\mathcal{L}^J}(x) \,\mathcal{L}^J(dx)}\\
&=&\int_{\mathbb{R}^{L+1}} (\mathcal{J}Q)^{-1} \zeta\left(J^{\frac{1}{2}}(\widetilde{\beta}-\beta)\right) 
  \int_{\mathbb{R}^J_{1,\widetilde{\beta}}} \frac{d\nu_{J,\beta}}{d\mathcal{L}^{J}}(x) \mathcal{H}^{J-L-1}(dx)
   \mathcal{L}^{L+1}(d\widetilde{\beta}) \\
&\stackrel{\eqref{mgrand}}{=}&\int_{\mathbb{R}^{L+1}} (\mathcal{J}Q)^{-1}\zeta\left(J^{\frac{1}{2}}(\widetilde{\beta}-\beta)\right) \\
  && \int_{\mathbb{R}^J_{1,\widetilde{\beta}}}\exp\left(-\sum_{j=1}^J\psi^*(\hat{m}_{j,\beta})+
  \sum_{j=1}^J\hat{m}_{j,\beta} x_j - H_J(x)\right) \mathcal{H}(dx)  \mathcal{L}(d\widetilde{\beta})\\
&\stackrel{\eqref{barpsi*_def},\eqref{dualmeanj}}{=}&\int_{\mathbb{R}^{L+1}} \,(\mathcal{J}Q)^{-1}\,\zeta\left(J^{\frac{1}{2}}(\widetilde{\beta}-\beta)\right) \\
  && \;\int_{\mathbb{R}^J_{1,\widetilde{\beta}}}\,  \exp\left(-J\bar{\psi}^*_J (\hat{\beta}^{max}) + J\, \langle\widetilde{\beta},\hat{\beta}^{max}\rangle
  - H_J(x)\right)\mathcal{H}(dx)  \mathcal{L}(d\widetilde{\beta})\\
&\stackrel{\eqref{d_bdhesspsiJ*}, \eqref{psiJ}}{=}& \int_{\mathbb{R}^{L+1}} \,(\mathcal{J}Q)^{-1}\,\zeta\left(J^{\frac{1}{2}}(\widetilde{\beta}-\beta)\right)\\
 && \quad \exp\left(J\left(\bar{\psi}_J (\beta)- \psi_J (\widetilde{\beta})\,+ \langle\widetilde{\beta} - \beta,\hat{\beta}^{max}\rangle\right)\right)
  \mathcal{L}^{L+1}(d\widetilde{\beta})\\
&=& \int_{\mathbb{R}^{L+1}} \zeta(u)\left(J^{\frac{L+1}{2}}\mathcal{J}Q\right)^{-1}\\
 && \quad \exp \left(J\left(\bar{\psi}_J (\beta)- \psi_J (J^{-\frac{1}{2}}u+\beta) +\langle J^{-\frac{1}{2}}u,\hat{\beta}^{max}\rangle\right)\right)
 \mathcal{L}^{L+1}(du).
\end{eqnarray*}
The identity \eqref{cramerrep} now follows from approximating the Dirac
mass $\delta_0$ in $\mathbb{R}^{L+1}$ by continuous test functions $\zeta_i$.
\end{proof}

Lemma~\ref{p_estimate_jacobien} is a simple consequence of the following statement which says that the operator~$Q_{1}JQ_{1}^t$ is close to the identity for large~$J$. 
\begin{lemma} \label{PNP}
It holds (with implicit constants depending on $L$):
\begin{align}
(Q_{1}JQ_{1}^t)_{l_1 l_2} &=\, \delta_{l_1 l_2} + O\left(\frac{1}{J^2}\right),  \label{approxid2} \\
\|Q_{1}JQ_{1}^t -\operatorname{id}_{Y_{1}^{DG}}\| &\lesssim  \frac{1}{J^2}. \label{approxid3}
\end{align}
\end{lemma}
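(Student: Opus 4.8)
The plan is to compute $Q_1 J Q_1^t$ essentially explicitly by realizing that $J Q_1^t$ is a Riemann-sum-type discretization of the identity embedding $Y_1^{DG} \hookrightarrow L^2(0,1)$, so that $Q_1 J Q_1^t$ is the composition of this discretization with the $L^2$-orthogonal projection back onto $Y_1^{DG}$, and the error is the quadrature error of the midpoint/averaging rule applied to products of polynomials of degree $\le L$. Concretely, using the coordinate representation of Definition~\ref{Q1_basis}, for the fixed orthonormal basis $\{f_l\}_{l=0}^L$ of $Y_1^{DG}$ one has $(Q_1 J Q_1^t)_{l_1 l_2} = \frac1J \sum_{j=1}^J \bar f_{l_1,j}\,\bar f_{l_2,j}$ where, by \eqref{e_adjoint_NQ_M^t_explicit} applied with $M=1$, $\bar f_{l,j} = J\int_{(j-1)/J}^{j/J} f_l(\theta)\,d\theta$ is the cell-average of $f_l$ on the $j$-th subinterval. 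Thus I would first show
\begin{align*}
(Q_1 J Q_1^t)_{l_1 l_2} \,=\, \sum_{j=1}^J J \Big(\int_{I_j} f_{l_1}\Big)\Big(\int_{I_j} f_{l_2}\Big), \qquad I_j := \Big[\tfrac{j-1}{J}, \tfrac{j}{J}\Big),
\end{align*}
and compare this to $\langle f_{l_1}, f_{l_2}\rangle_{L^2} = \sum_j \int_{I_j} f_{l_1} f_{l_2} = \delta_{l_1 l_2}$.

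The key step is then the local quadrature estimate: on each interval $I_j$ of length $h = 1/J$, the difference $h\big(\int_{I_j} f_{l_1}\big)\big(\int_{I_j} f_{l_2}\big) - \int_{I_j} f_{l_1} f_{l_2}$ equals (up to sign) the variance-type quantity $\int_{I_j}\int_{I_j} \tfrac{1}{2h}\big(f_{l_1}(s)-f_{l_1}(t)\big)\big(f_{l_2}(s)-f_{l_2}(t)\big)\,ds\,dt$; since $f_{l_1}, f_{l_2}$ are polynomials of degree $\le L$ with derivatives bounded by a constant depending only on $L$ on the unit interval (hence their rescaled restrictions have increments $O(h)$ across $I_j$), this is $O(h^3) = O(J^{-3})$ per interval. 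Summing over the $J = 1/h$ intervals gives a total error $O(J^{-2})$, which is \eqref{approxid2}. Then \eqref{approxid3} follows immediately since the matrix is $(L+1)\times(L+1)$ with $L$ fixed, so the operator norm is controlled (up to an $L$-dependent constant) by the max entry deviation.

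I expect the only mildly delicate point to be bookkeeping the rescaling: the polynomials $f_l$ are fixed on $[0,1]$ but one integrates over subintervals of length $1/J$, so one should either change variables to reduce everything to the unit interval with rescaled polynomials (whose sup-norm and derivative bounds are uniform in $J$ because the $f_l$ are fixed), or directly Taylor-expand $f_l$ around the midpoint of $I_j$ and track that the degree-$\le L$ structure makes the expansion exact with controlled coefficients. Neither is a real obstacle — the whole lemma is a clean Riemann-sum error estimate — so the main thing is just to present the midpoint-rule error identity cleanly and observe that the $O(J^{-3})$-per-cell bound, summed over $J$ cells, yields the claimed $O(J^{-2})$. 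Finally, Lemma~\ref{p_estimate_jacobien} follows from \eqref{approxid3} because $\mathcal{J}Q^2 = \det(Q_1 Q_1^t) = J^{-(L+1)}\det(Q_1 J Q_1^t) = J^{-(L+1)}(1 + O(J^{-2}))$, so $J^{(L+1)/2}\mathcal{J}Q = 1 + O(J^{-2})$, which is between $1/C$ and $C$ for $J$ large.
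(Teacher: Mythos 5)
Your proposal is correct and yields the claimed $O(J^{-2})$ rate by the same scaling mechanism as the paper: the error is a product of two first-differences of the fixed basis polynomials $f_l$, each contributing $O(1/J)$. The algebraic route is, however, mildly different. You extract this product via the per-cell covariance identity
$\frac{1}{h}\bigl(\int_{I_j}f_{l_1}\bigr)\bigl(\int_{I_j}f_{l_2}\bigr) - \int_{I_j}f_{l_1}f_{l_2} = -\frac{1}{2h}\int_{I_j}\int_{I_j}\bigl(f_{l_1}(s)-f_{l_1}(t)\bigr)\bigl(f_{l_2}(s)-f_{l_2}(t)\bigr)\,ds\,dt$
(minor typo in your text: it should be $\tfrac1h$, not $h$, in front of the product of integrals, consistent with the $J$ in your preceding display), then bound each increment by $O(h)$ and sum over the $J$ cells. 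The paper instead notes that the step function $\bar f_l$ is exactly the $L^2$-orthogonal projection of $f_l$ onto piecewise constants on the sub-mesh of size $1/J$, so by orthogonality $\langle f_{l_1},f_{l_2}\rangle_{L^2} - \langle\bar f_{l_1},\bar f_{l_2}\rangle_{L^2} = \langle f_{l_1}-\bar f_{l_1},\,f_{l_2}-\bar f_{l_2}\rangle_{L^2}$, and then bounds this by $\|f_{l_1}-\bar f_{l_1}\|_{L^\infty}\|f_{l_2}-\bar f_{l_2}\|_{L^\infty}\leq \frac{1}{J^2}\|f_{l_1}'\|_{L^\infty}\|f_{l_2}'\|_{L^\infty}$ using the mean value theorem. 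The orthogonality observation is a touch slicker in that it avoids any per-cell bookkeeping by giving one global identity, but both arguments are elementary quadrature-error estimates of the same strength. Your closing remark about $J^{(L+1)/2}\mathcal{J}Q = 1 + O(J^{-2})$ also matches how the paper deduces Lemma~\ref{p_estimate_jacobien} from \eqref{approxid3}.
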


\begin{proof}[Proof of Lemma~\ref{PNP}.]
By Definitions \ref{d_adjoint_NQMt} and \ref{Q1_basis}, for $l_1,l_2= 0,...,L$:
\begin{align}
(\bar f_l)_j &= (JQ_1^t f_l)_j = J \int_{\frac{j-1}{J}}^{\frac{j}{J}} f_l d\theta, \\
(Q_{1}JQ_{1}^t f_{l_1})_{l_2} &= (Q_1 \bar f_{l_1})_{l_2} = \frac{1}{J}\bar f_{l_1} \cdot \bar f_{l_2} = \langle \bar f_{l_1}, \bar f_{l_2} \rangle_{L^2}. \label{approxid1}
\end{align}
Since $\langle f_{l_1}, f_{l_2} \rangle_{L^2} = \delta_{l_1 l_2}$ and $\bar f_{l}$ is a piecewise average approximation of $f_l$, we expect
$Q_{1}JQ_{1}^t$ converges to $\id_{Y^{DG}}$. More precisely, 
\begin{align}\langle  f_{l_1}, f_{l_2} \rangle_{L^2} - \langle \bar f_{l_1}, \bar f_{l_2} \rangle_{L^2}  &= \langle f_{l_1} - \bar f_{l_1}, f_{l_2} - \bar f_{l_2} \rangle_{L^2}\\ 
&\leq |f_{l_1} - \bar f_{l_1}|_{L^\infty} |f_{l_2} - \bar f_{l_2}|_{L^\infty} \\
&\leq \left(\frac{1}{J} |f_{l_1}'|_{L^{\infty}}\right) \left(  \frac{1}{J} |f_{l_2}'|_{L^{\infty}} \right)  
\end{align}
where we used mean value theorem. This implies \eqref{approxid2}, and \eqref{approxid3} follows. \end{proof}

\begin{proof}[Proof of Lemma~\ref{p_estimate_jacobien}]
We note that $J^{\frac{L+1}{2}} \,\mathcal{J}Q \,=\, (\det Q_{1}JQ_{1}^t)^{\frac{1}{2}}$.
Estimate ~\eqref{approxid3} implies that there is
$J^* \in \mathbb{N}$ such that for $J\,\geq\,J^*$:
\begin{equation}
\frac{1}{C} \,\leq\, J^{\frac{L+1}{2}} \,\mathcal{J}Q \,\leq\, C.
\end{equation}
\end{proof}

\subsubsection{Proof of Proposition \ref{estimates on g}.}\label{s_p_local_cramer}

We now begin with the rather long and technical proof of Proposition \ref{estimates on g}. We recommend the interested reader to first read the proof of Proposition 31 in \cite{GORV}. As in the usual proof of the (local) central limit theorem, we use independence and the Fourier transform to
obtain an explicit formula for $\frac{dg_{J,\beta}}{d\mathcal{L}^{L+1}}(0)$. This is the starting point of our further
analysis.
\begin{lemma}\label{gexplicit}
Let
\begin{equation}
 h(m,z)\,:=\, e^{-imz} \int_{\mathbb{R}} e^{iz x} \,\mu_m(dx) \label{h}
\end{equation}
be the characteristic function of the centered version of the measure $\mu_m$ given by~\eqref{e_d_mu_m}. Then $\frac{dg_{J,\beta}}{d\mathcal{L}^{L+1}}(0)$ can be represented as
\begin{equation}
 \frac{dg_{J,\beta}}{d\mathcal{L}^{L+1}}(0)
 \,=\,\left(\frac{1}{2\pi}\right)^{L+1}\,\int_{\mathbb{R}^{L+1}} \, \prod_{j=1}^{J} \,h(m_{j,\beta}\,,\,
 J^{-\frac{1}{2}}\,\xi\cdot\gamma^j) \,d\xi. \label{repg}
\end{equation}
\end{lemma}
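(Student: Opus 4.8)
The plan is to recognize $g_{J,\beta}$ as the distribution of a normalized sum of independent real random variables carrying vector-valued weights, to compute its characteristic function by independence, and to invert the Fourier transform at the origin. First I would record that under $\nu_{J,\beta}$ (cf.~\eqref{mgrand}) the coordinates $x_1,\dots,x_J$ are independent with $x_j\sim\mu_{m_{j,\beta}}$ (cf.~\eqref{e_d_mu_m}), and that by Definition~\ref{Q1_basis} together with the identity $\int (Q_1x)_l\,d\nu_{J,\beta}=\beta_l$ established just before Lemma~\ref{cramer} one has, componentwise,
\begin{equation*}
J^{\frac{1}{2}}\bigl(Q_1 x - \beta\bigr) \,=\, J^{-\frac{1}{2}}\sum_{j=1}^J (x_j - m_{j,\beta})\,\gamma^j .
\end{equation*}
Thus $g_{J,\beta}$ is the law of $J^{-\frac{1}{2}}\sum_j (x_j - m_{j,\beta})\gamma^j$, a sum of independent $\mathbb{R}^{L+1}$-valued vectors. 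Since $h(m,z)=\int_{\mathbb{R}}e^{iz(x-m)}\,\mu_m(dx)$ is precisely the characteristic function of the centered law $\mu_m$ (cf.~\eqref{h}), independence gives, for every $\xi\in\mathbb{R}^{L+1}$,
\begin{equation*}
\widehat{g_{J,\beta}}(\xi) \,=\, \prod_{j=1}^J \mathbb{E}\Bigl[\exp\bigl(iJ^{-\frac{1}{2}}(\xi\cdot\gamma^j)(x_j-m_{j,\beta})\bigr)\Bigr] \,=\, \prod_{j=1}^J h\bigl(m_{j,\beta},\,J^{-\frac{1}{2}}\,\xi\cdot\gamma^j\bigr).
\end{equation*}

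Next I would justify Fourier inversion at $\xi=0$, which requires that $g_{J,\beta}$ have a continuous density and that $\widehat{g_{J,\beta}}\in L^1(\mathbb{R}^{L+1})$. Absolute continuity, with a continuous (indeed bounded) density, follows exactly as in the proof of Lemma~\ref{cramer}: the co-area formula for $Q_1$ expresses the density of $g_{J,\beta}$ at a point as an integral of the smooth, rapidly decaying density of $\nu_{J,\beta}$ over a fiber of $Q_1$, which is finite and depends continuously on the point once $Q_1$ is surjective (Lemma~\ref{PNP}). For the integrability of $\widehat{g_{J,\beta}}$ I would exploit the structure \eqref{e_structure_of_ss_potential}--\eqref{e_assumptions_on_ss_perturbation}: completing the square in the exponent of \eqref{e_d_mu_m} shows that the centered density of $\mu_m$ is a unit-variance Gaussian times a factor that, together with its first two derivatives, is bounded above and below uniformly in $m$; hence this density and its second derivative lie in $L^1(\mathbb{R})$ uniformly in $m$, so that $|h(m,z)|\le C\min\{1,z^{-2}\}$ with $C$ independent of $m$. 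Since $\tfrac1J\sum_{j=1}^J\gamma^j\otimes\gamma^j=Q_1JQ_1^t=\mathrm{id}+O(J^{-2})$ by \eqref{approxid3}, the vectors $\gamma^j$ span $\mathbb{R}^{L+1}$ and the form $\xi\mapsto\sum_j(\xi\cdot\gamma^j)^2$ is comparable to $J|\xi|^2$; an elementary estimate (as in~\cite{GORV} for $L=0$) then shows that $\prod_{j=1}^J h(m_{j,\beta},J^{-\frac{1}{2}}\xi\cdot\gamma^j)$ decays like $e^{-c|\xi|^2}$ for $|\xi|\lesssim\sqrt{J}$ and super-polynomially for larger $|\xi|$, hence is integrable over $\mathbb{R}^{L+1}$ for $J$ large.

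Granting this, the Fourier inversion theorem yields
\begin{equation*}
\frac{dg_{J,\beta}}{d\mathcal{L}^{L+1}}(0) \,=\, \Bigl(\frac{1}{2\pi}\Bigr)^{L+1}\int_{\mathbb{R}^{L+1}} \widehat{g_{J,\beta}}(\xi)\, d\xi \,=\, \Bigl(\frac{1}{2\pi}\Bigr)^{L+1}\int_{\mathbb{R}^{L+1}} \prod_{j=1}^J h\bigl(m_{j,\beta},\,J^{-\frac{1}{2}}\,\xi\cdot\gamma^j\bigr)\, d\xi ,
\end{equation*}
which is \eqref{repg}; the right-hand integral is real because $h(m,-z)=\overline{h(m,z)}$ makes the integrand at $-\xi$ the complex conjugate of the integrand at $\xi$. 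The only genuinely non-routine point is the integrability of the characteristic function: that is where the surjectivity of $Q_1$ (equivalently, Lemma~\ref{PNP}) and the ``Gaussian plus bounded $C^2$ perturbation'' structure of $\psi$ must be combined, much as in the proof of a local central limit theorem; everything else is bookkeeping with independence and the Fourier transform.
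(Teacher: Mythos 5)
Your proposal is correct and follows essentially the same route as the paper: both compute the characteristic function of $g_{J,\beta}$ as a product of the $h(m_{j,\beta},J^{-1/2}\xi\cdot\gamma^j)$ by independence and then apply Fourier inversion at the origin. The only difference is that you spell out the integrability of $\widehat{g_{J,\beta}}$ and the continuity of the density needed to justify the inversion, which the paper passes over silently at this point (these facts are in effect established only later, in the proof of Proposition~\ref{estimates on g}); this is a reasonable addition of rigor rather than a different proof.
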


\begin{proof}[Proof of Lemma \ref{gexplicit}.]
Applying Fourier transform,
\begin{align} (2\pi)&^{L+1} \lefteqn{\frac{dg_{J,\beta}}{d\mathcal{L}^{L+1}}(0) \,=\,  \int_{\mathbb{R}^{L+1}} \, \int_{\mathbb{R}^{L+1}}
  \exp\,(i\,\xi\cdot u)\,\frac{dg_{J,\beta}}{d\mathcal{L}^{L+1}}(u)\, du \,d\xi}\\
 &= \int_{\mathbb{R}^{L+1}} \, \int_{\mathbb{R}^J} \exp\left(i\,\xi\cdot\,J^{\frac{1}{2}}(Q_{1}x-\beta)\right)
 \nu_{J,\beta}(dx) \,d\xi\\
 &= \int_{\mathbb{R}^{L+1}} \, \int_{\mathbb{R}^J} \exp\left(i\,\xi\cdot\,J^{\frac{1}{2}}(Q_{1}x-\int Q_1 \tilde{x} \nu_{J, \beta}(d\tilde{x}))\right)
 \nu_{J,\beta}(dx) \,d\xi\\
 &\stackrel{\eqref{mgrand}}{=}  \int_{\mathbb{R}^{L+1}} \, \prod_{j=1}^{J}\, \int_{\mathbb{R}}
      \exp \left(i\, J^{-\frac{1}{2}}\, (JQ_1^t \xi)_j \, (x_j - m_{j,\beta})\right)\, \mu_{m_{j,\beta}}(dx_j)\, d\xi \\
&\stackrel{\eqref{h}}{=} \int_{\mathbb{R}^{L+1}} \,
  \prod_{j=1}^{J}\, h(m_{j,\beta}\,,\,J^{-\frac{1}{2}}\,\xi\cdot\gamma^{j}) \,d\xi.
\end{align}
as desired.
\end{proof}
To continue from formula \eqref{repg} we need two ingredients. The first ingredient is a collection of elementary
properties of the function $h$. 
\begin{lemma} \label{properties of h}
We have the following bounds and decay properties for the function $h$ and its derivatives:
\begin{equation}
|h(m,z)| \leq 1. \label{globalboundh}
\end{equation}
Given $\varepsilon > 0$, there is $C_{\varepsilon}<\infty$ (uniform in $m$), such that for $|z|\geq \varepsilon$:
\begin{equation}
|h(m,z)| \,\leq\, \frac{1}{1 + |z|\,C_{\varepsilon}^{-1}}. \label{decayh}
\end{equation}
For all $z \in \mathbb{R}$, $m\in\mathbb{R}$:
\begin{equation}
\left|\frac{\partial h}{\partial m} (m,z)\right| \leq C (1+ |z|), \quad
\left|\frac{\partial^2 h}{\partial m^2} (m,z)\right| \leq C (1+ |z|^2).  \label{secderh}
\end{equation}
There is $\delta_0> 0$ such that for $z \in [-\delta_0,\delta_0]$, $m\in\mathbb{R}$, we can express $h$ as
\begin{equation}
h(m,z) = \exp(-z^2\,h_2(m,z)). \label{h2}
\end{equation}
Here, $h_2$ is a (complex-valued) function satisfying
\begin{align}
 h_2(m,0) = \frac{\operatorname{Var}(\mu_m)}{2}, \ 0 <  c \leq \text{Re } h_2(m,z) \leq C \ \forall z\in [-\delta_0,\delta_0], \label{h2zero} \\
\left|\frac{\partial h_2}{\partial z} (m,z)\right| \leq C ,\quad
\left|\frac{\partial h_2}{\partial m} (m,z)\right| \leq C ,\quad \left|\frac{\partial^2 h_2}{\partial m^2} (m,z)\right| \leq C .\label{derhinside}
\end{align}
\end{lemma}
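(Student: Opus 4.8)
The plan is to exploit that the single-site measure $\mu_m$ is a standard Gaussian centered at $b_m := \hat z_m - a$, reweighted by a bounded factor. Writing $\psi(z)=\tfrac12 z^2+az+\delta\psi(z)$ one reads off $\frac{d\mu_m}{dz}(x)=A_m\,\phi_{b_m}(x)\,e^{-\delta\psi(x)}$, where $\phi_{b_m}$ is the density of $\mathcal{N}(b_m,1)$ and $e^{-\|\delta\psi\|_\infty}\le A_m\le e^{\|\delta\psi\|_\infty}$. Hence $\mu_m \le e^{2\|\delta\psi\|_\infty}\mathcal{N}(b_m,1)$ as measures, and $|b_m-m|=\bigl|\int(b_m-x)\,\mu_m(dx)\bigr|\le\int|x-b_m|\,\mu_m(dx)\le C$; consequently every centered moment $\int|x-m|^k\,\mu_m(dx)$ is bounded uniformly in $m$, so that the characteristic function $h(m,\cdot)$ of $\tilde\mu_m$, the law of $x-m$ under $\mu_m$, satisfies $|\partial_z^k h(m,z)| \le \int|y|^k\,\tilde\mu_m(dy)\le C_k$ uniformly in $m,z$; in particular $|h|\le1$, which is~\eqref{globalboundh}. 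Also, by the Landau interpolation inequality $\|\delta\psi'\|_\infty<\infty$, so $\rho_m := \frac{d\mu_m}{dz}$ has $\rho_m'(x)=\rho_m(x)(b_m-x-\delta\psi'(x))$ and therefore $\|\rho_m'\|_{L^1}\le\int|x-b_m|\,\mu_m(dx)+\|\delta\psi'\|_\infty\le C$ uniformly in $m$. Finally, from Lemma~\ref{psi*}, $\partial_m\hat z_m=((\psi^*)''(\hat z_m))^{-1}$ and $\partial_m^2\hat z_m=-(\psi^*)'''(\hat z_m)((\psi^*)''(\hat z_m))^{-3}$ are bounded uniformly in $m$ (and $(\psi^*)''(\hat z_m)=\operatorname{Var}(\mu_m)$).

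The decay estimate~\eqref{decayh} rests on splitting off a Gaussian part. With $p:=e^{-2\|\delta\psi\|_\infty}\in(0,1)$, the pointwise bound $\rho_m\ge p\,\phi_{b_m}$ lets us write $\mu_m = p\,\mathcal{N}(b_m,1) + (1-p)\,\nu_m$ for a probability measure $\nu_m$, whence $|h(m,z)| \le p\,e^{-z^2/2}+(1-p) = 1-p(1-e^{-z^2/2})$; for $|z|\ge\varepsilon$ this gives $|h(m,z)|\le\theta_\varepsilon := 1-p(1-e^{-\varepsilon^2/2})<1$, uniformly in $m$. On the other hand, integration by parts in $h(m,z)=\int e^{izy}\rho_m(y+m)\,dy$ gives $|z|\,|h(m,z)|\le \|\rho_m'\|_{L^1}\le C'$ for all $z$. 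Combining, for $|z|\ge\varepsilon$ one has $|h(m,z)|\,(1+|z|/C_\varepsilon) \le \theta_\varepsilon + C'/C_\varepsilon = 1$ with the choice $C_\varepsilon := C'/(1-\theta_\varepsilon)$, which is~\eqref{decayh}.

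For~\eqref{secderh}, differentiating $\rho_m(x)=\exp(-\psi^*(\hat z_m)+\hat z_m x-\psi(x))$ in $m$ and using $(\psi^*)'(\hat z_m)=m$ gives $\partial_m\rho_m=(\partial_m\hat z_m)(x-m)\rho_m$. Substituting into $h(m,z)=\int e^{iz(x-m)}\rho_m(x)\,dx$ and simplifying yields
\begin{equation*}
\partial_m h(m,z) = -iz\,h(m,z) - i\,(\partial_m\hat z_m)\,\partial_z h(m,z),
\end{equation*}
and applying $\partial_m$ once more, with $c_1 := \partial_m\hat z_m$, $c_2 := \partial_m^2\hat z_m$,
\begin{equation*}
\partial_m^2 h = -(z^2+c_1)\,h - (2c_1 z + ic_2)\,\partial_z h - c_1^2\,\partial_z^2 h .
\end{equation*}
The bounds $|h|\le1$, $|\partial_z h|,|\partial_z^2 h|\le C$ and $|c_1|,|c_2|\le C$ from the first paragraph then give $|\partial_m h|\le C(1+|z|)$ and $|\partial_m^2 h|\le C(1+|z|^2)$.

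Finally, for~\eqref{h2}, \eqref{h2zero}, \eqref{derhinside}: since all moments of $\mu_m$ are finite and uniformly bounded, $h$ is $C^\infty$ jointly in $(m,z)$ with $h(m,0)=1$, $\partial_z h(m,0)=0$ and $\partial_z^2 h(m,0)=-v_m$, where $v_m:=\operatorname{Var}(\mu_m)$. A third-order Taylor expansion in $z$ with the uniform bound on $\int|y|^3\,\tilde\mu_m(dy)$ shows that for a small enough $\delta_0>0$ one has $|h(m,z)-1|<\tfrac12$ on $[-\delta_0,\delta_0]$, uniformly in $m$; hence the principal logarithm $\operatorname{Log}h(m,z)$ is well-defined and smooth there, and setting $h_2(m,z):=-z^{-2}\operatorname{Log}h(m,z)$ for $z\ne0$ and $h_2(m,0):=v_m/2$ realizes~\eqref{h2} with $h_2(m,0)=\operatorname{Var}(\mu_m)/2\in(2c,C/4)$ by Lemma~\ref{psi*}. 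Since $|h|=\exp(-z^2\operatorname{Re}h_2)$, one has $\operatorname{Re}h_2=-z^{-2}\ln|h|$, and the uniform expansion $\ln|h(m,z)|=-\tfrac{v_m}{2}z^2+O(|z|^3)$ gives $\operatorname{Re}h_2(m,z)=\tfrac{v_m}{2}+O(|z|)$, so after shrinking $\delta_0$ one obtains $0<c\le\operatorname{Re}h_2\le C$ on $[-\delta_0,\delta_0]$. For the derivative bounds, with $L:=-\operatorname{Log}h$ (so $L(m,0)=0$, $\partial_z L(m,0)=0$), Taylor's formula with integral remainder in $z$ gives $h_2(m,z)=\int_0^1(1-t)\,\partial_z^2 L(m,tz)\,dt$; differentiating under the integral, $\partial_z h_2$, $\partial_m h_2$ and $\partial_m^2 h_2$ are integrals over $t\in[0,1]$ of $\partial_z^3 L$, $\partial_z^2\partial_m L$ and $\partial_z^2\partial_m^2 L$ at $(m,tz)$, respectively. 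Each of these is a rational expression in $h$ and its $(m,z)$-derivatives with a power of $h$ in the denominator; since $|h|\ge\tfrac12$ on $[-\delta_0,\delta_0]$ and the numerators are controlled by the derivative bounds of the previous paragraphs (the $\partial_z^2\partial_m^2 L$ term brings in $\partial_z^4 h$, handled by the uniform fourth-moment bound), all three are bounded uniformly. I expect the main obstacle to be precisely the uniformity in $m$ in~\eqref{decayh}: each absolutely continuous $\mu_m$ has $|h(m,z)|<1$ for $z\ne0$, but controlling $\sup_m|h(m,z)|$ requires structure, supplied here by the Gaussian-domination/mixture decomposition; the remainder, including the local expansion, is lengthy but mechanical bookkeeping.
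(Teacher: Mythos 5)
The paper itself does not supply a proof here; it simply refers the reader to [GORV], so there is no ``paper's own proof'' to compare against. Your self-contained argument is correct, and the central ideas are the ones one would expect to find in [GORV]: writing $\rho_m = A_m\,\phi_{b_m}\,e^{-\delta\psi}$ to obtain two-sided Gaussian domination with a uniform constant, using the upper bound to get uniform moment bounds (hence $|\partial_z^k h|\le C_k$ and \eqref{globalboundh}), and using the lower bound $\rho_m\ge p\,\phi_{b_m}$ to decompose $\mu_m$ as a Gaussian-plus-remainder mixture, which is exactly what delivers the $m$-uniformity in \eqref{decayh} (the real crux, as you correctly flag). The integration-by-parts bound $|z||h|\le\|\rho_m'\|_{L^1}$, combined with the mixture bound $|h|\le\theta_\varepsilon<1$ on $|z|\ge\varepsilon$, gives the claimed rational decay; the algebraic combination with $C_\varepsilon := C'/(1-\theta_\varepsilon)$ is tidy and correct. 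For \eqref{secderh}, the identities $\partial_m h = -iz\,h - ic_1\partial_z h$ and the one for $\partial_m^2 h$ follow from $\partial_m\rho_m = c_1(x-m)\rho_m$ and check out; the bounds $|c_1|,|c_2|\le C$ come from Lemma~\ref{psi*} as you say. For the $h_2$ part, the argument via the principal logarithm and the Taylor-with-integral-remainder representation $h_2(m,z)=\int_0^1(1-t)\,\partial_z^2 L(m,tz)\,dt$ is clean and does handle the $m$- and $z$-derivative bounds: the needed denominator control $|h|\ge 1/2$ on $[-\delta_0,\delta_0]$ and the uniform bounds on $\partial_z^j h$, $\partial_z^j\partial_m h$, $\partial_z^j\partial_m^2 h$ for the small range of $j$ that actually appears (up to $\partial_z^4 h$) are all in place.

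Two small points worth making explicit if you write this up. First, $\|\delta\psi'\|_\infty<\infty$ is used in the $L^1$ bound on $\rho_m'$, and it does \emph{not} follow from $\delta\psi\in C^2$ alone at a single point; it is the combination $\|\delta\psi\|_\infty+\|\delta\psi''\|_\infty<\infty$ that gives it via Landau/Kolmogorov, which is precisely what assumption \eqref{e_assumptions_on_ss_perturbation} provides — worth stating. Second, when you invoke ``$h$ is $C^\infty$ jointly,'' what you actually use (and prove) is smoothness to the finite order needed; since you derive $\partial_m h$ and $\partial_m^2 h$ explicitly and then apply $\partial_z$ at most a few times, this is fine, but the phrasing could be tightened to avoid promising more regularity than the argument establishes.
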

The estimates of Lemma~\eqref{properties of h} should not be surprising as
$h(m,\cdot)$ is just the Fourier transform of $\mu_m$ which belongs to the exponential family of a perturbed standard Gaussian measure. For the proofs, we refer
the reader to~\cite{GORV}.\medskip

The second ingredient for the proof of Proposition~\ref{estimates on g} is a lower bound on the inner products $\xi \cdot \gamma^j$ which enter into the second argument of $h$. This is new compared to~\cite{GORV}.
Note that the vectors $\{\gamma^j\}_{j=1}^J$ form a piecewise constant approximation of the smooth curve
\begin{equation}
\gamma\,: [0,1] \rightarrow \mathbb{R}^{L+1}\,,\: \gamma(t) \,=\,(f_0(t), f_1(t), \cdots, f_L(t)).
\end{equation}
in the sense that $\forall J \in \mathbb{N}$ (with implicit constants depending on $L$),
	\begin{align}
	\max_{1\leq j\leq J} \sup_{t\in [\frac{j-1}{J}, \frac{j}{J}]} |\gamma^j - \gamma(t)| \lesssim \frac{1}{J} \quad \mbox{and} \quad
	\max_{1\leq j\leq J} |\gamma^j| \lesssim 1, \label{gammaj_approx}
	\end{align}
The proof of \eqref{gammaj_approx} is similar to that of \eqref{approxid2}. It turns out that all we need is the following elementary geometric property of the curve $\gamma$ and its piecewise approximation $\{\gamma^j\}_{j=1}^J$.
\begin{lemma} \label{gamma} Fix $L+1$ disjoint closed subintervals of $[0, 1]$ of length $1/(L+1.5)$, and denote them $I_k$, $1\leq k \leq L+1$. For $\xi\in\mathbb{R}^{L+1}$, define
\begin{equation*}
\omega_k(\xi)\,:=\, \inf_{t\in I_k} |\xi\cdot\gamma(t)|\, \quad \mbox{ and } \quad \omega_{k,J}(\xi)\,:=\, \min_{j: \frac{j}{J}\in I_k} |\xi\cdot\gamma^j|
\end{equation*}
There is a constant $c_{\gamma} > 0$ such that
\begin{equation}
\inf_{\xi \in S^L} \,\max_{k=1,...,L+2}\, \omega_k(\xi) \,\geq\, 2\,c_{\gamma} \label{gammacont}
\end{equation}
where $S^L$ denotes the unit sphere in $\mathbb{R}^{L+1}$. From \eqref{gammacont}, it follows by approximation (cf. \eqref{gammaj_approx}) that there exists an integer $J_{\gamma}$ such that for $J\geq J_{\gamma}$, each interval $I_k$ contains at least $J/(L+2)$ spins and 
\begin{align}
 \,\max_{k=1,...,L+2}\, \omega_{k,J}(\xi) \geq c_{\gamma}. \label{gammadisc}
\end{align}
\end{lemma}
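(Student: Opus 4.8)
The plan is to first establish the continuous bound \eqref{gammacont} by combining compactness of $S^L$ with the elementary fact that a nonzero polynomial of degree $\le L$ has at most $L$ roots, and then to deduce the discrete bound \eqref{gammadisc} (and the spin count) from \eqref{gammacont} via the uniform approximation \eqref{gammaj_approx}.

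For \eqref{gammacont}, define $\Phi(\xi):=\max_{1\le k\le L+1}\omega_k(\xi)$. First I would observe that $\Phi$ is continuous on $S^L$: for fixed $t$ the map $\xi\mapsto|\xi\cdot\gamma(t)|$ is Lipschitz with constant $|\gamma(t)|\le\sup_{[0,1]}|\gamma|<\infty$ uniformly in $t$, so each $\omega_k=\inf_{t\in I_k}|\xi\cdot\gamma(t)|$, and hence $\Phi$, is Lipschitz. Next I claim $\Phi(\xi)>0$ for every $\xi\in S^L$. Suppose not: then $\omega_k(\xi)=0$ for all $k$, and since each $I_k$ is compact and $t\mapsto|\xi\cdot\gamma(t)|$ is continuous, the infimum is attained, giving $t_k\in I_k$ with $\xi\cdot\gamma(t_k)=0$. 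The intervals $I_k$ are pairwise disjoint, so $t_1,\dots,t_{L+1}$ are $L+1$ distinct zeros of the polynomial $p(t):=\xi\cdot\gamma(t)=\sum_{l=0}^L\xi_l f_l(t)$, whose degree is $\le L$; therefore $p\equiv 0$, and since $\{f_l\}_{l=0}^L$ is the basis of $Y_1^{DG}$ fixed in Definition~\ref{Q1_basis}, this forces $\xi=0$, contradicting $\xi\in S^L$. Thus $\Phi>0$ on the compact set $S^L$, and it suffices to take $2c_\gamma:=\min_{\xi\in S^L}\Phi(\xi)>0$.

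To obtain \eqref{gammadisc}, I would invoke \eqref{gammaj_approx}: there is a constant $C$ with $|\gamma^j-\gamma(t)|\le C/J$ whenever $t\in[\tfrac{j-1}{J},\tfrac{j}{J}]$, in particular $|\gamma^j-\gamma(j/J)|\le C/J$. Fix $\xi\in S^L$ and pick $k^*$ with $\omega_{k^*}(\xi)\ge 2c_\gamma$. For any spin $j$ with $j/J\in I_{k^*}$ one has $|\xi\cdot\gamma(j/J)|\ge\omega_{k^*}(\xi)\ge 2c_\gamma$, hence
\[
|\xi\cdot\gamma^j|\ \ge\ |\xi\cdot\gamma(j/J)|-|\gamma^j-\gamma(j/J)|\ \ge\ 2c_\gamma-\frac{C}{J}\ \ge\ c_\gamma
\]
as soon as $J\ge C/c_\gamma$. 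Consequently $\omega_{k^*,J}(\xi)\ge c_\gamma$, so $\max_k\omega_{k,J}(\xi)\ge c_\gamma$. For the spin count, each $I_k$ has length $1/(L+1.5)$ and the points $j/J$ are spaced $1/J$ apart, so $I_k$ contains at least $J/(L+1.5)-1$ of them, which exceeds $J/(L+2)$ once $J$ is large; choosing $J_\gamma$ large enough to absorb both requirements finishes the proof.

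The argument has no deep obstacle; the only points that need care are bookkeeping ones — ensuring the infimum defining $\omega_k(\xi)$ is genuinely attained (so that $\omega_k(\xi)=0$ really produces a zero of $p$ inside $I_k$) and that the resulting $L+1$ zeros are honestly distinct. Both are guaranteed by the compactness and pairwise disjointness of the $I_k$, and it is precisely the ``$\le L$ roots'' constraint that dictates using $L+1$ prescribed intervals of length $1/(L+1.5)$ rather than $1/(L+1)$, so that they still fit disjointly inside $[0,1]$ with room to spare.
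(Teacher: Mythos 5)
Your proof is correct and follows essentially the same route as the paper's: continuity of $\omega_k$ plus compactness of $S^L$ reduce \eqref{gammacont} to showing $\max_k\omega_k(\xi)>0$ for each fixed $\xi$, which you get from the fact that $\xi\cdot\gamma$ is a nonzero polynomial of degree $\le L$ and thus has at most $L$ zeros, so it cannot vanish on all $L+1$ disjoint intervals. The paper leaves the passage from \eqref{gammacont} to \eqref{gammadisc} as a one-line ``follows by approximation (cf.~\eqref{gammaj_approx})''; you have simply supplied that triangle-inequality step and the elementary spin count explicitly, which matches the intended argument.
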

Denote by $k(\xi)$ the index of the (first) interval $I_k$ that maximizes $\omega_{k, J}$. 
\begin{proof}[Proof of Lemma~\ref{gamma}.]

The function $\omega_k:\mathbb{R}^{L+1}\rightarrow \mathbb{R}$ is continuous for all $k=1,...,L+1$, so the same is true for
$\omega :=\max_k \omega_k$.
As $S^L$ is compact, for (\ref{gammacont}) it only remains to show that
\begin{equation*}
\forall \xi \in S^L:\, \omega(\xi) > 0.
\end{equation*}
But for $\xi \in S^L$, $\xi \cdot \gamma$ is, by independence of the polynomials $f_l$, a polynomial of degree $\leq L$ that is not
identically zero. Hence, it has at most $L$ zeros in $[0,1]$, which implies $\omega(\xi) > 0$ by pigeonhole principle. \end{proof}

The strategy for the rest of the proof is to split the integral on the right hand side of \eqref{repg},
i.e. $$(2\pi)^{L+1}\,\frac{dg_{J,\beta}}{d\mathcal{L}^{L+1}}(0)
\,=\,\int_{\mathbb{R}^{L+1}} \, \prod_{j=1}^{J} \,h(m_{j,\beta}\,,\,
J^{-\frac{1}{2}}\,\xi\cdot\gamma^j) \,d\xi,$$
 into an inner and an outer part. 
We will show that for sufficiently small $\delta$ and for sufficiently large $J$ (depending on $\delta$)
\begin{align}
&  \lim_{J \to \infty}\int_{\left\{|\xi|> J^{\frac{1}{2}}\delta\right\}} \, \prod_{j=1}^{J} |\,h(m_{j,\beta}\,,\,J^{-\frac{1}{2}}\,\xi\cdot\gamma^{j})| \;d\xi = 0 , \label{outerint} \\
&\int_{\left\{|\xi|\leq J^{\frac{1}{2}}\delta\right\}} \, \prod_{j=1}^{J} |\,h(m_{j,\beta}\,,\,J^{-\frac{1}{2}}\,\xi\cdot\gamma^{j})| \;d\xi
 \;\leq\; C , \label{innerintup}\\
& \left|\,\int_{\left\{|\xi|\leq J^{\frac{1}{2}}\delta\right\}} \, \prod_{j=1}^{J} \,h(m_{j,\beta}\,,\,J^{-\frac{1}{2}}\,\xi\cdot\gamma^{j}) \;d\xi \right|
 \;\geq\; \frac{1}{C} \label{innerintlow}\\
 & \lim_{J \to \infty} \left\|\,\operatorname{Hess}\, \int_{\left\{|\xi|> J^{\frac{1}{2}}\delta\right\}} \,\prod_{j=1}^{J} \,h(m_{j,\beta}\,,\,J^{-\frac{1}{2}}\,\xi\cdot\gamma^{j})\, \,d\xi \;\right\| = 0, \label{outerhess}\\
 &\left\|\,\operatorname{Hess}\,\int_{\left\{|\xi|\leq J^{\frac{1}{2}}\delta\right\}} \, \prod_{j=1}^{J} \,h(m_{j,\beta}\,,\,J^{-\frac{1}{2}}\,\xi\cdot\gamma^{j})\, \,d\xi
   \;\right\|\;\leq\; C.\label{innerhess}
\end{align}
The bounds for $\frac{dg_{J,\beta}}{d\mathcal{L}^{L+1}}(0)$ in \eqref{boundg} follows 
from \eqref{outerint} - \eqref{innerintlow}. The bounds for the Hessian in \eqref{boundhessg} follows from
\eqref{outerhess} and \eqref{innerhess}.
The bounds for the gradient in \eqref{boundnablag} is then immediate from interpolation.\medskip

Let us assume for the rest of the proof that $J\geq J_{\gamma}$. First consider
the outer integral from \eqref{outerint}. Recall that the interval $I_{k(\xi)}$ contains at least $J/(L+2)$ spins (cf. Lemma \ref{gamma}). With this and the decay property \eqref{decayh} of
$h$ in mind, we set $\varepsilon \,:=\, \delta\, c_{\gamma}$ and compute:
\begin{eqnarray*}
 \lefteqn{\int_{\left\{|\xi|\,>\, J^{\frac{1}{2}}\delta\right\}} \, \prod_{j=1}^{J} |\,h(m_{j,\beta}\,,\,J^{-\frac{1}{2}}\,\xi\cdot\gamma^{j})| \,d\xi}\\
&\stackrel{\eqref{globalboundh}}{\leq}& \int_{\left\{|\xi|\,>\, J^{\frac{1}{2}}\delta\right\}} \, \prod_{j: \frac{j}{J} \in I_{k(\xi)}}
   |\,h(m_{j,\beta}\,,\,J^{-\frac{1}{2}}\,\xi\cdot\gamma^{j})| \,d\xi\\
&\stackrel{\eqref{decayh}, \eqref{gammadisc}}{\leq}& \int_{\left\{|\xi|\,>\, J^{\frac{1}{2}}\delta\right\}} \, \prod_{j: \frac{j}{J} \in I_{k(\xi)}}
 \frac{1}{1+J^{-\frac{1}{2}} \,|\xi\cdot\gamma^{j}|\,C_{\varepsilon}^{-1}} \,d\xi\\
&\stackrel{\eqref{gammadisc}}{\leq}& \left(\frac{1}{1+\,\varepsilon\,C_{\varepsilon}^{-1}}\right)^{\frac{J}{L+2}-L-2}\;
       J^{\frac{L+1}{2}}\,\int_{\{|\xi|\,>\, \delta\}} \left(\frac{1}{1+\,c_{\gamma} C_{\varepsilon}^{-1} |\xi|}\right)^{L+2} \, d\xi.  
\end{eqnarray*}
This goes to $0$ as $J \rightarrow \infty$. \\

For the inner integral from \eqref{innerintup} we use the representation of $h$ via $h_2$ in Lemma \ref{properties of h}. For this purpose, we assume from now on that
\begin{equation}
\delta \,\leq\, \delta_0 \quad \mbox{and} \quad \delta \,\max_{j=1,...,J} |\gamma^j| \leq\, \delta_0. \label{delta0}
\end{equation}
We compute :
\begin{eqnarray*}
\lefteqn{\int_{\left\{|\xi|\,\leq\, J^{\frac{1}{2}}\delta\right\}} \, \prod_{j=1}^{J} |\,h(m_{j,\beta}\,,\,J^{-\frac{1}{2}}\,\xi\cdot\gamma^{j})| \;d\xi}\\
&\stackrel{\eqref{h2},\eqref{delta0}}{=}& \int_{\left\{|\xi|\,\leq\, J^{\frac{1}{2}}\delta\right\}} \, \prod_{j=1}^{J}\,
     |\exp\,(-\,(J^{-\frac{1}{2}}\,\xi\cdot\gamma^{j})^2 \; h_2(m_{j,\beta}\,,\,J^{-\frac{1}{2}}\,\xi\cdot\gamma^{j}))| \;d\xi\\
&\stackrel{\eqref{gammadisc},\eqref{globalboundh},\eqref{h2zero}}{\leq}& \int_{\left\{|\xi|\,\leq\, J^{\frac{1}{2}}\delta\right\}}\,
 \prod_{\left\{j\,|\,\frac{j}{J} \,\in\, I_{k(\xi)}\right\}}
    \,\exp \left(-c\,(J^{-\frac{1}{2}}\,c_{\gamma} |\xi|)^2\right) \;d\xi\\
&=& \int_{\left\{|\xi|\,\leq\, J^{\frac{1}{2}}\delta\right\}} \, \exp\left(-\,c \,J^{-1}\,\frac{J}{L+2}\, c_{\gamma}^2\, |\xi|^2\right) \;d\xi \\
&\leq& \int_{\mathbb{R}^{L+1}} \,\exp\left(-\,c \,\frac{1}{L+2} \,c_{\gamma}^2 \,|\xi|^2\right) \; d\xi,
\end{eqnarray*}
which is finite. Note that here we really need that the number of spins in $I_{k(\xi)}$ is of order $J$, whereas for the previous estimate arbitrarily slow increase to infinity of this number would have been sufficient. We next turn to the lower bound \eqref{innerintlow}:
\begin{eqnarray*}
\lefteqn{\left|\, \int_{\left\{|\xi|\leq J^{\frac{1}{2}}\delta\right\}} \,
\prod_{j=1}^{J} \,h(m_{j,\beta},J^{-\frac{1}{2}}\,\xi\cdot\gamma^{j}) \;d\xi\,\right|}\nonumber\\
&=& \left|\,\int_{\left\{|\xi|\leq J^{\frac{1}{2}}\delta\right\}} \,\exp\left(-\sum_{j=1}^J\,(J^{-\frac{1}{2}}\,\xi\cdot\gamma^{j})^2
     \; h_2(m_{j,\beta},J^{-\frac{1}{2}}\,\xi\cdot\gamma^{j})\right) \;d\xi\,\right|\nonumber\\
&\stackrel{\eqref{h2zero}}{\geq}& \int_{\left\{|\xi|\leq J^{\frac{1}{2}}\delta\right\}}
  \underbrace{\exp\left(- \sum_{j=1}^J\,(J^{-\frac{1}{2}}\,\xi\cdot\gamma^{j})^2
     \; h_2(m_{j,\beta},0)\right)}_{S_1} \;d\xi \nonumber\\
&-& \,\int_{\left\{|\xi|\leq J^{\frac{1}{2}}\delta\right\}} \exp\left(- \sum_{j=1}^J\,(J^{-\frac{1}{2}}\,\xi\cdot\gamma^{j})^2
\; h_2(m_{j,\beta},0)\right) \nonumber\\
&& \quad \underbrace{\left|\, \exp\left(- \sum_{j=1}^J\,(J^{-\frac{1}{2}}\,\xi\cdot\gamma^{j})^2
     \; [h_2(m_{j,\beta},J^{-\frac{1}{2}}\,\xi\cdot\gamma^{j}) - h_2(m_{j,\beta},0)]\right)-1\right|}_{S_2}\;d\xi.\label{lowerestimate}
\end{eqnarray*}
We estimate the terms $S_1, S_2$ as (using $|\exp(z) - 1| \leq \exp(|z|) - 1$):
\begin{align}
\exp\left(-C\,\max_j |\gamma^j|^2  |\xi|^2\right) \overset{\eqref{h2zero}}\leq S_1 &\overset{\eqref{h2zero},  \eqref{gammadisc}}{\leq} \exp\left(-c \frac{1}{L+2}\,c_{\gamma}^2\,|\xi|^2\right), \\
S_2 &\stackrel{\eqref{derhinside},\eqref{delta0}}{\leq} \exp\,(|\xi|^2 \max_j |\gamma^j|^2 C\delta_0) -1.
\end{align}
Set $C_1\,:=\,c \frac{1}{L+2}\,c_{\gamma}^2$ and $C_2\,:=\,\max_j |\gamma^j|^2 C$.
We thus find:
\begin{align}
& \int_{\left\{|\xi|\leq J^{\frac{1}{2}}\delta\right\}}  S_1 d\xi - \int_{\left\{|\xi|\leq J^{\frac{1}{2}}\delta\right\}} S_1 S_2 d\xi \\
&\qquad \geq \int_{\left\{|\xi|\leq J^{\frac{1}{2}}\delta\right\}}
  e^{-C_2 |\xi|^2} \;d\xi  - 
  \int_{\mathbb{R}^{L+1}}
  e^{-C_1 |\xi|^2} \left(e^{C_2 \delta_0 |\xi|^2} - 1\right)\;d\xi.
\end{align}
Now, we choose $\delta_0$ (and accordingly $\delta$) small enough to ensure that the first integral dominates the second integral for large $J$. This implies \eqref{innerintlow}
for all sufficiently large $J$.\medskip

Let us turn to the terms that involve derivatives with respect to $\beta$ now.
We first compute $\nabla\,\frac{dg_{J,\beta}}{d\mathcal{L}^{L+1}}(0)$
and $\operatorname{Hess}\, \frac{dg_{J,\beta}}{d\mathcal{L}^{L+1}}(0)$ starting from \eqref{repg}.
The interchange of differentiation and integration will be justified by the bounds
developed below, relying on pointwise bounds for the integrands. From now on, we write $[j] := (m_{j, \beta}, J^{-\frac{1}{2}} \xi \cdot \gamma^j)$ for short. We have:
\begin{equation}
(2\,\pi)^{L+1}\,\nabla \frac{dg_{J,\beta}}{d\mathcal{L}^{L+1}}(0) = \int_{\mathbb{R}^{L+1}} \sum_{j=1}^J \,\frac{\partial h}{\partial m}[j]\,
  \prod_{n\neq j} h[j]\; \nabla m_{j,\beta}\;d\xi
\end{equation}
and
\begin{align}
&(2\,\pi)^{L+1} \,\operatorname{Hess} \frac{dg_{J,\beta}}{d\mathcal{L}^{L+1}}(0) \label{Hessg}\\
 &= \int_{\mathbb{R}^{L+1}} \sum_{j=1}^J \,\frac{\partial^2 h}{\partial m^2}[j]\,
   \prod_{n\neq j} h[n]\;\nabla m_{j,\beta} \otimes \nabla m_{j,\beta}\,d\xi \nonumber\\  
   &+ \int_{\mathbb{R}^{L+1}} \sum_{j=1}^J \sum_{p\neq j}  \frac{\partial h}{\partial m}[j]
   \,\frac{\partial h}{\partial m}[p]  \prod_{n\neq j,p} h[n]\;\nabla m_{j,\beta}
   \otimes \nabla m_{p,\beta}\;d\xi
   \\
 &+ \int_{\mathbb{R}^{L+1}} \sum_{j=1}^J \,\frac{\partial h}{\partial m}[j]\,
 \prod_{n\neq j} h[n]\; \operatorname{Hess} m_{j,\beta}\,d\xi.
\end{align}

We will need the following auxiliary statement which we will deduce at the end of this section.
\begin{lemma}\label{p_bound_grad_and_hess_m}
It holds uniformly in $j=1,...,J$ and $\beta$ that
\begin{align}
\|\nabla m_{j,\beta}\| &\leq C,  \label{nablam}\\
\|\operatorname{Hess} m_{j,\beta}\| &\leq C.   \label{Hessm}
\end{align}
\end{lemma}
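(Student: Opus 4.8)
\textbf{Proof proposal for Lemma~\ref{p_bound_grad_and_hess_m}.}

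The plan is to write $m_{j,\beta}$ as an explicit composition of maps whose derivatives we already control, and then differentiate with the chain and product rules. Combining \eqref{meanj}, \eqref{dualmeanj} and the second identity in \eqref{duality}, we have
\begin{align}
m_{j,\beta} \,=\, (\psi^*)'\!\left(\hat{m}_{j,\beta}\right), \qquad \hat{m}_{j,\beta} \,=\, \nabla\bar{\psi}_J(\beta)\cdot\gamma^j ,
\end{align}
so that $\beta\mapsto m_{j,\beta}$ is obtained by (i) the gradient map $\beta\mapsto\nabla\bar{\psi}_J(\beta)$, (ii) the inner product with the fixed vector $\gamma^j$, and (iii) the scalar function $(\psi^*)'$. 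The inputs we will use are: $(\psi^*)''\leq C/2$ and $|(\psi^*)'''|\leq C/2$ uniformly, from \eqref{boundvar}--\eqref{boundthirdm} of Lemma~\ref{psi*}; the uniform boundedness of $|\gamma^j|\lesssim 1$ in $j$ and $J$, from \eqref{gammaj_approx}; and the uniform boundedness of $\|\operatorname{Hess}\bar{\psi}_J(\beta)\|$ and $\|D^3\bar{\psi}_J(\beta)\|$ for $J\geq J_1$, which follow from Lemma~\ref{barpsi*} and standard convex analysis. For the last point, recall $\operatorname{Hess}\bar{\psi}_J(\beta)=(\operatorname{Hess}\bar{\psi}^*_J(\nabla\bar{\psi}_J(\beta)))^{-1}$, hence $\|\operatorname{Hess}\bar{\psi}_J(\beta)\|\leq\frac{1}{2c}$ by \eqref{bdhesspsiJ*}; and differentiating the identity $\operatorname{Hess}\bar{\psi}_J(\beta)\,\operatorname{Hess}\bar{\psi}^*_J(\nabla\bar{\psi}_J(\beta))=\operatorname{id}$ once more expresses $D^3\bar{\psi}_J(\beta)$ as a contraction of $D^3\bar{\psi}^*_J$ with three copies of $(\operatorname{Hess}\bar{\psi}^*_J)^{-1}$, so $\|D^3\bar{\psi}_J(\beta)\|\lesssim\frac{C}{(2c)^3}$ by \eqref{bdhesspsiJ*}--\eqref{bdthirdpsiJ*}.

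First I would bound the gradient. By the chain rule,
\begin{align}
\nabla m_{j,\beta} \,=\, (\psi^*)''(\hat{m}_{j,\beta})\,\operatorname{Hess}\bar{\psi}_J(\beta)\,\gamma^j ,
\end{align}
since $\nabla_\beta\hat{m}_{j,\beta}=\operatorname{Hess}\bar{\psi}_J(\beta)\gamma^j$. Multiplying the three bounds $(\psi^*)''\leq C/2$, $\|\operatorname{Hess}\bar{\psi}_J(\beta)\|\leq\frac1{2c}$ and $|\gamma^j|\lesssim 1$ yields \eqref{nablam} with a constant uniform in $j$, $\beta$ and $J\geq J_1$.

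For the Hessian, differentiating once more and applying the product rule gives
\begin{align}
\operatorname{Hess} m_{j,\beta} \,&=\, (\psi^*)'''(\hat{m}_{j,\beta})\,\big(\operatorname{Hess}\bar{\psi}_J(\beta)\gamma^j\big)\otimes\big(\operatorname{Hess}\bar{\psi}_J(\beta)\gamma^j\big) \\
&\quad +\, (\psi^*)''(\hat{m}_{j,\beta})\,D^3\bar{\psi}_J(\beta)[\,\cdot\,,\,\cdot\,,\gamma^j] ,
\end{align}
using $\operatorname{Hess}_\beta\hat{m}_{j,\beta}=D^3\bar{\psi}_J(\beta)[\,\cdot\,,\,\cdot\,,\gamma^j]$. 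The first term is bounded by $|(\psi^*)'''|\,\|\operatorname{Hess}\bar{\psi}_J(\beta)\|^2|\gamma^j|^2\lesssim 1$ and the second by $(\psi^*)''\,\|D^3\bar{\psi}_J(\beta)\|\,|\gamma^j|\lesssim 1$, which gives \eqref{Hessm}. There is no serious obstacle here: everything reduces to the chain rule together with the a priori bounds on the derivatives of $\psi^*$ and of $\bar{\psi}_J,\bar{\psi}^*_J$. The only point requiring a little care is the bookkeeping of which derivative lands on which factor in $\operatorname{Hess} m_{j,\beta}$, and recording explicitly that the third-derivative bound for the Legendre--Fenchel transform $\bar{\psi}_J$ is a genuine consequence of \eqref{bdhesspsiJ*} and \eqref{bdthirdpsiJ*}.
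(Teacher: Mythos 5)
Your proposal is correct and essentially reproduces the paper's argument: both decompose $\beta\mapsto m_{j,\beta}$ through $(\psi^*)'$, $\gamma^j$, and $\nabla\bar\psi_J$, use the chain/product rule, and invoke the uniform bounds of Lemma~\ref{psi*}, \eqref{gammaj_approx}, and Lemma~\ref{barpsi*} (via Legendre duality). The only cosmetic difference is that the paper writes the same quantities as $\operatorname{Var}(\mu_{m_{j,\beta}})$ and $\int(z-m_{j,\beta})^3\,\mu_{m_{j,\beta}}(dz)$ instead of $(\psi^*)''(\hat m_{j,\beta})$ and $(\psi^*)'''(\hat m_{j,\beta})$, which coincide by Lemma~\ref{psi*}; you also spell out the transfer of the $C^3$ bounds from $\bar\psi^*_J$ to $\bar\psi_J$, which the paper leaves as a cited standard fact.
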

 From now on we write $\bar \xi_j := J^{-\frac{1}{2}} \xi \cdot \gamma^j$ for short. Denote ~$O:= \left\{|\xi|> J^{\frac{1}{2}}\delta\right\}$. The outer integral of \eqref{outerhess} becomes
\begin{align*}
& \left\|\operatorname{Hess}\, \int_{O}
\,\prod_{j=1}^J \,h(m_{j, \beta}, J^{-\frac{1}{2}} \xi \cdot \gamma^j)\, \,d\xi \;\right\|\nonumber\\
&\stackrel{\eqref{Hessg}-\eqref{Hessm}}{\lesssim}  \int_{O} \sum_{j=1}^J
      \left|\frac{\partial^2 h}{\partial m^2}[j]\right|\;
      \prod_{n\neq j} |h[n]|\;d\xi + \int_{O} \sum_{j=1}^J \,\left|\frac{\partial h}{\partial m}[j]\right|\,
      \prod_{n\neq j} |h[n]|\; d\xi  \\
    &\qquad + \int_{O} \sum_{j=1}^J \sum_{p\neq j} 
  \left|\frac{\partial h}{\partial m}[j]\right|\,
  \left|\frac{\partial h}{\partial m}[p]\right| \prod_{n\neq j,p} |h[n]|\;d\xi
  \\
&\stackrel{\eqref{globalboundh}-\eqref{secderh}}{\lesssim}
    \int_{O} \sum_{j=1}^J
   \prod_{n\neq j: \frac{n}{J}\in I_{k(\xi)}} \frac{1+ |\bar \xi_j|^2}{1+|\bar \xi_n|\,C_{\varepsilon}^{-1}}\;d\xi + \int_{O} \sum_{j=1}^J  \prod_{n\neq j: \frac{n}{J}\in I_{k(\xi)}} \frac{1+ |\bar \xi_j|}{1+|\bar \xi_n|\,C_{\varepsilon}^{-1}}\; d\xi \\
 &\qquad + \int_{O} \sum_{j=1}^J \sum_{p\neq j}  \prod_{n\neq j, p: \frac{n}{J}\in I_{k(\xi)}} \frac{(1+ |\bar \xi_j|)
 	(1+ |\bar \xi_p|)}{1+|\bar \xi_n|\,C_{\varepsilon}^{-1}}\;d\xi
 \\
&\stackrel{\eqref{gammadisc}}{\lesssim}
J A^{\frac{J}{L+2}-L-3}
\int_{O}  B(\xi)^{L+2}\;d\xi + \max_j |\gamma^j|^2 A^{\frac{J}{L+2}-L-5}
\int_{O}  |\xi|^2\, B(\xi)^{L+4}\;d\xi \\
& \qquad + J^2 A^{\frac{J}{L+2}-L-4}
\int_{O} B(\xi)^{L+2}\;d\xi +  J\,\max_j |\gamma^j|^2 A^{\frac{J}{L+2}-L-6}
\int_{O}  |\xi|^2\, B(\xi)^{L+4}\;d\xi \\
&\qquad +  J^{\frac{1}{2}}\,\max_j |\gamma^j| A^{\frac{J}{L+2}-L-4}
\int_{O}  |\xi|\, B(\xi)^{L+3}\;d\xi
\end{align*}
where 
\begin{equation}
A := \frac{1}{1+\varepsilon \,C_{\varepsilon}^{-1}}, \quad B(\xi) := \frac{1}{1+J^{-\frac{1}{2}} |\xi|\,c_{\gamma}\,C_{\varepsilon}^{-1}}.
\end{equation}
In the last step, we have collected like terms after application of the estimate \eqref{gammadisc}.
We also used Young's inequality once.
Observe that we always left exactly enough of the factors that were at our disposal, i.e.
$J/(L+2) - 1$ and $J/(L+2) - 2$ respectively, inside the integral to ensure integrability.
Performing a change of variables just as in the last step of the proof for \eqref{outerint},
we find that the right hand side goes to zero as $J\rightarrow \infty$ because exponential decay
beats polynomial growth. This proves \eqref{outerhess}.\\

For the inner integral of \eqref{innerhess}, we again use the representation via $h_2$ from \eqref{h2}.
In this case, we have the following
formulas for the derivatives with respect to $m$:
\begin{align*}
&\frac{\partial h}{\partial m} (m,z) \,=\, - z^2 \,\frac{\partial h_2}{\partial m}(m,z) \,h(m,z), \\
&\frac{\partial^2 h}{\partial m^2} (m,z) \,=\,
 \left(- z^2 \,\frac{\partial^2 h_2}{\partial m^2}(m,z) \,+\, z^4\,\left(\frac{\partial h_2}{\partial m}(m,z)\right)^2\right) \,h(m,z).
\end{align*}
Denote ~$I :=\left\{|\xi|\leq J^{\frac{1}{2}}\delta\right\}$. Using the bounds from \eqref{derhinside}, we find
\begin{align*}
\lefteqn{\left\|\,\operatorname{Hess}\,\int_{I} \,
 \prod_{j=1}^{J} \,h(m_{j,\beta}\,,\,J^{-\frac{1}{2}}\,\xi\cdot\gamma^{j}) \;d\xi\,\right\|}\\
&\stackrel{\eqref{Hessg}-\eqref{Hessm}}{\lesssim}
  \int_{I} \sum_{j=1}^J \bigg(\bar \xi_j^2\,
  \left|\frac{\partial^2 h_2}{\partial m^2}[j]\right| +\,\bar \xi_j^4\,\left|\frac{\partial h_2}{\partial m}[j]\right|^2\bigg) \prod_{n=1}^J \,|\exp(-\bar \xi_n^2 h_2[n])|\,d\xi \nonumber\\
 &+ \int_{I} \sum_{j=1}^J \,\sum_{p\neq j}\, \bar \xi_j^2
 \,\bar \xi_p^2  \left|\frac{\partial h_2}{\partial m}[j]\right|\,
 \left|\frac{\partial h_2}{\partial m}[p]\right|\prod_{n=1}^J\,|\exp(-\bar \xi_n^2 h_2[n])|\,d\xi\\
 &+  \int_{I} \sum_{j=1}^J \,\bar \xi_j^2\,
 \left|\frac{\partial h_2}{\partial m}[j]\right| \prod_{n=1}^J\,|\exp(-\bar \xi_n^2 h_2[n])|\,d\xi\\
&\stackrel{\eqref{derhinside},\eqref{h2zero}}{\lesssim}
  \max_j |\gamma^j|^2 \int_{I}
  \,|\xi|^2 \exp\left(-c J^{-1}\,\sum_{n:\frac{n}{J}\,\in\, I_{k(\xi)}} |\xi|^2 \,c_{\gamma}^2 \,\right) \,d\xi\\
&+(1+J^{-1})\,\max_j |\gamma^j|^4 \int_{I}
  \,|\xi|^4 \exp\left(-c J^{-1}\,\sum_{n:\frac{n}{J}\,\in\, I_{k(\xi)}} |\xi|^2 \,c_{\gamma}^2 \,\right) \,d\xi\\
&\lesssim \int_{\mathbb{R}^{L+1}}\, (|\xi|^2 + |\xi|^4) \exp\left(-c\,c_{\gamma}^2\,\frac{1}{L+2} |\xi|^2 \,\right) \,d\xi,
\end{align*}
which is finite. This completes the proof of Proposition~\ref{estimates on g} up to the verification of Lemma~\ref{p_bound_grad_and_hess_m}.

\begin{proof}[Proof of Lemma~\ref{p_bound_grad_and_hess_m}]
We recall that
\begin{equation*}
m_{j,\beta} = \int_{\mathbb{R}} z\, \exp(-\psi^*(\hat{m}_{j,\beta})+\hat{m}_{j,\beta}z-\psi(z)) \,dz.
\end{equation*}
Standard calculation yields
\begin{align*}
\nabla m_{j,\beta}   & \stackrel{\eqref{meanj}}{=} \operatorname{Var}(\mu_{m_{j,\beta}})\; \nabla \hat{m}_{j,\beta}, \\
\operatorname{Hess} m_{j,\beta}
 &= \operatorname{Var}(\mu_{m_{j,\beta}})\; \operatorname{Hess}\hat{m}_{j,\beta} \\ &\quad + \left(\int \,(z - m_{j,\beta})^3 \; \mu_{m_{j,\beta}}(dz)\right) \:
     \nabla \hat{m}_{j,\beta} \,\otimes\,\nabla \hat{m}_{j,\beta}.
\end{align*}
By the uniform estimates on $\operatorname{Var}(\mu_{m})$ and $\int \,(z-m)^3\, \mu_m(dz)$
in \eqref{boundvar} and \eqref{boundthirdm},
it remains to bound $\nabla \hat{m}_{j,\beta}$ and $\operatorname{Hess}\hat{m}_{j,\beta}$. Note that
\begin{align}
\hat{m}_{j,\beta} \,\stackrel{\eqref{dualmeanj}}{=}\,
\langle\hat{\beta}^{max}, \gamma^j\rangle \stackrel{\eqref{duality}}{=} \langle\nabla \bar{\psi}_J(\beta), \gamma^j\rangle = \partial_{\gamma^j} \bar{\psi}_J(\beta) \label{hatm}
\end{align}
where $\partial_\eta$ denotes the partial derivative in the direction of $\eta$. Thus for any $\eta \in \mathbb{R}^{L+1}$:
\begin{align}
\langle \nabla \hat{m}_{j,\beta}, \eta \rangle &= \partial_\eta \partial_{\gamma^j} \bar{\psi}_J(\beta) \leq \|\operatorname{Hess} \bar{\psi}_J\| |\eta||\gamma^j| \\
\langle \operatorname{Hess} \hat{m}_{j,\beta} \eta, \eta \rangle  &= \partial^2_\eta \partial_{\gamma^j} \bar{\psi}_J(\beta) \leq \|D^3 \bar{\psi}_J\| |\eta|^2|\gamma^j|
\end{align}
Since $|\gamma^j|$ is uniformly bounded (cf. \eqref{gammaj_approx}) and the Hessian and 3rd derivative of $\bar{\psi}_J$ are uniformly bounded, this concludes the proof of \eqref{nablam} and \eqref{Hessm}. 
\end{proof}

\section{Proof of Theorem~\ref{p_LSI_conditional}}\label{s_uniform_LSI}

The purpose of this section is to deduce Theorem~\ref{p_LSI_conditional}, which states that the conditional measure~$\mu(dx|Px=y)$ satisfies a uniform LSI. Let us outline how we proceed. In Section~\ref{s_basic_principles_LSI}, we state some basic principles of the LSI and introduce the two-scale criterion which is the principle that underlies our argument for deducing Theorem~\ref{p_LSI_conditional}. In Section~\ref{mainresultLSI}, we explain how those principles are applied to deduce the uniform LSI for the conditional measure~$\mu(dx|Px=y)$. In Section~\ref{s_gradient_parallel} we give the proof of an auxiliary result.

\subsection{Basic principles for the LSI}\label{s_basic_principles_LSI}

Four different principles underly our proofs of logarithmic Sobolev
inequalities in Section~\ref{mainresultLSI}. Three of these are standard results
that have proven to be useful for establishing LSI in many cases and that have been
known for a long time. The fourth principle is a more specialized criterion that has been successfully applied for deducing LSI for spin systems. It will guide our main strategy of proof while the other results
are needed to verify the assumptions of the criterion.
Let us forget for a moment the precise definitions of $X_N$ and $H$ and let us present the basic principles of the LSI in the setting of Euclidean spaces. So let~$X$ be Euclidean space or affine subspaces
of some Euclidean space. With~$\nabla$ and~$|\cdot|$ we denote the gradient and norm that is derived from the Euclidean structure of~$X$. We write
$\mathcal{P}(X)$ for the space of Borel probability measures on $X$.
\begin{definition}[LSI]\label{deflsi}
Let 
$\Phi(z) := z\,\ln z$. We say that $\nu \in \mathcal{P}(X)$ satisfies a logarithmic
Sobolev inequality $(\operatorname{LSI})$ with constant $\rho>0$ if for all smooth functions
$h:X\rightarrow \mathbb{R}_+$ it holds that
\begin{equation*}
\Ent( h \nu | \nu) := \int \Phi(h) \;\nu(dx) \,-\,\Phi\left(\int h\;\nu(dx)\right) \;\leq\,
 \frac {1}{\rho} \int \frac{1}{2h}\, |\nabla h|^2 \;\nu(dx).
\end{equation*}
In this case, we also use the notation~$ \operatorname{LSI}(\nu) \geq\rho$.
\end{definition}

The following tensorization principle has been known ever since the notion of LSI came up (see~\cite{Gro75}). It is the basic reason for why LSI is well-suited for high-dimensional systems.
\begin{lemma} [Tensorization principle] \label{prodpropLSI}
Given $\nu_n \,\in\, \mathcal{P}(X_N)$ for $n\,=\,1,...,N$. Then $\operatorname{LSI}(\nu_n)\geq \rho_n$ for all $n=1,...,N$ implies:
\begin{equation*}
 \operatorname{LSI}\left(\bigotimes_{n=1}^{N} \,\nu_n\right) \geq \min_n\, \rho_n.
\end{equation*}
\end{lemma}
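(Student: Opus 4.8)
The plan is to reduce to the two-factor case $N=2$ by induction --- writing $\bigotimes_{n=1}^{N}\nu_n=\nu_1\otimes\big(\bigotimes_{n=2}^{N}\nu_n\big)$ and feeding in the inductive hypothesis $\operatorname{LSI}\big(\bigotimes_{n=2}^{N}\nu_n\big)\ge\min_{n\ge 2}\rho_n$ to conclude $\operatorname{LSI}\big(\bigotimes_{n=1}^{N}\nu_n\big)\ge\min_n\rho_n$ --- and then to establish the two-factor statement from two ingredients: an exact chain-rule (tensorization) identity for the entropy functional $\Ent(\,\cdot\,|\,\cdot\,)$ of Definition~\ref{deflsi}, and the fact that partial averaging does not increase the Fisher-information-type term on the right-hand side of the LSI.

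For the two-factor case I would take $\nu_1\in\mathcal P(X_1)$, $\nu_2\in\mathcal P(X_2)$, a smooth $h\colon X_1\times X_2\to\mathbb R_+$, and set $\bar h(x_2):=\int_{X_1}h(x_1,x_2)\,\nu_1(dx_1)$. The first step is to record the identity
\begin{equation*}
\Ent\big(h\,(\nu_1\otimes\nu_2)\,\big|\,\nu_1\otimes\nu_2\big)=\int_{X_2}\Ent\big(h(\cdot,x_2)\,\nu_1\,\big|\,\nu_1\big)\,\nu_2(dx_2)+\Ent\big(\bar h\,\nu_2\,\big|\,\nu_2\big),
\end{equation*}
which follows by first reducing to $\int h\,d(\nu_1\otimes\nu_2)=1$ via the scaling relation $\Ent(\lambda f\nu|\nu)=\lambda\,\Ent(f\nu|\nu)$, then splitting $h\ln h=h\ln(h/\bar h)+h\ln\bar h$ and integrating out $x_1$ first. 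I would then apply $\operatorname{LSI}(\nu_1)\ge\rho_1$ to the $x_1$-integrand of the first term (for each fixed $x_2$) and $\operatorname{LSI}(\nu_2)\ge\rho_2$ to the function $\bar h$ in the second term, obtaining
\begin{equation*}
\Ent\big(h\,(\nu_1\otimes\nu_2)\,\big|\,\nu_1\otimes\nu_2\big)\le\frac1{\rho_1}\int\frac{|\nabla_1 h|^2}{2h}\,d(\nu_1\otimes\nu_2)+\frac1{\rho_2}\int_{X_2}\frac{|\nabla_2\bar h(x_2)|^2}{2\bar h(x_2)}\,\nu_2(dx_2),
\end{equation*}
with $\nabla_1,\nabla_2$ the gradients in the respective variables.

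The remaining step is to absorb the second term. Differentiating under the integral, $\nabla_2\bar h(x_2)=\int_{X_1}\nabla_2 h(x_1,x_2)\,\nu_1(dx_1)$, and Cauchy--Schwarz (pairing $\nabla_2 h/\sqrt h$ with $\sqrt h$) gives $|\nabla_2\bar h(x_2)|^2\le\bar h(x_2)\int_{X_1}|\nabla_2 h|^2/h\,\nu_1(dx_1)$, hence $|\nabla_2\bar h(x_2)|^2/(2\bar h(x_2))\le\tfrac12\int_{X_1}|\nabla_2 h|^2/h\,\nu_1(dx_1)$. Substituting this, using $|\nabla h|^2=|\nabla_1 h|^2+|\nabla_2 h|^2$ and $\max\{1/\rho_1,1/\rho_2\}=1/\min\{\rho_1,\rho_2\}$, yields $\operatorname{LSI}(\nu_1\otimes\nu_2)\ge\min\{\rho_1,\rho_2\}$, which together with the induction above proves the lemma.

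There is no genuine obstacle here; the argument is soft and the work is bookkeeping. The two points that need a line of care are (i) justifying the interchange of differentiation and integration defining $\nabla_2\bar h$ (and the smoothness of $\bar h$), which is handled by dominated convergence, and (ii) the entropy chain-rule identity, which is a direct computation from the scaling relation. If one insists on the statement for all smooth nonnegative $h$, the cleanest route is to first prove the inequality for $h$ bounded below by a positive constant --- so that $\bar h$ is bounded below and all integrands are manifestly well-behaved --- and then remove this restriction by replacing $h$ with $h+\varepsilon$ and letting $\varepsilon\downarrow 0$, with monotone/dominated convergence controlling both sides.
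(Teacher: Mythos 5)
The paper does not prove Lemma~\ref{prodpropLSI}; it defers to the references~\cite{GZ} and~\cite{L}, noting only that the tensorization principle ``has been known ever since the notion of LSI came up.'' Your argument is exactly the standard textbook proof found in those sources (and going back to Gross): induct on the number of factors, use the entropy decomposition $\Ent\bigl(h\,(\nu_1\otimes\nu_2)\,\big|\,\nu_1\otimes\nu_2\bigr)=\int\Ent\bigl(h(\cdot,x_2)\nu_1\big|\nu_1\bigr)\nu_2(dx_2)+\Ent\bigl(\bar h\nu_2\big|\nu_2\bigr)$, apply the two LSIs, and control the Fisher information of the marginal $\bar h$ by Cauchy--Schwarz. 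All the steps you give are correct, including the observation that the entropy and Fisher information are both 1-homogeneous in $h$, so the unnormalized conditional slices $h(\cdot,x_2)$ cause no trouble, and the final bookkeeping $|\nabla_1 h|^2+|\nabla_2 h|^2=|\nabla h|^2$ together with $\max\{1/\rho_1,1/\rho_2\}=1/\min\{\rho_1,\rho_2\}$. Your remarks on differentiating under the integral and on the $h\to h+\varepsilon$ regularization to avoid division by zero are the right caveats and are easily discharged. In short: the paper gives no proof, your proof is the canonical one, and it is complete.
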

We next recall two fundamental criteria for proving logarithmic Sobolev inequalities.
The first one is a simple perturbation result due to Holley and Stroock~\cite{HS}.
In the statement of this lemma, as well as later on in the text, we use $Z^{-1}$ to denote a constant
normalizing a given measure to unit mass.
\begin{lemma} [Holley-Stroock] \label{HStroock}
We assume that $\nu \,\in\,\mathcal{P}(X)$ satisfies $\operatorname{LSI}(\nu) \geq \rho$. For a bounded function
$\delta\psi:\,X\rightarrow\mathbb{R}$, define
a measure $\widetilde{\nu} \,\in\,\mathcal{P}(X)$ that is absolutely continuous with respect to $\nu$ via
\begin{equation*}
\frac{d\widetilde{\nu}}{d\nu}(x) \,=\, Z^{-1}\,\exp[-\delta\psi(x)].
\end{equation*}
Then $\operatorname{LSI}(\widetilde{\nu}) \geq \rho\,\exp\,[-2\operatorname{osc}(\delta\psi)]$.
Here $\operatorname{osc}(\delta\psi) = \sup_X \delta\psi \,-\,\inf_X \delta\psi$ stands for the total oscillation of the
perturbation.
\end{lemma}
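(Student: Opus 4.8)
\emph{Plan.} The statement is the classical Holley--Stroock perturbation principle, and the plan is to reduce everything to the comparability of the two Radon--Nikodym derivatives $\tfrac{d\widetilde\nu}{d\nu}$ and $\tfrac{d\nu}{d\widetilde\nu}$, each of which is bounded by $\exp[\operatorname{osc}(\delta\psi)]$, combined with a \emph{monotone} (variational) description of relative entropy that makes it respect such bounds. Concretely, I would use the tangent-line representation
\[
  \Ent(h\mu|\mu) \,=\, \inf_{c>0}\,\int \bigl(\Phi(h) - \Phi(c) - \Phi'(c)(h-c)\bigr)\,\mu(dx),
  \qquad \Phi(z)=z\ln z,
\]
valid for every $\mu\in\mathcal P(X)$ and every $h:X\to\mathbb{R}_+$, with the infimum attained at $c=\int h\,d\mu$ and --- by convexity of $\Phi$ --- with pointwise nonnegative integrand. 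This nonnegativity is precisely what permits an estimate of the integral after a change of reference measure.

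First I would transfer the entropy from $\widetilde\nu$ back to $\nu$. Since $\tfrac{d\widetilde\nu}{d\nu}=Z^{-1}e^{-\delta\psi}$ and $Z=\int e^{-\delta\psi}\,d\nu\geq e^{-\sup\delta\psi}$, we have $\tfrac{d\widetilde\nu}{d\nu}\leq e^{\sup\delta\psi-\inf\delta\psi}=e^{\operatorname{osc}(\delta\psi)}$, so for each fixed $c>0$, integrating the nonnegative tangent-line integrand,
\[
  \int\bigl(\Phi(h)-\Phi(c)-\Phi'(c)(h-c)\bigr)\,d\widetilde\nu \,\leq\, e^{\operatorname{osc}(\delta\psi)}\int\bigl(\Phi(h)-\Phi(c)-\Phi'(c)(h-c)\bigr)\,d\nu .
\]
Bounding the left-hand side from below by $\Ent(h\widetilde\nu|\widetilde\nu)$ and then taking the infimum over $c>0$ on the right, this gives $\Ent(h\widetilde\nu|\widetilde\nu)\leq e^{\operatorname{osc}(\delta\psi)}\,\Ent(h\nu|\nu)$.

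Next I would invoke the hypothesis $\operatorname{LSI}(\nu)\geq\rho$ to get $\Ent(h\nu|\nu)\leq\tfrac1\rho\int\tfrac{|\nabla h|^2}{2h}\,d\nu$, and then run the comparison in the opposite direction on the Dirichlet form: since $\tfrac{d\nu}{d\widetilde\nu}=Z\,e^{\delta\psi}$ with $Z\leq e^{-\inf\delta\psi}$, we have $\tfrac{d\nu}{d\widetilde\nu}\leq e^{\operatorname{osc}(\delta\psi)}$, hence $\int\tfrac{|\nabla h|^2}{2h}\,d\nu\leq e^{\operatorname{osc}(\delta\psi)}\int\tfrac{|\nabla h|^2}{2h}\,d\widetilde\nu$ because the integrand is nonnegative. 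Chaining the three inequalities yields $\Ent(h\widetilde\nu|\widetilde\nu)\leq \tfrac{e^{2\operatorname{osc}(\delta\psi)}}{\rho}\int\tfrac{|\nabla h|^2}{2h}\,d\widetilde\nu$, that is $\operatorname{LSI}(\widetilde\nu)\geq\rho\,e^{-2\operatorname{osc}(\delta\psi)}$, as claimed.

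There is no genuine obstacle in this argument; the one point that requires a little care is to resist estimating $\Ent$ through its ``naive'' form $\int\Phi(h)\,d\mu-\Phi(\int h\,d\mu)$, whose subtracted term has the wrong sign under a change of measure --- the tangent-line/variational formula is exactly what sidesteps this. One also observes that only the oscillation of $\delta\psi$ enters (not two-sided pointwise bounds), and that smoothness of $h$ is never touched, so the class of admissible test functions is preserved and the asserted constant $\rho\,e^{-2\operatorname{osc}(\delta\psi)}$ is obtained.
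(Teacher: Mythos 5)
The paper does not give its own proof of Lemma~\ref{HStroock}: it is stated as a classical fact with a pointer to standard references (Holley--Stroock, and the surveys cited just after the lemma). Your proof is correct and is precisely the classical argument: you use the tangent-line (variational) representation of relative entropy, $\Ent(h\mu|\mu)=\inf_{c>0}\int\bigl(\Phi(h)-\Phi(c)-\Phi'(c)(h-c)\bigr)\,d\mu$, whose integrand is pointwise nonnegative by convexity of $\Phi$, so that comparability of $d\widetilde\nu/d\nu$ and $d\nu/d\widetilde\nu$ (each bounded by $e^{\operatorname{osc}(\delta\psi)}$ once the normalization constant $Z$ is bounded both ways) transfers first the entropy and then the Dirichlet form, yielding the factor $e^{-2\operatorname{osc}(\delta\psi)}$. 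You also correctly flag the one genuine pitfall, namely that the ``naive'' formula $\int\Phi(h)\,d\mu-\Phi(\int h\,d\mu)$ does not directly respect the change of reference measure because of the subtracted term, which is exactly what the variational formula sidesteps. Since the paper delegates this lemma to the literature, your writeup is a valid and self-contained substitute.
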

The second criterion is due to Bakry and \'Emery~\cite{BE}. It says that a uniformly strictly convex Hamiltonian implies $\operatorname{LSI}$.
\begin{lemma} [Bakry-\'Emery]\label{BakryEmery}
Let $\nu \,\in\,\mathcal{P}(X)$ be absolutely continuous with respect to the
Hausdorff measure $\mathcal{H}$ on $X$.
If the Hamiltonian $H$ of the measure $\nu$, given by
\begin{equation*}
H(x) :=\, -\ln\,\frac{d\nu}{d\mathcal{H}}(x),
\end{equation*}
is twice continuously differentiable and uniformly convex with lower bound $\lambda$, i.e.
\begin{equation*}
\forall\, x\in X \quad \forall\, v \in T_x X \quad \langle v,\operatorname{Hess} H(x)\,v\rangle_{T_x X} \,\geq\, \lambda \,|v|_{T_x X}^2,
\end{equation*}
then $\operatorname{LSI}(\nu)\,\geq\,\lambda$.
\end{lemma}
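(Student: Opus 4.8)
The plan is to prove this by the semigroup interpolation method of Bakry and \'Emery, the point being that on the flat finite-dimensional space $X$ the hypothesis $\Hess H\geq\lambda\,\Id$ is precisely the Bochner-type curvature bound that is needed. Let $L:=\Delta_X-\nabla H\cdot\nabla$ be the intrinsic generator on $X$ and $(P_t)_{t\geq 0}$ the associated diffusion semigroup; it is symmetric in $L^2(\nu)$, and since in our application $\psi(x_n)=\tfrac12 x_n^2+ax_n+\delta\psi(x_n)$ with $\delta\psi$ bounded in $C^2$, the Hamiltonian is a fixed quadratic plus a $C^2$-bounded perturbation, so $\nu$ has sub-Gaussian tails and $(P_t)$ is a well-behaved ergodic semigroup with $P_t g\to\int g\,d\nu$ as $t\to\infty$. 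Write $\Gamma(g):=|\nabla g|^2$ for the carr\'e du champ, recall the diffusion chain rule $L\,\phi(g)=\phi'(g)\,Lg+\phi''(g)\,\Gamma(g)$, and set $\Gamma_2(g):=\tfrac12\bigl(L\Gamma(g)-2\,\Gamma(g,Lg)\bigr)$.

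First I would record the de Bruijn-type identity. Fix a smooth $h>0$ with $\int h\,d\nu=1$ (the general $h$ then follows by truncation and density), put $h_t:=P_t h$, and differentiate $t\mapsto\Ent(h_t\nu|\nu)=\int\Phi(h_t)\,d\nu$; integrating by parts against $\nu$ gives $\frac{d}{dt}\Ent(h_t\nu|\nu)=\int\Phi'(h_t)\,Lh_t\,d\nu=-\int\Phi''(h_t)\,\Gamma(h_t)\,d\nu=-I(h_t)$, where $I(g):=\int \Gamma(g)/g\,d\nu$ is the Fisher information. Since $h_t\to 1$ and $\Ent(h_t\nu|\nu)\to 0$, integrating in time yields $\Ent(h\nu|\nu)=\int_0^\infty I(h_t)\,dt$. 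Next, a direct Bochner computation on the flat space $X$ gives $\Gamma_2(g)=\|\Hess g\|_{\mathrm{HS}}^2+\langle\nabla g,\Hess H\,\nabla g\rangle$ with no Ricci term, so $\Hess H\geq\lambda\,\Id$ forces $\Gamma_2(g)\geq\lambda\,\Gamma(g)$ pointwise; by the standard computation $\frac{d}{dt}I(h_t)=-2\int \Gamma_2(\log h_t)\,h_t\,d\nu\leq-2\lambda\int\Gamma(\log h_t)\,h_t\,d\nu=-2\lambda\,I(h_t)$, so Gr\"onwall gives $I(h_t)\leq e^{-2\lambda t}I(h)$. Combining, $\Ent(h\nu|\nu)=\int_0^\infty I(h_t)\,dt\leq\frac{1}{2\lambda}\int\frac{|\nabla h|^2}{h}\,d\nu$, which is exactly $\operatorname{LSI}(\nu)\geq\lambda$ in the normalization of Definition~\ref{deflsi}. (Equivalently one can use the localized interpolation $P_t\Phi(h)-\Phi(P_th)=\int_0^t P_s\bigl(\Gamma(P_{t-s}h)/P_{t-s}h\bigr)\,ds$, bound the integrand by $e^{-2\lambda(t-s)}P_t(\Gamma(h)/h)$ via the gradient estimate $\Gamma(P_rg)/P_rg\leq e^{-2\lambda r}P_r(\Gamma(g)/g)$ coming from $\Gamma_2\geq\lambda\Gamma$ and Cauchy--Schwarz, integrate against $\nu$, and let $t\to\infty$.)

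The genuinely substantive content is a classical few-line computation: the flat Bochner identity for $\Gamma_2$ and the passage from $\Gamma_2\geq\lambda\Gamma$ to the entropy decay. The real obstacle is the analytic bookkeeping — establishing existence, smoothness, and enough decay and integrability of $P_th$ together with its first two spatial derivatives to license the differentiations under the integral sign, the integrations by parts, and the limits $t\to\infty$; the clean way is to first prove the inequality for $h$ smooth, bounded, and bounded away from $0$, controlling all terms by the sub-Gaussian tails of $\nu$ coming from the $\tfrac12 x_n^2$ term, and then remove these restrictions by a routine approximation. A minor additional point is that $X$ may be a proper affine subspace of an ambient Euclidean space, so $\nabla$, $\Delta$ and $\Hess$ must throughout be read as the intrinsic operators on $X$; this is harmless since the intrinsic space is still flat. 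Alternatively, the statement may simply be quoted from~\cite{BE} or~\cite{BGL14}.
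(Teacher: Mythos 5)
Your proof sketch is correct and is the standard Bakry--\'Emery argument: flat-space Bochner identity $\Gamma_2(g)=\|\Hess g\|_{\mathrm{HS}}^2+\langle\nabla g,\Hess H\,\nabla g\rangle$ gives $\Gamma_2\geq\lambda\Gamma$, which via the exponential decay of Fisher information and the de Bruijn identity yields $\Ent(h\nu|\nu)\leq\frac{1}{2\lambda}\int|\nabla h|^2/h\,d\nu$, matching the normalization of Definition~\ref{deflsi} with $\rho=\lambda$. The paper, however, does not prove this lemma at all: immediately after stating Lemmas~\ref{prodpropLSI}, \ref{HStroock}, and \ref{BakryEmery} it simply remarks that ``Proofs of the facts mentioned so far can be found for example in~\cite{GZ} or \ldots~\cite{L}.'' So you supplied a proof where the paper cites one, and you already anticipated this in your closing sentence. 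Two minor remarks. First, you appeal to the specific structure $\psi(x_n)=\frac12 x_n^2+ax_n+\delta\psi(x_n)$ to get sub-Gaussian tails, but that is unnecessary here: the lemma as stated is a general criterion, and the hypothesis $\Hess H\geq\lambda\,\Id$ with $\lambda>0$ already forces Gaussian-type tails of $\nu$ on the finite-dimensional space $X$, which is all the analytic bookkeeping requires. Second, you correctly flag that $X$ may be a proper affine subspace (which it is in the application, e.g.\ $X=P^{-1}\{y\}$), and that all differential operators are the intrinsic ones; this is an important point to keep explicit since the paper's hypothesis is phrased in terms of $T_xX$ precisely to accommodate this.
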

Proofs of the facts mentioned so far can be found for example in~\cite{GZ} or in the nice introduction to both
spectral gap and logarithmic Sobolev inequalities~\cite{L}.
As pointed out above, we will in addition need
the two-scale criterion that was presented in~\cite{OR}
and which is also contained, in a slightly different formulation, in~\cite{GORV}.
We first define a decomposition of measures analogous
to Definition~\ref{d_disintegration_canonical_ensemble} in the setting of a product space.
\begin{definition} \label{defdecomp}
Let $\nu\in\mathcal{P}(X_1\times X_2)$ be a measure with smooth positive probability density function
with respect to Hausdorff measure. We decompose $\nu$ into a family of conditional measures
$\{\nu(dx_1|x_2)\}_{x_2\in X_2} \,\subset\, \mathcal{P}(X_1)$ and the corresponding marginal measure $\bar{\nu} \,\in\, \mathcal{P}(X_2)$.
This decomposition is such that for all measurable $h: X_1 \times X_2 \rightarrow \mathbb{R}$:
\begin{equation*}
 \int_{X_1\times X_2} h\,d\nu \,=\,\int_{X_2} \int_{X_1} h(x_1,x_2) \,\nu(dx_1|x_2) \,\bar{\nu}(dx_2).
\end{equation*}
\end{definition}
The two-scale criterion reads as follows.
\begin{lemma} [Two-scale criterion for LSI]\label{OR}
Let $\nu\in\mathcal{P}(X_1\times X_2)$ be a measure with twice continuously differentiable Hamiltonian $H$.
Assume that there exist constants $\rho_1,\rho_2>0$ such that
\begin{equation*}
\operatorname{LSI}(\nu(dx_1|x_2)) \geq\rho_1 \quad \text{uniformly in } x_2\in X_2,
\end{equation*}
\begin{equation*}
\operatorname{LSI}(\bar{\nu}) \geq \rho_2.
\end{equation*}
Assume furthermore that
\begin{equation}
\frac{1}{\rho_1}\,\frac{1}{\rho_2}\, \sup_{X_1\times X_2}\,|\nabla_{X_1} \nabla_{X_2} H(x)|^2 \,=\,
\kappa <\infty. \label{coupling}
\end{equation}
Here,
\begin{equation*}
|\nabla_{X_1} \nabla_{X_2} H(x)| \,=\, \sup\,\{\langle\operatorname{Hess} H(x)\, u,v \rangle|\,
  u\in T_xX_1, v\in T_xX_2,|u|=|v|=1\},
\end{equation*}
which is finite if $\operatorname{Hess} H$ is bounded.
Then
\begin{equation*}
\operatorname{LSI}(\nu) \,\geq\, \frac{1}{2} \left(\rho_1 + (1+\kappa)\rho_2 \,-\,\sqrt{(\rho_1 +(1+\kappa) \rho_2)^2
- 4\rho_1\rho_2}\right).
\end{equation*}
\end{lemma}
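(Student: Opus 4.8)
The plan is to follow the scheme of \cite{OR} and \cite{GORV}: split the entropy into a conditional and a marginal contribution, estimate each by the corresponding hypothesis, and absorb the error produced by the $x_2$-dependence of the conditional measure via the coupling bound \eqref{coupling}. So I would fix a smooth $f\colon X_1\times X_2\to\mathbb{R}_+$ with $\int f\,d\nu=1$, put $\bar f(x_2):=\int f(x_1,x_2)\,\nu(dx_1|x_2)$, and use the disintegration of Definition~\ref{defdecomp} to write the additivity of entropy
\begin{equation*}
\Ent(f\nu|\nu)=\int_{X_2}\Ent\!\big(f(\cdot,x_2)\,\nu(dx_1|x_2)\,\big|\,\nu(dx_1|x_2)\big)\,\bar\nu(dx_2)+\Ent(\bar f\bar\nu|\bar\nu).
\end{equation*}
The first term is immediately bounded by $\tfrac1{2\rho_1}\int\tfrac{|\nabla_{X_1}f|^2}{f}\,d\nu$ using $\operatorname{LSI}(\nu(dx_1|x_2))\ge\rho_1$.

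For the marginal term, $\operatorname{LSI}(\bar\nu)\ge\rho_2$ reduces matters to a pointwise estimate of $\tfrac{|\nabla_{X_2}\bar f|^2}{\bar f}$. Writing $\nu(dx_1|x_2)=Z(x_2)^{-1}e^{-H(x_1,x_2)}\,dx_1$ and differentiating under the integral sign gives
\begin{equation*}
\nabla_{X_2}\bar f(x_2)=\int\nabla_{X_2}f\,\nu(dx_1|x_2)-\cov_{\nu(\cdot|x_2)}\!\big(f,\nabla_{X_2}H\big).
\end{equation*}
The first summand is controlled by Jensen's inequality, $|\!\int\nabla_{X_2}f\,\nu(dx_1|x_2)|^2\le\bar f\int\tfrac{|\nabla_{X_2}f|^2}{f}\,\nu(dx_1|x_2)$. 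For the covariance I would use the elementary consequence of the LSI — Herbst's argument (Gaussian concentration) together with the Donsker--Varadhan variational formula — that if $\mu$ satisfies $\operatorname{LSI}(\rho)$, $g\ge 0$ with $\int g\,d\mu=1$, and $|\nabla h|\le\delta$, then $|\cov_\mu(g,h)|\le\delta\sqrt{\tfrac2\rho\Ent(g\mu|\mu)}$. Applying this on $X_1$ with $g=f/\bar f$ and, for each unit vector $e$, $h=e\cdot\nabla_{X_2}H$ — whose $X_1$-gradient has norm $\le|\nabla_{X_1}\nabla_{X_2}H|\le\sqrt{\rho_1\rho_2\kappa}$ by \eqref{coupling} — and then bounding $\Ent\!\big((f/\bar f)\,\nu(\cdot|x_2)\big)$ once more by the conditional Fisher information via $\rho_1$, the factors $\bar f$ cancel and one gets
\begin{equation*}
\big|\cov_{\nu(\cdot|x_2)}(f,\nabla_{X_2}H)\big|^2\le\frac{\kappa\rho_2}{\rho_1}\,\bar f(x_2)\int\frac{|\nabla_{X_1}f|^2}{f}\,\nu(dx_1|x_2).
\end{equation*}

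It then remains to assemble. Using $(a+b)^2\le(1+s)a^2+(1+s^{-1})b^2$ for a free $s>0$, the two estimates above, and integration against $\bar\nu$ (so that $\int\!\int\Xi\,\nu(dx_1|x_2)\,\bar\nu(dx_2)=\int\Xi\,d\nu$), the marginal term is bounded by $\tfrac{1+s}{2\rho_2}\int\tfrac{|\nabla_{X_2}f|^2}{f}\,d\nu+\tfrac{(1+s^{-1})\kappa}{2\rho_1}\int\tfrac{|\nabla_{X_1}f|^2}{f}\,d\nu$. Adding the conditional term and invoking the orthogonal splitting $|\nabla f|^2=|\nabla_{X_1}f|^2+|\nabla_{X_2}f|^2$ yields $\operatorname{LSI}(\nu)\ge\rho$ with $\rho^{-1}=\max\{\rho_1^{-1}(1+(1+s^{-1})\kappa),\,\rho_2^{-1}(1+s)\}$; minimizing over $s>0$ — the optimal choice equalizes the two terms and leads to the quadratic $\rho^2-(\rho_1+(1+\kappa)\rho_2)\rho+\rho_1\rho_2=0$ — gives exactly the asserted value of $\rho$ (and $\kappa=0$ recovers $\min(\rho_1,\rho_2)$, consistent with the tensorization principle).

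The step I expect to be the main obstacle is the covariance estimate: differentiating the conditional expectation correctly, passing the normalization factor $\bar f$ cleanly through the concentration/variational covariance bound and the second application of the conditional LSI so that it cancels, and reducing the vector $\nabla_{X_2}H$ to scalar directions $e$ so that the coupling norm $|\nabla_{X_1}\nabla_{X_2}H|$ enters with the correct constant $\sqrt{\rho_1\rho_2\kappa}$. The additivity of entropy and the final one-variable optimization are routine.
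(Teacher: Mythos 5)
The paper does not actually prove Lemma~\ref{OR}; it simply cites Theorem~2 of~\cite{OR} (and notes a variant in~\cite{GORV}). Your argument is a correct reconstruction of that cited proof and follows essentially the same route: disintegrate the entropy into a conditional piece (handled directly by $\operatorname{LSI}(\nu(dx_1|x_2))\geq\rho_1$) and a marginal piece (handled by $\operatorname{LSI}(\bar\nu)\geq\rho_2$ after a pointwise bound on $|\nabla_{X_2}\bar f|^2/\bar f$), with the marginal term split via the identity $\nabla_{X_2}\bar f=\int\nabla_{X_2}f\,\nu(dx_1|x_2)-\cov_{\nu(\cdot|x_2)}(f,\nabla_{X_2}H)$ into a Jensen/Cauchy--Schwarz term and a covariance term. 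Your covariance bound $|\cov_\mu(g,h)|\leq\|\nabla h\|_\infty\sqrt{(2/\rho)\Ent(g\mu|\mu)}$ (Donsker--Varadhan plus Herbst), followed by one more use of the conditional LSI to convert the entropy into the $X_1$-Fisher information, is exactly the mechanism used in~\cite{OR,GORV} to make the coupling constant $\kappa$ appear, and the final Young--inequality weight $s$ together with equalizing the two coefficients does lead to the quadratic $\rho^2-(\rho_1+(1+\kappa)\rho_2)\rho+\rho_1\rho_2=0$, whose smaller root is the asserted constant. Two small points worth spelling out when you write this up: (i) passing from directional bounds $|\cov_{\nu(\cdot|x_2)}(f,e\cdot\nabla_{X_2}H)|$ to the vector bound is legitimate because the vector norm is the supremum of these directional magnitudes and the bound $\|\nabla_{X_1}(e\cdot\nabla_{X_2}H)\|\leq|\nabla_{X_1}\nabla_{X_2}H|$ is uniform in $e$; (ii) the choice of the \emph{smaller} root should be justified by noting that it reduces to $\min(\rho_1,\rho_2)$ when $\kappa=0$ (tensorization), whereas the larger root does not give a valid bound. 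Aside from these presentational remarks, the proposal is sound.
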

Lemma \ref{OR} says that $\operatorname{LSI}$ for conditional measures and corresponding marginal may - under the coupling assumption
\eqref{coupling} - be combined to yield a $\operatorname{LSI}$ for the full measure.
A proof of the two-scale criterion can be found in~\cite{OR} where it is stated as Theorem $2$.

\subsection{Uniform LSI for conditional measures} \label{mainresultLSI}
In this section we explain how the basic principles of Section~\ref{s_basic_principles_LSI} are used to deduce Theorem~\ref{p_LSI_conditional}. \\

We start with showing that the LSI for the measures~$\mu(dx |Px=y)$ formulated in~\ref{p_LSI_conditional} coincides with the Definition~\ref{deflsi} of Section~\ref{s_basic_principles_LSI}. For this purpose, let us discuss the affine spaces and the Euclidean structures that are involved. The measure~$\mu(dx |Px=y)$ lives on the affine spaces~$$P^{-1}(\left\{ y \right\})= \left\{x \in X_N \ | \ Px=y \right\} \subset X_N = \mathbb{R}^N \qquad \mbox{for } y \in Y_M.$$
Let us now have a closer look at the gradient~$\nabla_{\parallel}$ on the space~$P^{-1}(\left\{ y \right\})$.
\begin{lemma}\label{p_fluctuation_gradient}
Let~$f: X_N \to \mathbb{R}$ be a smooth function. Let~$\nabla f$ be the gradient inherited from the standard Euclidean structure on~$X_N$. Then the gradient~$\nabla_{\parallel}$ on~$\ker P$ and the gradient ~$\nabla_{\perp}$ on ~$(\ker P)^{\perp} = \Image NP^t$ are given by
\begin{align}
\label{e_fluctuation_gradient}
\nabla_{\parallel} f = \nabla f - \nabla_{\perp} f \quad  \mbox{and} \quad \nabla_{\perp} f = NP^t (PNP^t)^{-1} P \nabla f.
\end{align}
\end{lemma}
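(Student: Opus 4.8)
The plan is to make the orthogonal decomposition $X_N=\ker P\oplus(\ker P)^\perp$ explicit with respect to the standard Euclidean structure on $X_N=\mathbb{R}^N$, recognize the operator $NP^t(PNP^t)^{-1}P$ as the Euclidean-orthogonal projection onto $(\ker P)^\perp$, and then invoke the elementary fact that the intrinsic gradient of a smooth function on an affine subspace equals the orthogonal projection of the ambient Euclidean gradient onto the direction of that subspace.

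First I would fix conventions as in Definition~\ref{d_adjoint_NQMt} (with $P$ in place of $Q_M$): $P^t:Y_M\to X_N$ is the adjoint of $P:X_N\to Y_M$ when $X_N$ carries the Euclidean inner product and $Y_M$ the $L^2$ inner product. The classical adjoint identity then gives $(\ker P)^\perp=\Image P^t$, a subspace of the finite-dimensional space $X_N$: indeed $x\in\ker P$ iff $\langle Px,v\rangle_{L^2}=0$ for all $v\in Y_M$ iff $\langle x,P^t v\rangle=0$ for all $v$ iff $x\perp\Image P^t$. Since $N$ is a nonzero scalar, $\Image P^t=\Image NP^t$, which is the claimed decomposition. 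Next I would use the standing assumption (valid since $K$ is large enough, cf. Lemma~\ref{p_invertible_PNPt}) that $P:X_N\to Y_M$ is surjective; then $P^t$ is injective and $PNP^t=N\,PP^t:Y_M\to Y_M$ is symmetric positive definite, hence invertible, so the right-hand sides in~\eqref{e_fluctuation_gradient} make sense. Setting $\Pi:=NP^t(PNP^t)^{-1}P=P^t(PP^t)^{-1}P$, a direct computation shows $\Pi^2=\Pi$, that $\Pi$ is Euclidean-self-adjoint (using that $PP^t$ is $L^2$-self-adjoint), and $\Pi P^t=P^t(PP^t)^{-1}PP^t=P^t$, so $\Image\Pi=\Image P^t=(\ker P)^\perp$. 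A self-adjoint idempotent is the orthogonal projection onto its image, so $\Pi$ is the orthogonal projection of $X_N$ onto $(\ker P)^\perp$ and $\Id-\Pi$ is the orthogonal projection onto $\ker P$.

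To finish, I would note that for a smooth $f:X_N\to\mathbb{R}$ and $x\in P^{-1}(\{y\})$, the set $P^{-1}(\{y\})$ is an affine translate of $\ker P$, so $\nabla_\parallel f(x)$ is the unique $g\in\ker P$ with $\langle g,v\rangle=\langle\nabla f(x),v\rangle$ for all $v\in\ker P$; since $\Id-\Pi$ fixes $\ker P$ and is self-adjoint, $g=(\Id-\Pi)\nabla f(x)$ works, giving $\nabla_\parallel f=\nabla f-\nabla_\perp f$. The same argument on affine translates of $(\ker P)^\perp=\Image NP^t$ gives $\nabla_\perp f=\Pi\nabla f=NP^t(PNP^t)^{-1}P\nabla f$. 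The computations are entirely elementary; the only points that need care are the bookkeeping distinguishing the Euclidean adjoint $P^t$ from the $L^2$-adjoint $NP^t$ (harmless, since $N$ is a scalar and the operator reduces to the classical $P^t(PP^t)^{-1}P$) and the invertibility of $PNP^t$, which rests on the standing surjectivity assumption on $P$ — so there is no real obstacle beyond that bookkeeping.
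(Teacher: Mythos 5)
Your proof is correct and follows essentially the same route as the paper, which simply invokes the orthogonal decomposition $x = x_\parallel + x_\perp$ with $x_\perp = NP^t(PNP^t)^{-1}Px$ (Definition~\ref{d_decomposition_fluctuation_mesoscopic_profile}, justified by Lemma~\ref{p_invertible_PNPt}) and leaves the verification to the reader. You supply exactly the omitted bookkeeping: the identification $(\ker P)^\perp = \Image P^t = \Image NP^t$, the check that $\Pi = NP^t(PNP^t)^{-1}P$ is a Euclidean self-adjoint idempotent with image $(\ker P)^\perp$ (hence the orthogonal projection), and the standard fact that the gradient along an affine subspace is the projection of the ambient gradient.
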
 
We postpone the proof of Lemma~\ref{p_fluctuation_gradient} to Section~\ref{s_gradient_parallel}. Using Lemma~\ref{p_fluctuation_gradient} one sees that the LSI~\eqref{e_d_LSI} of Theorem~\ref{p_LSI_conditional} coincides with the Definition~\ref{deflsi}.\\

Now, let us explain the main idea of the proof of Theorem~\ref{p_LSI_conditional}. As a first attempt to deduce the uniform LSI for the conditional measure~$\mu(dx| Px=y)$ one could try to apply the method of~\cite{GORV}. For a better illustration, let us now outline the basic idea of the approach of~\cite{GORV}: The lattice~$\left\{1, \ldots, N \right\} $ is decomposed into~$M$ many blocks~$B(l)$ each containing~$K$-many sites. The coarse-graining operator~$P: \mathbb{R}^N \to \mathbb{R}^M$ in~\cite{GORV} projects spin configurations $x$ onto the local means over the blocks. The uniform LSI for the measure~$\mu(dx|Px=y)$ is then deduced in the following way: Since the coarse-graining is completely local and the
Hamiltonian is uncoupled, $\mu(dx|Px=y)$ is a
product measure of conditional measures that condition on the mean over a block i.e.
\begin{align}
  \mu(dx|Px=y) = \otimes_{l=1}^M \mu \left( d(x_i)_{i \in B(l)}| \frac{1}{K} \sum_{i \in B(l)} x_i = y_l \right).
\end{align}
Hence by the tensorization principle (cf.~Lemma~\ref{prodpropLSI}), it is sufficient to show a uniform LSI for these measures. For convenience, let us only consider at the first block $B(1)=\left\{1, \ldots , K \right\}$. This block is further subdivided into~$R$ many blocks~$\tilde B(\tilde l)$ each containing~$J$ sites. Again, in~\cite{GORV} one considers the coarse-graining operator~$\tilde P: \mathbb{R}^K \to \mathbb{R}^R$ that maps a configuration~$(x_i)_{i \in B(1)} \in \mathbb{R}^K$ on the block $B (1)= \left\{1, \ldots , K \right\}$ onto the mean values 
\begin{align}
  \tilde y_{\tilde l} = \frac{1}{J} \sum_{i \in \tilde B(\tilde l)} x_i \qquad \tilde l \in \left\{ 1, \ldots , R \right\}.  
\end{align}
over the blocks of size~$J$. As this operator $\tilde P$ is orthogonal, disintegration of measures (see e.g.~Definition~\ref{d_disintegration_canonical_ensemble}) yields that the measure~$\mu \left( dx_1, \ldots, dx_K | \frac{1}{K} \sum_{i =1}^K x_i = y_1 \right)$ can be decomposed into conditional measures and a marginal i.e.
\begin{align}
  \mu & \left( dx_1, \ldots, dx_K | \frac{1}{K} \sum_{i =1}^K x_i = y_1 \right) \\
& = \mu \left( dx_1, \ldots, dx_K | \tilde Px = \tilde y \right) \bar \mu(d \tilde y_{1}, \ldots , \tilde y_{R})\\ 
& = \left( \otimes_{l=1}^{R} \mu \left( d(x_i)_{i \in \tilde B(l)} | \frac{1}{J} \sum_{i \in \tilde B(\tilde l)} x_i =\tilde y_{\tilde l}   \right)  \right)  \bar \mu \left( d \tilde y_{1}, \ldots , d \tilde y_{R} \right).
\end{align}
The uniform LSI for the measures~$$ \mu  \left( dx_1, \ldots, dx_K | \frac{1}{K} \sum_{i =1}^K x_i = y_1 \right) $$  now follows from an application of the two-scale criterion (see Lemma~\ref{OR}). Indeed, a combination of the tensorization principle and of Holley-Stroock (cf.~Lemma~\ref{prodpropLSI} and Lemma~\ref{HStroock}) yields that the conditional measure 
$$
\mu \left( dx_1, \ldots, dx_K | \tilde Px = \tilde y \right)
$$
satisfies a $\LSI(\varrho_1)$ with constant~$\varrho_1>0$ that only depends on~$J$. Additionally, the marginal measure~$\bar \mu (d \tilde y)$ satisfies a uniform LSI if~$J$ is large enough, which follows from the fact that the Hamiltonian~$\bar H(\tilde y)$ of the marginal $\bar \mu (d \tilde y) = \frac{1}{Z} \exp (- \bar H (\tilde y)) d \tilde y$ is uniformly convex for large enough~$J$. Hence, Lemma~\ref{OR} yields that the measures
\begin{align*}
  \mu \left( d(x_i)_{i \in B(l)},| \frac{1}{K} \sum_{i \in B(l)} x_i = y_l \right)
\end{align*}
satisfy a LSI with constant~$\varrho>0$ that only depends on~$J$. By choosing~$J$ large enough but fixed, this yields that the LSI constant~$\varrho>0$ uniformly in~$K$. Hence finally, by choosing~$K$ large enough but fixed, this yields that the measures~$\mu(dx|Px=y)$ satisfy a LSI with constant~$\varrho>0$ that is uniform in~$N$ and in~$y$.\\

We want to note that this approach needs the technical assumption that $N$ is an integer multiple of $M$. However, it will be apparent that the same arguments apply whenever the $N$ spins can be partitioned into $M$ blocks in such a way that
each block contains a number of spins greater than some critical value $K^*$.\\

Unfortunately, if one wants to apply the same argument to the coarse-graining operator~$P : X_N \to Y_M$, that projects onto splines, one runs into problems. It is crucial for the method of~\cite{GORV} that the coarse-graining operator~$P$ is local in the sense that spins from one block (let's say the interval $\left(0,\frac{1}{M}\right)$)
are independent of the conditioning on any other block. This is important for both the factorization and
the convexification part of the argument. In this respect, the spline spaces for $L\geq 1$ are not ideally suited for an application
of the two-scale criterion. Even though splines do possess a localized
basis of B-splines, these functions overlap.\\

We will circumvent this problem in the following way. Similar to the proceeding in Section~\ref{s_convexification}, we consider the space~$Y_M^{DG}$ of piecewise polynomials i.e.~(cf.~Definition~\ref{d_Y_pol_and_H})
\begin{align} \label{d_Y_pol_2}
 Y^{DG}_{M} & := \left\{ y\in L^2(\mathbb{T}) \ | \ \,\forall m=1,\ldots,M:  \right.\\
& \qquad \qquad \left. y|_{\left(\frac{m-1}{M},\frac{m}{M}\right)}
                         \,\text{polynomial of degree $\leq$ L}   \right\} . 
\end{align}
We then consider the orthogonal projection $Q_M: \mathbb{R}^N \rightarrow Y_M^{DG}$  onto $Y_M^{DG}$ in $L^2(\mathbb{T})$. The advantage of~$Q_M$ over~$P$ is that the coarse-graining operator~$Q_M$ is purely local in the sense we described above. This allows us to use the same strategy as in~\cite{GORV} to deduce that the conditional measures~$\mu(dx| Q_Mx =y)$ satisfy a LSI that is uniform in~$y$ and in~$N$ (see Theorem~\ref{LSIDG} and Lemma~\ref{p_product_structure_conditional_Q} below). In the second step of the argument, we deduce the uniform LSI of the conditional measures~$\mu(dx| Px=y)$ from the uniform LSI of the measures~$\mu(dx| Q_Mx =y)$ via another application of the two-scale criterion for the LSI (see Lemma~\ref{OR}).\\

The next statement provides the main ingredient of the proof of Theorem~\ref{p_LSI_conditional}.
\begin{theorem}[Uniform LSI for~$\mu (dx|Q_Mx=y)$]\label{LSIDG}
Let $Y^{DG}_{M}$ be the space of piecewise polynomials (see~\eqref{d_Y_DG} or Definition~\ref{d_Y_pol_and_H}). Let $Q_M: \mathbb{R}^N \rightarrow Y_M^{DG}$ denote the orthogonal projection onto $Y_M^{DG}$ in $L^2(\mathbb{T})$. Then the conditional measures~$\mu(dx | Q_Mx=y)$ satisfy~$LSI(\varrho_Q)$ with a constant~$\varrho_Q>0$ uniform in~$N,M$ and~$y \in Y_M^{DG}$.  
\end{theorem}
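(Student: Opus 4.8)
The plan is to follow the block-decomposition and two-scale bootstrap of~\cite{GORV} outlined above, using crucially that $Q_M$ is \emph{local} (fibres of $Q_M$ are orthogonal products over the $M$ blocks) and feeding in the strict convexity of the discontinuous-Galerkin coarse-grained Hamiltonian (Theorem~\ref{p_strict_convexity_bar_H_DG}, equivalently Theorem~\ref{1block}) wherever~\cite{GORV} used its scalar analogue.

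The first step is to reduce to a single block. By~\eqref{e_decomposition},~\eqref{P3},~\eqref{fibreprod},~\eqref{Haussdorffac} and the fact that $H_N$ is uncoupled, the conditional measure factorizes as $\mu(dx\,|\,Q_M x = y)=\bigotimes_{m=1}^{M}\mu^{(K)}\bigl(dx^{(m)}\,\big|\,Q_1 x^{(m)}=y^{(m)}\bigr)$, where $\mu^{(K)}$ denotes the Gibbs measure of $H_K$ on $\mathbb{R}^K$ (this is Lemma~\ref{p_product_structure_conditional_Q}). By tensorization (Lemma~\ref{prodpropLSI}) it then suffices to produce $\varrho>0$, independent of $K$ (large) and of $\beta\in Y_1^{DG}$, with $\LSI\bigl(\mu^{(K)}(dx\,|\,Q_1 x=\beta)\bigr)\ge\varrho$.

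For this single-block estimate I would apply the two-scale criterion once more, at the sub-block level. Fix $J^*$ as in Theorem~\ref{1block} and, for $K$ large, partition the $K$ spins into $R\ge 2$ sub-blocks of sizes $J_1,\dots,J_R\in[J^*,2J^*)$; let $Q_R\colon\mathbb{R}^K\to Y_R^{DG}$ be the $L^2$-orthogonal projection onto the associated DG space. Since $Y_1^{DG}\subset Y_R^{DG}$ with compatible $L^2$ inner products, $Q_1=Q_1^{R}\circ Q_R$ for the orthogonal projection $Q_1^{R}\colon Y_R^{DG}\to Y_1^{DG}$, so disintegrating $\mu^{(K)}(dx\,|\,Q_1 x=\beta)$ along $Q_R$ gives conditional measures $\mu^{(K)}(dx\,|\,Q_R x=\tilde y)$ with $\tilde y$ in the affine fibre $\mathcal{F}_\beta:=\{\tilde y\in Y_R^{DG}\colon Q_1^{R}\tilde y=\beta\}$ and a marginal $\bar\nu_\beta\in\mathcal{P}(\mathcal{F}_\beta)$. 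Decomposing $x=x_\parallel+x_\perp$ with $x_\perp\in Y_R^{DG}$ and $x_\parallel\in\ker Q_R\perp Y_R^{DG}$, the affine space $\{Q_1 x=\beta\}$ splits \emph{orthogonally} as $(\text{a fibre of }Q_R)\times\mathcal{F}_\beta$, so Lemma~\ref{OR} applies with $X_1$ a fibre of $Q_R$ and $X_2=\mathcal{F}_\beta$, and the three hypotheses hold with $K$-independent constants: (a) by locality, $\mu^{(K)}(dx\,|\,Q_R x=\tilde y)=\bigotimes_r\mu^{(J_r)}(dx^{(r)}\,|\,Q_1 x^{(r)}=\tilde y^{(r)})$, and each factor --- the Gibbs measure of $\tfrac12|\cdot|^2+(\text{linear})+\sum_i\delta\psi(\cdot_i)$ restricted to an affine subspace --- satisfies $\LSI\ge e^{-4J_r\|\delta\psi\|_{L^\infty}}\ge e^{-8J^*\|\delta\psi\|_{L^\infty}}=:\rho_1$ by Bakry--\'Emery (Lemma~\ref{BakryEmery}) on the Gaussian part and Holley--Stroock (Lemma~\ref{HStroock}) on the bounded perturbation, and tensorization preserves $\rho_1$; (b) the Hamiltonian of $\bar\nu_\beta$ is, up to a constant, $K\bar H_{Y_R^{DG}}$ restricted to $\mathcal{F}_\beta$ (cf.~\eqref{e_d_marginal}), which by Lemma~\ref{p_decomp_bar_H_DG} equals $\sum_r J_r\,\psi_{J_r}(\cdot^{(r)})$; Theorem~\ref{p_strict_convexity_bar_H_DG} (for the $R$-mesh) gives $\Hess\ge 2\lambda\,\Id$ in the $L^2$ structure, and since the prefactor $K$ exactly cancels the factor $K$ produced when rewriting the $L^2$ inner product on $\mathbb{R}^K$ through the Euclidean one, this survives restriction to $\mathcal{F}_\beta$ as $\Hess\ge 2\lambda\,\Id$ in the Euclidean structure, whence $\rho_2:=2\lambda$ by Bakry--\'Emery; (c) the Hamiltonian of $\mu^{(K)}(dx\,|\,Q_1 x=\beta)$ is $H_K$ restricted, with $\|\Hess H_K\|\le 1+\|\delta\psi''\|_{L^\infty}=:C_\psi$, so $|\nabla_{X_1}\nabla_{X_2}H|\le C_\psi$ and $\kappa=C_\psi^2/(\rho_1\rho_2)<\infty$. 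Feeding the uniform $\rho_1,\rho_2,\kappa$ into the conclusion of Lemma~\ref{OR} gives $\LSI\bigl(\mu^{(K)}(dx\,|\,Q_1 x=\beta)\bigr)\ge\varrho>0$ depending only on $\lambda$, $J^*$ and $\|\delta\psi\|_{C^2}$; combined with the reduction step this proves the theorem with $\varrho_Q:=\varrho$.

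The hard part is the inner two-scale argument, where the following points need care: (i) the locality of $Q_R$ is used twice --- to factor the inner conditional measure \emph{and} to make the $X_1\oplus X_2$ splitting orthogonal --- which rests on the nesting $Y_1^{DG}\subset Y_R^{DG}$; (ii) the marginal $\bar\nu_\beta$ must be identified with the DG coarse-grained Hamiltonian restricted to the fibre $\mathcal{F}_\beta$ so that Theorem~\ref{p_strict_convexity_bar_H_DG}/Theorem~\ref{1block} can be invoked; (iii) the sub-block size $J_r$ must stay \emph{bounded}, since Holley--Stroock over a sub-block costs a factor $e^{-cJ_r}$ --- which is exactly why a direct Bakry--\'Emery/Holley--Stroock argument fails for large $K$ and the two-scale bootstrap is genuinely needed; and (iv) the normalizations among the Euclidean structure on $\mathbb{R}^K$, the $L^2$ structure, and the Hausdorff measures on the affine fibres must be tracked consistently, so that the convexity bound of Theorem~\ref{1block} --- which is naturally an $L^2$ statement --- yields an LSI constant that does not degenerate as $K\to\infty$.
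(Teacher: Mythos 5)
Your proposal matches the paper's own proof: first tensorize over the $M$ blocks via the product structure of $\mu(dx\,|\,Q_M x=y)$ (Lemma~\ref{p_product_structure_conditional_Q} plus Lemma~\ref{prodpropLSI}), then apply the two-scale criterion (Lemma~\ref{OR}) on a single $K$-block with sub-blocks of bounded size, combining Bakry--\'Emery and Holley--Stroock for the conditional measures with the strict convexity of $\bar H_{Y_R^{DG}}$ from Theorem~\ref{p_strict_convexity_bar_H_DG} for the marginal. The only point to tighten is that the factor $K$ cancels only up to the $O(1/J^2)$ error of Lemma~\ref{p_QtKQ}: the marginal lives on $W_\alpha\subset \Image(KQ_R^t)\subset\mathbb{R}^K$, and the chain rule for $\hat H_{W_\alpha}(x_\perp)=K\bar H_{Y_R^{DG}}(Q_R x_\perp)+\text{const}$ produces $|Q_R u|_{L^2}^2$ rather than $|u|_{L^2}^2$, so one needs $|Q_R u|_{L^2}^2=(1+O(1/J^2))|u|_{L^2}^2$ to pass to the Euclidean norm --- harmless since $J$ is fixed and large.
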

\begin{remark}
Both in Theorem~\ref{p_LSI_conditional} and in Theorem \ref{LSIDG}, the case $L=0$ corresponds to the result in~\cite{GORV}.
In Theorem \ref{LSIDG}, the space of observables is of course of dimension $(L+1)M$, not $M$.
\end{remark}

We will now state the proof of Theorem~\ref{LSIDG}. Because the coarse-graining operator~$Q_M$ is local and the Hamiltonian~$H$ has no interaction between different sites a straight forward calculation shows that the conditional measure~$\mu(dx|Q_Mx=y)$ has the following product structure. In the following lemma, we use notation introduced in Section~\ref{s_strict_convexity_bar_H_DG}.

\begin{lemma}\label{p_product_structure_conditional_Q}
We decompose the lattice~$\left\{1, \ldots, N \right\}$ into~$M$ many blocks $$B(m) = \left\{ (m-1)K+1, \ldots, mK \right\}, \qquad 1 \leq m \leq M,$$ each consisting of~$K$ many sites. For convenience, we denote with~$x^{(m)}$ the vector~$x^{(m)}= (x_i)_{i \in B(m)}$. For clarity, we denote $\mu^N$ instead of $\mu$ the Gibbs measure on $\mathbb{R}^N$. Then it holds that 
\begin{equation}
 \mu^N( dx |Q_Mx= \alpha) = \bigotimes_{m=1}^{M}  \mu^K ( dx^{(m)} | Q_1x^{(m)}= \alpha^{(m)})
\end{equation}
\end{lemma}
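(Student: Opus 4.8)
The plan is to verify the asserted product formula at the level of densities with respect to Hausdorff measure, using only the block structure already recorded in Section~\ref{s_strict_convexity_bar_H_DG}. By disintegration of~$\mu^N$ along the orthogonal projection~$Q_M$ (in the spirit of Definition~\ref{d_disintegration_canonical_ensemble}), the conditional measure~$\mu^N(dx|Q_Mx=\alpha)$ is the probability measure supported on the fiber~$\mathbb{R}^N_{M,\alpha}=\{x\in\mathbb{R}^N : Q_Mx=\alpha\}$ that is absolutely continuous with respect to the Hausdorff measure on that fiber with density proportional to~$\exp(-H_N(x))$; here one uses that the co-area Jacobian of the affine map~$Q_M$ is constant along each fiber, so that it is absorbed into the normalization~$Z_\alpha:=\int_{\mathbb{R}^N_{M,\alpha}}\exp(-H_N(x))\,dx$. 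Likewise, for each~$m\in[M]$, the measure~$\mu^K(dx^{(m)}|Q_1x^{(m)}=\alpha^{(m)})$ has density proportional to~$\exp(-H_K(x^{(m)}))$ on the fiber~$\mathbb{R}^K_{1,\alpha^{(m)}}$.

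First I would invoke~\eqref{P3}, which gives~$Q_Mx=\oplus_{m=1}^M Q_1x^{(m)}$, so that~$Q_Mx=\alpha$ holds if and only if~$Q_1x^{(m)}=\alpha^{(m)}$ for every~$m\in[M]$. Equivalently, the fiber factors as the orthogonal direct sum~\eqref{fibreprod}, namely~$\mathbb{R}^N_{M,\alpha}=\bigoplus_{m=1}^M\mathbb{R}^K_{1,\alpha^{(m)}}$, and by~\eqref{Haussdorffac} the Hausdorff measure on it is the product~$\bigotimes_{m=1}^M\mathcal{H}(dx^{(m)})$. Next I would use that the Hamiltonian is uncoupled: by~\eqref{hamiltonian} one has~$H_N(x)=\sum_{n=1}^N\psi(x_n)=\sum_{m=1}^M H_K(x^{(m)})$, hence~$\exp(-H_N(x))=\prod_{m=1}^M\exp(-H_K(x^{(m)}))$. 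Substituting this into the definition of~$Z_\alpha$ and applying Tonelli's theorem over the product fiber shows that the normalization itself factors, namely~$Z_\alpha=\prod_{m=1}^M\int_{\mathbb{R}^K_{1,\alpha^{(m)}}}\exp(-H_K(x^{(m)}))\,dx^{(m)}$, which is the product of the normalizations of the single-block conditional measures.

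Combining these three observations, the density of~$\mu^N(dx|Q_Mx=\alpha)$ with respect to~$\bigotimes_{m=1}^M\mathcal{H}(dx^{(m)})$ equals $\frac{1}{Z_\alpha}\exp(-H_N(x))=\prod_{m=1}^M\frac{\exp(-H_K(x^{(m)}))}{\int_{\mathbb{R}^K_{1,\alpha^{(m)}}}\exp(-H_K(x^{(m)}))\,dx^{(m)}}$, which is exactly the density of~$\bigotimes_{m=1}^M\mu^K(dx^{(m)}|Q_1x^{(m)}=\alpha^{(m)})$, and the claim follows. There is no genuine obstacle here: the only point requiring a line of care is that disintegration along the orthogonal projection~$Q_M$ really does produce the renormalized restriction of~$\mu^N$ to each fiber with no fiber-dependent Jacobian, and this is also where the standing assumption~$N=KM$ enters, ensuring that each block~$B(m)$ carries exactly~$K$ spins so that the decomposition~\eqref{e_decomposition}--\eqref{fibreprod} applies verbatim.
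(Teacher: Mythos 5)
The paper omits the proof of this lemma (``We skip the the simple proof of Lemma~\ref{p_product_structure_conditional_Q}''), but your argument is precisely the straightforward one the authors had in mind: it assembles exactly the ingredients the paper has already laid out in Section~\ref{s_strict_convexity_bar_H_DG}, namely the block factorization of the projection \eqref{P3}, of the fiber \eqref{fibreprod}, and of the Hausdorff measure \eqref{Haussdorffac}, together with the additivity~$H_N(x)=\sum_{m=1}^M H_K(x^{(m)})$ of the uncoupled Hamiltonian \eqref{hamiltonian}. Your identification of the conditional density as proportional to~$\exp(-H_N)$ on the fiber (with the co-area Jacobian~$(\det Q_M Q_M^t)^{1/2}$ being a genuine constant that is absorbed into the normalization) matches the convention recorded in \eqref{e_d_conditional_can_ensemble}, and Tonelli then gives the factorization of~$Z_\alpha$ and hence of the conditional law. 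The proof is correct.
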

We skip the the simple proof of Lemma~\ref{p_product_structure_conditional_Q} and continue with deducing Theorem~\ref{LSIDG}.

\begin{proof}[Proof of Theorem~\ref{LSIDG}]
By the tensorization principle (cf.~Lemma~\ref{prodpropLSI}) and Lemma~\ref{p_product_structure_conditional_Q} it suffices to show that the conditional measures 
\begin{align} 
 \mu^K ( dx^{(m)} | Q_1x^{(m)}= \alpha^{(m)})
\end{align}
satisfy a uniform LSI. The strategy is to apply the two-scale criterion for LSI (see Lemma~\ref{OR}). We have to carry out several steps in order to verify the hypotheses of Lemma~\ref{OR}. As usual, the polynomial degree $L$ is fixed for the entire argument and we will not always mention the dependencies of constants on $L$ explicitly.\\
 
Without loss of generality we may only consider the first block~$B(1)$. For convenience we will write~$x \in \mathbb{R}^K$ instead of~$x^{(1)}$ and $\alpha$ instead of~$\alpha^{(1)}$. We also introduce the short notation
\begin{align} \label{e_d_mu_alpha_K}
\mu_\alpha^K (d x ) := \mu^K ( dx | Q_1x= \alpha).
\end{align}
From now on we assume that~$K$ is of the form $K=RJ$,
where $J\in \mathbb{N}$ is large but fixed and $R\in\mathbb{N}$ is arbitrary. The measure
$\mu_\alpha^K (d x )$ is defined on the space (cf.~\eqref{e_d_R_M_alpha})
\begin{align}
  \mathbb{R}^K_{1,\alpha} := \{x\in \mathbb{R}^K|\,Q_{1}x =\alpha\} .
\end{align}
In order to apply Lemma~\ref{OR}, we have to decompose the space~$\mathbb{R}^K_{1,\alpha}$ into an orthogonal sum of two spaces. One space will describe the mesoscopic profile of~$x$ and the other one will describe the fluctuations around this profile. For this purpose we use the coarse-graining operator $Q_{R}$. Compared to~\cite{GORV} there is a small but unproblematic technical complication: For $L\geq 1$, $Q_{R}KQ_{R}^t$ is not equal to the identity on $Y_{R}^{DG}$. However, we have already seen in Lemma~\ref{PNP} that~$Q_{R}KQ_{R}^t$ is close to the identity for large~$J$. This means that dealing with the resulting approximation error is unproblematic. Yet, we have to be a little bit careful when decomposing the space~$ \mathbb{R}^K_{1,\alpha} $.\\

If $J$ is sufficiently large, then by~\eqref{approxid3} there is for any given $x \in \mathbb{R}^J$ a unique $y \in Y_{1}^{DG}$ such that
\begin{equation}\label{e_solution_PNP}
Q_{1}\,(x - JQ_{1}^ty) \,=\, 0.
\end{equation}
The equation~\eqref{e_solution_PNP} yields the desired orthogonal decomposition of $x$ into a fluctuation part from
$Ker \,Q_{1}$ and a macroscopic part:
\begin{equation*}
 x\,=\, (x-JQ_{1}^ty)\,+\,JQ_{1}^ty.
\end{equation*}
The respective formula for more than one block follows immediately from~\eqref{P3}. Namely, we write~$ \mathbb{R}^K \ni x = (x^{(1)}, \ldots, x^{(R)})$, where~$x^{(r)} \in \mathbb{R}^J$. Then we define for~$1 \leq r \leq R$
\begin{equation*}
 x^{(r)}= \underbrace{(x^{(r)}-JQ_{1}^ty^{(r)})}_{=: x_\parallel^{(r)}}+ \underbrace{JQ_{1}^ty^{(r)}}_{=: x_{\perp}^{(r)}}.
\end{equation*}
 Then we get that any~$x \in \mathbb{R}^K$ can be uniquely written as
 \begin{align}
   \label{e_orthogonal_decomposition_Q}
   x= x_{\parallel} + x_{\perp}, 
 \end{align}
where~$x_{\parallel} \in Ker \,Q_{R}$ and~$x_{\perp} \in Im \,KQ_{R}^t$. Hence we obtain the orthogonal decomposition
\begin{equation}
\mathbb{R}^K \,=\, V \oplus W
\end{equation}
where $V:= Ker\, Q_{R}$ and $W:= Im\, KQ_{R}^t$.\\

The key observation here is that $Y_1^{DG}$ is a linear subspace of $Y_R^{DG}$, which gives the inclusion $V \subset Ker \,Q_{1}$. This allows us to decompose the affine subspace $\mathbb{R}^K_{1,\alpha} \subset \mathbb{R}^K$ as
\begin{equation}
 \mathbb{R}^K_{1,\alpha}\,=\, V \oplus W_{\alpha}, \label{facfibre}
\end{equation}
where
\begin{equation*}
W_{\alpha} := KQ_{R}^t\,\{y\in Y_{R}^{DG}| Q_{1}KQ_{R}^ty =\alpha\}.
\end{equation*}
Now, following Definition~\ref{defdecomp} we  decompose $\mu^K_{\alpha}(dx)$ (cf.~\eqref{e_d_R_M_alpha}) with respect to the
factorization \eqref{facfibre} into the family of conditional measures
$\left\{ \mu^K(d x_{\parallel} | x_{\perp}) \right\}_{x_{\perp}} \subset\, \mathcal{P}(V)$ and the marginal
$\bar{\mu}^K_{\alpha} (dx_{\perp}) \,\in \mathcal{P}(W_{\alpha})$. \\

We will apply the two-scale criterion to prove Theorem \ref{LSIDG}. Hence, we need to show that there are $\rho_1,\rho_2 > 0$, independent of $R$ (and hence $K$) and $\alpha$, such that
\begin{equation}
\forall x_{\perp}\,\in\,W_{\alpha}: \: \operatorname{LSI}(\mu^K( d x_{\parallel}| x_{\perp})) \geq \rho_1, \label{lsicond}
\end{equation}
and
\begin{equation}
\operatorname{LSI}(\bar{\mu}^K_{\alpha} (d x_{\perp})) \geq \rho_2 \label{lsimarg}.
\end{equation}
Additionally, we have to show that
\begin{equation}
\frac{1}{\rho_1}\,\frac{1}{\rho_2}\, \sup_{\mathbb{R}^K_{1,\alpha}}\,|\nabla_V \nabla_{W_{\alpha}} H_K|^2 \,\leq\, \kappa \,<\,\infty, \label{coupling2}
\end{equation}
for some constant $\kappa$ that is independent of $K$ and $\alpha$.\\

Let us now deduce~\eqref{lsicond}. We apply Lemma~\ref{p_product_structure_conditional_Q} to our situation by setting~$N=K$ and~$K=J$. This yields that the measure
$\mu^K (d x_{\parallel} | x_{\perp})$ is a product measure i.e.
\begin{equation}
\mu^K (d x_{\parallel} | x_{\perp})  = \bigotimes_{r=1}^R \mu^J (d x_{\parallel}^{(r)} | x_{\perp}^{(r)}). \label{prodcond2}
\end{equation}
Since $J$ is a fixed finite integer, the Hamiltonian of $\mu^{J}$ is just a bounded perturbation of a function that is
uniformly convex with lower bound one. Consequently, we get from a combination of Lemma~\ref{HStroock} and Lemma~\ref{BakryEmery} that
\begin{equation*}
\operatorname{LSI} \left( \mu^{J} (d x_{\parallel}^{(r)} | x_{\perp}^{(r)} )\right) \geq \exp\,(-2J \operatorname{osc}(\delta\psi)) =: \rho_1 > 0.
\end{equation*}
Now, a combination of~\eqref{prodcond2} and Lemma~\ref{prodpropLSI} yields~$ \eqref{lsicond}$.\\

Let us now turn to~\eqref{lsimarg}. The strategy is to show that the Hamiltonian of the marginal measure~$\bar{\mu}^K_{\alpha} (d x_{\perp})$ is uniformly strictly convex. The desired statement~\eqref{lsimarg} follows then from the Bakry-\'Emery criterion (cf.~Lemma~\ref{BakryEmery}). We start with observing the the Hamiltonian~$\hat H_{W_\alpha}$ of the measure~$\bar{\mu}^K_{\alpha} (d x_{\perp})$ is given by
\begin{align}
  \hat H_{W_\alpha} (x_{\perp}) & := - \ln \frac{ d \bar{\mu}^K_{\alpha}}{dx_{\perp}} (x_{\perp}) \\
  & =- \ln \frac{1}{Z}\int_V \exp(-H_K(x_{\parallel}+ x_{\perp})) dx_{\parallel}.
\end{align}
Here as usual, $dx_{\parallel}$ is the Hausdorff measure of appropriate dimension on $V$. Using definition~\eqref{e_def_coarse_grained_Hamiltonian_DG} of~$\bar H_{Y_R^{DG}}$, the last identity yields
\begin{align}\label{e_connecting_hat_H_with_H_DG}
  \hat H_{W_\alpha} (x_{\perp}) = K \bar H_{Y_R^{DG}} (Q_{R} x_{\perp}) + \ln Z.
\end{align}
where $Z$ is a constant that accounts for different normalization constants. From Theorem~\ref{p_strict_convexity_bar_H_DG}, we know that~$\bar H_{Y_R^{DG}}$ is uniformly strictly convex, provided~$J$ is large enough. Now, using~\eqref{e_connecting_hat_H_with_H_DG} we will transfer the convexity from~$\bar H_{Y_R^{DG}}$ to~$\hat H_{W_\alpha}$. Applying Theorem~\ref{p_strict_convexity_bar_H_DG}, we get using the chain rule that for $J\geq J^*$ and arbitrary  $u\in T_{x_{\perp}} W_\alpha \subset Im KQ_R^t$, 
\begin{align}
\langle u,\operatorname{Hess}  \hat {H}_{W_\alpha}(x_{\perp}) \,u\rangle_{T_{x_{\perp}} W_\alpha}
&=  K \langle Q_R u , \operatorname{Hess}  \bar H_{Y_R^{DG}} (Q_Rx_{\perp}) \, Q_R u\rangle_{L^2}\\
&\geq 2 K \lambda |Q_R u|_{L^2}^2\\
 &\overset{\eqref{e_QtKQ}}{=} 2K \lambda \left(1+ O\left(\frac{1}{J^2}\right)\right) |u|_{L^2}^2, \\
 &= 2\lambda\left(1+ O\left(\frac{1}{J^2}\right)\right)\, |u|_{T_{x_{\perp}} W_\alpha}^2, 
\end{align}
where we used estimate \eqref{e_QtKQ} from below. This yields the uniform strict convexity of~$\hat H_{W_\alpha}$. Thus, the Bakry-\'Emery criterion (cf.~Lemma~\ref{BakryEmery}) implies~\eqref{lsimarg} with constant $\rho_2\,:=\,\lambda$.\\

Finally, let us consider~\eqref{coupling2}. We set $\kappa:= \frac{1}{\rho_1}\frac{1}{\lambda} \|\operatorname{Hess} H_K\|^2$. It is immediate from
the explicit form of $H_K$ (the $C^2(\mathbb{R})$-bound for $\delta\psi$ to be precise) that~$\kappa$ is bounded independently of $K$.\\

Overall, we may hence apply Lemma \ref{OR} which yields that for $J\geq J^*$:
\begin{equation*}
\operatorname{LSI}(\mu^K_{\alpha}) \geq \frac{1}{2}
 \left(\rho_1 \,+\,(1+\kappa) \lambda \,-\,\sqrt{(\rho_1 \,+\,(1+\kappa) \lambda)^2 \,-\, 4\rho_1\lambda} \right),
\end{equation*}
which is bounded from below uniformly in~$K$.

\end{proof}

Let us now turn to the proof of Theorem~\ref{p_LSI_conditional}. With Theorem~\ref{LSIDG} at hand, the proof of Theorem~\ref{p_LSI_conditional} consists of an application of the two-scale criterion (see Lemma~\ref{OR}). The argument is very similar to the proof of Theorem~\ref{LSIDG}.  
\begin{proof}[Proof of Theorem~\ref{p_LSI_conditional}]
We recall
the orthogonal decomposition induced by $Q_M$ from the proof of Theorem~\ref{LSIDG}: if $K$ is sufficiently large, any~$x \in \mathbb{R}^N$ can be uniquely written as
\begin{align}
x= x_{\parallel} + x_{\perp}, 
\end{align}
where~$x_{\parallel} \in Ker \,Q_{M}$ and~$x_{\perp} \in Im \,NQ_{M}^t$. In short,
\begin{equation}
\mathbb{R}^N \,=\, V \oplus  W
\end{equation}
where $V:= Ker\, Q_{M}$ and $W:= Im\, NQ_{M}^t$. \\

The key observation here is that $Y_{M}$ is a linear subspace of $Y^{DG}_{M}$, which gives the inclusion $V \subset Ker \,P$. This allows us to decompose the affine subspace $P^{-1}(\{y\}) \subset \mathbb{R}^N$ as
\begin{equation}
 \{x\in \mathbb{R}^N| Px =y\} \,=\, V
  \oplus W_y. \label{factsplinefiber}
\end{equation}
where
\begin{equation}W_y:= NQ_{M}^t \{\beta \in Y_{M}^{DG}\,| P NQ_{M}^t\beta =y\}.
\end{equation} 
Now, following Definition~\ref{defdecomp} we  decompose $\mu^N(dx|Px=y)$ with respect to the
factorization \eqref{factsplinefiber} into the family of conditional measures
$\left\{ \mu^N(d x_{\parallel} | x_{\perp}) \right\}_{x_{\perp}} \subset\, \mathcal{P}(V)$ and the marginal
$\bar{\mu}^N (dx_{\perp} | Px_{\perp} = y) \,\in \mathcal{P}(W_{y})$. \\

Let us compare this decomposition to the decomposition used in the proof of Theorem~\ref{LSIDG}. There, the constraint determining the affine subspace stemmed
from prescribing a global polynomial. Here,  it comes from prescribing a spline on the same mesh.\\

We will apply the two-scale criterion to prove Theorem \ref{p_LSI_conditional}. Hence, we need to show that there are $\rho_{DG}, \bar \rho > 0$, independent of $M$ (and hence $N$) and $y$, such that
\begin{equation}
\forall x_{\perp}\,\in\,W_{y}: \: \operatorname{LSI}(\mu^N( d x_{\parallel}| x_{\perp})) \geq \rho_{DG}, \label{lsicond_2}
\end{equation}
and
\begin{equation}
\operatorname{LSI}(\bar{\mu}^N (d x_{\perp}|Px_{\perp}=y)) \geq \bar \rho \label{lsimarg_2}.
\end{equation}
Additionally, we have to show that
\begin{equation}
\frac{1}{\rho_{DG}}\,\frac{1}{\bar \rho}\, \sup_{P^{-1}(\{y\})}\,|\nabla_V \nabla_{W_{y}} H_N|^2 \,\leq\, \kappa \,<\,\infty, \label{coupling2_2}
\end{equation}
for some constant $\kappa$ that is independent of $N$ and $y$.\\

Let us now turn to~\eqref{lsicond_2}. Note that
\begin{align}
 \mu^N(dx_{\parallel}| x_{\perp}) = \mu^N(dx| Q_M x = Q_M x_{\perp}).  
\end{align}
Using the last representation, it follows from Theorem~\ref{LSIDG} that the conditional measure satisfies a $\LSI(\varrho_{DG})$ with~$\varrho_{DG}>0$ uniformly in~$N$ and~$x_{\perp}$.\\

Let us now turn to~\eqref{lsimarg_2}. The proof follows the same strategy of the proof of \eqref{lsimarg} in the proof of Theorem \ref{LSIDG}: compute the Hamiltonian~$\hat H_{W_y}$ of the measure~$\bar \mu^N(d x_{\perp} | P x_{\perp}=y)$, relate it to $H_{Y_M^{DG}}$, transfer the uniform convexity of the latter to the former, and apply Bakry-\'Emery criterion (cf.~Lemma~\ref{BakryEmery}) to get the desired statement with $\bar \rho := \lambda$. We leave the details as an exercise. \\

Finally, let us consider~\eqref{coupling2_2}. We set $\kappa:= \frac{1}{\rho_{DG}}\frac{1}{\lambda} \|\operatorname{Hess} H_N\|^2$. It is immediate from
the explicit form of $H_N$ (the $C^2(\mathbb{R})$-bound for $\delta\psi$ to be precise) that~$\kappa$ is bounded independently of $N$.\\

Overall, we may hence apply Lemma \ref{OR} which yields that for $K\geq K^*$:
\begin{equation*}
  \operatorname{LSI}(\mu^N(dx|Px =y))  \geq \rho,
\end{equation*}
where the constant
\begin{equation*}
\rho:= \frac{1}{2} \left(\rho_{DG} \,+\,(1+\kappa) \lambda \,-\,\sqrt{(\rho_{DG} \,+\,(1+\kappa) \lambda)^2 \,-\, 4\rho_{DG}\lambda} \right).
\end{equation*}
is uniformly bounded from below in $N$.
\end{proof}

In the proof of Theorem~\ref{LSIDG} and Theorem \ref{p_LSI_conditional} above, we used the estimate \eqref{e_QtKQ} from below. Let us state and prove this estimate now.

\begin{lemma}\label{p_QtKQ} If $u \in KQ_R^t Y_R^{DG}$, then
	\begin{equation}\label{e_QtKQ}
	|Q_R u|_{L^2}^2 = \left(1+ O\left(\frac{1}{K^2}\right)\right)\,|u|_{L^2}^2
	\end{equation}
\end{lemma}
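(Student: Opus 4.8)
The plan is to reduce \eqref{e_QtKQ} to the one-block estimate of Lemma~\ref{PNP} by an elementary operator identity. Set $S := Q_R K Q_R^t : Y_R^{DG}\to Y_R^{DG}$. By Definition~\ref{d_adjoint_NQMt}, $KQ_R^t$ is the $L^2$-adjoint of $Q_R$, so $S$ is self-adjoint and positive semidefinite for the $L^2$ inner product on $Y_R^{DG}$. Writing $u = K Q_R^t y$ with $y\in Y_R^{DG}$, adjointness gives immediately
\begin{equation*}
|u|_{L^2}^2 = \langle K Q_R^t y, K Q_R^t y\rangle_{L^2} = \langle y, S y\rangle_{L^2}, \qquad |Q_R u|_{L^2}^2 = \langle S y, S y\rangle_{L^2} = \langle y, S^2 y\rangle_{L^2}.
\end{equation*}
Thus the claim is equivalent to $\langle y, S^2 y\rangle_{L^2} = (1+O(J^{-2}))\langle y, S y\rangle_{L^2}$ for all $y \in Y_R^{DG}$, where (recall $K = RJ$ in this section) $J$ is the number of spins per block and $O(J^{-2})$ is the rate already produced by Lemma~\ref{PNP}.

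The next step is to bound $S - \id_{Y_R^{DG}}$. Since functions in $Y_R^{DG}$ decouple across the $R$ sub-intervals, the orthogonal splitting $Y_R^{DG} = \bigoplus_{m=1}^R Y_1^{DG}$ from \eqref{e_decomposition}--\eqref{e_innerproduct} together with \eqref{P3} (with the roles of $M$, $N$, $K$ there played here by $R$, $K$, $J$) identifies $S$ with the orthogonal direct sum of $R$ copies of the one-block operator $Q_1 J Q_1^t$. Hence estimate \eqref{approxid3} of Lemma~\ref{PNP} yields $\|S - \id_{Y_R^{DG}}\| \lesssim J^{-2}$; in particular, for $J$ large enough $\|S-\id_{Y_R^{DG}}\|\le \tfrac12$, so $S$ is invertible, $\|S\|\le 2$, and $\langle y, S y\rangle_{L^2} \ge \tfrac12 |y|_{L^2}^2$ for every $y$.

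Finally I would carry out the algebra. With $E := S - \id_{Y_R^{DG}}$, so $\|E\|\lesssim J^{-2}$, one has $S^2 - S = E + E^2 = E S$, hence
\begin{equation*}
\big|\langle y, S^2 y\rangle_{L^2} - \langle y, S y\rangle_{L^2}\big| = \big|\langle y, E S y\rangle_{L^2}\big| \le \|E\|\,|y|_{L^2}\,|S y|_{L^2} \le 2\|E\|\,|y|_{L^2}^2 \le 4\|E\|\,\langle y, S y\rangle_{L^2},
\end{equation*}
using $\|S\|\le 2$ and $\langle y, S y\rangle_{L^2}\ge \tfrac12|y|_{L^2}^2$. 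Therefore $\langle y, S^2 y\rangle_{L^2} = (1 + O(\|E\|))\langle y, S y\rangle_{L^2} = (1+O(J^{-2}))\langle y, S y\rangle_{L^2}$, which is \eqref{e_QtKQ} (with the error rate of Lemma~\ref{PNP}, i.e.\ the one used when \eqref{e_QtKQ} is invoked in the proof of Theorem~\ref{LSIDG}). I do not expect a genuine obstacle: the only points requiring care are keeping track that it is $KQ_R^t$ --- not $Q_R^t$ --- that is the $L^2$-adjoint, so the factors of $K$ cancel correctly, and the block bookkeeping that lets Lemma~\ref{PNP} be applied verbatim.
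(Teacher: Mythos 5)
Your proof is correct, and it differs in structure from the paper's own argument in a way worth noting. You pull everything back to the operator $S=Q_RKQ_R^t$: writing $u=KQ_R^ty$ and using $L^2$-adjointness you convert both $|u|_{L^2}^2$ and $|Q_Ru|_{L^2}^2$ to quadratic forms $\langle y,Sy\rangle$ and $\langle y,S^2y\rangle$, and then a single algebraic identity $S^2-S=(S-\id)S$ together with the operator bound $\|S-\id\|\lesssim J^{-2}$ gives both directions of the estimate at once. The paper instead works with the inverse $(Q_RKQ_R^t)^{-1}$: it exploits the identity $KQ_R^t(Q_RKQ_R^t)^{-1}Q_Ru=u$ (valid because $u\in\operatorname{Im} KQ_R^t$) to isolate $\langle u,u\rangle$ inside $\langle Q_Ru,Q_Ru\rangle$, bounds the remainder by $\|\id-(Q_RKQ_R^t)^{-1}\|\,|Q_Ru|_{L^2}^2$, and then disposes of the upper bound trivially from the contractivity of the orthogonal projection $Q_R$. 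Your route is inverse-free and symmetric (the upper and lower bounds come out of the same computation), whereas the paper's route leans on the geometric fact $|Q_Ru|_{L^2}\le|u|_{L^2}$ for the easy direction; both are equally elementary. You are also more precise about the error rate: Lemma~\ref{PNP} together with the block decomposition (as in \eqref{e_decomposition}, \eqref{P3}) yields $\|S-\id\|\lesssim J^{-2}$ with $K=RJ$, which matches the rate $O(J^{-2})$ actually used when \eqref{e_QtKQ} is invoked in the proof of Theorem~\ref{LSIDG}, rather than the $O(K^{-2})$ appearing in the lemma's statement and proof in the paper (which is only the right scale when $R$ is held fixed).
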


\begin{proof} By assumption, $u = KQ_R^t \beta$ for some $\beta \in Y_R^{DG}$, so
	\begin{align}KQ_R^t (Q_R KQ_R^t)^{-1} Q_R u = KQ_R^t \beta = u. 
	\end{align}
	Using this and \eqref{approxid3}, we get
	\begin{eqnarray*}
	\lefteqn{\,\langle Q_R u , Q_R u\rangle_{L^2}}  \\
	&=&  \,\langle (Q_R K Q_R^t)^{-1} Q_R u , Q_R u\rangle_{L^2} +  \,\langle (\id_{Y_R^{DG}} - (Q_R K Q_R^t)^{-1}) Q_R u , Q_R u\rangle_{L^2} \\
	&\geq& \,\langle KQ_R^t (Q_R K Q_R^t)^{-1} Q_R u , u\rangle_{L^2}  -  \|\id_{Y_R^{DG}} - (Q_R K Q_R^t)^{-1}\| |Q_R u|_{L^2}^2 \\
	&\geq&  \,\langle u , u\rangle_{L^2} -  O\left(\frac{1}{K^2}\right) |u|_{L^2}^2  = \left(1+ O\left(\frac{1}{K^2}\right)\right)\,|u|_{L^2}^2
	\end{eqnarray*}
This reverse inequality is trivial.
\end{proof}

\subsection{Proof of Lemma~\ref{p_fluctuation_gradient}: Determining the gradient on~$\ker P$}\label{s_gradient_parallel}

The proof of Lemma~\ref{p_fluctuation_gradient} needs the following auxiliary result.
\begin{lemma}\label{p_invertible_PNPt}
It holds that
	\begin{align}\label{e_p_operator_estimate}
	\| PNP^t  - \id_{Y_M} \| \lesssim \frac{1}{K^2}.
	\end{align}
In particular, if~$K$ is large enough then $P NP^t: Y_M \rightarrow Y_M$ is invertible. 
\end{lemma}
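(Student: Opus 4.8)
The plan is to reduce the operator-norm bound to a quadratic-form estimate and then exploit that, since $N=KM$, the fine mesh $\{i/N\}$ refines the coarse mesh $\{m/M\}$, so that every $y\in Y_M$ is a polynomial of degree $\le L$ on each fine interval. First I would record the elementary algebraic fact about $NP^t\colon Y_M\to X_N$, which — exactly as for $Q_M$ in Definition~\ref{d_adjoint_NQMt} — is the $L^2$-orthogonal projection of $Y_M\subset L^2(\mathbb{T})$ onto $X_N$, with $(NP^ty)_i=N\int_{(i-1)/N}^{i/N}y\,d\theta$: for $y,z\in Y_M$,
\[
\langle PNP^ty,z\rangle_{L^2}=\langle NP^ty,z\rangle_{L^2}=\langle NP^ty,NP^tz\rangle_{L^2},
\]
the first equality because $P$ is the $L^2$-selfadjoint projection onto $Y_M\ni z$, the second because $NP^ty\in X_N$ while $z-NP^tz\perp X_N$. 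Hence $PNP^t$ is selfadjoint and positive semidefinite on $(Y_M,\langle\cdot,\cdot\rangle_{L^2})$, with $\id_{Y_M}-PNP^t\ge 0$ and, by Pythagoras,
\[
\langle(\id_{Y_M}-PNP^t)y,y\rangle_{L^2}=\|y\|_{L^2}^2-\|NP^ty\|_{L^2}^2=\|y-NP^ty\|_{L^2}^2 .
\]
Since $PNP^t-\id_{Y_M}$ is selfadjoint with nonpositive quadratic form, this gives $\|PNP^t-\id_{Y_M}\|=\sup\{\|y-NP^ty\|_{L^2}^2:\ y\in Y_M,\ \|y\|_{L^2}=1\}$.

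It then remains to show $\|y-NP^ty\|_{L^2}\lesssim \tfrac1K\|y\|_{L^2}$ for $y\in Y_M$. On each fine interval $I_i=[(i-1)/N,i/N]$ (of length $1/N$, and contained in a single coarse interval because $N=KM$), $NP^ty$ equals the mean of the polynomial $y|_{I_i}$, so the Poincaré–Wirtinger inequality on $I_i$ followed by summation yields
\[
\|y-NP^ty\|_{L^2(\mathbb{T})}^2\ \lesssim\ \frac{1}{N^2}\,\|y'\|_{L^2(\mathbb{T})}^2 .
\]
On the other hand, the standard inverse (Markov-type) inequality for polynomials of degree $\le L$ on an interval of length $1/M$ — with a constant depending only on the fixed degree $L$ — gives $\|y'\|_{L^2(I_m)}\lesssim M\,\|y\|_{L^2(I_m)}$ on each coarse interval $I_m$, hence $\|y'\|_{L^2(\mathbb{T})}^2\lesssim M^2\,\|y\|_{L^2(\mathbb{T})}^2$ after summing over $m$. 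Combining,
\[
\|y-NP^ty\|_{L^2(\mathbb{T})}^2\ \lesssim\ \frac{M^2}{N^2}\,\|y\|_{L^2(\mathbb{T})}^2\ =\ \frac{1}{K^2}\,\|y\|_{L^2(\mathbb{T})}^2 ,
\]
which with the first paragraph proves $\|PNP^t-\id_{Y_M}\|\lesssim K^{-2}$. Invertibility for $K\ge K^*$ is then immediate: for $K$ large the right-hand side is $<1$, so $PNP^t=\id_{Y_M}-(\id_{Y_M}-PNP^t)$ is invertible by a Neumann series (equivalently, the eigenvalues of the selfadjoint $PNP^t$ all lie in $[1-CK^{-2},1]$); in particular $P$ is surjective onto $Y_M$.

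I do not expect a serious obstacle; the only point needing care is matching the powers of $K$ and $M$. One cannot control $\|y-NP^ty\|$ by $\|y'\|$ alone (the latter is of size $O(M)$ for oscillatory splines), so the inverse inequality is essential, and one must keep in mind that the averaging error lives at the fine scale $1/N$ while the quadratic form lives at the coarse scale $1/M$ — the product of the two scale factors being exactly $M^2/N^2=1/K^2$. A pointwise alternative to the $L^2$-Poincaré step ($|y-NP^ty|\le N^{-1}\|y'\|_{L^\infty(I_i)}$ on each fine interval, together with $\|y'\|_{L^\infty(I_m)}\lesssim M\|y\|_{L^\infty(I_m)}\lesssim M^{3/2}\|y\|_{L^2(I_m)}$ and the fact that each $I_m$ contains $K$ fine intervals) also yields $K^{-2}$, but the route via the $L^2$ inequalities above is cleaner.
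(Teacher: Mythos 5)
Your proof is correct but takes a genuinely different route from the paper's. The paper first proves Lemma~\ref{PNP}, i.e.\ $\|Q_1 J Q_1^t - \id_{Y_1^{DG}}\| \lesssim J^{-2}$, by computing the matrix entries of $Q_1 J Q_1^t$ in the orthonormal polynomial basis $\{f_l\}$ and applying the mean value theorem to $f_l - \bar f_l$; it then scales this block estimate to $\|Q_M N Q_M^t - \id_{Y_M^{DG}}\| \lesssim K^{-2}$ and transfers the bound to $Y_M$ via the inclusion $Y_M \subset Y_M^{DG}$ together with $P = P Q_M$ and $NP^t = N Q_M^t|_{Y_M}$. You instead work directly on $Y_M$: you observe that $NP^t$ is the $L^2$-orthogonal projection onto $X_N$, which makes $\id_{Y_M} - PNP^t$ self-adjoint and positive-semidefinite with quadratic form $\|y - NP^t y\|_{L^2}^2$, so the operator norm is exactly $\sup_{\|y\|_{L^2}=1}\|y - NP^ty\|_{L^2}^2$; you then control this approximation error by Poincar\'e at the fine scale $1/N$ followed by the polynomial inverse (Markov) inequality at the coarse scale $1/M$, and the two scale factors multiply to $K^{-2}$. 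Both arguments are sound and give the same bound. The paper's detour through $Y_M^{DG}$ is economical in context because Lemma~\ref{PNP} is needed elsewhere (e.g.\ in Lemma~\ref{barpsi*}), while your route is more self-contained, bypasses the choice of basis, and gives the sharper spectral information that the eigenvalues of $PNP^t$ lie in $[1 - CK^{-2},\, 1]$, from which the invertibility for large $K$ is immediate. Only minor point to flag: the Poincar\'e step uses $y'$, which is justified for $L \ge 1$ since splines are piecewise $C^\infty$; for $L = 0$ each fine interval lies in a single coarse interval (as $N = KM$), so $y = NP^t y$ and the left-hand side vanishes identically.
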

\begin{proof}[Proof of Lemma~\ref{p_invertible_PNPt}]
Recall that we split up our interval~$[0,1]$ into~$M$ subintervals of length~$1/M$ (and~$N=MK$). It then follows from~\eqref{approxid3} that for~$K$ large enough the operator~$Q_MNQ_M^t$ is close to the identity~$\id_{Y_M^{DG}}$ in the sense that 
\begin{align}\label{e_dizdar_operator_estimate}
  \| Q_MNQ_M^t  - \id_{Y_M^{DG}} \| \leq \frac{C}{K^2}.
\end{align}
Now, since $Y_M \subset Y_M^{DG}$, $P= PQ_M$ and $NP^t = NQ_M^t|_{Y_M}$. Thus,
\begin{align}
\|PNP^t - \id_{Y_M}\| &=  \sup_{y\in Y_M, |y| = 1} |PQ_MNQ_M^ty - Py| \\
& \leq \sup_{y\in Y_M, |y| = 1} |Q_MNQ_M^ty - y| \\
&\leq |Q_MNQ_M^t - \id_{Y_M^{DG}}| \overset{\eqref{e_dizdar_operator_estimate}}{\leq} \frac{C}{K^2}.
\end{align}
\end{proof}
A consequence of Lemma~\ref{p_invertible_PNPt} is the following orthogonal decomposition of the space~$X_N = \mathbb{R}^N$. 
\begin{definition} \label{d_decomposition_fluctuation_mesoscopic_profile}
   Given~$x \in X_N$, let $x_{\parallel}$ denote the projection of~$x$ onto~$\ker P$ and let~$x_{\perp}$ denote the projection onto~$(\ker P)^{\perp} = \Image NP^t$. They are given by
   \begin{align*}
   x_{\parallel} = x - x_{\perp}  \quad  \mbox{and} \quad    x_{\perp} = NP^t(PNP^t)^{-1} P x.
   \end{align*} 
  \end{definition}

Lemma~\ref{p_fluctuation_gradient} then follows from this decomposition.

\section{Proof of Theorem~\ref{p_convergence_meso_to_macro_free_energy}}\label{s_convergence_free_energies}

This section is organized as follows. We first discuss the main idea of the proof. In Subsection~\ref{s_local_cramer_aux_results} we state and deduce several auxiliary results. We prove Theorem~\ref{p_convergence_meso_to_macro_free_energy} in Section~\ref{s_local_cramer_proof}. \\

Our goal is to show that the gradient of the coarse-grained Hamiltonian~$\bar H_{Y_M} : Y_M \to \mathbb{R}$ (cf.~Definition~\eqref{e_def_coarse_grained_Hmailtonian})
\begin{align}
   \bar H(y) := \bar H_{Y_M}(y) :=  - \frac{1}{N} \ln \int_{\left\{x \in X_N  :  Px =y \right\}} \exp \left( - H(x) \right) dx,
\end{align}
is close to that of the macroscopic free energy~$\mathcal H : L^2(\mathbb{T}) \to \mathbb{R}$ \begin{align}
  \mathcal{H}(y)  = \int_0^1 \varphi (y(\theta)) d \theta &\overset{\eqref{average_hydro_potential}}{=} \int_0^1 \sup_{\hat \theta \in \mathbb{R}} \left(\hat \theta y(\theta)  - \psi^*(\hat \theta) \right) \\
  &= \sup_{\hat y \in L^2(\mathbb{T})} \Big( \langle y, \hat y \rangle_{L^2} - \underbrace{\int_0^1 \psi^*(\hat y (\theta)) d \theta}_{=: \varphi^*(\hat y)} \Big) \label{e_def_macroscopic_free_energy}
\end{align}
where the supremum is attained by $\hat y = \varphi'(y)$.  \medskip

As in the proof of Theorem~\ref{p_strict_convexity_coarse_grained_Hamiltonian}, we will reduce the statement from the level of spline functions~$Y_M$ to the level of discontinuous Galerkin functions~$Y_{M}^{DG}$ (see Definition~\ref{d_Y_pol_and_H}). For this purpose we recall Definition~\ref{d_H_DG} of the coarse-grained Hamiltonian~$\bar H_{Y_M^{DG}}: Y_M^{DG} \to \mathbb{R}$: 
\begin{align}  \label{e_def_coarse_grained_wrt_Q}
  \bar H_{Y_M^{DG}} (y)=  - \frac{1}{N} \ln \int_{\left\{x \in \mathbb{R}^N :  Q_M x =y \right\}} \exp \left( - H(x) \right) \mathcal{L}^{ M}(dx)
\end{align}
where~$Q_M$ denotes the~$L^2$-orthogonal projection onto $Y_{M}^{DG}$.\medskip

In addition, we introduce the mesoscopic free energy $\mathcal{H}_{Y_M^{DG}}$ on the space of discontinuous Galerkin functions $Y_M^{DG}$:

\begin{definition}\label{d_meso_free_energy_DG}
  Let $\mathcal{H}_{Y_M^{DG}} : Y_M^{DG} \to \mathbb{R}$ be the function given by
\begin{align}
  \label{e_def_meso_free_energy}
   \mathcal{H}_{Y_M^{DG}} (z) : = \sup_{\hat z \in Y_M^{DG}} \left( \langle z, \hat z \rangle_{L^2} - \varphi_{N}^* (\hat z) \right),
\end{align}
where $\varphi_N^*: Y_M^{DG} \rightarrow \mathbb{R}$ is the function given by
\begin{align} \label{e_def_varphi_N_star}
  \varphi_{N}^* (\hat z) := \frac{1}{N} \sum_{i=1}^N \psi^*\left(N \int_{\frac{i-1}{N}}^{\frac{i}{N}} \hat z(s) d s\right) \overset{\eqref{barpsi*_def}}{=} \frac{1}{M}\sum_{m=1}^M \bar{\psi}^*_K(\hat z^{(m)})
\end{align}
where~$\hat z^{(m)} \in Y_1^{DG}, m =1, \cdots, M,$ are obtained by restricting $\hat z \in Y_M^{DG}$ to subintervals (cf. \eqref{e_restriction_to_subinterval}).
\end{definition}

The main ingredient of the proof is Lemma~\ref{e_convergence_cgH_Q_to_mfe_DIZDAR} below which states that~$\bar H_{Y_M^{DG}}$ is close to~$\mathcal{H}_{Y_M^{DG}}$ in~$C^2$ if~$K$ is large. This has been essentially established in Section~\ref{s_convexification}. The rest of the proof consists of arguing that~$\bar H$ is close to~$\bar H_{Y_M^{DG}}$ and~$\mathcal{H}$ is close to $\mathcal{H}_{Y_M^{DG}}$. The fact that ~$\bar H$ is close to a shifted version of~$\bar H_{Y_M^{DG}}$ follows from the formula \eqref{e_bar_h_Y_in_terms_of_bar_h_Y^DG} 
\begin{align*}
    \bar H (y) = -\frac{1}{N} \ln \int_{Y_M^\perp} \exp(- N \bar H_{Y_M^{DG}} (y+z)) \mathcal{L}^{LM}(dz) N^{LM}
\end{align*}
and the fact that~$\bar H_{Y_M^{DG}}$ is uniformly strictly convex (see Lemma~\ref{p_convexity_bar_H_Z}), and therefore the integral on the right hand side concentrates more and more around the minimum of~$\bar H_{Y_M^{DG}}$ for large~$K$. The fact that~$\mathcal{H}$ is close to $\mathcal{H}_{Y_M^{DG}}$ follows from the observation that as~$N \to \infty$ the function~$\varphi_{N}^*$ given by~\eqref{e_def_varphi_N_star} converges to the function~$\varphi^*$ and that as~$M \to \infty$, the spline space~$Y_M\subset L^2(\mathbb{T})$ approximates the full space~$L^2(\mathbb{T})$.

\subsection{Auxiliary results} \label{s_local_cramer_aux_results}

The first auxiliary result is that~$\bar H_{Y_M^{DG}}$ converges to $\mathcal{H}_{Y_M^{DG}}$ in $C^2$ as ~$K \to \infty$:

\begin{lemma}\label{e_convergence_cgH_Q_to_mfe_DIZDAR}
 There exists $K^*$ such that for $K \geq K^*$ and for all $M$ and ~$z \in Y_M^{DG}$,
  \begin{align}
    \label{e_cgHwrtZ_to_mfe_C_0}
    \left| \bar H_{Y_M^{DG}} (z) - \mathcal{H}_{Y_M^{DG}}(z)    \right| &\lesssim \frac{1}{K}, \\
    \label{e_cgHwrtZ_to_mfe_C_1}
    \left\| \nabla \bar H_{Y_M^{DG}} (z) - \nabla \mathcal{H}_{Y_M^{DG}}(z)    \right\| &\lesssim \frac{1}{K}, \\
    \label{e_cgHwrtZ_to_mfe_C_2}
    \| \Hess \bar H_{Y_M^{DG}} (z) - \Hess \mathcal H_{Y_M^{DG}}(z)    \| &\lesssim \frac{1}{K}.
  \end{align}
\end{lemma}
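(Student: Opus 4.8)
Since both $\bar H_{Y_M^{DG}}$ and $\mathcal{H}_{Y_M^{DG}}$ decompose over the $M$ blocks — $\bar H_{Y_M^{DG}}(z) = \frac{1}{M}\sum_m \psi_K(z^{(m)})$ by Lemma \ref{p_decomp_bar_H_DG}, and $\mathcal{H}_{Y_M^{DG}}(z) = \frac{1}{M}\sum_m \bar\psi_K(z^{(m)})$ by the definition \eqref{e_def_meso_free_energy}–\eqref{e_def_varphi_N_star} together with the fact that the Legendre transform of a block-sum $\varphi_N^* = \frac1M\sum_m \bar\psi_K^*(\cdot^{(m)})$ (in the rescaled $L^2$ inner product) splits over blocks — the whole statement reduces, just as in Section \ref{s_strict_convexity_bar_H_DG}, to the single-block estimate
\begin{equation*}
\|\psi_J - \bar\psi_J\|_{C^2(\mathbb{R}^{L+1})} \lesssim \frac{1}{J}.
\end{equation*}
But this is exactly what was proven in the derivation of Theorem \ref{1block}: formulas \eqref{orderzero}, \eqref{e_local_cramer_convergence_gradient}, \eqref{e_local_cramer_convergence_hessian} give $|\bar\psi_J - \psi_J| \lesssim 1/J$, $\|\nabla\bar\psi_J - \nabla\psi_J\| \lesssim 1/J$, $\|\operatorname{Hess}\bar\psi_J - \operatorname{Hess}\psi_J\| \lesssim 1/J$, uniformly in $\beta$. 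So the first step is to record carefully that $\mathcal{H}_{Y_M^{DG}}$, restricted to a block, equals $\bar\psi_K$, i.e.\ that the Legendre–Fenchel transform in \eqref{e_def_meso_free_energy} decouples; this uses that the $L^2$ inner product on $Y_M^{DG}$ is the block average \eqref{e_innerproduct}, so the sup over $\hat z \in Y_M^{DG}$ splits into independent sups over $\hat z^{(m)} \in Y_1^{DG}$, each of which is the definition \eqref{d_bdhesspsiJ*} of $\bar\psi_K(z^{(m)})$.

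The second step is the bookkeeping of the rescaling factors. By \eqref{e_decomposition} and \eqref{e_innerproduct}, for $z \in Y_M^{DG}$ and tangent vectors identified via \eqref{e_identification},
\begin{equation*}
\operatorname{Hess}_{Y_M^{DG}} \bar H_{Y_M^{DG}}(z) = M \operatorname{Hess}_{\oplus_m Y_1^{DG}}\!\Big(\tfrac{1}{M}\sum_m \psi_K(z^{(m)})\Big) = \bigoplus_m \operatorname{Hess}_{Y_1^{DG}}\psi_K(z^{(m)}),
\end{equation*}
and likewise with $\bar\psi_K$ in place of $\psi_K$ and $\mathcal{H}_{Y_M^{DG}}$ in place of $\bar H_{Y_M^{DG}}$; the factor $M$ from the chain rule cancels the factor $1/M$ in the block sum. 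Hence the operator norm of $\operatorname{Hess}\bar H_{Y_M^{DG}}(z) - \operatorname{Hess}\mathcal{H}_{Y_M^{DG}}(z)$ on $Y_M^{DG}$ equals $\max_m \|\operatorname{Hess}\psi_K(z^{(m)}) - \operatorname{Hess}\bar\psi_K(z^{(m)})\| \lesssim 1/K$ by \eqref{e_local_cramer_convergence_hessian} with $J = K$, which is \eqref{e_cgHwrtZ_to_mfe_C_2}. The gradient statement \eqref{e_cgHwrtZ_to_mfe_C_1} is identical: $\nabla_{Y_M^{DG}}$ in the $L^2$ structure again produces a factor cancelling the $1/M$, so $\|\nabla\bar H_{Y_M^{DG}}(z) - \nabla\mathcal{H}_{Y_M^{DG}}(z)\|_{L^2}^2 = \frac{1}{M}\sum_m \|\nabla\psi_K(z^{(m)}) - \nabla\bar\psi_K(z^{(m)})\|^2 \lesssim 1/K^2$, using \eqref{e_local_cramer_convergence_gradient}. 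For \eqref{e_cgHwrtZ_to_mfe_C_0} there is no rescaling subtlety at all: $|\bar H_{Y_M^{DG}}(z) - \mathcal{H}_{Y_M^{DG}}(z)| \leq \frac{1}{M}\sum_m |\psi_K(z^{(m)}) - \bar\psi_K(z^{(m)})| \lesssim 1/K$ by \eqref{orderzero}. We take $K^* := \max\{J^*, J_1, J_2\}$ from Theorem \ref{1block} and its auxiliary lemmas.

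The main (in fact the only) obstacle is purely notational: making sure the two decompositions — of $\bar H_{Y_M^{DG}}$ via Lemma \ref{p_decomp_bar_H_DG}, and of $\mathcal{H}_{Y_M^{DG}}$ via the definition \eqref{e_def_meso_free_energy} — are taken with respect to the \emph{same} inner-product normalization on $Y_M^{DG}$ (the block-averaged $L^2$ structure \eqref{e_innerproduct}, not the naive Euclidean one on $\mathbb{R}^{(L+1)M}$), so that the factors of $M$ cancel consistently in all three estimates. Once that is set up, everything follows by invoking the single-block $C^2$ convergence already established in Section \ref{s_strict_convexity_bar_H_DG}, and no new analytic input is needed.
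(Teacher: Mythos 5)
Your proof is correct and takes essentially the same route as the paper: decompose $\bar H_{Y_M^{DG}}(z)=\frac1M\sum_m\psi_K(z^{(m)})$ via Lemma~\ref{p_decomp_bar_H_DG}, observe that the Legendre transform in \eqref{e_def_meso_free_energy} decouples over blocks so that $\mathcal{H}_{Y_M^{DG}}(z)=\frac1M\sum_m\bar\psi_K(z^{(m)})$, and then invoke the single-block $C^2$ estimates \eqref{orderzero}, \eqref{e_local_cramer_convergence_gradient}, \eqref{e_local_cramer_convergence_hessian} with $J=K$, keeping track of the $L^2$-rescaled inner product \eqref{e_innerproduct}. Your explicit bookkeeping of the $M$-factor cancellations in the gradient and Hessian is exactly what the paper compresses into the phrase ``taking into account the different Euclidean structures as in the proof of Theorem~\ref{p_strict_convexity_bar_H_DG}.''
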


\begin{proof}[Proof of Lemma~\ref{e_convergence_cgH_Q_to_mfe_DIZDAR}]
By Lemma~\ref{p_decomp_bar_H_DG} and Definition~\ref{d_meso_free_energy_DG},
\begin{align}
  \bar H_{Y_M^{DG}}(z) = \frac{1}{M}\sum_{m=1}^M \psi_K(z^{(m)}), \quad
  \mathcal{H}_{Y_M^{DG}}(z) = \frac{1}{M}\sum_{m=1}^M \bar{\psi}_K( z^{(m)}).
\end{align}
Taking into account the different Euclidean structures on ~$Y_{M}^{DG}$ and~$ Y_1^{DG}$  as in the proof of Theorem~\ref{p_strict_convexity_bar_H_DG}, we see that the estimate~\eqref{e_cgHwrtZ_to_mfe_C_0} follows from~\eqref{orderzero}, the estimate~\eqref{e_cgHwrtZ_to_mfe_C_1} follows from~\eqref{e_local_cramer_convergence_gradient} and the estimate~\eqref{e_cgHwrtZ_to_mfe_C_2} follows from~\eqref{e_local_cramer_convergence_hessian}. 
\end{proof}

The next auxiliary result is that the coarse-grained Hamiltonians~$\bar H_{Y_M}$ and~$\bar H_{Y_M^{DG}}$ and the free energies~$\mathcal{H}_{Y_M^{DG}}$ and~$\mathcal{H}$ are uniformly strictly convex. We summarize those results in the following lemma.
\begin{lemma}\label{p_convexity_bar_H_Z}
There are constants~$0< \lambda < \Lambda < \infty$ and~$K_0$ such that if~$K\geq K_0$ then for all~$z \in Y_M^{DG}$
\begin{align}
  \label{e_convexity_bar_H_Z}
  \lambda \Id_{Y_M^{DG}} &\leq \Hess \bar H_{Y_M^{DG}} (z) \leq \Lambda \Id_{Y_M^{DG}} \\
  \label{e_convexity_meso_fe_on_Z}
  \lambda \Id_{Y_M^{DG}} &\leq \Hess \mathcal{H}_{Y_M^{DG}} (z) \leq \Lambda \Id_{Y_M^{DG}}. \\
  \label{e_convexity_varphi_N}
  \lambda \Id_{Y_M^{DG}} &\leq \Hess \varphi_N^* (z) \leq \Lambda \Id_{Y_M^{DG}}.
\end{align}
Under the same conditions, for all~$z \in Y_M$
\begin{align}
  \label{e_convexity_bar_H_Y}
  \lambda \Id_{Y_M} \leq \Hess \bar H_{Y_M} (z) \leq \Lambda \Id_{Y_M}.\end{align}
Finally, for all~$z \in L^2$
\begin{align}
  \label{e_convexity_bounds_varphi}
  \lambda \Id_{L^2} &\leq \Hess \varphi^* (z) \leq \Lambda \Id_{L^2}\\
  \label{e_convexity_mathcal_H}
  \lambda \Id_{L^2} &\leq \Hess \mathcal{H}  (z) \leq \Lambda \Id_{L^2}.
\end{align}
All inequalities are in the sense of quadratic forms. 
\end{lemma}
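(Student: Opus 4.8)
The plan is to observe that each of the bounds \eqref{e_convexity_bar_H_Z}--\eqref{e_convexity_mathcal_H} is either a restatement of a result already established in this article or an immediate consequence of such a result together with elementary convex analysis, and then to take $\lambda$ to be the smallest of the lower bounds and $\Lambda$ the largest of the upper bounds produced below, with $K_{0}$ the maximum of the thresholds $K^{*}$ appearing in Theorems~\ref{p_strict_convexity_coarse_grained_Hamiltonian} and~\ref{p_strict_convexity_bar_H_DG} and $J_{1}$ appearing in Lemma~\ref{barpsi*}. Concretely: \eqref{e_convexity_bar_H_Z} is exactly Theorem~\ref{p_strict_convexity_bar_H_DG} (absorbing the factor $2$ into the constants, recalling $N=KM$), and \eqref{e_convexity_bar_H_Y} is exactly Theorem~\ref{p_strict_convexity_coarse_grained_Hamiltonian}, since $\bar H=\bar H_{Y_{M}}$ by definition.

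For \eqref{e_convexity_varphi_N} I would use the identity $\varphi_{N}^{*}(\hat z)=\frac1M\sum_{m=1}^{M}\bar\psi_{K}^{*}(\hat z^{(m)})$ from~\eqref{e_def_varphi_N_star} together with the orthogonal block decomposition $Y_{M}^{DG}=\bigoplus_{m=1}^{M}Y_{1}^{DG}$ and its inner product~\eqref{e_innerproduct}. Exactly as in the proof of Theorem~\ref{p_strict_convexity_bar_H_DG}, this gives, for $\hat\eta\in Y_{M}^{DG}$ with blocks $\hat\eta^{(m)}$, the identity
\[
\langle\hat\eta,\Hess_{Y_{M}^{DG}}\varphi_{N}^{*}(\hat z)\,\hat\eta\rangle_{Y_{M}^{DG}}=\frac1M\sum_{m=1}^{M}\langle\hat\eta^{(m)},\Hess_{Y_{1}^{DG}}\bar\psi_{K}^{*}(\hat z^{(m)})\,\hat\eta^{(m)}\rangle_{Y_{1}^{DG}},
\]
together with $|\hat\eta|_{Y_{M}^{DG}}^{2}=\frac1M\sum_{m}|\hat\eta^{(m)}|_{Y_{1}^{DG}}^{2}$. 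For $K\ge J_{1}$ the blockwise estimate of Lemma~\ref{barpsi*} (understood, as its proof in fact shows, as a genuine quadratic-form bound) then yields $2c\,\Id_{Y_{M}^{DG}}\le\Hess_{Y_{M}^{DG}}\varphi_{N}^{*}(\hat z)\le C\,\Id_{Y_{M}^{DG}}$ after summation over $m$. For \eqref{e_convexity_bounds_varphi} I would argue pointwise: since $(\psi^{*})''$ is bounded (Lemma~\ref{psi*}), $\varphi^{*}(\hat y)=\int_{0}^{1}\psi^{*}(\hat y(\theta))\,d\theta$ is $C^{2}$ on $L^{2}(\T)$ with $D^{2}\varphi^{*}(\hat y)(v,v)=\int_{0}^{1}(\psi^{*})''(\hat y(\theta))\,v(\theta)^{2}\,d\theta$, so the bounds $4c\le(\psi^{*})''\le C/2$ from Lemma~\ref{psi*} give $4c\,\Id_{L^{2}}\le\Hess\varphi^{*}(\hat y)\le\tfrac{C}{2}\,\Id_{L^{2}}$.

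The remaining two bounds I would obtain from the ones just proved by convex duality. By Definition~\ref{d_meso_free_energy_DG} and~\eqref{e_def_varphi_N_star} one has $\mathcal H_{Y_{M}^{DG}}(z)=\frac1M\sum_{m}\bar\psi_{K}(z^{(m)})$ with $\bar\psi_{K}=(\bar\psi_{K}^{*})^{*}$, which for $K\ge J_{1}$ is uniformly strictly convex with uniformly bounded Hessian by the standard Legendre-inversion fact already invoked after Lemma~\ref{barpsi*}; the same blockwise summation as for \eqref{e_convexity_varphi_N} then gives \eqref{e_convexity_meso_fe_on_Z}. For \eqref{e_convexity_mathcal_H}, $\mathcal H$ is by~\eqref{e_def_macroscopic_free_energy} the Legendre--Fenchel transform on $L^{2}(\T)$ of $\varphi^{*}$, so either the Hilbert-space version of that same inversion fact applied to \eqref{e_convexity_bounds_varphi}, or the pointwise route via the one-dimensional duality $\varphi=(\psi^{*})^{*}$, $\varphi''=\big((\psi^{*})''\circ\varphi'\big)^{-1}$ together with $\mathcal H(y)=\int_{0}^{1}\varphi(y(\theta))\,d\theta$, yields $\Lambda^{-1}\,\Id_{L^{2}}\le\Hess\mathcal H(y)\le\lambda^{-1}\,\Id_{L^{2}}$ with $\lambda,\Lambda$ the constants from \eqref{e_convexity_bounds_varphi}.

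There is no genuine obstacle here; this is a bookkeeping lemma assembling inputs for the proof of Theorem~\ref{p_convergence_meso_to_macro_free_energy}. The only points requiring a little care are keeping the three relevant Euclidean structures straight --- the standard Euclidean one on $\mathbb R^{N}$, the $L^{2}$-inherited ones on $Y_{M}^{DG}$ and $Y_{M}$, and the rescaled block structure~\eqref{e_innerproduct} on $\bigoplus_{m}Y_{1}^{DG}$ --- so that the summations in \eqref{e_convexity_varphi_N} and \eqref{e_convexity_meso_fe_on_Z} are correctly normalised, and invoking the Legendre-inversion of the Hessian on the infinite-dimensional space $L^{2}(\T)$ for \eqref{e_convexity_mathcal_H}, which, as noted, can alternatively be avoided by passing through the one-dimensional statement about $\psi^{*}$ and $\varphi$.
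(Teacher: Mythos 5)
Your proof is correct and follows essentially the same route as the paper's: \eqref{e_convexity_bar_H_Z} and \eqref{e_convexity_bar_H_Y} are Theorems~\ref{p_strict_convexity_bar_H_DG} and~\ref{p_strict_convexity_coarse_grained_Hamiltonian}; \eqref{e_convexity_varphi_N} comes from the blockwise bound~\eqref{bdhesspsiJ*} via the decomposition~\eqref{e_def_varphi_N_star}; \eqref{e_convexity_meso_fe_on_Z} follows by Legendre inversion; and \eqref{e_convexity_bounds_varphi}, \eqref{e_convexity_mathcal_H} come from the pointwise bounds on $(\psi^*)''$ and $\varphi''$. The paper states these steps tersely while you spell out the normalisation between the $Y_M^{DG}$ and $Y_1^{DG}$ inner products and explicitly note that Lemma~\ref{barpsi*} in fact provides a quadratic-form (not just operator-norm) lower bound --- both accurate clarifications, but no genuinely different argument.
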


\begin{proof}[Proof of Lemma~\ref{p_convexity_bar_H_Z}]
The estimate~\eqref{e_convexity_bar_H_Z} is given by Theorem~\ref{p_strict_convexity_bar_H_DG}. The estimate \eqref{e_convexity_varphi_N} follows from \eqref{bdhesspsiJ*}, from which the estimate~\eqref{e_convexity_meso_fe_on_Z} follows by basic properties of Legendre transform. The estimate~\eqref{e_convexity_bar_H_Y} is given by Theorem~\ref{p_strict_convexity_coarse_grained_Hamiltonian}. The estimate ~\eqref{e_convexity_bounds_varphi} follows from the uniform strict convexity of $\psi^*$ and uniform bound of $(\psi^*)''$ (cf. \eqref{boundvar}) since
\begin{equation}
\langle y_1, \varphi^*(x) y_2\rangle_{L^2} = \int y_1(\theta)y_2(\theta) (\psi^*)''(\theta) d\theta.
\end{equation} 
Similarly, the estimate ~\eqref{e_convexity_mathcal_H} follow from the uniform strict convexity of $\varphi$ and uniform bound of $\varphi''$ ($\varphi$ is the Legendre transform of $\psi^*$). 
\end{proof}

The next auxiliary statement shows a nice relation between the Hamiltonians~$\bar H_{Y_M}$ and~$\bar H_{Y_M^{DG}}$. Recall that $Y_M^\perp := \left\{ z \in Y_M^{DG}  :  Pz =0 \right\}$. 
\begin{lemma}\label{p_error_laplace_method}
For every ~$y \in Y_M$, there exists a unique~$\bar z^* \in Y_M^\perp$ such that
  \begin{align}\label{e_def_z_star}
    \bar H_{Y_M^{DG}} (y+ \bar z^*) = \inf_{z\in Y_M^\perp} \bar H_{Y_M^{DG}} (y+ z).
  \end{align}
Then for any~$y \in Y_M$,
  \begin{equation}\label{e_z_star_property}
  \nabla \bar H_{Y_M^{DG}} (y+ \bar z^*) = P \nabla \bar H_{Y_M^{DG}} (y+ \bar z^*).
  \end{equation}
  and
\begin{align}
  \label{e_error_laplace_method}
  | \nabla \bar H(y) - P\nabla \bar H_{Y_M^{DG}}(y + \bar z^*) |_{L^{2}} \lesssim \frac{1}{K}.
\end{align}
\end{lemma}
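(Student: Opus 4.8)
The plan is to treat the right-hand side of the formula \eqref{e_bar_h_Y_in_terms_of_bar_h_Y^DG} as a Laplace-type integral and extract its asymptotics by the usual Laplace/saddle-point philosophy: since $\bar H_{Y_M^{DG}}$ is uniformly strictly convex on $Y_M^{DG}$ by Lemma~\ref{p_convexity_bar_H_Z} (estimate \eqref{e_convexity_bar_H_Z}), the restriction of $z\mapsto \bar H_{Y_M^{DG}}(y+z)$ to the affine slice $y + Y_M^\perp$ is uniformly strictly convex, hence admits a unique minimizer $\bar z^*=\bar z^*(y)$, which establishes \eqref{e_def_z_star}. At that minimizer the derivative in every direction $z\in Y_M^\perp$ vanishes, i.e. $\langle \nabla \bar H_{Y_M^{DG}}(y+\bar z^*), z\rangle = 0$ for all $z \in Y_M^\perp$; since $Y_M^{DG}=Y_M\oplus Y_M^\perp$ orthogonally and $P$ is the orthogonal projection onto $Y_M$, this says precisely that $\nabla\bar H_{Y_M^{DG}}(y+\bar z^*)$ has no $Y_M^\perp$-component, which is \eqref{e_z_star_property}. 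These two facts are immediate and take essentially no work.

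The substance is \eqref{e_error_laplace_method}. First I would differentiate \eqref{e_bar_h_Y_in_terms_of_bar_h_Y^DG} in $y\in Y_M$. Writing $F(z):= N\bar H_{Y_M^{DG}}(y+z)$ for the (shifted) convex function on $Y_M^\perp$, so that $N\bar H(y) = -\ln\int_{Y_M^\perp} e^{-F(z)}\,dz + \text{const}$, the chain rule gives
\begin{align}
\nabla_{Y_M}\bar H(y) \;=\; \frac{\int_{Y_M^\perp} P\,\nabla\bar H_{Y_M^{DG}}(y+z)\, e^{-N\bar H_{Y_M^{DG}}(y+z)}\,dz}{\int_{Y_M^\perp} e^{-N\bar H_{Y_M^{DG}}(y+z)}\,dz},
\end{align}
i.e. $\nabla\bar H(y)$ is the average of $P\nabla\bar H_{Y_M^{DG}}(y+z)$ under the probability measure $\propto e^{-N\bar H_{Y_M^{DG}}(y+z)}\,dz$ on $Y_M^\perp$. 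So \eqref{e_error_laplace_method} reduces to showing that this average differs from the value at the minimizer $z=\bar z^*$ by $O(1/K)$ (recall $N=KM$). For this I would do a Taylor expansion of $P\nabla\bar H_{Y_M^{DG}}(y+z)$ around $\bar z^*$; the zeroth-order term is exactly $P\nabla\bar H_{Y_M^{DG}}(y+\bar z^*)$, and the error is controlled by $\|\Hess\bar H_{Y_M^{DG}}\|\cdot \mathbb{E}|z - \bar z^*|$ (the uniform upper Hessian bound $\Lambda$ from \eqref{e_convexity_bar_H_Z} bounds the Lipschitz constant of $\nabla\bar H_{Y_M^{DG}}$, and $\|P\|\le 1$). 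It then remains to estimate $\mathbb{E}|z-\bar z^*|$ under the log-concave measure $\propto e^{-N\bar H_{Y_M^{DG}}(y+z)}\,dz$. Since this measure has a Hamiltonian whose Hessian is bounded below by $N\lambda$ on the $LM$-dimensional space $Y_M^\perp$, a standard concentration estimate (e.g. via Brascamp--Lieb, or the Bakry--\'Emery/Poincar\'e inequality, applied coordinatewise) gives $\mathbb{E}|z-\bar z^*|^2 \lesssim \frac{LM}{N\lambda} = \frac{L}{K\lambda}$, hence $\mathbb{E}|z-\bar z^*|\lesssim (LM/N)^{1/2}$... wait — this naive bound gives only $K^{-1/2}$, not $K^{-1}$, so one has to be more careful.

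The main obstacle, as just indicated, is upgrading $K^{-1/2}$ to the claimed $K^{-1}$. The point is that the first-order Taylor term averages out: $\mathbb{E}[z-\bar z^*]$ is not zero but is itself of order $1/K$ smaller than $\mathbb{E}|z-\bar z^*|$, because of the near-symmetry of a strongly log-concave measure around its mode, while the mean and the mode of $\propto e^{-F(z)}$ differ by a quantity governed by $D^3\bar H_{Y_M^{DG}}$ times the variance. Concretely I would expand $F$ to third order around $\bar z^*$, write the measure as a Gaussian (with covariance $\sim (N\Hess\bar H_{Y_M^{DG}}(y+\bar z^*))^{-1}$, which is $O(1/N)$ in operator norm) times a correction $e^{-R(z)}$ with $R$ cubic-and-higher, and then: (i) the contribution of the Hessian term of $P\nabla\bar H_{Y_M^{DG}}$ against the centered Gaussian fluctuation is $O(\text{tr of covariance}) = O(LM/N) = O(L/K)$; (ii) the contribution of the gradient term against $\mathbb{E}[z-\bar z^*]$ is $O(\|D^3\bar H_{Y_M^{DG}}\|)\cdot O(LM/N) = O(L/K)$ since the mean–mode discrepancy is controlled by the third derivative and the variance. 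Both pieces use the uniform bounds on $\Hess\bar H_{Y_M^{DG}}$ and $D^3\bar H_{Y_M^{DG}}$ coming from Lemma~\ref{barpsi*}/Lemma~\ref{p_convexity_bar_H_Z} together with the decomposition \eqref{e_bar_H_pol_noninteracting}, which makes the measure a product over the $M$ blocks so the trace estimates are just $M$ times a single-block estimate of size $L/(KM)$. I would present this as a clean Laplace-asymptotics lemma on product log-concave measures and then specialize; the bookkeeping of the third-order remainder, while not deep, is where the care is needed to land exactly on $1/K$.
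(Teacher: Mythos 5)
Your handling of \eqref{e_def_z_star} and \eqref{e_z_star_property} matches the paper's. For \eqref{e_error_laplace_method} you take a genuinely different route: a higher-order Laplace expansion around $\bar z^*$. The paper instead bounds $|\nabla\bar H(y)-P\nabla\bar H_{Y_M^{DG}}(y+\bar z^*)|^2$ directly by Jensen's inequality, the Lipschitz bound $\Lambda$ on $\nabla\bar H_{Y_M^{DG}}$, then the convexity inequality $|z-\bar z^*|^2\leq\frac{1}{\lambda}(z-\bar z^*)\cdot\nabla\bar H_{Y_M^{DG}}(y+z)$ (which uses \eqref{e_z_star_property} together with $z-\bar z^*\in Y_M^\perp$ to drop the term at $\bar z^*$), and finally integration by parts to evaluate the Gibbs expectation exactly as $\frac{1}{N}\dim Y_M^\perp=\frac{L}{K}$. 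No Taylor expansion, no third derivative, no product structure is needed in that chain. It is clean, but note what it yields: $|\nabla\bar H(y)-P\nabla\bar H_{Y_M^{DG}}(y+\bar z^*)|^2\lesssim L/K$, i.e.\ only $K^{-1/2}$ on the unsquared norm. So the unease you voiced in the middle of your proposal --- that the naive concentration estimate gives $K^{-1/2}$, not $K^{-1}$ --- is well-founded, and in fact applies to the paper's own displayed derivation when read literally.

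Your attempted upgrade to $K^{-1}$ via cancellation of the first-order Taylor term is the right kind of thing to try, but it contains a concrete error: the Gibbs measure on $Y_M^\perp$ is \emph{not} a product over the $M$ blocks. The Hamiltonian $\bar H_{Y_M^{DG}}$ is indeed a sum of block-local pieces by \eqref{e_bar_H_pol_noninteracting}, but the integration domain $Y_M^\perp$ --- the orthogonal complement of the spline space $Y_M$ inside $Y_M^{DG}$ --- is constrained by the inter-block continuity conditions defining $Y_M$, and hence is not of the form $\bigoplus_m V_m$ with $V_m\subset Y_1^{DG}$. (Minimal counterexample with two blocks: $\{(a,b,c,d):b=c\}^\perp=\{(0,e,-e,0)\}$ is one-dimensional, not a direct sum of per-block pieces.) So the restricted measure does not factor, and the step where you argue the trace and remainder estimates are ``just $M$ times a single-block estimate'' does not go through. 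Part of this is repairable --- the trace bound $\operatorname{tr}\operatorname{Cov}\lesssim LM/(N\lambda)=L/(K\lambda)$ follows from Brascamp--Lieb without any product structure --- but the control of the mean--mode discrepancy and the cubic remainder on the non-product space $Y_M^\perp$, which is where the claimed gain from $K^{-1/2}$ to $K^{-1}$ must come from, is not actually carried out, so the $K^{-1}$ rate is not established by your argument either.
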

\begin{proof}[Proof of Lemma~\ref{p_error_laplace_method}]
  The statement~\eqref{e_def_z_star} follows from the strict convexity ~\eqref{e_convexity_bar_H_Z} of~$\bar H_{Y_M^{DG}}$, applied to the affine subspace $y+Y_M^\perp$. The statement \eqref{e_z_star_property} follows directly from ~\eqref{e_def_z_star}.
  
Let us now turn to the verification of~\eqref{e_error_laplace_method}. It follows from~\eqref{e_bar_h_Y_in_terms_of_bar_h_Y^DG} that
\begin{align}
 \nabla \bar H (y) = \frac{\int_{Y_M^\perp} P\nabla \bar H_{Y_M^{DG}} (y+ z)  \exp(- N \bar H_{Y_M^{DG}} (y+z)) dz}{\int_{Y_M^\perp} \exp(- N \bar H_{Y_M^{DG}}) (y+z) dz}.
\end{align}
From this, we get
\begin{align}
&  | \nabla \bar H (y) - P\nabla \bar H_{Y_M^{DG}} (y+ \bar z^*)|^2\\
 & = \left| \frac{\int_{Y_M^\perp} \left(  P\nabla \bar H_{Y_M^{DG}} (y+ z)   - P\nabla \bar H_{Y_M^{DG}} (y+ \bar z^*) \right) e^{- N \bar H_{Y_M^{DG}} (y+z) } dz}{\int_{Y_M^\perp} \exp(- N \bar H_{Y_M^{DG}} (y+z)) dz} \right|^2  \\
& \leq \frac{\int_{Y_M^\perp} \left|  \nabla \bar H_{Y_M^{DG}} (y+ z)   - \nabla \bar H_{Y_M^{DG}} (y+ \bar z^*) \right|^2 e^{- N \bar H_{Y_M^{DG}} (y+z) } dz }{\int_{Y_M^\perp} \exp(- N \bar H_{Y_M^{DG}} (y+z)) dz} \\
& \quad \leq \Lambda^2 \ \frac{\int_{Y_M^\perp} \left|  z -\bar z^* \right|^2 \exp(- N \bar H_{Y_M^{DG}} (y+z)) dz }{\int_{Y_M^\perp} \exp(- N \bar H_{Y_M^{DG}} (y+z)) dz},  \label{e_laplace_error_bound_setp_1}
\end{align}
where we used the upper bound ~\eqref{e_convexity_bar_H_Z} on ~$\operatorname{Hess} \bar H_{Y_M^{DG}}$ in the last step. Now, using the convexity bound ~\eqref{e_convexity_bar_H_Z} of ~$\bar H_{Y_M^{DG}}$, we get
\begin{align}
   \left|  z - \bar z^* \right|^2  & \leq \frac{1}{\lambda} \left(z - \bar z^* \right) \cdot \left( \nabla \bar H_{Y_M^{DG}} (y+ z) - \nabla \bar H_{Y_M^{DG}} (y + \bar z^*) \right)  \\
& \overset{\eqref{e_z_star_property}}{=}  \frac{1}{\lambda} \left(z - \bar z^* \right)  \cdot \nabla \bar H_{Y_M^{DG}} (y+ z) .  \label{e_laplace_error_bound_setp_2}
\end{align}
Inserting the estimate~\eqref{e_laplace_error_bound_setp_2} into the right hand side of~\eqref{e_laplace_error_bound_setp_1}, and using integration by parts, we get
\begin{align}
&  | \nabla \bar H (y) - P\nabla \bar H_{Y_M^{DG}} (y+ \bar z^*)|^2  \\
 & \leq \frac{\Lambda^2}{\lambda} \ \frac{\int_{Y_M^\perp} \left(z - \bar z^* \right)  \cdot \nabla \bar H_{Y_M^{DG}} (y+ z) \exp(- N \bar H_{Y_M^{DG}} (y+z)) \mathcal{L}^{LM}(dz) }{\int_{Y_M^\perp} \exp(- N \bar H_{Y_M^{DG}} (y+z)) \mathcal{L}^{LM}(dz)}\\
& = -\frac{\Lambda^2}{\lambda N} \ \frac{\int_{Y_M^\perp} \left(z - \bar z^* \right)  \cdot \nabla \left[ \exp(- N \bar H_{Y_M^{DG}} (y+z)) \right] \mathcal{L}^{LM}(dz) }{\int_{Y_M^\perp} \exp(- N \bar H_{Y_M^{DG}} (y+z)) \mathcal{L}^{LM}(dz)} \\
& = \frac{\Lambda^2}{\lambda N} \ \frac{\int_{Y_M^\perp} \nabla \cdot \left(z - \bar z^* \right)   \exp(- N \bar H_{Y_M^{DG}} (y+z)) \mathcal{L}^{LM}(dz) }{\int_{Y_M^\perp} \exp(- N \bar H_{Y_M^{DG}} (y+z)) \mathcal{L}^{LM}(dz)} \\
&= \frac{\Lambda^2}{\lambda N } \dim Y_M^\perp = \frac{\Lambda^2}{\lambda} \ \frac{L}{K},
\end{align}
which is the desired estimate~\eqref{e_error_laplace_method}.
\end{proof}

Let us introduce the mesoscopic free energy~$\mathcal{H}_{Y_M}: Y_M \to \mathbb{R}$ that is associated to the spline space~$Y_M$.
\begin{definition}
  Let~$\mathcal{H}_{Y_M}  : Y_M \to \mathbb{R}$ be the function given by
\begin{align}
  \label{e_d_meso_fe_Y}
  \mathcal{H}_{Y_M} (y) = \sup_{\hat y \in Y_M} \left( \langle y, \hat y \rangle_{L^2} - \varphi_{N}^*(\hat y) \right),
\end{align}
where~$\varphi_{N}^* (\hat y)$ is given by~\eqref{e_def_varphi_N_star}. 
\end{definition}

The next auxiliary statement shows a nice relation between the mesoscopic free energies~$\mathcal{H}_{Y_M}$ and~$\mathcal{H}_{Y_M^{DG}}$. 
\begin{lemma}\label{p_relation_meso_fe_Y_and_Z}
For every~$y \in Y_M$, there exists a unique~$ z^* \in Y_M^\perp$ such that
\begin{align*}
  \inf_{Y_M^\perp} \mathcal{H}_{Y_M^{DG}}(y +z)  =\mathcal{H}_{Y_M^{DG}}(y + z^*) .
\end{align*}
Then for all $y\in Y_M$,
  \begin{align}\label{e_relation_meso_fe_Y_and_Z}
 \mathcal{H}_{Y_M} (y) = \mathcal{H}_{Y_M^{DG}}(y + z^*) = \inf_{Y_M^\perp} \mathcal{H}_{Y_M^{DG}}(y +z)
  \end{align}
and therefore,
  \begin{align}\label{e_relation_gradient_meso_fe_Y_and_Z}
\nabla \mathcal{H}_{Y_M} (y) = \nabla  \mathcal{H}_{Y_M^{DG}}(y + z^*) = P \nabla  \mathcal{H}_{Y_M^{DG}}(y + z^*).
  \end{align}
Let~$\bar  z^*$ be as in \eqref{e_def_z_star}, then for all ~$y \in Y_M$,
\begin{align}
  \label{e_relation_between_z_star_and_hat_z}
  |\bar z^* - z^*|_{L^2} \lesssim  \frac{1}{K},
\end{align}
and therefore by~\eqref{e_convexity_bar_H_Z}
\begin{align}\label{e_estimate_bar_h_Z_hat_z_star_Z}
  |\nabla \bar H_{Y_{M}^{DG}} (y + \bar z^*) - \nabla \bar H_{Y_{M}^{DG}} (y + z^*)|_{L^2}
  \lesssim  \frac{1}{K}.  
\end{align}
\end{lemma}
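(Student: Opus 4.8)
The plan is to proceed in three stages: first establish the variational identity \eqref{e_relation_meso_fe_Y_and_Z}, then differentiate to get \eqref{e_relation_gradient_meso_fe_Y_and_Z}, and finally compare the two minimizers $\bar z^*$ and $z^*$ to obtain \eqref{e_relation_between_z_star_and_hat_z} and \eqref{e_estimate_bar_h_Z_hat_z_star_Z}.

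First I would prove \eqref{e_relation_meso_fe_Y_and_Z}. Existence and uniqueness of $z^*$ is the same argument as for $\bar z^*$ in Lemma~\ref{p_error_laplace_method}: by \eqref{e_convexity_meso_fe_on_Z} the function $\mathcal{H}_{Y_M^{DG}}$ is uniformly strictly convex, hence has a unique minimizer on the affine subspace $y + Y_M^\perp$. The identity $\mathcal{H}_{Y_M}(y) = \inf_{z \in Y_M^\perp} \mathcal{H}_{Y_M^{DG}}(y+z)$ is then a statement about Legendre transforms restricted to subspaces. Concretely, both sides are the value at $y \in Y_M$ of the Legendre transform of $\varphi_N^*|_{Y_M}$: on the one hand this is $\mathcal{H}_{Y_M}(y)$ by definition \eqref{e_d_meso_fe_Y}; on the other hand, since $Y_M$ is an $L^2$-orthogonal subspace of $Y_M^{DG}$, for $\hat y \in Y_M$ the pairing $\langle y, \hat y\rangle_{L^2}$ depends only on the $Y_M$-component, so that partial minimization of the convex conjugate over $Y_M^\perp$ corresponds under duality to restricting the primal variable. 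Alternatively, and perhaps cleanly, I would verify \eqref{e_relation_meso_fe_Y_and_Z} directly via first-order conditions: the minimizer $z^*$ satisfies $\nabla \mathcal{H}_{Y_M^{DG}}(y+z^*) \perp Y_M^\perp$, i.e. $\nabla \mathcal{H}_{Y_M^{DG}}(y+z^*) \in (Y_M^\perp)^{\perp \cap Y_M^{DG}} = Y_M$, which is exactly $P\nabla \mathcal{H}_{Y_M^{DG}}(y+z^*) = \nabla \mathcal{H}_{Y_M^{DG}}(y+z^*)$; then one checks that $\hat y := \nabla \mathcal{H}_{Y_M^{DG}}(y+z^*) \in Y_M$ is the maximizer in \eqref{e_d_meso_fe_Y} by duality of the Legendre transform $(\mathcal{H}_{Y_M^{DG}}, \varphi_N^*)$, and that the value matches. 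Equation \eqref{e_relation_gradient_meso_fe_Y_and_Z} then follows: differentiating $y \mapsto \mathcal{H}_{Y_M^{DG}}(y + z^*(y))$ using the envelope theorem (the $z^*$-dependence drops because $\nabla \mathcal{H}_{Y_M^{DG}}(y+z^*) \perp Y_M^\perp$) gives $\nabla \mathcal{H}_{Y_M}(y) = P\nabla \mathcal{H}_{Y_M^{DG}}(y+z^*)$, and the first equality in \eqref{e_relation_gradient_meso_fe_Y_and_Z} is just \eqref{e_z_star_property}'s analogue.

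The main work — and the step I expect to be the real obstacle — is \eqref{e_relation_between_z_star_and_hat_z}. Both $\bar z^*$ and $z^*$ are characterized by a perpendicularity condition on $Y_M^\perp$: $\nabla \bar H_{Y_M^{DG}}(y+\bar z^*) \perp Y_M^\perp$ and $\nabla \mathcal{H}_{Y_M^{DG}}(y + z^*) \perp Y_M^\perp$. Write $\Pi$ for the $L^2$-orthogonal projection onto $Y_M^\perp$. Then $\bar z^*$ solves $\Pi \nabla \bar H_{Y_M^{DG}}(y + \bar z^*) = 0$ and $z^*$ solves $\Pi \nabla \mathcal{H}_{Y_M^{DG}}(y + z^*) = 0$. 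Subtracting and using the fundamental theorem of calculus, $0 = \Pi[\nabla \bar H_{Y_M^{DG}}(y+\bar z^*) - \nabla \bar H_{Y_M^{DG}}(y + z^*)] + \Pi[\nabla \bar H_{Y_M^{DG}}(y+z^*) - \nabla \mathcal{H}_{Y_M^{DG}}(y+z^*)]$. The first bracket equals $\big(\int_0^1 \Hess \bar H_{Y_M^{DG}}(y + z^* + t(\bar z^* - z^*))\,dt\big)(\bar z^* - z^*)$, and restricted to $Y_M^\perp$ this operator is bounded below by $\lambda$ in view of \eqref{e_convexity_bar_H_Z}; the second bracket is bounded by $\|\nabla \bar H_{Y_M^{DG}}(y+z^*) - \nabla \mathcal{H}_{Y_M^{DG}}(y+z^*)\| \lesssim \frac{1}{K}$ by \eqref{e_cgHwrtZ_to_mfe_C_1}. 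Pairing the identity with $\bar z^* - z^*$ and using coercivity gives $\lambda |\bar z^* - z^*|_{L^2}^2 \lesssim \frac{1}{K} |\bar z^* - z^*|_{L^2}$, hence $|\bar z^* - z^*|_{L^2} \lesssim \frac{1}{K}$, which is \eqref{e_relation_between_z_star_and_hat_z}. (The subtlety here is making sure the coercivity is used on the subspace $Y_M^\perp$ and that $\Pi$ acting on the gradient difference really reproduces the Hessian contraction along $\bar z^* - z^* \in Y_M^\perp$ — this is where the orthogonal-subspace structure is essential, but it is routine once set up.) Finally, \eqref{e_estimate_bar_h_Z_hat_z_star_Z} is immediate: by the mean value inequality and the upper bound $\Hess \bar H_{Y_M^{DG}} \leq \Lambda \Id$ from \eqref{e_convexity_bar_H_Z},
\begin{align}
|\nabla \bar H_{Y_M^{DG}}(y + \bar z^*) - \nabla \bar H_{Y_M^{DG}}(y + z^*)|_{L^2} \leq \Lambda |\bar z^* - z^*|_{L^2} \lesssim \frac{1}{K}.
\end{align}
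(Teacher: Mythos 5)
Your proposal is correct and follows essentially the same strategy as the paper: establish the variational identity via the first-order optimality condition that $\nabla\mathcal{H}_{Y_M^{DG}}(y+z^*) \in Y_M$, differentiate to obtain the gradient identity, and then bound $|\bar z^* - z^*|$ by combining the uniform convexity of $\bar H_{Y_M^{DG}}$ with the $C^1$ closeness estimate~\eqref{e_cgHwrtZ_to_mfe_C_1}, using the orthogonality of both gradients to $Y_M^\perp$. The only cosmetic differences are that the paper derives the "$\geq$" direction of~\eqref{e_relation_meso_fe_Y_and_Z} via the minimax inequality rather than by direct duality, justifies differentiability of $y\mapsto z^*(y)$ via the implicit function theorem rather than an unstated envelope theorem, and phrases the key estimate for~\eqref{e_relation_between_z_star_and_hat_z} as a direct use of the convexity inequality rather than an integral Hessian representation projected onto $Y_M^\perp$ — all equivalent to what you wrote.
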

\begin{proof}[Proof of Lemma~\ref{p_relation_meso_fe_Y_and_Z}]
The unique existence of $z^*$ follows directly from the strict convexity of~$\mathcal{H}_{Y_M^{DG}}$. Let us prove \eqref{e_relation_meso_fe_Y_and_Z}. Notice that in general for a function $F$
\begin{equation}
 \inf_x \sup_y F(x, y) \geq \sup_y \inf_x F(x, y).
\end{equation}
Using this, we get
\begin{align}
  \inf_{ z\in Y_M^\perp}   \mathcal{H}_{Y_M^{DG}}(y+ z) 
  &\overset{\eqref{e_def_meso_free_energy}}{=} \inf_{ z\in Y_M^\perp}  \sup_{\hat z \in Y_M^{DG}} \left( \langle y+ z , \hat z \rangle_{L^2} - \varphi_{N}^* (\hat z) \right) \\
 &\geq \sup_{\hat z \in Y_M^{DG}} \inf_{  z\in Y_M^\perp}  \left( \langle y+ z, \hat z \rangle_{L^2} - \varphi_{N}^* (\hat z) \right) \\
 &= \sup_{\hat z \in Y_M^{DG}}   \left[ \left( \langle y, \hat z \rangle_{L^2} - \varphi_{N}^* (\hat z) \right)    +  \inf_{ z\ in Y_M^\perp }  \langle z, \hat z \rangle_{L^2} \right].
\end{align}
We observe that
\begin{align*}
  \inf_{  z\in Y_M^\perp} \langle z, \hat z \rangle_{L^2} =
  \begin{cases}
    0, & \mbox{if } \hat z \in Y_M \\
    - \infty, & \mbox{if } \hat z \notin Y_M.
  \end{cases}
\end{align*}
Hence, we get that 
\begin{align}
  \label{e_e_relation_meso_fe_Y_and_Z_first_identity_2}
  \inf_{ z\in Y_M^\perp}   \mathcal{H}_{Y_M^{DG}}(y+ z) 
  &\geq \sup_{\hat z \in Y_M^{DG}}    \left[ \left( \langle y, \hat z \rangle_{L^2} - \varphi_{N}^* (\hat z) \right)    +  \inf_{  z\in Y_M^\perp }  \langle z, \hat z \rangle_{L^2} \right] \\
& = \sup_{\hat z \in Y_M}     \left( \langle y, \hat z \rangle_{L^2} - \varphi_{N}^* (\hat z) \right) \overset{\eqref{e_d_meso_fe_Y}}{=} \mathcal{H}_{Y_M} (y).
\end{align}

It remains to show the opposite inequality.
By strict convexity of $\varphi_N^*$, there exists $\hat z^{max} \in Y_M^{DG}$ such that
\begin{equation}
\mathcal{H}_{Y_M^{DG}}(y+ z^*) = \langle y+ z^* , \hat z^{max} \rangle_{L^2} - \varphi_{N}^* (\hat z^{max}).
\end{equation}
By basic properties of Legendre transform, 
\begin{equation}
\hat z^{max} = \nabla \mathcal{H}_{Y_M^{DG}}(y+ z^*) \overset{\eqref{e_relation_gradient_meso_fe_Y_and_Z}}{=} P\nabla \mathcal{H}_{Y_M^{DG}}(y+ z^*) \in Y_M.
\end{equation}
Hence, $\hat z^{max} \perp z^*$, and
\begin{align}
\mathcal{H}_{Y_M^{DG}}(y+ z^*) &= \langle y+ z^* , \hat z^{max} \rangle_{L^2} - \varphi_{N}^* (\hat z^{max})\\
&= \langle y , \hat z^{max} \rangle_{L^2} - \varphi_{N}^* (\hat z^{max})\\
& \leq  \sup_{\hat z \in Y_M} \left( \langle y , \hat z \rangle_{L^2} - \varphi_{N}^* (\hat z) \right)\overset{\eqref{e_d_meso_fe_Y}}{=} \mathcal{H}_{Y_M} (y).
\end{align}
This completes the verification of~\eqref{e_relation_meso_fe_Y_and_Z}.\\

Let us now turn to the verification of~\eqref{e_relation_gradient_meso_fe_Y_and_Z}. The second equality directly follows from the definition of $z^*$. he main observation for proving the first equality is that $z^*: Y_M \rightarrow Y_M^\perp$ is a $C^1$ map. To verify the smoothness, let $P_{Y_M^\perp}: Y_M^{DG} \rightarrow Y_M^\perp$ denote the $L^2$ orthogonal projection onto $Y_M^\perp$, and let $F: Y_M\oplus Y_M^\perp \rightarrow Y_M^\perp$ be the function given by $F(y, z) = P_{Y_M^\perp} \nabla \mathcal{H}_{Y_M^{DG}}(y+z)$. Then
\begin{align}
& F(y, z^*(y)) = 0, \\
& F \mbox{ is } C^1 \quad \mbox{and} \quad \frac{\partial F}{\partial z} = \operatorname{Hess} \mathcal{H}_{Y_M^{DG}}|_{Y_M^\perp}.
\end{align} 
Since $\operatorname{Hess} \mathcal{H}_{Y_M^{DG}}$ is positive definite by strict convexity of $\mathcal{H}_{Y_M^{DG}}$,  $\frac{\partial F}{\partial z}$ is also positive definite and so invertible, which shows $z^*$ is $C^1$ by implicit function theorem. Now, the first equality of \eqref{e_relation_gradient_meso_fe_Y_and_Z} follows from taking gradient of both sides of the first equality of \eqref{e_relation_meso_fe_Y_and_Z} by applying the chain rule.\\

Let us turn to the verification of~\eqref{e_relation_between_z_star_and_hat_z}. We observe that due to the definition of~$\bar z^*$ and~$z^*$ it holds
\begin{align}
  \label{e_nabla_h_0_at_z}
  \nabla \bar H_{Y_M^{DG}} (y+ \bar z^*) \in Y_M, \quad \mbox{and} \quad \nabla \mathcal{H}_{Y_M^{DG}} (y+ z^*) \in Y_M.
\end{align}
Using this and the convexity bound~\eqref{e_convexity_bar_H_Z} of~$\bar H_{Y_M^{DG}}$, we get that
\begin{align*}
  | \bar z^* - z^*|_{L^2}^2 & \leq \frac{1}{\lambda} \langle  \nabla \bar H_{Y_M^{DG}}(y+ \bar z^*) - \nabla \bar H_{Y_M^{DG}} (y+ z^*) , \bar z^* - z^* \rangle_{L^{2}} \\
& = - \frac{1}{\lambda} \langle   \nabla \bar H_{Y_M^{DG}} (y+ z^*) , \bar z^* - z^* \rangle_{L^{2}} \\ 
& = \frac{1}{\lambda} \langle   \nabla \mathcal{H}_{Y_M^{DG}} (y+ z^*) - \nabla \bar H_{Y_M^{DG}} (y+ z^*) , \bar z^* - z^* \rangle_{L^{2}} \\ 
& \overset{\eqref{e_cgHwrtZ_to_mfe_C_1}}{\lesssim} \frac{1}{\lambda} \ \frac{1}{K} |\bar z^* - z^*|_{L^2},
\end{align*}
which yields the desired estimate~\eqref{e_relation_between_z_star_and_hat_z}.
\end{proof}

The last auxiliary result shows that~$\nabla \mathcal{H}_{Y_M}$ and~$\nabla \mathcal{H}$ are close. 
\begin{lemma}\label{p_numerical_error}
It holds that for any~$x \in L^2$
\begin{align}
    \label{e_numerical_error}
    |  \nabla \mathcal{H}_{Y_M}(Px) - \nabla \mathcal{H}(x)  |_{L^2}  \lesssim  \left(\frac{1}{K} + \frac{1}{M}\right)(|x|_{L^2}+|x|_{H^1}).
  \end{align}
\end{lemma}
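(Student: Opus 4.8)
The plan is to pass to the Legendre-dual descriptions of the two gradients. Set $u:=\nabla\mathcal{H}(x)$ and $v:=\nabla\mathcal{H}_{Y_M}(Px)\in Y_M$. By \eqref{e_def_macroscopic_free_energy}, \eqref{e_d_meso_fe_Y} and the uniform convexity bounds of $\varphi^*$ and $\varphi_N^*$ from Lemma~\ref{p_convexity_bar_H_Z} (estimates \eqref{e_convexity_bounds_varphi} and \eqref{e_convexity_varphi_N}), the maximizers defining $\mathcal{H}$ and $\mathcal{H}_{Y_M}$ are characterized by the Euler--Lagrange conditions
\[
  x \,=\, \nabla\varphi^*(u)\quad\text{in }L^2(\mathbb{T}),\qquad Px \,=\, \nabla\varphi_N^*(v)\quad\text{in }Y_M ,
\]
where $\nabla\varphi_N^*$ denotes the gradient of the function $\varphi_N^*$ on $Y_M$. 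We recall that $\nabla\varphi^*(w)(\theta)=(\psi^*)'(w(\theta))$ pointwise, and that for $w\in Y_M$ one has $\nabla\varphi_N^*(w)=P g_w$, where $g_w$ is the fine-mesh piecewise-constant function with $g_w|_{[\frac{i-1}{N},\frac{i}{N})}=(\psi^*)'\!\bigl(N\!\int_{(i-1)/N}^{i/N}w\bigr)$ (cf. \eqref{e_def_varphi_N_star}). The target bound \eqref{e_numerical_error} will be obtained through the splitting $|u-v|_{L^2}\le|u-Pu|_{L^2}+|Pu-v|_{L^2}$, where the first term carries the spline-approximation error of order $\tfrac1M$ and the second the quadrature error of order $\tfrac1K$.

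For the first term I would use that $\varphi$ is uniformly strictly convex with $\Hess\mathcal{H}\le\Lambda\Id$ (cf. \eqref{e_convexity_mathcal_H}) and that $(\psi^*)'(0)=0$, hence $\varphi'(0)=0$, which is a direct consequence of the centering \eqref{e_ss_mean_0}. The chain rule then gives $u\in H^1$ with $|u|_{H^1}=\|\varphi''(x)\,x'\|_{L^2}\le\Lambda|x|_{H^1}$ and $|u|_{L^2}\le\Lambda|x|_{L^2}$. Standard approximation estimates for the $L^2$-orthogonal projection $P$ onto the degree-$L$ spline space associated with the mesh of size $1/M$ then yield $|u-Pu|_{L^2}\lesssim\frac1M|u|_{H^1}\lesssim\frac1M|x|_{H^1}$.

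For the second term, since $v-Pu\in Y_M$, the monotonicity of $\nabla\varphi_N^*$ on $Y_M$ (from the strong convexity \eqref{e_convexity_varphi_N}) together with the Euler--Lagrange condition $Px=\nabla\varphi_N^*(v)$ gives
\[
  \lambda\,|v-Pu|_{L^2}^2 \,\le\, \bigl\langle \nabla\varphi_N^*(v)-\nabla\varphi_N^*(Pu),\,v-Pu\bigr\rangle_{L^2}
  \,=\, \bigl\langle P\bigl(\nabla\varphi^*(u)-g_{Pu}\bigr),\,v-Pu\bigr\rangle_{L^2},
\]
so that $|v-Pu|_{L^2}\le\frac1\lambda\,|\nabla\varphi^*(u)-g_{Pu}|_{L^2}$. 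Writing $w:=Pu$, I would then split $|\nabla\varphi^*(u)-g_{w}|_{L^2}\le|\nabla\varphi^*(u)-\nabla\varphi^*(w)|_{L^2}+|(\psi^*)'(w)-g_{w}|_{L^2}$, where the first summand is $\le\Lambda|u-w|_{L^2}\lesssim\frac1M|x|_{H^1}$ by \eqref{e_convexity_bounds_varphi} and the previous paragraph, and the second is the genuine quadrature error. For the latter, I would estimate the oscillation of $(\psi^*)'(w)$ over a fine cell of length $1/N$ by $\tfrac1N\sup|((\psi^*)'(w))'|\le\tfrac\Lambda N\sup|w'|$, convert $\sup|w'|$ over each coarse cell of length $1/M$ into $\|w'\|_{L^2}$ of that cell by the inverse (Markov--Bernstein) estimate for polynomials of degree $\le L$, sum over the $K$ fine cells inside each coarse cell, and use $N=KM$ together with $H^1$-stability of the projection $P$ (so $\|(Pu)'\|_{L^2}\lesssim\|u\|_{H^1}\lesssim|x|_{L^2}+|x|_{H^1}$, using $\varphi'(0)=0$ again) to arrive at $|(\psi^*)'(w)-g_w|_{L^2}\lesssim\frac1N\|(Pu)'\|_{L^2}\lesssim\frac1K(|x|_{L^2}+|x|_{H^1})$. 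For $L=0$ one checks directly that $g_w=(\psi^*)'(w)\in Y_M$, so this error vanishes. Collecting the bounds gives $|u-v|_{L^2}\lesssim(\frac1K+\frac1M)(|x|_{L^2}+|x|_{H^1})$, which is \eqref{e_numerical_error}.

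The step I expect to be the main obstacle is the quadrature-error estimate $|(\psi^*)'(Pu)-g_{Pu}|_{L^2}\lesssim\tfrac1K(|x|_{L^2}+|x|_{H^1})$: a naive pointwise-oscillation argument only controls this difference by an $L^\infty$- or a second-derivative norm of $Pu$, so one must downgrade it to an $L^2$-estimate via the inverse inequality on the polynomial pieces and the $H^1$-stability of the $L^2$-projection onto the quasi-uniform spline mesh, carefully keeping track of the interplay between the coarse mesh ($1/M$) and the fine mesh ($1/N=1/(KM)$) when summing — which is exactly where the factor $1/K$ is produced and where the constants have to be controlled uniformly.
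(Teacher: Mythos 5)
Your argument is correct, and it achieves the stated bound by a genuinely different decomposition than the paper. Where you insert the intermediate point $Pu = P\nabla\mathcal{H}(x)$ and compare $v=\nabla\mathcal{H}_{Y_M}(Px)$ to $Pu$ to $u$, the paper instead introduces an auxiliary mesoscopic free energy $\hat{\mathcal{H}}_{Y_M}(y):=\sup_{\hat y\in Y_M}(\langle y,\hat y\rangle_{L^2}-\varphi^*(\hat y))$ — i.e.\ it keeps the exact $\varphi^*$ but restricts the dual variable to $Y_M$ — and compares $\hat y_N=\nabla\mathcal{H}_{Y_M}(Px)$ to $\hat y=\nabla\hat{\mathcal{H}}_{Y_M}(Px)$ to $\hat x=\nabla\mathcal{H}(x)$. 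In the paper's route the $\tfrac1K$-step (from $\hat y_N$ to $\hat y$) isolates \emph{only} the quadrature error $|\nabla\varphi^*(\hat y_N)-\nabla\varphi_N^*(\hat y_N)|_{L^2}$, while the $\tfrac1M$-step (from $\hat y$ to $\hat x$) isolates only the spline-approximation error $|\hat x-P\hat x|_{L^2}$; in yours the step from $v$ to $Pu$ picks up both the quadrature error at $Pu$ and a second copy of the approximation error $|u-Pu|_{L^2}$, which is harmless since both are of the stated order. A second, more substantive, difference: the paper controls the $\tfrac1N$-small quadrature error via the inverse Sobolev inequality \eqref{e_inverse_sobolev}, upgrading $|\hat y_N|_{H^1}\lesssim M|\hat y_N|_{L^2}$ (paying the factor $M$ that turns $\tfrac1N$ into $\tfrac1K$) and then using boundedness of $\operatorname{Hess}\mathcal{H}_{Y_M}$ together with $\nabla\mathcal{H}_{Y_M}(0)=0$ to get $|\hat y_N|_{L^2}\lesssim|x|_{L^2}$; you instead avoid the inverse inequality entirely by invoking the $H^1$-stability \eqref{e_control_spilne_H_1_norm} of $P$ and bounding $|Pu|_{H^1}\lesssim|u|_{H^1}\lesssim|x|_{H^1}$, which in fact yields the slightly sharper factor $\tfrac1N$ in place of $\tfrac1K$ for that piece. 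Your Markov--Bernstein detour through $\sup|w'|$ on coarse cells is correct but unnecessary: the Poincar\'e inequality on each fine cell (as the paper uses) gives $|(\psi^*)'(w)-g_w|_{L^2}\lesssim\tfrac1N|w|_{H^1}$ directly, without passing through the $L^\infty$ norm of $w'$.
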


\begin{proof}[Proof of Lemma~\ref{p_numerical_error}]
We introduce another mesoscopic free energy on the spline space $Y_M$. Let $\hat{\mathcal{H}}_{Y_M}: Y_M\to \mathbb{R}$ be the function given by
  \begin{align}
    \label{e_def_auxiliary_meso_fe_numerical_error}
   \hat{\mathcal{H}}_{Y_M} (y) = \sup_{\hat y \in Y_M} \left( \langle y , \hat y \rangle_{L^2} - \varphi^*(\hat y) \right),
  \end{align}
where~$\varphi^*(\hat y)$ is defined in~\eqref{e_def_macroscopic_free_energy}. 
For~$x \in L^2$ let us set~$y=Px$. By basic properties of the Legendre transform,
\begin{alignat}{4}
  \label{e_def_hat_y_N}
  \nabla \mathcal{H}_{Y_M} (y) &= \hat y_N \quad &&\mbox{where } \hat y_N \in Y_M &&\mbox{ and } P \left( \nabla \varphi_N^* \right) (\hat y_N) &&=y,\\
 \label{e_def_hat_y}
  \nabla \hat{\mathcal{H}}_{Y_M} (y) &= \hat y \quad &&\mbox{where } \hat y \in Y_M &&\mbox{ and } \quad P \left( \nabla \varphi^* \right) (\hat y) &&= y ,\\
 \label{e_def_hat_x}
  \nabla \mathcal{H} (x) &= \hat x \quad &&\mbox{where } \hat x \in L^{2} &&\mbox{ and } \qquad \left( \nabla \varphi^* \right) (\hat x) &&=x.
\end{alignat}
Below, we will show that
\begin{align}
  \label{e_numerical_error_est_hat_y_hat_y_N}
  | \hat y_N - \hat y |_{L^2} &\lesssim   \frac{1}{K} \ |x|_{L^2}, \\
  \label{e_numerical_error_est_hat_y_hat_x}
  | \hat y - \hat x |_{L^2} &\lesssim   \frac{1}{M} \ |x|_{H^{1}},
\end{align}
from which the desired inequality~\eqref{e_numerical_error} follows by triangle inequality. We start with deducing~\eqref{e_numerical_error_est_hat_y_hat_y_N}. By \eqref{e_def_hat_y_N} and~\eqref{e_def_hat_y},
 \begin{align}
 \langle \nabla \varphi_N^*(\hat y_N) - \nabla \varphi^* (\hat y) , \hat y_N - \hat y \rangle_{L^2} &= \langle  P \left(\nabla \varphi_N^*(\hat y_N) - \nabla \varphi^* (\hat y)\right) , \hat y_N - \hat y \rangle_{L^2}\\
  &= \langle  y -y , \hat y_N - \hat y \rangle_{L^2} = 0.
 \end{align}
 Therefore, using the convexity bounds~\eqref{e_convexity_bounds_varphi} of~$\varphi^*$, we get
\begin{align}
   \lambda | \hat y_N - \hat y|_{L^2}^2 &\leq  \langle \nabla \varphi^*(\hat y_N) - \nabla \varphi^* (\hat y) , \hat y_N - \hat y \rangle_{L^2} \\
   &= \langle \nabla \varphi^*(\hat y_N) - \nabla \varphi_N^* (\hat y_N) , \hat y_N - \hat y \rangle_{L^2} \\
& \leq  | \nabla \varphi^*(\hat y_N) - \nabla \varphi_N^*(\hat y_N) |_{L^2}    |\hat y_N - \hat y|_{L^2}.
\label{e_numerical_error_est_hat_y_hat_x_step_1}
\end{align}
Using the definitions ~\eqref{e_def_macroscopic_free_energy} and~\eqref{e_def_varphi_N_star} of~$\varphi^*$ and~$\varphi_N^*$, we find
\begin{eqnarray*}  \lefteqn{| \nabla \varphi^*(\hat y_N) - \nabla \varphi_N^*(\hat y_N) |_{L^2}^2} \\
 & =&  \sum_{i=1}^N  \int_{\frac{i-1}{N}}^{\frac{i}{N}} \left|(\psi^{*})'(\hat y_N (\theta)) - (\psi^{*})' \left( N \int_{\frac{i-1}{N}}^{\frac{i}{n}} \hat y_N (s) ds \right) \right|^2 d \theta   \\
& \overset{\eqref{boundvar}}{\lesssim}&   \sum_{i=1}^N \int_{\frac{i-1}{N}}^{\frac{i}{N}} \left| \hat y_N (\theta) -  N \int_{\frac{i-1}{N}}^{\frac{i}{n}} \hat y_N (s) ds \right|^2 d \theta \\
& \lesssim&   \frac{1}{N^2} \sum_{i=1}^N \int_{\frac{i-1}{N}}^{\frac{i}{N}} \left| \hat y_N' (\theta)  \right|^2 d \theta    \\
&=&   \frac{1}{N^2}   \left| \hat y_N \right|_{H^1}^2  \overset{\eqref{e_inverse_sobolev}}{\lesssim}   \frac{M^2}{N^2}   \left| \hat y_N \right|_{L^2}^2  \overset{\eqref{e_def_hat_y_N}}{=}  \frac{1}{K^2}   \left| \nabla \mathcal{H}_{Y_M}(y) \right|_{L^2}^2
\end{eqnarray*}
where we used a Poincar\'e inequality on an interval of length~$\frac{1}{M}$ and then an inverse Sobolev inequality \eqref{e_inverse_sobolev} from below. Let $y_0$ be the minimizer of $\mathcal{H}_{Y_M}$, then $\nabla \mathcal{H}_{Y_M} (y_0) = 0$, and
\begin{align} y_0 \overset{\eqref{e_def_hat_y_N}}{=} P \nabla \varphi_N^* (0) 
= P (\psi^*)'(0)
= (\psi^*)'(0) 
\overset{\eqref{e_ss_mean_0}}{=} 0.
\end{align}
Therefore, because $\Hess \mathcal{H}_{Y_M}$ is uniformly bounded (which follows from the uniform strict convexity of $\varphi^*_N$), we get
\begin{align}\left| \nabla \mathcal{H}_{Y_M}(y) \right|_{L^2}^2 =   \left| \nabla \mathcal{H}_{Y_M}(y) - \nabla \mathcal{H}_{Y_M}(0) \right|_{L^2}^2 
\lesssim |y|_{L^2}^2 
\leq |x|_{L^2}^2.
\end{align}
Inserting this estimate into~\eqref{e_numerical_error_est_hat_y_hat_x_step_1} yields the desired estimate~\eqref{e_numerical_error_est_hat_y_hat_y_N}. \\

Let us now turn to the last missing ingredient of this argument, namely the estimate~\eqref{e_numerical_error_est_hat_y_hat_x}. By~\eqref{e_def_hat_y} and~\eqref{e_def_hat_x}, for all~$\zeta \in Y_M$
\begin{align}
  \langle \nabla \varphi^*(\hat y) -  \nabla \varphi^*(\hat x)  , \zeta \rangle_{L^2} & = \langle P \nabla \varphi^*(\hat y) - P \nabla \varphi^*(\hat x)  , \zeta \rangle_{L^2} \\
  & =\langle Px - P x  , \zeta \rangle_{L^2} 
 =0 .
\end{align}
Choosing $\zeta = \hat y - P \hat x = ( \hat y - \hat x ) + ( \hat x - P \hat x )$, we get by  using~\eqref{e_convexity_bounds_varphi} that
\begin{align}
  \lambda |\hat y - \hat x|_{L^2}^2 & \leq \langle \nabla \varphi^* (\hat y) - \nabla \varphi^* (\hat x) , \hat y - \hat x \rangle_{L^2} \\
& = - \langle \nabla \varphi^* (\hat y) - \nabla \varphi^* (\hat x) , \hat x - P \hat x \rangle_{L^2}  \\
& \leq |\nabla \varphi^* (\hat y) - \nabla \varphi^* (\hat x)|_{L^2} |\hat x - P \hat x|_{L^2} \\
& \leq \Lambda |\hat y - \hat x|_{L^2} |\hat x - P \hat x|_{L^2}. \label{e_numerical_error_est_hat_y_hat_x_step_2}
\end{align}
Finally, by ~\eqref{e_penalization_fluctuations_L_2_norm} from below,
\begin{align}
|\hat x - P \hat x|_{L^2} \lesssim   \frac{1}{M}  |\hat x|_{H^1}  =  \frac{1}{M} |\nabla \mathcal{H} (x)|_{H^1},
\end{align}
and by the uniform bound on $\varphi''$,
\begin{align}
|\nabla \mathcal{H} (x)|_{H^1} = |\varphi'(x)|_{H^1} &= |\partial_\theta \varphi'(x)|_{L^2} \\
&= |\varphi''(x) \partial_\theta x|_{L^2} \lesssim |\partial_\theta x|_{L^2} = |x|_{H^1}, 
\end{align}
which, combined with \eqref{e_numerical_error_est_hat_y_hat_x_step_2},
give the desired estimate~\eqref{e_numerical_error_est_hat_y_hat_x}.

\end{proof}

\subsection{Proof of Theorem~\ref{p_convergence_meso_to_macro_free_energy}}\label{s_local_cramer_proof}

Using the auxiliary results that were provided in Section~\ref{s_local_cramer_aux_results}, proving Theorem~\ref{p_convergence_meso_to_macro_free_energy} becomes straightforward.

\begin{proof}[Proof of Theorem~\ref{p_convergence_meso_to_macro_free_energy}]
	For any~$\zeta \in L^2(\mathbb{T})$ and~$y=P\zeta$,
	\begin{align}
	| \nabla \bar H (P\zeta) -  \nabla \mathcal{H}(\zeta)|_{L^2}  & \leq | \nabla \bar H (y) - P \nabla \bar H_{Y_M^{DG}} (y + \bar z^*) |_{L^2} \\
	&\quad  + |P \nabla \bar H_{Y_M^{DG}} (y + \bar z^*)  - P \nabla \bar H_{Y_M^{DG}} (y + z^*) |_{L^2} \\ 
	& \quad + |P \nabla \bar H_{Y_M^{DG}} (y + z^*) - P\nabla \mathcal{H}_{Y_M^{DG}} (y + z^*)  |_{L^2} \\
	&  \quad + |P \nabla \mathcal{H}_{Y_M^{DG}} (y + z^*) - \nabla \mathcal{H}(\zeta)  |_{L^2}.
	\end{align}
	The first term on the right hand side of the last estimate is estimated by~\eqref{e_error_laplace_method}. The second term is estimated by~\eqref{e_estimate_bar_h_Z_hat_z_star_Z}. The third term is estimated by~\eqref{e_cgHwrtZ_to_mfe_C_1}. And finally, the fourth term is estimated by using~\eqref{e_relation_gradient_meso_fe_Y_and_Z} and~\eqref{e_numerical_error}. Summing up yields the desired estimate~\eqref{e_estimate_meso_to_macro_free_energy}.
\end{proof}

\subsection{Properties of spline approximations}
In  the proof of Lemma~\ref{p_numerical_error} we used an inverse Sobolev inequality \eqref{e_inverse_sobolev} and an estimate~\eqref{e_penalization_fluctuations_L_2_norm} from below. Let us now state and prove these results as well as some related results on spline approximations that will be used in the companion article \cite{DMOW18a}. 
\begin{lemma}[Inverse Sobolev inequality] \label{p_inverse_sobolev}
	For all~$y \in Y_M$ holds
	\begin{align}\label{e_inverse_sobolev}
	|y|_{H^{2}} \lesssim M  |y|_{H^{1}} \lesssim M^2 |y|_{L^2}.
	\end{align}
\end{lemma}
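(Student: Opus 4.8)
The plan is to reduce the estimate, by summing over the mesh cells, to a \emph{single} cell, and there to combine a scaling argument with the equivalence of all seminorms on the finite-dimensional space of polynomials of degree $\le L$. Throughout, $|\cdot|_{H^k}$ denotes the homogeneous seminorm, $|y|_{H^k}^2 = \int_{\mathbb{T}} |y^{(k)}|^2$, understood cellwise; for $L\ge 2$ the continuity $Y_M\subset C^{L-1}$ makes $y''$ a genuine $L^2$ function, while for $L\le 1$ the $H^2$ side is either $+\infty$ or trivial, so there is nothing to prove.

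First I would record the reference estimate on the cell $(0,1)$. Let $\Pi_L$ be the space of polynomials of degree $\le L$ on $(0,1)$. Since $\Pi_L$ is finite-dimensional, $p\mapsto |p|_{L^2(0,1)}$ is a norm, $p\mapsto |p'|_{L^2(0,1)}$ is a seminorm vanishing exactly on the constants, and $p\mapsto |p''|_{L^2(0,1)}$ is a seminorm vanishing on the affine functions; hence, working on the unit $L^2$-sphere of $\Pi_L$ (resp.\ on the unit sphere of $\Pi_L$ modulo constants) and using compactness, there is a constant $C_L<\infty$ with
\begin{equation*}
|p''|_{L^2(0,1)} \le C_L\,|p'|_{L^2(0,1)} \le C_L^2\,|p|_{L^2(0,1)} \qquad \text{for all } p\in\Pi_L.
\end{equation*}

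Next, for $y\in Y_M$ and $1\le m\le M$ let $y^{(m)}\in\Pi_L$ be the rescaled restriction to the $m$-th cell, $y^{(m)}(\theta)=y\big(\tfrac{m-1}{M}+\tfrac{\theta}{M}\big)$, as in \eqref{e_restriction_to_subinterval}. The change of variables $x=\tfrac{m-1}{M}+\tfrac{\theta}{M}$ yields, for $k=0,1,2$,
\begin{equation*}
\int_{\frac{m-1}{M}}^{\frac{m}{M}} |y^{(k)}(x)|^2\,dx \;=\; M^{2k-1}\int_0^1 \big|(y^{(m)})^{(k)}(\theta)\big|^2\,d\theta .
\end{equation*}
Applying the reference estimate cellwise and summing over $m$, the factor $M^{3}$ (from $k=2$) against the factor $M$ (from $k=1$) produces exactly the claimed power $M^{2}$, and likewise $M$ against $M^{-1}$ for the passage from $H^1$ to $L^2$:
\begin{equation*}
|y|_{H^2}^2 = \sum_{m=1}^M M^{3}\,|y^{(m)}|_{H^2(0,1)}^2 \le M^2 C_L^2 \sum_{m=1}^M M\,|y^{(m)}|_{H^1(0,1)}^2 = M^2 C_L^2\,|y|_{H^1}^2,
\end{equation*}
and in the same way $|y|_{H^1}^2 \le M^2 C_L^2\,|y|_{L^2}^2$. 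Taking square roots gives \eqref{e_inverse_sobolev}.

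I do not expect a genuine obstacle here. The argument is essentially bookkeeping; the only points that need a little care are stating the polynomial seminorm inequalities modulo the correct kernels (constants for the $H^1$ seminorm, affine functions for the $H^2$ seminorm) before invoking compactness, and keeping track of the powers of $M$ picked up under the affine rescaling of each cell.
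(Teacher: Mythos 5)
Your proof is correct and is exactly the ``straightforward calculation'' the paper alludes to but omits: equivalence of the $L^2$, $H^1$ and $H^2$ seminorms on the fixed $(L+1)$-dimensional reference space $\Pi_L$ over $(0,1)$ (with the correct kernels modded out before invoking compactness), combined with the $1/M$ rescaling of each mesh cell to produce the explicit powers of $M$. The cellwise reduction is the right way to make the paper's appeal to ``norms on finite-dimensional spaces are equivalent'' give a constant independent of $M$, and your bookkeeping of the factors $M^{2k-1}$ is accurate.
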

It is clear that the estimate~\eqref{e_inverse_sobolev} holds. The spline spaces $Y_M$ are finite dimensional and norms on finite-dimensional vector spaces are equivalent. The factor~$M^2$ comes from a scaling argument i.e.~$\frac{1}{M}$ is the only internal length scale. We omit the details of the proof, which consists of a straight forward calculation.
\begin{lemma}[Approximation properties of splines]\label{p_penalization_fluctuations_H_neg_norm}
  Let~$\zeta \in L^{2}(\mathbb{T})$. For~$\zeta \in L^{2}(\mathbb{T})$ we define the spline interpolation~$I \zeta \in Y_M$ as
  \begin{align}\label{e_def_spline_interpolation}
    I \zeta (\theta) = \sum_{j=1}^M \zeta \left( \frac{2j-1}{2M} \right)  B_j(\theta),
  \end{align}
where~$B_j \in Y_M$ is the B-spline basis of $Y_M$ given by
\begin{equation} \label{bspline}
B_j(\theta) =
 \begin{cases}
  \frac{M^2}{2} (\theta-\frac {j-2}{M})^2 & \text{for}\, \theta\in [\frac{j-2}{M},\frac{j-1}{M}) \\
  \frac{3}{4} \,-\,M^2(\theta - \frac{2j-1}{2M})^2 & \text{for}\, \theta\in [\frac{j-1}{M},\frac{j}{M}) \\
  \frac{M^2}{2} (\theta-\frac {j+1}{M})^2 & \text{for}\, \theta \in [\frac{j}{M},\frac{j+1}{M}) \\
  0 & \text{else}.
 \end{cases}
\end{equation} 
Then 
  \begin{align}\label{e_penalization_fluctuations_L_2_norm}
       |\zeta-P\zeta |_{L^2}   \leq \left| \zeta - I \zeta \right|_{L^2} \lesssim \frac{1}{M} \left| \zeta \right|_{H^1}
  \end{align}
and
\begin{align}
  \label{e_control_spilne_H_1_norm}
  |P\zeta |_{H^1}   \lesssim |\zeta |_{H^1} .
\end{align}
\end{lemma}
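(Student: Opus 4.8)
The plan is to treat the three claimed inequalities in turn, starting from the observation that the first one is essentially free. Since $P$ is the $L^2$-orthogonal projection onto $Y_M$, it realizes the $L^2$-best approximation in $Y_M$, and $I\zeta \in Y_M$ by construction; hence $|\zeta - P\zeta|_{L^2} \le |\zeta - I\zeta|_{L^2}$. Thus the content of Lemma~\ref{p_penalization_fluctuations_H_neg_norm} is the interpolation estimate $|\zeta - I\zeta|_{L^2} \lesssim \tfrac1M |\zeta|_{H^1}$ in~\eqref{e_penalization_fluctuations_L_2_norm} together with the stability bound~\eqref{e_control_spilne_H_1_norm}. Throughout one may assume $\zeta \in H^1(\mathbb{T})$, since otherwise the right-hand sides are infinite; then $\zeta$ is continuous by the one-dimensional Sobolev embedding, so the nodal values entering~\eqref{e_def_spline_interpolation} make sense.

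For the interpolation estimate I would run a standard localization argument. From the explicit formula~\eqref{bspline} one checks the elementary facts that $\{B_j\}$ is a nonnegative partition of unity, $\sum_j B_j \equiv 1$ with $0\le B_j\le 1$, that each $B_j$ is supported on the three consecutive subintervals around $\tfrac{2j-1}{2M}$, and that $|B_j'|_{L^\infty} \lesssim M$; in particular $I$ reproduces constants. Fix a subinterval $T_m := [\tfrac{m-1}{M},\tfrac{m}{M})$ and its enlargement $\tilde T_m := [\tfrac{m-2}{M},\tfrac{m+1}{M}]$ (indices mod $M$); on $T_m$ the function $I\zeta$ depends only on the values of $\zeta$ at the three nodes lying in $\tilde T_m$. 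Writing $\bar\zeta_m$ for the mean of $\zeta$ over $\tilde T_m$ and using constant-reproduction, $\zeta - I\zeta = (\zeta - \bar\zeta_m) - I(\zeta - \bar\zeta_m)$ on $T_m$. A Poincar\'e inequality on the length-$3/M$ window $\tilde T_m$ bounds the first term by $\tfrac1M |\zeta|_{H^1(\tilde T_m)}$, while the fundamental theorem of calculus and Cauchy--Schwarz give $|\zeta(s)-\bar\zeta_m| \lesssim \tfrac1{\sqrt M}|\zeta|_{H^1(\tilde T_m)}$ for all $s \in \tilde T_m$; combined with $|B_j|_{L^2(T_m)} \le M^{-1/2}$ this bounds the second term by $\tfrac1M|\zeta|_{H^1(\tilde T_m)}$ as well. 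Hence $|\zeta - I\zeta|_{L^2(T_m)} \lesssim \tfrac1M|\zeta|_{H^1(\tilde T_m)}$; squaring, summing over $m$, and using the bounded overlap of the $\tilde T_m$ gives~\eqref{e_penalization_fluctuations_L_2_norm}.

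For~\eqref{e_control_spilne_H_1_norm} I would split $P\zeta = I\zeta + (P\zeta - I\zeta)$. On $T_m$, since $\sum_j B_j' \equiv 0$, one has $(I\zeta)' = \sum_j (\zeta(\mathrm{node}_j) - \bar\zeta_m) B_j'$, and with $|B_j'|_{L^2(T_m)} \lesssim \sqrt M$ and the nodal bound above this yields $|(I\zeta)'|_{L^2(T_m)} \lesssim |\zeta|_{H^1(\tilde T_m)}$, hence $|I\zeta|_{H^1} \lesssim |\zeta|_{H^1}$. For the remainder, $P\zeta - I\zeta \in Y_M$ and $|P\zeta - I\zeta|_{L^2} \le |\zeta - P\zeta|_{L^2} + |\zeta - I\zeta|_{L^2} \le 2|\zeta - I\zeta|_{L^2} \lesssim \tfrac1M|\zeta|_{H^1}$, so the inverse Sobolev inequality~\eqref{e_inverse_sobolev} gives $|P\zeta - I\zeta|_{H^1} \lesssim M|P\zeta - I\zeta|_{L^2} \lesssim |\zeta|_{H^1}$. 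Adding the two bounds proves~\eqref{e_control_spilne_H_1_norm}.

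The only mildly delicate structural point is that the $L^2$-stability of the non-local projector $P$ does not by itself give $H^1$-stability, which is why one detours through the explicit, local interpolant $I$ and pays a factor $M$ via the inverse estimate~\eqref{e_inverse_sobolev}; this factor exactly cancels the $\tfrac1M$ from the interpolation error. Everything else is routine: verifying the B-spline identities from~\eqref{bspline}, the Poincar\'e and trace-type estimates on the length-$3/M$ windows $\tilde T_m$, and the bookkeeping of periodic index ranges on $\mathbb{T}$. I do not expect any genuine obstacle.
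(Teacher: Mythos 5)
Your proof is correct and follows essentially the same route as the paper: best-approximation gives the first inequality for free, the interpolation error is estimated locally on each subinterval via the partition-of-unity property of the $B_j$ together with FTC/Cauchy--Schwarz (you subtract the local mean and invoke Poincaré where the paper differences $\zeta(\theta)$ directly against the nodal values, but this is a cosmetic variant), and the $H^1$-stability of $P$ is obtained by passing through the local interpolant $I$ and applying the inverse Sobolev inequality~\eqref{e_inverse_sobolev} to $P\zeta - I\zeta$. The one place you give more detail than the paper is the explicit $H^1$ bound on $I\zeta$ via $\sum_j B_j' \equiv 0$, which the paper relegates to "similar calculations ... left as an exercise."
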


\begin{proof}[Proof of Lemma~\ref{p_penalization_fluctuations_H_neg_norm}]
We start with deducing ~\eqref{e_penalization_fluctuations_L_2_norm}. The first estimate of \eqref{e_penalization_fluctuations_L_2_norm} is due to the best approximation property of $P$. The second estimate of \eqref{e_penalization_fluctuations_L_2_norm} is well known in the literature on~$B$-splines (see for example Theorem 7.3 in \cite{devore_lorentz_1993}). For the convenience of the reader we give a short proof of this fact. We exploit the fact that the $B_j$ have small support. We observe
\begin{equation}
\sum_{j=1}^M B_j \equiv 1 \label{partition1}.
\end{equation}

In this case, we obtain for $\theta \in \left(\frac{m-1}{M},\frac{m}{M}\right)$:
\begin{eqnarray*}
\zeta(\theta) - I \zeta  (\theta) &\stackrel{\eqref{bspline},\eqref{partition1}}{=}& \sum_{j=0}^2
 \left(\zeta (\theta) - \zeta\left(\frac{2m-3 +2j}{2M}\right)\right) B_{m-1+j}(\theta).
\end{eqnarray*}
We use Young's inequality, the Fundamental Theorem of Calculus and
the Cauchy-Schwarz inequality to deduce an estimate:
\begin{eqnarray*}
\lefteqn{\int_{\frac{m-1}{M}}^{\frac{m}{M}} |\zeta (\theta) - I \zeta (\theta)|^2 \,d\theta}\\
&\leq& 3  \sum_{j=0}^2 \int_{\frac{m-1}{M}}^{\frac{m}{M}} \left(\zeta (\theta) - \zeta\left(\frac{2m-3 +2j}{2M}\right)\right) B_{m-1+j}(\theta)\\
&\leq& \int_{\frac{m-1}{M}}^{\frac{m}{M}} 3\,\frac{2}{M}\,\left(\int_{\frac{2m-3}{2M}}^{\frac{2m+1}{2M}} |\zeta '(\widetilde{\theta})|^2
d\widetilde{\theta}\right) \; (B_{m-1}^2(\theta)+B_{m}^2(\theta)+B_{m+1}^2(\theta)) \,d\theta\\
&\leq& \frac{6}{M^2} \left|\sum_{j=1}^M B_j^2 \right|_{L^{\infty}} \,\left(\int_{\frac{2m-3}{2M}}^{\frac{2m+1}{2M}} |\zeta '(\widetilde{\theta})|^2 d\widetilde{\theta}\right).
\end{eqnarray*}
Adding up all inequalities for $m=1,...,M$ yields the second estimate of \eqref{e_penalization_fluctuations_L_2_norm}. \smallskip

Let us now turn to the verification of~\eqref{e_control_spilne_H_1_norm}. It suffices to show that
\begin{align}\label{e_aux_estimates_for_tianqi_estimate}
  |\zeta - I \zeta|_{H^1} \lesssim |\zeta|_{H^1} \qquad \mbox{and} \qquad |P \zeta - I \zeta|_{H^1} \lesssim |\zeta|_{H^1}
\end{align}
from which we get
\begin{align}
  |P \zeta|_{H^1} \leq |P \zeta - I \zeta|_{H^1} + |I \zeta - \zeta|_{H^1} + |\zeta|_{H^1}  \lesssim |\zeta|_{H^1}.
\end{align}
The first estimate of~\eqref{e_aux_estimates_for_tianqi_estimate} can be deduced with similar calculations as were used for verifying the second estimate of \eqref{e_penalization_fluctuations_L_2_norm}. We leave the details as an exercise. The second estimate of~\eqref{e_aux_estimates_for_tianqi_estimate} follows from the inverse Sobolev inequality on $Y_M$ and the estimates of \eqref{e_penalization_fluctuations_L_2_norm}:
\begin{align}
  |P \zeta - I \zeta |_{H^1} &\overset{\eqref{e_inverse_sobolev}}{\lesssim} M |P \zeta - I \zeta|_{L^2} \\
  & \leq M(|P\zeta - \zeta|_{L^2} +  |\zeta - I \zeta|_{L^2} )
 \overset{\eqref{e_penalization_fluctuations_L_2_norm}}{\lesssim  }  |\zeta|_{H^1}.
\end{align}
\end{proof}

\section*{Acknowledgment}
This research has been partially supported by NSF grant DMS-1407558. Georg Menz and Tianqi Wu want to thank the Max-Planck Institute for Mathematics in the Sciences, Leipzig, Germany, for financial support.

\bibliographystyle{alpha}
\bibliography{bib}

\end{document}